\theoremstyle{plain}
\newtheorem{theorem}{Theorem}[section]
\newtheorem{corollary}[theorem]{Corollary}
\newtheorem{prop}[theorem]{Proposition}% reset theorem numbering for each chapter
\newtheorem{proposition}[theorem]{Proposition}
\newtheorem{lemma}[theorem]{Lemma}
\theoremstyle{definition}
\newtheorem{definition}[theorem]{Definition}
\newtheorem{remark}[theorem]{Remark}
\def \R{\mathbb R}
\def \N{{\mathbb N}}
\def\E{\mathbb E}
\def\P{\mathbb P}
\def \Z{\mathbb Z}
\newcommand{\cA}{\mathcal{A}}
\newcommand{\cB}{\mathcal{B}}
\newcommand{\cC}{\mathcal{C}}
\newcommand{\cD}{\mathcal{D}}
\newcommand{\cE}{\mathcal{E}}
\newcommand{\cG}{\mathcal{G}}
\newcommand{\cH}{\mathcal{H}}
\newcommand{\cI}{\mathcal{I}}
\newcommand{\cJ}{\mathcal{J}}
\newcommand{\cK}{\mathcal{K}}
\newcommand{\cL}{\mathcal{L}}
\newcommand{\cM}{\mathcal{M}}
\newcommand{\cN}{\mathcal{N}}
\newcommand{\cO}{\mathcal{O}}
\newcommand{\cP}{\mathcal{P}}
\newcommand{\cQ}{\mathcal{Q}}
\newcommand{\cR}{\mathcal{R}}
\newcommand{\cS}{\mathcal{S}}
\newcommand{\cT}{\mathcal{T}}
\newcommand{\cU}{\mathcal{U}}
\newcommand{\cW}{\mathcal{W}}
\newcommand{\cV}{\mathcal{V}}
\newcommand{\cX}{\mathcal{X}}
\newcommand{\cY}{\mathcal{Y}}
\newcommand{\cZ}{\mathcal{Z}}
\newcommand{\floor}[1]{{\left\lfloor #1 \right\rfloor}}
\newcommand{\ceil}[1]{{\left\lceil #1 \right\rceil}}
\newcommand{\fW}{\mathfrak{W}}
\newcommand{\fQ}{\mathfrak{Q}}
\newcommand{\sB}{\mathscr{B}}
\newcommand{\sC}{\mathscr{C}}
\newcommand{\sD}{\mathscr{D}}
\newcommand{\sE}{\mathscr{E}}
\newcommand{\sF}{\mathscr{F}}
\newcommand{\sI}{\mathscr{I}}
\newcommand{\sM}{\mathscr{M}}
\newcommand{\sP}{\mathscr{P}}
\newcommand{\sG}{\mathscr{G}}
\newcommand{\sH}{\mathscr{H}}
\newcommand{\sU}{\mathscr{U}}
\newcommand{\sW}{\mathscr{W}}
\newcommand{\oa}{{\overline{a}}}
\newcommand{\op}{{\overline{p}}}
\newcommand{\oq}{{\overline{q}}}
\newcommand{\del}{\partial}
\newcommand{\sset}{\subset}
\newcommand{\lf}{\left}
\newcommand{\rg}{\right}
\newcommand{\ga}{\gamma}
\newcommand{\ep}{\epsilon}
\newcommand{\ka}{\kappa}
\newcommand{\de}{\delta}
\newcommand{\sig}{\sigma}
\newcommand{\eps}{\epsilon}
\newcommand{\la}{\lambda}
\newcommand{\al}{\alpha}
\newcommand{\smin}{\setminus}
\newcommand{\fR}{\mathfrak{R}}
\newcommand{\fB}{\mathfrak{B}}
\newcommand{\fA}{\mathfrak{A}}
\newcommand{\fC}{\mathfrak{C}}
\newcommand{\fD}{\mathfrak{D}}
\title{The SIR model in a moving population: propagation of infection and herd immunity}
\author{Duncan Dauvergne and Allan Sly}
\begin{document}

	\maketitle
	
	\begin{abstract}
		In a collection of particles performing independent random walks on $\Z^d$ we study the spread of an infection with SIR dynamics.  Susceptible particles become infected when they meet an infected particle. Infected particles heal and are removed at rate $\nu$. We show that when $\nu$ is small, with positive probability the infection survives forever and grows linearly.  Furthermore, after the infection reaches a region, it quickly passes through and leaves behind a \emph{herd immunity} regime consisting of recovered particles, a small positive density of susceptible particles, and no infected particles. One notable feature of this model is the simultaneously existence of supercritical and subcritical phases on either side of an infection front of $O(1)$ width.
	\end{abstract}
	
% 	\tableofcontents
	
	\section{Introduction}
	
	We study a model for the spread of an infection in a collection of moving interacting particles, based on the well-known epidemiological \textbf{SIR}  (Susceptible-Infected-Removed) model. The process is initiated from a Poisson process of susceptible particles on $\Z^d$ with density $\mu > 0$. All particles perform independent continuous-time simple random walks on $\Z^d$.  At time $0$ a single infected particle is added at the origin $0$.  When an infected particle and a susceptible particle meet at a vertex, the susceptible particle becomes infected.  Infected particles recover at rate $\nu > 0$, after which point they are removed and no longer affect the process. 
	Our goal is to understand the evolution of this process and how the infection spreads over time. 
	
	This model fits within a broader class of dynamical interacting particle systems including Susceptible-Infected (SI also called A/B or X/Y) and Susceptible-Infected-Susceptible (SIS) models of infection spread, multi-particle Diffusion Limited Aggregation (mDLA), frog models, and activated random walks. See Section \ref{SS:related} for discussion of related work on these models. Each of these models can be viewed as an infection spreading through a population. Typically, we are interested in understanding the rate of spread and properties of the infected region.
	In contrast to the other models, the challenge here is that particles remain infected for only $O(1)$ time before being removed, and so particle density decreases as the infection spreads. Therefore, even if the infection survives forever we do not expect to see a growing ball of infected particles. Rather, we expect a growing sphere of particles of $O(1)$ width dividing the plane between an outer supercritical region with a high density of susceptible particles and a low density interior in a subcritical \emph{herd immunity} regime with too few susceptible particles to sustain an epidemic.
	
	The SIR model originates in epidemiology and is generally studied in the mean field setting where any particle can infect any other~\cite{kermack1927contribution,ross1917application}. In large populations its evolution can be studied by deterministic differential equations (see \cite{anderson1991discussion} for a survey of the plethora of variants and applications). It has also been studied in the case where particles occupy fixed vertices in a graph and can infect neighbouring vertices. In the lattice setting, this model was first studied in detail by Kuulasma~\cite{kuulasmaa1982spatial} building on work in simpler models by Mollison~\cite{mollison1977spatial}.
	In the case of random graphs with given degree distributions, Newman~\cite{newman2002spread} characterized the critical threshold for infection spread. 
	
	The case of moving particles is more mathematically challenging as the dynamic environment of particles has a complicated dependence on the infection process.  Moreover, an additional challenge is that, unlike in various SI or SIS models, it lacks any obvious monotonicity in the density and recovery rate parameters $\mu$ and $\nu$. For example, if we couple two SIR models at different densities, then while the higher density model initially has more infections, this may cause particles to recover earlier, decreasing the density and breaking the monotone coupling between the models.
	
	Our first main theorem shows that the SIR model displays qualitatively different behaviour at high and low recovery rates.
	We let ${\bf S_t}$ and ${\bf I_t}$ denote the sets of susceptible and infected particles at time $t$, respectively.  For this next theorem, we let $\P(d, \mu, \nu)$ denote the \textbf{survival probability} for the infection process in dimension $d$, density $\mu$ and recovery rate $\nu$. That is, $\P(d, \mu, \nu)$ is the probability that ${\bf I_t} \ne \emptyset$ for all $t > 0$.
	
	\begin{theorem}
		\label{T:main-1}
		Fix any density $\mu > 0$ and any dimension $d \ge 2$. Then there exist $\nu^- = \nu^-(\mu, d), \nu^+ = \nu^+(\mu, d) \in (0, \infty)$ such that if $\nu > \nu^+$, then $\P(d, \mu, \nu) = 0$ and if $\nu < \nu^-$ then $\P(d, \mu, \nu) > 0$. Moreover, $\P(d, \mu, \nu) \to 1$ as $\nu \to 0$.
	\end{theorem}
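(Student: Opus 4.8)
The plan is to establish the three assertions separately, guided throughout by one heuristic: for $\nu$ large an infected particle has, in expectation, strictly fewer than one ``offspring'' and the infection is a subcritical branching process, whereas for $\nu$ small it behaves like a supercritical branching random walk, since a typical infected particle lives for a time of order $1/\nu$ and during that time infects of order $\mu/\nu \gg 1$ susceptibles lying within distance of order $\nu^{-1/2}$.

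\emph{Extinction for $\nu$ large.} Take $\nu^+ := 2\mu$ and bound the expected total number $Z$ of particles that are ever infected. Explore the infection genealogy by processing infected particles one at a time: when a particle $v$ is processed, reveal its trajectory up to its recovery time $\tau_v \sim \mathrm{Exp}(\nu)$ and declare its children to be the not-yet-revealed Poisson particles whose paths cross $v$'s path during $[t_v, t_v+\tau_v]$, breaking ties so that the genealogy is a tree. The key point is that for every non-root $v$ the vertex at which it is infected already carries an infected particle at that instant -- the one that infected it -- so the Poisson particles present there at time $t_v$ have already been revealed; consequently every child of $v$ is first met at a nonzero spatial offset, and the number of children is stochastically dominated by $\mu$ times the number of sites other than its start point visited by the difference of $v$'s walk with an independent walk, run for time $\tau_v$. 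Since this difference walk jumps at rate $2$, that count is at most $\mu$ times its number of jumps, whose conditional mean is $2\mu/\nu$. Summing over generations (the first generation, consisting of the origin particle's children, has finite mean $\mu + 2\mu/\nu$; each later generation contracts by the factor $2\mu/\nu < 1$) gives $\E[Z] < \infty$, hence $Z < \infty$ almost surely and $\P(d,\mu,\nu) = 0$ for all $\nu > 2\mu$. The only care needed is the bookkeeping of the lazy revealing so that the range bound applies conditionally.

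\emph{Survival for $\nu$ small.} Set up a directed space--time renormalization. Fix a large constant $K$ and scales $T = K/\nu$ and $L \asymp K\sqrt{(1/\nu)\log(\mu/\nu)}$, so that $L$ beats the single-generation diffusive scale $\nu^{-1/2}$ by the factor $\sqrt{\log(\mu/\nu)}\to\infty$ while $L/T$ matches the branching-random-walk front speed $\asymp\sqrt{\nu\log(\mu/\nu)}$. Call a spacetime box of these dimensions \emph{good} if, started from a single infected particle near its centre in an otherwise density-$\mu$ environment, by time $T$ each forward-neighbour box again contains an infected particle near its centre and the infection has strayed only $O(L)$ from the box. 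Granting that a box is good with probability $\ge 1-\eps(\nu)$ with $\eps(\nu)\to0$ as $\nu\to0$, and that goodness can be localised and decoupled well enough to dominate a supercritical oriented percolation of density $\to1$, a directed open path out of the origin box keeps the infection alive forever and exists with positive probability, so $\P(d,\mu,\nu)>0$ for $\nu$ below some $\nu^- = \nu^-(\mu,d)$.

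Two things must be done, and I expect the second to be the main obstacle. First, the high-probability lower bound that a box is good: because depletion of susceptibles behind the front rules out monotone comparisons, I would run a greedy outward exploration along a single lineage -- an infected particle infects of order $\mu/\nu$ susceptibles within distance $\nu^{-1/2}$, and since $\mu/\nu\to\infty$ very many of them have been pushed a distance of order $\sqrt{(1/\nu)\log(\mu/\nu)}$ in the target direction; among those, pick one with enough remaining life that lands in territory not yet touched along this lineage (its displacement dwarfs the $\nu^{-1/2}$-scale blob infected so far), and iterate over the $O(K)$ generations, using the branching to guarantee that many infecteds reach each forward-neighbour box -- and this succeeds with probability $\to1$ as $\nu\to0$. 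Second, and this is the technically heavy part, one must decouple: make the ``untouched territory'' claim quantitative (bounding the chance that the front laps itself or a lineage loops back), ensure that different boxes' good events rest on essentially disjoint randomness, and turn this into a genuine percolation comparison -- via a multi-scale/sprinkling scheme in which a suitably enlarged box's behaviour is made nearly independent of the configuration outside it and of the infection's past, and the directed structure guarantees that a box is entered for the first time precisely when its turn comes. Finally, the limit $\P(d,\mu,\nu)\to1$ is then immediate: as $\nu\to0$ the good-box probability $1-\eps(\nu)\to1$, the dominated oriented percolation has density $\to1$, and its survival probability from the automatically good origin box tends to $1$.
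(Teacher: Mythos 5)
Your extinction argument is conceptually sound and essentially parallel to the paper's, just phrased differently: the paper proves $\P(d,\mu,\nu)=0$ for $\nu>8\mu$ via a supermartingale bound on the expected number of ``active chains'' (Proposition~\ref{P:high-recovery-rate}, Lemma~\ref{L:Poisson-model-1}), while you run a first-moment bound on a genealogy exploration. Both are first-moment arguments that exploit the fact that the unexplored particles remain stochastically dominated by a Poisson process, and both give exponential extinction. Your duality identity ($\mu$ times the expected range of the difference walk bounds the expected collision count) is the right calculation, and your observation that $\nu/8$ in the paper is not optimized is correct. The bookkeeping you flag as ``the only care needed'' (making the exploration order such that the particles at $v$'s birth site are already revealed, and maintaining Poisson domination under the conditioning) is real but manageable, and the paper handles its analogue through the active-chain formalism and Gr\"onwall's inequality.

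For survival, you have the right heuristic but the proposal has a genuine gap, and it is precisely the gap you yourself flag as ``the technically heavy part.'' Your plan is to compare to oriented space--time percolation with scales $T\asymp K/\nu$ and $L\asymp K\sqrt{\nu^{-1}\log(\mu/\nu)}$; the paper instead uses purely spatial blocks of side $L\asymp\nu^{-1/6}$ (equation~\eqref{E:Lnu-relationship}), a \emph{colouring process} that spreads at a guaranteed minimum speed (Definition~\ref{D:SI-colouring}(a)), and a coupling not to oriented percolation but to Sidoravicius--Stauffer percolation, a competing red/blue growth model designed to tolerate a sparse set of ``bad'' blocks that the red front must grow around. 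The reason the paper cannot simply dominate an oriented percolation is that the SIR environment is non-static and history-dependent: which particles are present in a box, whether they are still susceptible, and what their joint law looks like all depend on how and when the infection first arrived, and there is no monotonicity to fall back on (the paper stresses this). The device that actually delivers your ``different boxes' good events rest on essentially disjoint randomness'' is the block-Poisson description of Section~\ref{S:SI-colouring}: to each block $B$ an independent Poisson process $\sP_B$ of trajectories-plus-healing-clocks is attached, and a particle is assigned to the unique block where it is first coloured, so the good-block event $\cA_B$ of \eqref{E:cA123} is measurable with respect to $\sM_{B'}$ for $d(B,B')\le 2$ and the blue-seed field is genuinely $1$-dependent (Corollary~\ref{C:blue-domination}). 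Your sketch appeals to ``a multi-scale/sprinkling scheme'' but does not construct anything playing this role; without it, the dependence of one box's good event on the infection's past is not controlled, and the oriented-percolation comparison does not go through. In short: the extinction half is a valid alternate proof, but the survival half correctly identifies the obstacle and then stops at the point where the paper's main technical work begins.
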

	
	We expect that $\P(d, \mu, \nu)$ is a monotone function of $\nu$ and therefore there exists a critical recovery rate $\nu_c \in (0,\infty)$. However, proving this monotonicity appears to be a  difficult problem for the reasons discussed above. Indeed, closely related processes are not monotone on certain graphs, see~\cite{deijfen2006nonmonotonic, candellero2021first}.
	
	The fact that $\P(d, \mu, \nu) = 0$ for all large enough $\nu$ was previously established by Kesten and Sidoravicius in \cite{kesten2006phase} for the SIS model where particles become susceptible again after recovering from the infection. Their result immediately implies ours since the infection process for the SIS model stochastically dominates the infection process for the SIR model and so this part of Theorem~\ref{T:main-1} is not new. However, their proof is quite involved as it has to deal with the possibility of particles becoming reinfected. In the SIR setting, a much more straightforward supermartingale argument works, see Section \ref{S:recovery}. Recently, Grimmett and Li~\cite{grimmett2020brownian} have shown that $\P(d, \mu, \nu) = 0$  for large enough $\nu$ in a Brownian version of our SIR model. Their broad proof strategy is similar to ours but differs in the precise setup (and naturally, there are also different technicalities that come from working in discrete vs. continuous space). The proof that $\P(d, \mu, \nu) \to 1$ as $\nu \to 0$ is much more difficult and is given in Section \ref{S:linear}.
	
	A version of Theorem \ref{T:main-1} also holds if we first fix $\nu$ and allow the density $\mu$ to vary instead. Indeed, the results of Section \ref{S:recovery} show that for fixed $\nu$, if $\mu$ is low enough then $\P(d, \mu, \nu) = 0$. A variant of the arguments in Section \ref{S:linear} would show that $\P(d, \mu, \nu) \to 1$ as $\mu \to \infty$, but the details are sufficiently different that we do not include the proof here.
	
	\begin{remark}[Dimension $1$]
		In dimension $1$, it is not difficult to check that the SIR process dies out exponentially quickly at every recovery rate $\nu$ and every density $\mu$. Indeed, by Theorem~1 in~\cite{kesten2005spread}, there exists a constant $C > 0$ such that the set of infected particles $I_t$ is contained in the interval $[-Ct, Ct]$ with exponentially high probability. Since particles move diffusively and each particle is only infected for an $O(1)$ amount of time, this implies that there is a constant $C' \in \N$ such that at most times $t$, at most $C'$ total particles are infected. At any such time $t$ there is a positive probability depending on $C'$ but not on $t$ that all of these particles recover before encountering any susceptible particles. Hence the infection must die out exponentially quickly. 
	\end{remark}
	
	Theorem \ref{T:main-1} establishes that the SIR process can exhibit both recovery regimes and survival regimes. While the recovery regime is not particularly interesting, there are many possibilities for how the process might behave in the survival regime. The next theorem describes the particular behaviour of the process in the survival regime.

	\begin{theorem}
		\label{T:main-2}
		Fix any density $\mu > 0$, any dimension $d \ge 2$, and $\nu < \nu^-(\mu, d)$. Then there is a positive probability event $\cG_\nu$ with $\P[\cG_\nu] \to 1$ as $\nu \to 0$, such that on $\cG_\nu$ the following events hold. Here $c, C > 0$ are constants that depend on $\nu, \mu, d$.
		\begin{enumerate}
			\item (Survival) ${\bf I_t} \ne \emptyset$ for all $t \ge 0$.
			\item (Linear growth) For all large enough $t$, we have
			$$
			{\bf I_t} \sset B(0, C t) \smin B(0, c t) .
			$$
			\item (Infection Duration) For every site $x \in \Z^d$, let $D_x$ be the length of time between the first appearance of an infected particle at site $x$ and the last appearance. If an infected particle never reaches site $x$, set $D_x = 0$. Then for all $x \in \Z^d$ and $m \ge 3$ we have
		\begin{equation}
		\label{E:Dxm}
			\P(D_x > m \;|\; \cG_\nu) \le C\exp(-m^{c/\log \log m}).
			\end{equation}
			In particular, on $\cG_\nu$ there exists a random constant $D > 0$ such that for all $x \in \Z^d$ we have 
			\begin{equation}
			\label{E:DxDbound}
			D_x \le D + [\log (\|x\|_2 + 3)]^{C \log \log \log (\|x\|_2 + 3)}. 
			\end{equation}
			\item (Herd Immunity in the centre) We have
			$$
			\liminf_{t \to \infty} \frac{|{\bf S}_t \cap B(0, c t) |}{t^d} > 0.
			$$
		\end{enumerate}
	\end{theorem}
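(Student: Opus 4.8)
The plan is to establish all four parts through a multiscale renormalization whose fundamental object is a \emph{good block}: a space--time box of spatial side $L = L(\nu)$ and a comparable temporal length, such that if the infection enters one face carrying enough infected particles and the particle configuration inside is close to an independent Poisson field of density $\mu$, then with conditional probability at least $1-e^{-cL}$ the following hold: (a) the infection sweeps across the box at a speed bounded below by $c$ and exits the opposite faces again carrying enough infected particles; (b) it infects all but a density $\ge c\mu > 0$ of the susceptible particles it meets, leaving behind a low-density \emph{herd-immunity} configuration; and (c) every site of the box is visited by an infected particle only during a window of length $O(1)$, after which no infected particle returns to that site for the remaining life of the block. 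Constructing this \emph{one-block estimate} is the technical heart of the argument. Its inputs are: Poisson concentration for the number of walkers in a box; a local supercriticality statement saying that at density $\mu$ a single infected particle, whose lifetime is of order $1/\nu$, seeds a number of further infections that diverges as $\nu \to 0$, so the local process is overwhelmingly supercritical for small $\nu$; and, in the reverse direction, the low-density subcriticality estimates of Section~\ref{S:recovery} to force the infection to die out behind the front. Because the model has no monotonicity, the estimate must be \emph{self-improving}: the hypothesis on the entering configuration cannot be exact Poisson but only a quantitative ``not-too-depleted, not-too-clustered'' condition that the dynamics reproduces on the exit faces, so that good blocks can be chained together.

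Granting the one-block estimate, the second step compares the collection of good blocks with supercritical oriented site percolation on a coarse-grained space--time lattice, each block being ``opened'' by the good blocks feeding into it. Dependence between blocks is not strictly finite range, but a walker leaves a box of side $L$ within time $O(L)$ only with probability $e^{-\Omega(L)}$, so after truncating such events the good-block process stochastically dominates a supercritical oriented percolation once $\nu$ is small. Classical results then furnish an infinite oriented cluster issuing from the origin block, giving Survival (part 1) together with the presence of infected particles near radius $\Theta(t)$ at every time; combined with clause (c) (die-out behind the front) and a Kesten--Sidoravicius-type linear upper bound on the spread as in the dimension-one remark, this gives the shell structure ${\bf I}_t \sset B(0, Ct) \smin B(0, ct)$ of part 2. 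The event $\cG_\nu$ is then the event that the origin block is good and the good-block cluster surrounds the origin and expands linearly in every direction; $\P[\cG_\nu] \to 1$ as $\nu \to 0$ because the per-block failure probability $e^{-cL}$ can be driven to $0$.

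Herd immunity in the centre (part 4) combines two facts already in hand: by clause (b), each good block inside $B(0, ct)$ retains a density $\ge c\mu$ of susceptibles that are never infected; and by the shell structure of part 2, no infected particle re-enters $B(0, ct)$ once the front has left it. Summing the surviving susceptibles over the $\Theta(t^d)$ good blocks inside $B(0, ct)$ — using that their per-block counts concentrate, so a union bound disposes of the rare deficient blocks — yields $|{\bf S}_t \cap B(0, ct)| \gtrsim t^d$, hence the stated $\liminf$.

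The remaining and most delicate point is the Infection-Duration tail (part 3). On a good block $D_x = O(1)$, but $\cG_\nu$ only guarantees enough good blocks to percolate, and a site trapped in a bad pocket may keep the infection alive longer while it is re-seeded from the surrounding good blocks. The plan is a hierarchical, Kesten--Sidoravicius-style argument: introduce scales $\ell_{k+1} = \ell_k^{1+\alpha}$ with $\alpha > 0$ small, call a scale-$k$ block \emph{good at scale $k$} if the infection crosses it and dies out behind it within time $O(\ell_k)$, and show that any region not already resolved at scales $< k$ is covered by scale-$k$ blocks, all but a few of which are good at scale $k$, with the failure probability bootstrapping from one scale to the next. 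One cannot simply obtain ``exponential in $\ell_k$'' because assembling the scale-$(k-1)$ data into a scale-$k$ verdict costs a combinatorial factor, but one can keep the exponent growing fast enough that at the scale with $\ell_k \asymp m$ the failure probability is $\exp(-m^{c/\log\log m})$, the $\log\log m$ being the number of scales needed to reach $m$. This yields \eqref{E:Dxm}; the almost-sure bound \eqref{E:DxDbound} then follows from Borel--Cantelli, the extra $\log\log\log$ arising because one unions over the $\Theta(\|x\|_2^d)$ sites at radius $\|x\|_2$. The core obstacle throughout is the absence of any monotone comparison, so the two-sided control — supercritical ahead of an $O(1)$-width front, subcritical behind it — must be threaded by hand through every scale of the renormalization; this is also why the one-block estimate has to be engineered so that its conclusion re-establishes its own hypotheses.
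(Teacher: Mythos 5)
Your high-level architecture (renormalization into blocks of side $L(\nu)$, a one-block estimate proved by an approximate-Poisson argument, comparison with a percolation process, local extinction behind the front via the subcriticality estimate of Section~\ref{S:recovery}, and a multiscale iteration for the stretched-exponential duration tail) matches the spirit of the paper. But two specific choices are either wrong or hide the hard part of the argument.

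First, the clause (b) of your one-block estimate --- that with conditional probability $\ge 1-e^{-cL}$ a density $\ge c\mu > 0$ of susceptibles survives the passage of the front --- is quantitatively false, and in fact \emph{incompatible} with the rest of your own argument. The paper's Proposition~\ref{P:big-estimate} shows that, with superpolynomially high probability, after the front passes the density of unremoved particles in a block is at most $L^{-20}$, which is far below the subcritical threshold $\nu/8$ of Proposition~\ref{P:high-recovery-rate} given the choice $L\asymp\nu^{-1/6}$. This smallness is precisely what lets the infection die out behind the front; if a constant fraction $c\mu$ of susceptibles typically remained, the region behind the front would stay at a supercritical density and part~3 could not hold. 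Accordingly, survival of an individual susceptible past the front is an \emph{extremely rare} local event (the event $\cU_B$ / $\cS_B$ of Section~\ref{S:survival}, of probability $\delta = \delta(L)$ per block), and the positive constant in part~4 is $\delta(L)$-sized, not $c\mu$. This is not a cosmetic issue: proving that these rare per-block survivals produce a positive density of \emph{forever}-susceptible particles is the main difficulty of Section~\ref{S:herd}. It requires constructing a protective wall of infected particles around the candidate survivor to screen out later infections, localizing the survival event with respect to the block $\sigma$-algebras $\sM_B$ (despite the long-range dependence of the colouring times $\tau_B$ and of the SSP), and then running a martingale concentration argument (Corollary~\ref{C:mart-conc}, Corollary~\ref{C:mart-summation}) to get the density out. ``Sum over the $\Theta(t^d)$ good blocks and union-bound away the deficient ones'' does not capture this.

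Second, the comparison with oriented site percolation is not the comparison the paper makes, and it is not clear it can be made to work. The paper couples the colouring process to Sidoravicius--Stauffer percolation (SSP), in which the failed blocks (blue seeds) do not merely stay closed but seed a slow competing growth that the red process must geometrically engulf, and the $\tau_B$'s are produced jointly with the SSP rather than on a fixed space--time lattice. This design is what lets one handle the facts that (i) the colouring time of a block is a random stopping time entangled with the infection history rather than a deterministic layer index, and (ii) the model has no monotonicity, so one cannot use the standard ``open the site and restart independently'' domination step of oriented-percolation renormalization in the usual way. Your paragraph acknowledges the dependence issue but proposes to resolve it by ``truncating'' rare long-range events and then invoking a standard domination; in this model that truncation interacts with the non-monotonicity and with the random $\tau_B$'s in exactly the way SSP was built to control, so I would regard this step as a genuine gap rather than a detail. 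By contrast, your multiscale heuristic for the $\exp(-m^{c/\log\log m})$ duration tail --- doubly-exponentially growing scales, with $\log\log m$ levels to reach scale $m$ --- does track the mechanism in the paper, which comes from the geometric scales $r_k$ in the SSP engulfing-set construction combined with the local extinction estimate of Proposition~\ref{P:death-with-specifics}.
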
 
	
	We strongly believe that $\cG_\nu$ can be taken to be the survival event 
	$$
	\{\textbf{I}_t \ne \emptyset \text{ for all } t\}.
	$$
	However, our methods do not suffice to show this. 
	
	\subsection{Proof Sketch}	
	In~\cite{dauvergne2021spread} we studied the evolution of infections in the SI model where particles do not recover.  Previous work of Kesten and Sidoravicius~\cite{kesten2005spread,kesten2008shape} had analyzed this model in the setting where susceptible and infected particles move with the same speed. Under this assumption, unlabeled particle trajectories are independent random walks and are not affected by the infection process.  This work left open the problem of models where the particle environment was dependent on the infection process.
	Our work in~\cite{dauvergne2021spread} developed a technical framework to establish linear growth in general SI models which is flexible enough to study many other models. 
	 In the present work we use it to understand the different phenomena that occur in SIR models, particularly the way in which the infection process divides the plane into subcritical and supercritical regimes divided by an infected region of $O(1)$ width.
	
	We divide the lattice into blocks of fixed side length $L$ chosen based on the recovery rate $\nu$ (see equation~\eqref{E:Lnu-relationship}). We define a colouring process on the set of blocks where a block is coloured the first time an infected particle enters it or if it has been next to a coloured block for a sufficient period of time (see Section~\ref{S:SI-colouring} for the precise definition).  In order to establish sufficient spatial independence, we only observe particles that have previously visited the coloured region.  Outside of the coloured region, the conditional distribution of the particles is conditionally Poisson with a random intensity that can be bounded from below. This description allows us to define the SIR process in a spatially independent way according to the colouring process.
	 To each block we associate an independent Poisson process and use this to generate the particles inside that block at the time it is coloured, as well as their future trajectories and healing times.
	
%	Outside of the coloured region, the conditional distribution of particles can be described as an intensity-$\mu$ Poisson process, thinned out randomly according to the past colouring process. The intensity of this 
% conditionally Poisson process can be bounded below. To each block we associate an independent Poisson process and use this process to generate the particles inside that block at the time it is coloured, as well as their future trajectories.  
	
	The lower bound on the intensity implies a high-probability lower bound on the number of particles present in a block $B$ at the random time $\tau_B$ when it is first coloured. This implies that for most blocks, if they are infected at time $\tau_B$ then the infection will quickly be propagated to the neighbouring blocks irrespective of where the infection first enters.  Those that do not have this property are labeled as \emph{blue seeds}. Our block construction of the SIR process using independent Poisson processes guarantees that the blue seed process is stochastically dominated by a highly subcritical percolation in the low recovery rate regime. To establish that the infection survives and grows linearly we couple the colouring process to a competitive growth model called Sidoravicius--Stauffer percolation (SSP) first used to study multi-particle DLA~\cite{sidoravicius2019multi}.  
	
	Our analysis here goes beyond the questions addressed in~\cite{dauvergne2021spread}, and a particular focus is the study of the herd immunity regime left behind after the infection passes through a region. Our aim is to show that in this herd immunity regime, there are no infected particles remaining while a small density of susceptible particles remains uninfected. To show that the infection dies out locally after the wave of infections passes through, we first use random walk estimates to show that the density of unremoved particles is small in regions that were coloured long ago. We then combine this with a more delicate version of the supermartingale argument used to analyze the high recovery rate regime to prove that the infection dies out completely with high probability.
	
	 To show that there is a small density of susceptible particles that never become infected we show that locally each block has a small probability of having a particle that survives for a long period of time after $\tau_B$. After the wave of infections has passed through, it is unlikely that there are ever infected particles in a neighbourhood of this susceptible particle. In order to show that many such particles survive we show that this event can be bounded below by one that is essentially local. 
	 
	 A particular challenge in establishing the features of the herd immunity regime is to show that the events in question can essentially be defined locally using the independent Poisson blocks, i.e. that they do not depend on the relative colouring times $\tau_B$ or on the behaviour of the coupled Sidoravicius--Stauffer percolation, both of which in principle can have long range dependencies.
	
	\subsection{Related work}
	\label{SS:related}
	
	The type of SIR model we study in this paper fits within a broader class of dynamical interacting particle systems
	built from random walks on the lattice. If we take our SIR model and make all susceptible particles stationary, then we recover the frog model on $\Z^d$ (with recovery). Alves, Machado, and Popov~\cite{alves2002phase} showed that 
	this model exhibits a phase transition, obtained asymptotics for the critical recovery rate, and found a shape theorem in the SI setting where recovery is not permitted~\cite{alves2002shape}. The frog model has also been well-studied in the SIS (Susceptible-Infected-Susceptible) setting, where frogs recover from the infection and return to a stationary susceptible state under the name `activated random walks', e.g. see~\cite{dickman2010activated, rolla2020activated, stauffer2018critical} for review articles and recent progress.
	
	The case when both susceptible and infected particles are allowed to move is more mathematically challenging as the dynamic environment of particles has a much more complicated dependence on the infection process. In the SI and SIS settings, as discussed above this model was seriously attacked in a series of papers of Kesten and Sidoravicius~\cite{kesten2005spread, kesten2008shape, kesten2006phase}, culminating in a shape theorem for the infected region in the SI model.  See~\cite{kesten2012asymptotic} for a survey of problems about these types of models and~\cite{baldasso2020local, baldasso2021local} for more recent work.
	
	A version of the SI model on $\Z^d$ where susceptible particles can move but infected particles cannot move and instead form a stationary growing aggregate is known as multi-particle diffusion limited aggregation (mDLA). While the character of mDLA is somewhat different (for example, the aggregate has a fractal-like structure at low densities), the techniques used to study mDLA are useful here. Indeed, this line of work introduced SSP, known there as first passage percolation in a hostile environment (FPPHE), as a tool in the study of mDLA in ~\cite{sidoravicius2019multi}. See~\cite{finn2020non, candellero2021coexistence, candellero2021first} for further developments on FPPHE. A simpler version of the block Poisson description of the SIR process was also first used to study mDLA in one dimension in~\cite{sly2021one}.
	
	\subsection{Outline of the paper}
	
	For simplicity, throughout the paper we assume that the dimension $d=2$. The proofs in the case of general $d \ge 2$ go through essentially verbatim.
	In Section \ref{S:recovery}, we give a martingale argument proving that $\P(d, \mu, \nu) = 0$ for all large enough $\nu$. This section also includes a technical extension of this result that we will use later on to establish local infection recovery in the survival regime. In Section \ref{S:SSP} we introduce SSP and in Section \ref{S:linear} we couple an SSP with a block description of the SIR process to prove that $\P(d, \mu, \nu) \to 1$ as $\nu \to 0$ and that the infection spreads linearly. The remaining sections prove parts $2, 3,$ and $4$ of Theorem \ref{T:main-2}. Section \ref{S:global} defines events that will allow us to separate out local and global effects on the SIR process. Section \ref{S:upper-bd-density} gives a local construction that shows that once the infection passes through a region, the density of unremoved particles in that region quickly decreases with high probability, leaving that region in a subcritical regime. Section \ref{S:survival} provides a converse to the results of Section \ref{S:upper-bd-density}, by defining a low probability local event where a particle survives long after the infection has passed through a region. Section \ref{S:herd} delicately puts together the constructions from all the previous sections to prove Theorem \ref{T:main-2}.

	\subsection{Notational conventions}
	
	Rather unfortunately, the paper is loaded with notation. Moving forward, we use the following conventions to help the reader orient themselves:
	\begin{itemize}[nosep]
		\item Events are typically denoted with calligraphic symbols $\cA, \cB, \cC, \dots$
		\item Large constants are denoted by $C, C', C_1, C_2, \dots$ and small constants are denoted by $c, c', c_1, c_2, \dots$. In sections \ref{S:recovery} and \ref{S:SSP}, all constants are absolute.
		From Section \ref{S:linear} onward, we will fix a density $\mu$ and all constants will depend on $\mu$ but no other parameters. We always allow the meaning of constants to change from line to line.
		\item We always use fraktur notation $\fA, \fB, \dots$ to describe objects arising from the study of Sidoravicius--Stauffer percolation in Section \ref{S:SSP}.
		\item We do not include ceilings and floors unless they materially affect arguments.  
		\item Many numbers, e.g. $4000, 5000$, are arbitrary. They are chosen so that various scales we use in the paper will nest together in the right ways. The reader should use the explicit constants only as a way to help orient themselves when we are considering multiple scales.
		\item For $x, y \in \Z^2$, we write $d(x, y) = |x-y|$ for the $L^1$-distance (or graph distance) from $x$ to $y$. More generally, for sets $A, B$ we write $d(x, A) = \inf\{|x - y| : y \in A\}$ and $d(A, B) = \inf\{|x - y| : x \in A, y \in B\}$.
		\item For a set $A \sset \Z^2$, we let $\del A = \{x \in A : d(x, A^c) = 1\}$ be the interior boundary of $A$.
	\end{itemize}
	
	\subsection*{Acknowledgements}
The authors would like to thank Alexander Stauffer for useful discussions.  AS was supported by NSF grants DMS-1855527 and DMS-1749103, a Simons Investigator grant and a MacArthur Fellowship. DD was supported by an NSERC Discovery Grant.
	
	\section{SIR recovery}
	\label{S:recovery}

	In this section, we show that in the high recovery rate regime, the infection process dies out exponentially quickly. A few modifications to the argument will also allow us to give a criterion for the SIR infection recovering in a local window where particles have an initial low density. This will be used later to establish that in the low recovery rate regime, the infection dies out quickly in a region after the infection front has passed through.
	
	\begin{prop}
		\label{P:high-recovery-rate}
		Consider an SIR process with recovery rate $\nu$, started from a random initial configuration of susceptible particles that is stochastically dominated by a Poisson process of intensity $\nu/8$ with an initial infected particle added at the origin $0 := (0,0)$. Then for all $t > 0$, we have 
		$$
		\P(\mathbf{I}_t \ne \emptyset) \le \E \mathbf{I}_t \le (1 + \nu/8)e^{-\nu t/2}.
		$$
	\end{prop}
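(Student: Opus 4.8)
The plan is to set up a supermartingale whose value dominates $|\mathbf{I}_t|$ (weighted appropriately) and deduce the bound by optional stopping, or more directly by bounding $\E|\mathbf{I}_t|$ via a differential inequality. First I would reduce to the case where the initial susceptible configuration is \emph{exactly} Poisson of intensity $\nu/8$: since the infection process of the SIR model is monotone under adding susceptible particles to the initial configuration (more susceptibles can only create more infections, pointwise in a coupling), and since the hypothesis says the true configuration is stochastically dominated by this Poisson process, it suffices to prove the inequality for the dominating Poisson initial condition. So from now on assume $\mathbf{S}_0$ is Poisson$(\nu/8)$ on $\Z^2$ and one extra infected particle sits at the origin.

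Next I would track $f(t) := \E|\mathbf{I}_t|$ and write down its evolution. Infected particles are lost at rate $\nu$ each, contributing $-\nu f(t)$ to $f'(t)$. Infected particles are gained when an infected particle shares a site with a susceptible particle. The key point is to control the expected rate of new infections. For this, I would use the fact that the law of $\mathbf{S}_t$ — the susceptible particles that have \emph{not yet} been infected — is stochastically dominated by the free evolution, i.e.\ by a Poisson process of intensity $\nu/8$ at every time $t$ (susceptible particles only perform independent random walks until they are infected, and removing some of them keeps the rest dominated by the full Poisson cloud; a Poisson cloud of random walkers stays Poisson of the same intensity). Hence, conditionally on $\mathbf{I}_t$, the expected number of susceptible particles sitting at any given occupied infected site is at most $\nu/8$, so the expected instantaneous rate of new infections is at most $(\nu/8)\,|\mathbf{I}_t|$ — actually one must be slightly careful, since a newly infected particle can in turn infect others at the same site, but the expected \emph{total} number of susceptibles available at the sites of $\mathbf{I}_t$ is still at most $(\nu/8)|\mathbf{I}_t|$ in expectation, so the total expected gain rate is $\le (\nu/8) f(t)$. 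This yields the differential inequality
\begin{equation*}
f'(t) \le -\nu f(t) + \frac{\nu}{8} f(t) = -\frac{7\nu}{8} f(t) \le -\frac{\nu}{2} f(t),
\end{equation*}
with $f(0) = \E|\mathbf{I}_0| \le 1$ from the single seed, plus the contribution of susceptibles that get infected essentially instantaneously — here one accounts for the expected $\nu/8$ susceptibles initially at the origin, giving $f(0) \le 1 + \nu/8$. Integrating gives $f(t) \le (1+\nu/8) e^{-\nu t/2}$, and $\P(\mathbf{I}_t \ne \emptyset) \le \E|\mathbf{I}_t| = f(t)$ by Markov's inequality.

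To make the differential-inequality heuristic rigorous I would instead phrase it as a supermartingale statement: let $X_t := e^{\nu t/2} |\mathbf{I}_t|$ and show $\E[X_t]$ is non-increasing, by checking that the generator applied to $|\mathbf{I}_t|$ is bounded above by $-(\nu/2)|\mathbf{I}_t|$ in conditional expectation, using the Poisson domination of the un-infected susceptible cloud described above to bound the jump rates; then $\E[X_t] \le \E[X_0] \le 1 + \nu/8$. The main obstacle — and the only genuinely delicate point — is justifying that the conditional intensity of not-yet-infected susceptible particles at the locations of the infected set is bounded by $\nu/8$ uniformly in time: one has to argue that conditioning on the infection history only \emph{thins} the susceptible cloud (because susceptibles move as independent walkers oblivious to the infection until the moment they are caught), so the surviving susceptibles remain dominated by an autonomous Poisson cloud, which is stationary of intensity $\nu/8$. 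Once that domination is in hand, the martingale computation and the optional-stopping / Gronwall step are routine.
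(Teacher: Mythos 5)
Your reduction to the exactly-Poisson case relies on the assertion that the SIR infection set is pointwise monotone under adding susceptible particles, and this is precisely the monotonicity that the SIR model is known to lack (the introduction of the paper makes exactly this point). In a coupling with the same trajectories and the same per-particle healing clocks, adding an extra susceptible $c$ can cause another particle $b$ to be infected \emph{earlier}; since $b$ then heals at the next ring of its fixed healing clock, $b$ may already be removed at a later time $t$ at which, in the smaller system, it would have just been infected and still contagious. So $\mathbf{I}_t$ need not grow when susceptibles are added, and the reduction step does not go through as stated. The paper avoids this by bounding $|\mathbf{I}_t|$ not by itself but by the number of \emph{active chains}, $|\sC_{0,0}(t)|$ — a combinatorial overcount defined purely from particle trajectories and healing clocks, with no reference to the infection dynamics — which \emph{is} monotone in the particle configuration and therefore behaves well under stochastic domination.

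The second gap is the claim that, conditionally on the infection history, the not-yet-infected susceptibles remain dominated by a Poisson cloud of intensity $\nu/8$ because the conditioning ``only thins'' them. Conditioning on $\mathbf{I}_t$ (or on the full history) is not a thinning of the free susceptible cloud: it amounts to conditioning the survivors to have avoided the random space-time trace of the infected set, and the shape of $\mathbf{I}_t$ itself carries information about where susceptibles were (the infection had to be relayed through them). Such conditioning can deform the conditional spatial law of the survivors in a way that is not a priori controlled by domination by a fixed Poisson intensity. What actually makes the argument work in the paper is that one never conditions on the infection state; one conditions on the prefix $Q$ of a specific active chain, i.e.\ on the trajectories of finitely many named particles. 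Conditioning a Poisson process on finitely many of its points gives \emph{no} information about the remaining points, so the ambient cloud stays Poisson of the original intensity and the $\nu/8$ bound on new arrivals is immediate. This is the content of Lemma~\ref{L:Poisson-model-1}, and your proposal does not supply a substitute for it.

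A smaller point: your rate accounting is slightly off, since new infections arise both when an infected particle jumps onto a site holding susceptibles and when a susceptible jumps onto a site holding an infected particle, each contributing roughly $(\nu/8)|\mathbf{I}_t|$ to the gain rate; the natural bound is therefore $-\nu + 2\cdot(\nu/8) = -3\nu/4$ rather than $-7\nu/8$. This still beats $-\nu/2$, so it does not affect the conclusion, but the two structural gaps above do: without something like the active-chain reduction in place of the direct monotonicity and thinning claims, the supermartingale/Gr\"onwall step is not justified.
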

	
	To set up the proof of Proposition \ref{P:high-recovery-rate} we introduce a few conventions that we use throughout the paper. First, for a particle $a$, we let $\bar a:\R \to \R$ denote its cadlag trajectory. We also associate to every particle an independent intensity $\nu$ Poisson process $\bar a^h$ on $\R$ which is the \textbf{healing process} for $a$. Each particle $a$ is healed at the first time in its healing process after it becomes infected. Standard results about Poisson processes implies that this gives the same dynamics as associating to each particle a rate-$nu$ exponential recovery clock.
	
	We will analyze the number of infected particles at a particular time by counting certain sequences of potentially infected particles known as active chains. 
	An \textbf{active chain $Z = (\Pi, Q)$ on an interval $[s, t]$} is a partition $\Pi = \{\pi_0 = s < \pi_1 < \dots < \pi_{k(Z)} = t\}$ and a sequence of particles $Q = \{a_i : i \in \{1, \dots, k(Z)\}\}$ such that
	\begin{itemize}
		\item  $\oa_i(\pi_i) = \oa_{i+1}(\pi_i)$ and $\oa_i(\pi_i^-) \ne \oa_{i+1}(\pi_i^-)$ for all $i \in \{1, \dots, k(Z)-1\}$. Here $\oa(t^-) := \lim_{s \to t^-} \oa (s)$. 
		\item For all $i$, we have $\oa_i^h \cap [\pi_{i-1}, \pi_i] = \emptyset$.
		\item $a_i \ne a_j$ for $i \ne j$.
		\item For all $i \ge 2$, the particle $a_i$ is susceptible at time $0$.
	\end{itemize}
	For $r \in [\pi_i, \pi_{i+1}]$, we call $Z(r) := \oa_i (r)$ the \textbf{location} of $Z$ at time $t$ and $a_i$ the \textbf{label} of $Z$ at time $r$. We say that the active chain $Z$ starts at $(s, Z(s))$ and ends at $(t, Z(t))$. We allow the possibility that $s = t$ in the definition of active chains. In this case, an active chain always consists of a single particle. For a potential starting location $u = (s, z) \in [0, \infty) \times \Z^2$, and $t > s$, let $\sC_u(t)$ be the set of active chains in $X$ on the interval  $[s, t]$ starting at $(s, z)$. We have the inequality
	\begin{equation}
	|{\bf I}_t| \le |\sC_{0, 0}(t)|,
	\end{equation}
	so to prove Proposition \ref{P:high-recovery-rate} it suffices to analyze $\sC_{0, 0}$.
	The basic premise behind the proof of Proposition \ref{P:high-recovery-rate} is that $|\sC_{0, 0}|$ behaves like a supermartingale when the particle density is low or the recovery rate is high. Since we would also like a version of Proposition \ref{P:high-recovery-rate} that proves \emph{local} infection recovery even when we are in the global survival regime, we will analyze $|\sC_{0, 0}|$ in conjunction with more general supermartingale-like processes.
	For $u = (s, z) \in [0, \infty) \times \Z^2$ and $A \sset \Z^2$, define the quantity
	$$
	I_{u, A}(t) = \sum_{Z \in \mathcal \sC_u(t)} \exp (-\nu d(Z(t), A)/8). 
	$$
	Note that $I_{0, \Z^2} = |\sC_{0, 0}|$.
	To analyze $I_{u, A}$, we start with two preliminary lemmas. The first establishes that $\E I_{u, A}$ exists and is continuous if we start from a finite initial particle configuration.
	
	\begin{lemma}
		\label{L:IuA-exist-cts} 
		Consider an SIR process started from a possibly random initial configuration with at most $n$ particles, which may be either infected or susceptible.
		For all $u = (z, s), A,$ the quantity $\E I_{u, A}(r)$ is finite and continuous for $r \in [s, \infty)$.
	\end{lemma}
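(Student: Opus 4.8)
The plan is to dominate $I_{u,A}$ by the total number of active chains, show that this has finite expectation via moment bounds for Poisson random variables, and then deduce continuity of $r\mapsto\E I_{u,A}(r)$ by dominated convergence, using the fact that for a \emph{fixed} time the chain structure is almost surely locally constant in the right endpoint.

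First, since $\exp(-\nu d(Z(r),A)/8)\le 1$ we have $I_{u,A}(r)\le|\sC_u(r)|$, so it suffices to control $\E|\sC_u(r)|$ and to run the continuity argument through $|\sC_u|$. Let $N(r)$ denote the total number of jump times in $[s,r]$ among the at most $n$ particles of the initial configuration. Conditionally on the initial configuration the particles are independent rate-$1$ walks, so $N(r)$ is stochastically dominated by a Poisson variable of mean $n(r-s)$ and hence has finite moments of every order. I would then count chains by length: an active chain $Z\in\sC_u(r)$ with $k(Z)=k$ is completely determined by its ordered tuple of distinct particles $(a_1,\dots,a_k)$ together with its interior partition points $\pi_1<\dots<\pi_{k-1}$; there are at most $n^k$ choices of the tuple, and since each $\pi_i$ must be a jump time of $a_i$ or of $a_{i+1}$ and the $\pi_i$ are distinct, there are at most $\binom{N(r)}{k-1}$ choices of the partition points. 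As $1\le k\le n$, summing over $k$ gives a bound of the form $|\sC_u(r)|\le n^{n+1}(N(r)+1)^{n}$, and therefore $\E I_{u,A}(r)\le\E|\sC_u(r)|<\infty$ for every $r\ge s$.

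For continuity, fix $r_0\ge s$. Almost surely $r_0$ is neither a jump time nor a point of the healing process of any of the finitely many particles, so almost surely there is $\eps>0$ with $r_0-\eps\ge s$ such that $[r_0-\eps,r_0+\eps]$ contains no jump time and no healing time of any particle. On this event, for every $r\in[r_0-\eps,r_0+\eps]$, each chain in $\sC_u(r)$ has all of its interior partition points strictly below $r_0-\eps$ (they are jump times, hence avoid $[r_0-\eps,r_0+\eps]$, yet lie below $r\le r_0+\eps$); its final healing constraint is unaffected by moving $r$ within the interval (the final interval $[\pi_{k(Z)-1},r]$ varies only inside $[r_0-\eps,r_0+\eps]$, where no healings occur); and its current label does not jump within the interval. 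Consequently, replacing the right endpoint $r$ by any other $r'\in[r_0-\eps,r_0+\eps]$ defines a bijection $\sC_u(r)\to\sC_u(r')$ preserving the location function $Z(\cdot)$, so $r\mapsto I_{u,A}(r)$ is (a.s.) constant on $[r_0-\eps,r_0+\eps]$; in particular $I_{u,A}(r)\to I_{u,A}(r_0)$ a.s. as $r\to r_0$. Since $I_{u,A}(r)\le n^{n+1}(N(r_0+1)+1)^{n}$ uniformly over $r\in[s,r_0+1]$ and the right-hand side is integrable, dominated convergence yields $\E I_{u,A}(r)\to\E I_{u,A}(r_0)$ (one-sided at $r_0=s$).

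The argument is essentially bookkeeping. The two points that require care are the combinatorial estimate on the number of chains — where the finiteness of the particle count is essential, since the number of candidate partition points is controlled by the Poissonian $N(r)$ — and the verification that the data of a chain (partition points, healing constraints, current location) cannot change strictly between consecutive jump/healing times, which supplies both the almost-sure local constancy of $I_{u,A}$ and the uniform domination needed for dominated convergence. I do not expect a genuine obstacle beyond organizing these estimates.
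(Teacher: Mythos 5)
Your proof is correct and follows essentially the same route as the paper's: bound $I_{u,A}(r)$ by the number of active chains, control that count via the Poissonian jump count of the finitely many particles to get integrability, and then obtain continuity by observing that almost surely no particle jumps or heals at (or, for small $\eps$, near) the fixed time $r_0$, so $I_{u,A}$ is a.s.\ locally constant and dominated convergence applies. The only cosmetic difference is the combinatorial bookkeeping: you enumerate chains by length and choose labels and interior partition points separately, whereas the paper encodes each chain directly as a function from $\{s\}\cup T(t)$ (the jump times) to the label set, giving a single clean bound; both yield a random upper bound with finite expectation, so the arguments are interchangeable.
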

	
	\begin{proof} Fix $t > 0$ and let $r \in [s, t]$. Letting $T(t)$ be the set of times in $[s, t]$ when a particle jumps, any active chain can be encoded as a map from $\{s\} \cup T(t) \to \{1, \dots, n\}$ recording the label of $Z$ at each time in $\{s\} \cup T(t)$. Therefore 
		\begin{equation}
		\label{E:IuA}
		I_{u, A}(r) \le (|T(t)| + 1)^{n}.
		\end{equation}
		The quantity $T(t)$ is a Poisson random variable, so $\E I_{u, A}(r) < \infty$. To establish continuity of $\E I_{u, A}$ at $r \in [s, t)$, observe that the probability that any particle either recovers or jumps in the interval $(r-h, r + h)$ tends to $0$ in probability as $h \to 0$, so almost surely $I_{u, A}(r + h) \to I_{u, A}(r)$ as $h \to 0$. Therefore by the dominated convergence theorem, which we can apply by \eqref{E:IuA}, $\E I_{u, A}$ is continuous at $r$.
	\end{proof}
	
	We will also need the following version of Gr\"onwall's inequality.
	
	\begin{lemma}
		\label{L:gronwall}
		Let $f:[0, t] \to \R$ be a continuous function with $f(0) > 0$. Suppose that for all $s \in (0, t)$, the upper right Dini derivative satisfies
		$$
		D^+f(s) := \limsup_{h \to 0^+} \frac{f(s + h) - f(s)}{h} \le \al f(s)
		$$
		for some $\al \in \R$. Then
		$
		f(s) \le e^{\al s} f(0)
		$
		for all $s \in [0, t]$.
	\end{lemma}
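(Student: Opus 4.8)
The plan is to divide out the exponential and reduce to the standard fact that a continuous function with nonpositive upper right Dini derivative on an open interval is nonincreasing. Set $g(s) = e^{-\al s} f(s)$ on $[0,t]$. Since $s \mapsto e^{-\al s}$ is positive and $C^1$, the function $g$ is continuous, and the product rule for the upper right Dini derivative (legitimate because one factor is differentiable and positive) gives, for $s \in (0,t)$,
$$
D^+ g(s) = e^{-\al s} D^+ f(s) - \al e^{-\al s} f(s) = e^{-\al s}\bigl(D^+ f(s) - \al f(s)\bigr) \le 0 .
$$
Once we know that such a $g$ must be nonincreasing on $[0,t]$, we are done: for every $s \in [0,t]$ we get $e^{-\al s} f(s) = g(s) \le g(0) = f(0)$, i.e.\ $f(s) \le e^{\al s} f(0)$. (Note the hypothesis $f(0)>0$ is not actually used; it is included only because that is the form in which the lemma is applied.)

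It remains to prove the monotonicity fact. By continuity of $g$ it suffices to show $g(b) \le g(a)$ for all $0 < a < b < t$, since the values at the endpoints of $[0,t]$ are then recovered by taking limits. Fix such $a, b$ and a parameter $\ep > 0$, and put $g_\ep(s) = g(s) - \ep s$, so that $D^+ g_\ep(s) \le -\ep < 0$ for all $s \in (0,t)$, in particular on $[a,b]$. Suppose towards a contradiction that $g_\ep(b) > g_\ep(a)$, and set $c = \sup\{s \in [a,b] : g_\ep(s) \le g_\ep(a)\}$. Continuity gives $g_\ep(c) \le g_\ep(a)$, while $g_\ep(b) > g_\ep(a)$ forces $c < b$, and hence $g_\ep(s) > g_\ep(a) \ge g_\ep(c)$ for every $s \in (c,b]$. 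Consequently
$$
D^+ g_\ep(c) = \limsup_{h \to 0^+} \frac{g_\ep(c+h) - g_\ep(c)}{h} \ge 0 ,
$$
contradicting $D^+ g_\ep(c) \le -\ep$. Hence $g_\ep(b) \le g_\ep(a)$, that is $g(b) - g(a) \le \ep(b-a)$; letting $\ep \to 0$ yields $g(b) \le g(a)$, as desired.

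There is no genuine obstacle here: this is the classical Dini-derivative version of Gr\"onwall's inequality, and the only mild subtleties are the product rule for $D^+$ (handled by positivity and differentiability of $e^{-\al s}$, noting that $D^+f(s) \le \al f(s) < \infty$ so the relevant $\limsup$'s behave as expected) and the mismatch between the open interval $(0,t)$ on which the derivative hypothesis is assumed and the closed interval $[0,t]$ on which the conclusion is asserted (handled by continuity of $f$ at the endpoints together with the $\ep$-perturbation argument above).
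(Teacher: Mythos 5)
Your proof is correct and takes essentially the same route as the paper's: both substitute $g(s) = e^{-\al s} f(s)$, apply a product rule for the upper right Dini derivative to get $D^+ g \le 0$ on $(0,t)$, and then conclude $g(s) \le g(0)$. The only difference is that where the paper cites a mean value inequality for Dini derivatives ($\sup_{s\in(0,t)} D^+ f \ge (f(t)-f(0))/t$) and leaves its verification to the reader, you supply the required monotonicity fact directly via the $\ep$-perturbation argument.
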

	
	\begin{proof}
		We just need two facts about Dini derivatives of continuous functions $f, g$: a product rule and a mean value theorem.
		$$
		D^+(fg) \le D^+(f)g + f D^+(g), \qquad \sup_{s \in (0, t)} D^+ f \ge \frac{f(t) - f(0)}t.
		$$
		These facts are easy to check and we leave their proofs to the reader. Now, define $g(s) = e^{-\al s} f(s)$ for $s \in [0, t]$. By the product rule and the assumption of the lemma we have
		\begin{equation*}
		\label{E:D^+gg}
		D^+ g(s) \le e^{-\al s} D^+ f(s) - \al f(s) e^{-\al s}\le 0
		\end{equation*}
		for $s \in (0, t)$. Therefore by the mean value theorem, $g(s) \le g(0)$ on $[0, t]$ and so $f(s) \le e^{\al s} f(0)$.
	\end{proof}
	
	We can now establish supermartingale-like behaviour for $I_{u, A}$.

	\begin{lemma}
		\label{L:Poisson-model-1} Consider an SIR process with recovery rate $\nu \le 1$, whose initial configuration of susceptible particles is stochastically dominated by a Poisson process of intensity $\nu/8$ and whose initial configuration of infected particles is at most countable.  Then for any $u = (0, z) \in [0, \infty) \times \Z^2$ and $A \sset \Z^2$, we have 
		$$
		\E I_{u, A}(s) \le \E I_{u, A}(0) \exp(-\nu s/2).
		$$
		If $A = \Z^2$, we can drop the condition that $\nu \le 1$.
	\end{lemma}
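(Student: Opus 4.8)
The plan is to apply the Dini-derivative version of Gr\"onwall's inequality (Lemma~\ref{L:gronwall}) to the function $f(s) = \E I_{u,A}(s)$, so the real content is to establish $D^+ f(s) \le -\tfrac{\nu}{2} f(s)$ on any interval $[0,t]$ on which $f$ is finite and continuous. Finiteness and continuity hold when the initial configuration has finitely many particles, by Lemma~\ref{L:IuA-exist-cts}, so I first reduce to that case. The key point is that $I_{u,A}$ is monotone nondecreasing in the particle configuration: adding a particle can create new active chains but never invalidate an existing one, since the trajectories and healing processes of the other particles, and their susceptibility status at time $0$, are all unaffected (only the \emph{real} infection pattern changes, and active chains ignore that). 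Restricting the initial configuration to $B(0,n)$ thus gives finite configurations $X^{(n)} \uparrow X$, each still dominated by the intensity-$\nu/8$ Poisson process; I prove the estimate for each $X^{(n)}$ and pass to the limit by monotone convergence. (If $\E I_{u,A}(0) = \infty$ the claim is vacuous.)

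For a finite initial configuration, the heart of the matter is the one-step estimate
$$
D^+ f(s) \;\le\; \Big[(1+\tfrac{\nu}{8})\,e^{\nu/8} - 1 + \tfrac{\nu}{8} - \nu\Big]\, f(s).
$$
An elementary one-variable estimate shows the bracket is $\le -\tfrac{\nu}{2}$ for $0 < \nu \le 1$, so Lemma~\ref{L:gronwall} with $\al = -\nu/2$, applied on $[0,t]$ for each $t$, finishes the proof. Moreover, when $A = \Z^2$ every weight $\exp(-\nu d(Z(s),A)/8)$ equals $1$, the factor $e^{\nu/8}$ above is replaced by $1$, and the bracket becomes $\tfrac{\nu}{8} + \tfrac{\nu}{8} - \nu = -\tfrac{3\nu}{4} \le -\tfrac{\nu}{2}$ for \emph{every} $\nu>0$; this is exactly why the hypothesis $\nu\le 1$ can be dropped in that case. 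To derive the displayed bound I classify each chain $Z' \in \sC_u(s+h)$ by its restriction $Z \in \sC_u(s)$ and track how the contribution of a fixed $Z$ — with current label $a$ at location $x=Z(s)$ and weight $w(Z) = \exp(-\nu d(x,A)/8)$ — changes over $[s,s+h]$. To leading order in $h$ exactly one of three events occurs: (i) $a$ heals, at rate $\nu$, which removes $w(Z)$; (ii) $a$ jumps to a neighbour $y$, so the continuing chain acquires weight $\exp(-\nu d(y,A)/8)$ and, in addition, each particle susceptible at time $0$ sitting at $y$ spawns a new chain of that same weight; (iii) a particle susceptible at time $0$ at a neighbour of $x$ jumps onto $x$, spawning a new chain of weight $\simeq w(Z)$. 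Using $|d(y,A) - d(x,A)| \le 1$, hence $\exp(-\nu d(y,A)/8) \le e^{\nu/8}\, w(Z)$, together with the bound $\nu/8$ on the expected number of susceptible-at-time-$0$ particles available to form a new chain at any given site, and summing over $Z$, produces the claimed estimate as $h \to 0$. Interchanging $D^+$ with $\E$ and controlling the $o(h)$ remainders is justified by dominated convergence via the crude bound \eqref{E:IuA}.

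The step I expect to be the main obstacle is making precise the ``$\nu/8$'' bound on the rate at which new chains are created in (ii) and (iii), \emph{uniformly over the chain structure already revealed}. Conditioning on the whole history up to time $s$ is useless, since it makes both the chains and all particle positions deterministic; the resolution is to use that the particles susceptible at time $0$ evolve as independent continuous-time random walks started from a process dominated by $\mathrm{Poisson}(\nu/8)$, and to estimate the relevant correlated sum $\E\big[\sum_{Z \in \sC_u(s)} w(Z)\cdot \#\{\text{time-}0\text{-susceptible particles within one step of }Z(s)\}\big]$ by a mass-transport (Mecke/Campbell-type) computation for that Poisson process. This turns the sum into at most $(\nu/8)\cdot 2d\cdot \E\big[\sum_Z w(Z)\big] = (\nu/8)\cdot 2d \cdot f(s)$, the factor $2d$ arising from summing the walk kernel $\sum_x p_s(x,y) = 1$ over the $2d$ neighbours of $Z(s)$ and then being cancelled against the $1/(2d)$ in the per-neighbour jump rate. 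Once $D^+ f(s) \le -\tfrac{\nu}{2} f(s)$ is in hand, Gr\"onwall gives $f(s) \le f(0)\,e^{-\nu s/2}$; this estimate, with $A = \Z^2$, is the main input to Proposition~\ref{P:high-recovery-rate}.
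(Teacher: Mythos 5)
Your proposal is correct and follows essentially the same path as the paper's proof: reduce to a finite initial configuration, establish the one-sided Dini derivative bound $D^+\E I_{u,A}(s)\le -\tfrac{\nu}{2}\E I_{u,A}(s)$ by classifying the first-order events (jump of the current label, arrival of a nearby susceptible-at-time-$0$ particle, healing of the current label), apply Lemma~\ref{L:gronwall}, and then pass to the countable case by monotone convergence. The one place where your phrasing is looser than the paper's is the step you correctly flag as ``the main obstacle'': the paper handles the correlation between chain positions and nearby susceptible particles not by a Mecke/Campbell computation but by grouping active chains according to their label sequence $Q$ and conditioning on $I^{r,Q}(r)$; since $I^{r,Q}(r)$ is determined by the $Q$-trajectories alone, this conditioning reveals nothing about particles in $P\smin Q$, whose time-$r$ configuration is therefore still dominated by $\mathrm{Poisson}(\nu/8)$. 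Your Campbell-type appeal encodes the same decoupling-by-$Q$ idea (exclude the labels already used, then use that the rest stay Poisson), so the arguments are equivalent; the paper's version is just the explicit implementation. Your drift constant $(1+\nu/8)e^{\nu/8}-1+\nu/8-\nu$ differs superficially from the paper's $\nu/8+3\nu/8-\nu$ (which uses the crude bound $e^{\nu/8}\le 1+\nu/7$ for $\nu\le 1$), but both reduce to $\le -\nu/2$ for $\nu\le 1$, and your observation that the $e^{\nu/8}$ factor disappears when $A=\Z^2$ is exactly the paper's reason for dropping the $\nu\le 1$ hypothesis in that case.
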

	
	\begin{proof}
		We write $I=I_{u, A}$ throughout the proof to simplify notation, let $P$ be the full set of initially susceptible particles, and let $P'$ be the full set of initially infected particles. We first consider the case when $|P| + |P'| \le n$ for some $n \in \N$. For $r < s \in [0, \infty)$ and a sequence of labels $Q$, let $\sC_u^{r, Q}(s)$ be the set of active chains on $[0, s]$ starting at $u$ whose label sequence up to time $r$ is given by $Q$.
		Define $I^{r,Q}(s)$ in the same way as $I(s)$ but with the set $\sC_u^{r, Q}(s)$ in place of $\sC_u(s)$. We first show that for any $r \in [0, \infty)$, almost surely
		\begin{equation}
		\label{E:hEt}
		\limsup_{h \to 0^+} \frac{1}{h} \E\lf(I^{r, Q}(r + h) - I^{r, Q}(r) \mid  I^{r, Q}(r) \rg) \le -\nu I^{r, Q}(r)/2.
		\end{equation}
		First, this bound is trivially true if $\sC_u^{r, Q}(r) = \emptyset$. Now, when $\sC_u^{r, Q}(r)$ is nonempty, we consider how $\sC_u^{r, Q}(r)$ can change by time $r + h$ for small $h$. We enumerate the possibilities. In this enumeration, let $q$ be the final label in $Q$.
		\begin{enumerate}[label=(\roman*)]
			\item $\oq^h \cap (r, r + h] = \emptyset$, the particle $q$ does not move in the interval $(r, r+ h]$, and no particles in $P$ jump onto the square $q(r)$ in the interval $(r, r+ h]$. In this case, every active chain in $\sC_u^{r, Q}(r)$ extends uniquely to an active chain in $\sC_u^{r, Q}(r+h)$ and $I^{r, Q}(r + h) = I^{r, Q}(r)$.
			\item $\oq^h \cap (r, r + h] = \emptyset$, the particle $q$ does not move in the interval $(r, r+ h]$, and exactly one particle $p \in P \smin Q$ jumps onto the square $\oq(r)$ in the interval $(r, r+h]$, does not heal in that interval, and does not jump again in that interval. In this case, $I^{r, Q}(r + h) = 2I^{r, Q}(r)$.
			\item $\oq^h \cap (r, r + h] = \emptyset$ and the particle $q$ jumps exactly once in the interval $(r, r+h]$ from the location $y = \oq(r)$ to a neighbouring location $y'$ with $|(P \smin Q) \cap \{y'\}| = N$. No other particles in $P$ jump onto or off of $y$ or $y'$ in the interval $(r, r+h]$ and no particles heal in the interval $(r, r + h]$. In this case, $I^{r, Q}(r + h)$ equals
			\begin{equation}
			\label{E:IrQ}
			I^{r, Q}(r)(N+1)\exp\lf(\frac{\nu(d(y, A) - d(y', A))}{8}\rg).
			\end{equation}
			Note that in the $A = \Z^2$ setting, this is simply $I^{r, Q}(r) (N+1)$. In the $A \ne \Z^2$ setting, using that $\nu \le 1$, this is bounded above by $I^{r, Q}(r) (N+1)(\nu/7 + 1)$.
			\item The particle $q$ does not jump in the interval $(r, r+h]$, no particles in $P \smin Q$ jump from another square onto $\oq(r)$ in that interval, and $q$ heals once in that interval. In this case $I^{r, Q}(r + h) = 0$.
			\item Other possibilities occur. In all these possibilities, if $E$ denotes the collection of all jump times and healing times of all particles in $P \cup P'$, then $|E \cap (r, r + h]| \ge 2$.
		\end{enumerate}
		To help bound the probability of (i)-(v) conditioned on $I^{r,Q}(t)$, observe that conditioning on $I^{r, Q}(r)$ gives no information about particles $p \in P \smin Q$. Moreover, the location of the particles in $P \smin Q$ at time $r$ is stochastically dominated by a Poisson process of intensity $\nu/8,$ since this holds at time $0$. With this, we can easily see that conditionally on $I^{r, Q}(r)$,
		\begin{equation}
		\label{E:prob-bds}
		\begin{split}
		&\P (i) = 1 - O(h), \qquad \P (ii) \le \nu h/8 + O(h^2), \qquad \P (iii) = h + O(h^2), \\
		\qquad &\P(iv) = \nu h + O(h^2), \qquad \P (v) = O(h^2).
		\end{split}
		\end{equation}
		To get the estimate \eqref{E:hEt}, we also need to understand the conditional expectation of \eqref{E:IrQ} given $I^{r, Q}(r)$ and that the event in (iii) holds. Up to an $O(h)$ term, conditioning on (iii) is the same as conditioning on the particle $q$ to jump in $(r, r + h]$. Under this conditioning, $N$ is stochastically dominated by a Poisson random variable of mean $\nu/8$. Therefore
		$$
		\E\lf(I^{r, Q}(r + h) - I^{r, Q}(r) \mid I^{r, Q}(r), (iii) \rg) \le 3\nu I^{r, Q}(r) /8 + O(h).
		$$
		Also, conditionally on (v) occurring and $I^{r, Q}(r)$, since $|P| \le n$ it is easy to check using ideas similar to the proof of Lemma \ref{L:IuA-exist-cts} that $\E(I^{r, Q}(r + h) - I^{r, Q}(r) \mid I^{r, Q}(r), (v) ) \le c I^{r, Q}(r)$ for some constant $c>0$. Putting this all together with the bounds in \eqref{E:prob-bds}, and the explicit changes to $I^{r, Q}$ on events (i), (ii), and (iv) gives \eqref{E:hEt}. Next, again using that $P$ is finite it is straightforward to check using the decomposition (i)-(v) above that the random variables
		$$
		\frac{1}{h} \E\lf(I^{r, Q}(r + h) - I^{r, Q}(r) \;|\; I^{r, Q}(r)  \rg)
		$$
		are uniformly integrable as we vary $h$ over $(0, 1)$. Therefore the bound \eqref{E:hEt} holds with the relevant random variables replaced with their expectations. Summing over the finitely many sequences $Q$ with labels in $P \cup P'$, we then get that for all $s > 0$, we have the right-hand derivative bound
		\begin{equation}
		\label{E:derivative}
		\limsup_{h \to 0^+} \frac{\E I(s+h) - \E I(s)}h \le -\frac{\nu\E I(s)}2.
		\end{equation}
		The first inequality in the lemma then follows from Lemma \ref{L:gronwall} applied to the function $\E I$. The continuity of $\E I$ required for the lemma holds by Lemma \ref{L:IuA-exist-cts}. 
		
		Now we extend this to the case when $P, P'$ are potentially infinite. Note that $P$ is always countable by the stochastic domination of the initial configuration by a Poisson process. Let $\{p_i : i \in \N\}, \{p_i' : i \in \N\}$ be enumerations of $P, P'$, and let $I^n$ be defined in the same way as $I$ but with the set $\sC_u(s)$ replaced by the set of active chains on $[0, s]$ started at $u$ that only use labels from the set $\{p_1, \dots, p_n, p_1', \dots, p_n'\}$. Then for all $s$, the sequence $I^n(s)$ is monotone increasing, and $I^n(s) \nearrow I(s)$ almost surely, so by the monotone convergence theorem, $\E I^n(s) \to \E I(s)$, yielding the bound in the lemma.
	\end{proof}
	
	\begin{proof}[Proof of Proposition \ref{P:high-recovery-rate}]
		By Markov's inequality and Lemma \ref{L:Poisson-model-1} we have
		$$
		\P({\bf I_t} \ne \emptyset) \le \E \mathbf{I}_t \le \E|\sC_{0,0}(t)| = \E |I_{0, \Z^2}(t)| \le \E |I_{0, \Z^2}(0)| e^{-\nu t/2}.
		$$
		Now, $\E |I_{0, \Z^2}(0)|$ is simply the expected number of particles that are initially at $0$, which is at most $1 + \nu/8$.
	\end{proof}
	
	\subsection{Local recovery in the global survival regime}
	\label{S:recovery-local}
	In the remainder of this section, we give a more technical version of Proposition \ref{P:high-recovery-rate} that will 
	allow us to later establish local recovery in the global survival regime. While we have included this section here since it builds on Proposition \ref{P:high-recovery-rate}, the reader may wish to leave it for now and return to it later when it is applied in Section \ref{S:herd}.
	
	We fix a large spatial parameter $M \in \N$, a set of particles $P$ performing independent continuous-time random walks from an initial configuration $P(0)$, and a collection of particles $Q$ with initial configuration $Q(0)$, where each particle $q \in Q$ follows a (potentially random) cadlag path $\oq : [0, \infty) \to \R$. We start with an arbitrary set of initially infected particles  $S \sset P \cup Q$, and consider the SIR model on the particles $P, Q$ with recovery rate $\nu \in (0, \infty)$.

	We write $P(t), Q(t)$ for the point processes $\{\op(t) : p \in P\},\{\oq(t) : q \in Q\}$. For the remainder of this section, we make the following assumption on the initial distribution $P(0)$:
	\begin{itemize}
		\item There exists $\de \in (0, 1)$ such that for all $z \in \Z^2$, we have 
		\begin{equation}
		\label{E:IC-assumption}
		P(0) \cap [-j M, j M]^2 \le \de j^2 M^2.
		\end{equation}
	\end{itemize}
	
	Now, let $\sF_{P, t}$ be the $\sig$-algebra generated by the trajectories of $P$ up to time $t$ and define the event
	\begin{equation}
	\label{E:AY-event}
	\mathcal D_Q = \{ \text{For all } t \in [0, 4M^2], \text{ we have } Q(t) \cap [-2M, 2M]^2 = \emptyset \}.
	\end{equation}
	In the remainder of this section, we prove the following proposition.
	\begin{proposition}
		\label{P:death-with-specifics}
		There exist absolute constants $c, C > 0$ such that for $C \de \le \nu \le 1$, there exists an $\sF_{P, t}$-measurable event $\mathcal B_P$ such that on $\mathcal D_Q \cap \mathcal B_P$, 
		$$
		\text{there are no infected particles in } [-M, M]^2 \text{ at any } t \in [M^2/4, 4M^2].
		$$
		Moreover, $\P [\cB_P] \ge 1 - C e^{-c \nu M}$.
	\end{proposition}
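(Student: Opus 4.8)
The plan is to localize the active-chain supermartingale argument of Proposition~\ref{P:high-recovery-rate}. Write $B_0=[-M,M]^2$ and $B_1=[-2M,2M]^2$, so that $\cD_Q$ keeps every $Q$-particle out of $B_1$ throughout $[0,4M^2]$. The event $\cB_P$ we construct will be measurable with respect to the trajectories and healing clocks of $P$ alone, and will have the form ``a certain weighted count of active chains built only from $P$-particles stays below $1$ on $[M^2/4,4M^2]$''. Since $|\mathbf I_t|$ is dominated by the number of active chains ending at $t$, it then suffices to bound the probability that the $P$-data is such that \emph{some} choice of initial infected set $S$ and of $Q$-trajectories obeying $\cD_Q$ produces an active chain present in $B_0$ at some $t\in[M^2/4,4M^2]$; this probability does not reference $S$ or $Q$, so the resulting $\cB_P$ really is $\sF_{P}$-measurable.

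The reduction to $P$-only chains is where $\cD_Q$ is used. On $\cD_Q$, any active chain whose location lies in $B_1$ at a time $r\in[0,4M^2]$ must carry a label in $P$ at that time. Hence if an active chain ends in $B_0$ at $t\in[M^2/4,4M^2]$, its maximal terminal segment carrying $P$-labels is an active chain of $P$-particles ending in $B_0$ at $t$, seeded either at time $0$ by a $P$-particle, or at a hand-off from a $Q$-particle, which by $\cD_Q$ happens at a site outside $B_1$ and hence at $L^1$-distance $\ge M$ from $B_0$. Shifting each such seed, if necessary, back to the preceding jump or healing-clock time of the $P$-particle carrying it (to respect the healing condition in the definition of an active chain and to land in a countable, $\sF_P$-measurable family of candidate seeds), we obtain a countable collection $\widehat\sC$ of active $P$-chains — those started at time $0$ from any $P$-particle, or started at a jump or healing time of a $P$-particle at a site outside $B_1$ — such that on $\cD_Q$, for every $S$ and $Q$, every infected particle present in $B_0$ at a time in $[M^2/4,4M^2]$ is the endpoint of a chain in $\widehat\sC$.

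Following Lemma~\ref{L:Poisson-model-1} with $A=B_0$, set $\widehat I(t)=\sum_{Z\in\widehat\sC,\ Z\text{ alive at }t}\exp(-\nu\, d(Z(t),B_0)/8)$, an $\sF_P$-measurable process, so it is enough to keep $\widehat I<1$ on $[M^2/4,4M^2]$. Using~\eqref{E:IC-assumption} together with a large-deviation bound for the $P$-occupation counts (and $\delta\le\nu$, which makes the relevant mean counts $\gtrsim\nu M$), we get an $\sF_P$-measurable event $\cB_P^0$ of probability $\ge 1-Ce^{-c\nu M}$ on which the number of $P$-particles in each box $[-jM,jM]^2$, at each time in $[0,4M^2]$, is $O(\delta j^2M^2)$. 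On $\cB_P^0$ the drift computation of Lemma~\ref{L:Poisson-model-1} goes through with the per-jump positive drift of order $\delta$ instead of $\nu/8$, which the healing rate $\nu$ still dominates once $C\delta\le\nu$ with $C$ a large absolute constant, giving $D^+\,\E[\widehat I(r);\cB_P^0]\le-\tfrac{\nu}{2}\E[\widehat I(r);\cB_P^0]+R(r)$, where $R(r)$ accounts for chains of the second type being born. Each such seed is at distance $\ge M$ from $B_0$, hence contributes weight $\le e^{-\nu M/8}$; summing the seeding rate over $P$-particles near $\partial B_1$ and, with the weight decay, over those further out and using the density bound from $\cB_P^0$ gives $R(r)\le CM^2e^{-c\nu M}$. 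Since also $\widehat I(0)=\sum_{p\in P}\exp(-\nu\, d(\bar p(0),B_0)/8)=O(\delta M^2)$, Grönwall's inequality (Lemma~\ref{L:gronwall}) yields $\E[\widehat I(t);\cB_P^0]\le \delta M^2e^{-\nu t/2}+\tfrac{2}{\nu}CM^2e^{-c\nu M}\le Ce^{-c\nu M}$ for all $t\in[M^2/4,4M^2]$, provided $M$ is large enough in terms of $\nu$.

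Markov's inequality then gives $\P(\widehat I(t)\ge1,\ \cB_P^0)\le Ce^{-c\nu M}$ for each fixed $t$ in the window, and a union bound over an $O(M^2)$-point mesh of $[M^2/4,4M^2]$, controlling $\widehat I$ between mesh points by the bounded multiplicative growth of $\widehat I$ over unit time on $\cB_P^0$, upgrades this to $\P(\sup_{t\in[M^2/4,4M^2]}\widehat I(t)\ge1,\ \cB_P^0)\le Ce^{-c\nu M}$. Setting $\cB_P=\cB_P^0\cap\{\widehat I(t)<1\ \text{ for all }t\in[M^2/4,4M^2]\}$, which is $\sF_P$-measurable with $\P[\cB_P]\ge1-Ce^{-c\nu M}$, the chain reduction shows that on $\cB_P\cap\cD_Q$ there is no infected particle in $B_0$ at any time in $[M^2/4,4M^2]$. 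I expect the main obstacle to be the injection term $R(r)$: it is the single place where the density-unconstrained adversarial set $Q$ can feed infection toward $B_0$, and bounding it by $e^{-c\nu M}$ is exactly what forces the penalizing weight $e^{-\nu d(\cdot,B_0)/8}$, the hypothesis~\eqref{E:IC-assumption}, and the largeness of $C$ in $C\delta\le\nu$; a secondary, bookkeeping-heavy point is specifying the $\sF_P$-measurable seed family $\widehat\sC$ so that it simultaneously dominates all real active chains on $\cD_Q$ and actually satisfies the healing requirement in the definition of an active chain.
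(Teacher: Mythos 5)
Your proposal shares the core strategy with the paper — reduce to active chains built from $P$-particles, use the penalized count $I_{u,A}(\cdot)$ with the weight $e^{-\nu d(\cdot,A)/8}$, and exploit both time decay and the spatial discount for chains entering from the boundary of $[-2M,2M]^2$. The reduction to $P$-only chains via $\cD_Q$ is essentially the paper's Lemma~\ref{L:reduce-to-singles}. But there is a genuine gap in the supermartingale step.

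The drift computation in Lemma~\ref{L:Poisson-model-1} does not follow from an occupation-count bound. That proof needs the \emph{conditional law} of the as-yet-unvisited particles $P\setminus Q$, given the chain history up to time $r$, to be stochastically dominated by a low-intensity Poisson process — this is what bounds the per-jump birth rate of new chains. The Poisson structure of the initial configuration is exactly what makes this propagate: conditioning on the labels already used does not change the distribution of the rest. Your event $\cB_P^0$ only says that each box $[-jM,jM]^2$ contains $O(\delta j^2 M^2)$ particles at all times. That is compatible with arbitrary local clustering (e.g.\ $\delta M^2$ particles packed into a region of side $\sqrt{\delta}\,M$ at density $1$), in which case a chain endpoint sitting in the cluster picks up $O(1)$ new particles per jump, and $\nu\ge C\delta$ cannot dominate this. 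Conditioning on $\cB_P^0$ also ruins the clean $D^+\E[\widehat I(r);\cB_P^0]\le -\tfrac{\nu}{2}\E[\widehat I(r);\cB_P^0]+R(r)$ inequality, since $\cB_P^0$ is a future event and multiplying by its indicator inside the expectation is not compatible with the infinitesimal computation.

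The paper sidesteps exactly this by \emph{waiting}: Lemma~\ref{L:poisson-domination} uses the heat-kernel smoothing of independent walks to show that at every time $t\in[M^2/8,4M^2]$, the configuration $P(t)\cap[-3M,3M]^2$ is genuinely stochastically dominated by a Poisson process of intensity $C\delta$. It then applies Lemma~\ref{L:Poisson-model-1} \emph{afresh}, starting from each such $t$, over a short window of length $T=M/4$: chains starting inside $[-2M,2M]^2$ decay in time (giving $e^{-\nu T/2}$), and chains entering from $\partial[-2M,2M]^2$ pay the distance discount $e^{-\nu M/8}$. The crucial difference from your plan is that the argument only ever needs a window of length $O(M)$, started at a time when Poisson domination is available, rather than running the supermartingale from time $0$ and carrying an injection term. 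A secondary omission in your proposal: the union bound over times needs a discretization event controlling how many $P$-particle jump times land in short time intervals (the paper's Lemma~\ref{l:time.discr}); "bounded multiplicative growth of $\widehat I$ over unit time" is not automatic. And your "provided $M$ is large enough in terms of $\nu$" is fine in the end, but only because the small-$M$ case can be absorbed into the constants $C,c$ in the statement, as the paper notes explicitly.
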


	Moving forward, we will apply Proposition \ref{P:death-with-specifics} in the following way. In our main SIR model, after the colouring process has passed through a region, then typically that region will have a low density of particles that have not been removed. Any remaining infections will quickly die out because of Proposition \ref{P:death-with-specifics}, applied at various scales $M$. The two sets of particles $P$ and $Q$ in Proposition \ref{P:death-with-specifics} are distinguished in order to separate local and global effects.
	In applications, the set of trajectories $P$ will be measurable given some local $\sig$-algebra, allowing us to establish independence of spatially separated infection recovery events. The contributions from trajectories in $Q$ will be handled with a global union bound. 
	
	We prove Proposition \ref{P:death-with-specifics} by appealing to Lemma \ref{L:Poisson-model-1}. The first step for doing this is to show that in a certain time window, the configuration of $P$-particles near $0$ is stochastically dominated by a Poisson process.
	\begin{lemma}
		\label{L:poisson-domination}
		Under assumption \eqref{E:IC-assumption}, for $M$ sufficiently large and $t \in [M^2/8, 4M^2]$, the configuration $P(t) \cap [-3M, 3M]^2$ is stochastically dominated by a Poisson process $\Pi$ of intensity $C\de$.
	\end{lemma}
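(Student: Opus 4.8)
The plan is to show that at any fixed time $t$ in the window, the number of $P$-particles inside a box of side $O(M)$ centered near the origin has a Poisson-dominated distribution, and then upgrade this to domination of the entire point process restricted to $[-3M,3M]^2$ by a Poisson process of intensity $C\delta$. The heuristic is simple: each $P$-particle that lands in $[-3M,3M]^2$ at time $t$ started somewhere in $\Z^2$, and because the random walk is diffusive with variance of order $t \asymp M^2$, only particles starting within distance $O(M\log M)$ (or so) of the origin have a non-negligible chance of being there, while the transition probability $p_t(x,y)$ is everywhere $O(1/M^2)$ by the local central limit theorem / on-diagonal heat kernel bounds for the simple random walk. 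Combining the density bound \eqref{E:IC-assumption} on the initial configuration with these Gaussian-type tail bounds for the displacement gives an expected number of particles in any sub-box that is $O(\delta \cdot \text{area})$, uniformly in $t \in [M^2/8, 4M^2]$.

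The key steps, in order, are as follows. First I would record the relevant random walk estimates: for continuous-time simple random walk on $\Z^2$ run for time $t \ge M^2/8$, we have $\sup_{x,y} p_t(x,y) \le C/t \le C'/M^2$, and $\P(\|\bar a(t) - \bar a(0)\|_\infty \ge r) \le C\exp(-cr^2/t)$ for the displacement. Second, I would fix a box $R \subset [-3M,3M]^2$ (or any sub-box relevant for stochastic domination) and write $\E[|P(t) \cap R|] = \sum_{x \in P(0)} \P(\bar a_x(t) \in R)$. Split the sum over initial positions $x$ according to dyadic annuli $[-2^{k+1}M, 2^{k+1}M]^2 \setminus [-2^k M, 2^k M]^2$ around the origin: the $k=O(1)$ annuli contribute at most $(\#\text{particles there}) \times (C/M^2) \times |R| \le C\delta |R|$ using \eqref{E:IC-assumption}, and the annulus at scale $2^k M$ contributes at most $\delta 2^{2k}M^2 \cdot C\exp(-c 4^k)$ which is summable in $k$ and contributes $O(\delta |R|)$ overall (in fact $o(\delta M^2)$, absorbed into the constant). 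This yields $\E[|P(t) \cap R|] \le C\delta |R|$ for every box $R$. Third, since the particles move independently, $P(t)$ is a superposition of independent point processes (one per initial particle), so $P(t)$ restricted to $[-3M,3M]^2$ is a Poisson-type (in fact a sum of independent sparse Bernoulli) process; a standard coupling argument shows that such a process, whose intensity measure is pointwise bounded by $C\delta$ on $[-3M,3M]^2$ — note the intensity at site $y$ is $\sum_x p_t(x,y) \le C\delta$ by the same annulus computation with $R=\{y\}$ — is stochastically dominated by a Poisson process of intensity $C\delta$. One can either invoke that independent superpositions are Poisson-dominated when each component is a single point with small success probability, or use the explicit thinning/coupling construction.

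The main obstacle — really the only nontrivial point — is the stochastic domination of the full point process $P(t) \cap [-3M,3M]^2$ by a Poisson process, as opposed to merely controlling the expected count in each box. This requires an actual coupling: the clean way is to observe that $P(t)$ (ignoring labels) is obtained by independently moving each initial particle, hence is a superposition of independent point processes each of which is a single point; and a finite or countable superposition of independent $\{0,1\}$-valued point masses at site $y_i$ with probabilities $q_i$ is stochastically dominated by a Poisson process with intensity $\sum_i q_i \delta_{y_i}$ — indeed by a Poisson process of intensity $\sup_y \sum_i q_i \mathbf 1[y_i = y]$ times counting measure on the box, once we have the pointwise bound $\sum_i q_i \mathbf 1[y_i = y] \le C\delta$ established above. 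This last domination is a classical fact (a Bernoulli sum is dominated by a Poisson of the same mean, applied site by site and then superposed), so the whole lemma reduces to the heat-kernel bound plus the dyadic sum, both routine given \eqref{E:IC-assumption}. I would also flag that "$M$ sufficiently large" is used exactly to control error terms in the local CLT bound $p_t(x,y) \le C/M^2$ and to make the tail sum over far-away annuli negligible.
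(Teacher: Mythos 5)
Your approach diverges from the paper's at the crucial coupling step, and the argument you give for that step has a genuine gap.

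The expected-count computation and the heat-kernel bound $p_t(x,y)\le C/M^2$ combined with the dyadic annulus decomposition to get a per-site intensity bound of $C\delta$ are fine, and essentially match what the paper does. The problem is your passage from that intensity bound to stochastic domination of the full point process. You invoke the fact that ``a finite or countable superposition of independent $\{0,1\}$-valued point masses at site $y_i$ with probabilities $q_i$ is stochastically dominated by a Poisson process,'' and justify it by site-by-site Bernoulli-to-Poisson domination followed by superposition. That fact is true, but it concerns particles each of which either lands at a \emph{fixed} site $y_i$ or contributes nothing, so the counts at distinct sites are genuinely independent and the site-by-site argument applies. In the model of the lemma, each particle lands at a \emph{random} site: the indicators $\mathbf{1}(\bar p(t)=y)$ for a single particle $p$ are mutually exclusive across $y$, not independent, so the per-site marginal bound does not by itself yield domination of the joint configuration. (As a sanity check, a single particle that lands in $\{a,b\}$ with probability $1/2$ each and never outside is \emph{not} dominated by any finite-intensity Poisson process, even though its per-site marginals are Bernoulli$(1/2)$.) You also never state the companion fact, used essentially by the paper, that $\P(\bar p(t)\notin[-3M,3M]^2)\ge c$ for $t\ge M^2/8$ and $M$ large; without something of this kind no Poisson domination is possible, since one needs each particle to have a non-degenerate chance of contributing nothing to the box.

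The paper resolves exactly this issue using a version of Russo's lemma (Lemma~\ref{L:russo-lemma}): it suffices to check that \emph{conditionally} on the configuration of $P(t)$ on $[-3M,3M]^2\setminus\{z\}$, the count at $z$ is dominated by a Poisson$(C\delta)$ random variable. Under that conditioning the count at $z$ really is a sum of independent Bernoullis with means $\mu_p=\P(\bar p(t)=z\mid \bar p(t)\notin[-3M,3M]^2\setminus\{z\})$, and the lower bound $\P(\bar p(t)\notin[-3M,3M]^2)\ge c$ is used precisely to control the denominator of this conditional probability, giving $\mu_p\le CM^{-2}e^{-c|\bar p(0)|/M}$. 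Your approach could in principle be patched — e.g.\ by showing directly (via a Strassen-type verification over increasing events) that each single-particle process restricted to the box is dominated by a Poisson process of intensity comparable to $\sup_y\P(\bar p(t)=y)$, using the substantial escape probability, and then superposing independent dominations — but as written the key coupling step is asserted from a fact that does not apply, and the escape-probability input is missing.
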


	To prove Lemma \ref{L:poisson-domination}, we use the following lemma for establishing stochastic domination. 
	
	\begin{lemma}
		\label{L:russo-lemma}
		Let $P$ be a point process on a finite set $F$. Suppose that for any $x \in F$ and any point configuration $f$ on $F \smin \{x\}$ for which the event $P|_{F \smin \{x\}} = f$ has nonzero probability, the conditional distribution
		\begin{equation}
		\P(|P \cap \{x\}| \in \cdot \; | \; P|_{F \smin \{x\}} = f)
		\end{equation}
		is stochastically dominated by a Poisson random variable of mean $\ga$. Then $P$ is stochastically dominated by a Poisson process with intensity $\ga$.
	\end{lemma}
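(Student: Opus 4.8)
The plan is to reveal the configuration of $P$ site by site and build an explicit monotone coupling with a Poisson process via the standard quantile (inverse–CDF) coupling at each step. Enumerate $F = \{x_1, \dots, x_n\}$ (using finiteness here) and write $k_j := |P \cap \{x_j\}| \in \Z_{\ge 0}$. For $i \le n$ and a configuration $g$ on $\{x_1, \dots, x_{i-1}\}$ of positive probability, let $\mu_{i,g}$ be the conditional law of $k_i$ given $P|_{\{x_1, \dots, x_{i-1}\}} = g$ (and set $\mu_{i,g} = \delta_0$ on configurations of zero probability, which will be irrelevant). For a law $\rho$ on $\Z_{\ge 0}$ write $F_\rho^{-1}(u) = \inf\{k \in \Z_{\ge 0} : \rho(\{0, \dots, k\}) \ge u\}$ for its quantile function, and recall the two elementary facts that $F_\rho^{-1}(U) \sim \rho$ for $U$ uniform on $(0,1)$, and that $\rho \preceq \sigma$ (stochastic domination) implies $F_\rho^{-1} \le F_\sigma^{-1}$ pointwise on $(0,1)$.

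The first substantive step is to check that each $\mu_{i,g}$ is stochastically dominated by $R \sim \mathrm{Poisson}(\ga)$. This is where the hypothesis is used, together with the fact that stochastic domination of a random variable by a fixed law passes to conditional laws given a coarser $\sig$-algebra: since $\sig(P|_{\{x_1, \dots, x_{i-1}\}})$ is coarser than $\sig(P|_{F \smin \{x_i\}})$, the hypothesis gives $\P(k_i \ge t \mid P|_{F \smin \{x_i\}}) \le \P(R \ge t)$ almost surely for every $t$, and the tower property yields $\P(k_i \ge t \mid P|_{\{x_1, \dots, x_{i-1}\}}) = \E[\P(k_i \ge t \mid P|_{F \smin \{x_i\}}) \mid P|_{\{x_1, \dots, x_{i-1}\}}] \le \P(R \ge t)$. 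Specializing the regular conditional law to each fixed prefix configuration gives $\mu_{i,g} \preceq R$ for all $i, g$.

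Next I would construct the coupling. Let $U_1, \dots, U_n$ be i.i.d. uniform on $(0,1)$. Define $k_i := F^{-1}_{\mu_{i,g_i}}(U_i)$, where $g_i$ is the configuration with $k_1, \dots, k_{i-1}$ points at $x_1, \dots, x_{i-1}$; by induction $k_i$ is a function of $U_1, \dots, U_i$, and iterating the conditioning shows that $P := \sum_i k_i \delta_{x_i}$ has the prescribed law. Simultaneously set $N_i := F^{-1}_R(U_i)$ using the \emph{same} uniform, and $\Pi := \sum_i N_i \delta_{x_i}$. Since $N_i$ is a function of $U_i$ alone, the $N_i$ are i.i.d. $\mathrm{Poisson}(\ga)$, so $\Pi$ is a Poisson process on $F$ of intensity $\ga$; and because $\mu_{i,g_i} \preceq R$ we have $k_i = F^{-1}_{\mu_{i,g_i}}(U_i) \le F^{-1}_R(U_i) = N_i$ for all $i$, i.e. $P$ is a sub-configuration of $\Pi$ almost surely. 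This is precisely the required stochastic domination.

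There is no deep obstacle here; the only point that needs care — and the one I would flag as the crux — is the transfer of stochastic domination from the conditioning on $P|_{F \smin \{x_i\}}$ to the conditioning on the revealed prefix $P|_{\{x_1, \dots, x_{i-1}\}}$, handled by the tower-property computation above. The rest is the routine verification that the sequential quantile coupling simultaneously reproduces the law of $P$ and keeps the Poisson counts independent, which holds because each $N_i$ depends only on $U_i$ while each $k_i$ depends only on $U_1, \dots, U_i$.
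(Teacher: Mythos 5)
Your proof is correct: the tower-property step is exactly the point that needs care, and it is handled properly — since $\sigma\bigl(P|_{\{x_1,\dots,x_{i-1}\}}\bigr) \subseteq \sigma\bigl(P|_{F \smin \{x_i\}}\bigr)$, the pointwise tail bound survives conditioning on the coarser prefix, and the sequential quantile coupling then simultaneously reproduces the law of $P$, keeps the $N_i$ i.i.d.\ Poisson, and forces $k_i \le N_i$. The paper itself gives no proof, citing Russo's 1982 paper (Lemma~1) and noting that Russo's Bernoulli argument carries over verbatim to the Poisson case; your site-by-site monotone coupling is precisely that standard argument, so this fills in the omitted proof rather than offering a genuinely different route.
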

	
	Results of this form are well-known. For example, Lemma \ref{L:russo-lemma} was shown in \cite{russo1982approximate}, Lemma 1 for Bernoulli random variables and Bernoulli processes in place of Poisson random variables and Poisson processes. Russo's proof works verbatim in the context above, and so we omit it.
	
	We will also use a standard random walk estimate. This estimate will be used throughout the paper so we record it here as a lemma. We leave the proof as an exercise for the reader using Azuma's inequality and basic large deviations.
	
	\begin{lemma}
		\label{L:rw-estimate}
		Let $X:[0, \infty) \to \Z^2$ be a continuous time random walk. Then for all $u \in \Z^2$ and $m > 0$, we have
		$$
		\P(X(t) = u) \le \frac{C}{t} \exp\lf(- \frac{c|u|^2}{|u| + t}\rg), \;\; \P(\max_{0 \le r \le t} |X(t)| \ge m) \le C\exp\lf(- \frac{c m^2}{m + t}\rg).
		$$
	\end{lemma}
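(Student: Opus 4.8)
The plan is to condition on the number of jumps. Let $N(t)$ denote the number of jumps made by $X$ on $[0,t]$, a Poisson variable with mean $\lambda t$ where $\lambda$ is the jump rate; observed at its jump times, $X$ is a discrete-time simple random walk $S_0,S_1,\dots$ on $\Z^2$, and $\max_{0\le r\le t}|X(r)|=\max_{0\le k\le N(t)}|S_k|$. Thus the lemma reduces to two standard discrete ingredients --- exactly the ``Azuma plus large deviations'' in the hint --- namely the coordinatewise maximal Azuma estimate $\P(\max_{k\le n}|S_k^{(i)}|\ge a)\le 2e^{-a^2/2n}$ for $i=1,2$ (each coordinate of $S$ is a martingale with increments in $\{-1,0,1\}$, and the maximal form follows from Doob's inequality applied to the supermartingale $e^{\theta S_k^{(i)}-k\theta^2/2}$, optimized over $\theta$), together with the Poisson tail bound $\P(N(t)>2\max(\lambda t,m))\le Ce^{-c(t+m)}$. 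Alternatively one can skip the conditioning entirely: each coordinate $X^{(i)}(r)$ is a difference of two independent rate-$(\lambda/4)$ Poisson processes, so $M_r:=\exp(\theta X^{(i)}(r)-\tfrac{\lambda r}{2}(\cosh\theta-1))$ is a mean-one martingale, and Doob's maximal inequality plus optimization in $\theta$ gives $\P(\sup_{r\le t}X^{(i)}(r)\ge a)\le C\exp(-ca^2/(a+t))$ directly.

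For the maximal bound in the lemma, I would split on whether $N(t)\le n:=2\max(\lambda t,m)$. On the complement the Poisson tail gives $C e^{-c(t+m)}$, which is at most $Ce^{-cm^2/(m+t)}$ since $m^2/(m+t)\le m\le m+t$. On the event $\{N(t)\le n\}$ I apply the coordinatewise bound with $a=m/2$, union over the two coordinates and two signs (using that $|S_k|\ge m$ forces $|S_k^{(i)}|\ge m/2$ for some $i$), and get $Ce^{-cm^2/n}$; the case check $m^2/n\ge c'm^2/(m+t)$ is immediate --- when $m\le\lambda t$ one has $m^2/n=m^2/(2\lambda t)\asymp m^2/(m+t)$, and when $m>\lambda t$ one has $m^2/n=m/2\asymp m^2/(m+t)$. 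Adding the two pieces gives the second inequality of the lemma.

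For the pointwise bound I would derive it from the maximal bound together with the elementary estimate $\sup_w\P(X(s)=w)\le C/s$. The latter holds because $\sup_w\P(X(s)=w)\le \E\big[\sup_w\P(S_{N(s)}=w\mid N(s))\big]\le\E[C/(N(s)+1)]\le C/s$, where $\sup_w\P(S_n=w)\le C/(n+1)$ is classical --- for instance because in the rotated coordinates $U_k=S_k^{(1)}+S_k^{(2)}$ and $V_k=S_k^{(1)}-S_k^{(2)}$ the processes $U$ and $V$ are independent one-dimensional simple random walks. Now condition at time $t/2$:
$$\P(X(t)=u)=\sum_v\P(X(t/2)=v)\,\P(X(t/2)=u-v),$$
and split the sum according to whether $|v|\ge|u|/2$ (where I bound the second factor by $C/t$ and pull it out) or $|v|<|u|/2$ (where $|u-v|\ge|u|/2$, and I bound the first factor by $C/t$ and pull it out). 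In either piece the remaining sum is at most $\P(|X(t/2)|\ge|u|/2)$, which by the maximal bound is at most $C\exp(-c|u|^2/(|u|+t))$. This yields $\P(X(t)=u)\le (C/t)\exp(-c|u|^2/(|u|+t))$, as required.

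The only step that is not purely mechanical is getting the crossover in the exponents right: for $|u|$ (respectively $m$) of order $t$ or larger the walk is ballistic --- it cannot reach $u$ without making at least $|u|$ jumps --- so the Gaussian tail $\exp(-c|u|^2/t)$ must degrade to the Poisson tail $\exp(-c|u|)$, and the exponent $|u|^2/(|u|+t)$ is precisely the quantity interpolating between these. Splitting the Poisson variable $N(t)$ at $2\max(\lambda t,m)$ rather than at $2\lambda t$ is the one small trick that makes the two regimes line up; with that choice everything else is routine Poisson and random-walk large deviations.
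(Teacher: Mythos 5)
Your proof is correct, and since the paper leaves this lemma as an exercise with only the hint ``Azuma's inequality and basic large deviations,'' your approach is the intended one: the coordinatewise exponential-supermartingale/Doob argument is the Azuma step, the Poisson tail at level $2\max(\lambda t,m)$ is the large-deviations step, and splitting cases at that threshold correctly recovers the interpolating exponent $m^2/(m+t)$. The convolution trick using the on-diagonal bound $\sup_w\P(X(s)=w)\le C/s$ (via $\E[1/(N(s)+1)]\le C/s$ and the rotated-coordinates local estimate) is the standard way to upgrade the maximal inequality to the pointwise off-diagonal bound and closes the argument cleanly.
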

	
	\begin{proof}[Proof of Lemma \ref{L:poisson-domination}] By Lemma \ref{L:russo-lemma}, it is enough to show that for any $t \in [M^2/8, 4M^2], z \in [-3M, 3M]^2$ and any finite point configuration $f$ on ${[-3M, 3M]^2 \smin \{z\}}$, the conditional distribution
		\begin{equation}
		\label{E:Xtt}
		\P(|P(t) \cap \{z\}| \in \cdot \mid  P(t)|_{[-3M, 3M]^2 \smin \{z\}} = f)
		\end{equation}
		is stochastically dominated by a mean $C\de$ Poisson distribution.
		Equation \eqref{E:Xtt} follows from the stronger claim that for any set $P' \sset P$ and any function $f:P' \to [-3M, 3M]^2 \smin \{z\}$, the conditional distribution
		\begin{equation*}
		\label{E:Xttt}
		\P\lf(|P(t) \cap \{z\}| \in \cdot \; \Big| \; \op(t) = f(p) \;\forall p \in P', \op(t) \notin [-3M, 3M]^2 \smin \{z\} \;\forall p \in P \smin P' \rg)
		\end{equation*}
		is stochastically dominated by a mean $C\de$ Poisson distribution. With the above conditioning, we have
		\begin{equation}
		\label{E:Xtcapz}
		|P(t) \cap \{z\}| = \sum_{p \in P \smin P'} \mathbf{1}(\op(t) = z).
		\end{equation}
		This is a sum of independent Bernoulli random variables of mean 
		$$
		\mu_p := \P(\op(t) = z \mid \op(t) \notin [-3M, 3M]^2 \smin \{z\}).
		$$ 
		Now, for all $p \in P, t \ge M^2/8$, we have
		$
		\P(\op(t) \notin [-3M, 3M]^2) \ge c.
		$
		Combining this with the bound in Lemma \ref{L:rw-estimate} gives that 
		$$
		\mu_p \le  CM^{-2} e^{-c|\op(0)|/M}.
		$$
		Also, a Bernoulli random variable of mean $\mu$ is stochastically dominated by a Poisson random variable of mean $-\log(1 - \mu)$. Therefore since the sum of independent Poisson random variables is Poisson, \eqref{E:Xtcapz} is stochastically dominated by a Poisson random variable of mean
		$$
		\sum_{p \in P \smin P'} - \log\lf(1 - CM^{-2} e^{-c|p(0)|/M}\rg).
		$$
		For $M$ sufficiently large, every term in the above sum is bounded above by $2CM^{-2} e^{-c|p(0)|/M}$, and so by \eqref{E:IC-assumption}, after increasing $C$ the whole sum is bounded above by $C \de$.
	\end{proof}
	
	We also record a bound that deals with particles moving an unusually large amount in a small time interval. For this next lemma and in the remainder of the section, we  define the sets
		\begin{align*}
		P' &= \{p \in P : |\op(0)| \le M^2\}, \\
		\qquad P'' &= \{p \in P' : \max_{r \in [0, 4 M^2], s \in [0, 2M]} |\op(s + r) - \op(r)| \le M \},\\
			P''' &= \{p \in P : \min_{0 \le r \le 4 M^2} |\op(r)| \le 4 M \}.
		\end{align*}
	
	\begin{lemma}
		\label{L:side-bry}
		Let $\cH$ be the event  $\{P' \ne P''\} \cup \{P''' \not \sset P'\}$.
		Then \[\P[ \cH] \le C e^{- c M}.\] 
	\end{lemma}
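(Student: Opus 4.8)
The plan is to handle the two events $\{P' \ne P''\}$ and $\{P''' \not\sset P'\}$ making up $\cH$ separately, in each case by a union bound over the particles involved. In both cases the number of relevant particles is controlled by the density hypothesis \eqref{E:IC-assumption}, while the probability that a single fixed particle misbehaves is controlled by the maximal random walk bound in Lemma \ref{L:rw-estimate}; the resulting polynomial-in-$M$ prefactors coming from the union bound are absorbed by the (sub)Gaussian tails once $M$ is large. Since $Ce^{-cM} \ge 1$ for $M$ bounded (after enlarging $C$), we may freely assume throughout that $M$ is as large as needed.

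For $\{P' \ne P''\}$: this event occurs exactly when some particle $p$ with $|\op(0)| \le M^2$ has $|\op(r+s) - \op(r)| > M$ for some $r \in [0, 4M^2]$ and $s \in [0, 2M]$. By \eqref{E:IC-assumption} (with $j = M$) there are at most $\de M^4$ such particles, so it suffices to bound the probability of this event for a single fixed such $p$ by $Ce^{-cM}$. To do so I would cover $[0, 4M^2 + 4M]$ by the $O(M)$ length-$2M$ intervals $J_i = [2Mi, 2M(i+1)]$; every window $[r, r+s]$ with $s \le 2M$ lies inside $J_i \cup J_{i+1}$ for $i = \floor{r/(2M)}$, so $|\op(r+s) - \op(r)| > M$ forces the oscillation of $\op$ on one of these two intervals to exceed $M/2$, and hence the deviation of $\op$ from its value at the left endpoint of that interval to exceed $M/4$. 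By the Markov property at that left endpoint together with Lemma \ref{L:rw-estimate} (applied with $t = 2M$ and $m = M/4$), each such event has probability at most $Ce^{-cM}$; summing over the $O(M)$ intervals and then over the $\le \de M^4$ particles finishes this case.

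For $\{P''' \not\sset P'\}$: this occurs exactly when some particle $p$ with $|\op(0)| > M^2$ satisfies $\min_{0 \le r \le 4M^2} |\op(r)| \le 4M$. I would split such particles into dyadic shells $S_k = \{p : 2^k M^2 < |\op(0)| \le 2^{k+1} M^2\}$ for $k \ge 0$; by \eqref{E:IC-assumption} (with $j = 2^{k+1} M$) we get $|S_k| \le C 4^k M^4$. A particle in $S_k$ can come within distance $4M$ of the origin only if it moves distance at least $2^k M^2 - 4M \ge 2^{k-1} M^2$ within time $4M^2$, which by the maximal bound of Lemma \ref{L:rw-estimate} has probability at most $C\exp(-c 2^k M^2)$ (using $2^{k-1}M^2 + 4M^2 \le 5 \cdot 2^k M^2$). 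Summing over shells,
\[
\P(P''' \not\sset P') \;\le\; \sum_{k \ge 0} C 4^k M^4 \exp(-c 2^k M^2) \;\le\; C M^4 \exp(-c M^2) \;\le\; C e^{-cM}
\]
for $M$ large, the series being dominated by its $k=0$ term. Combining the two estimates gives $\P[\cH] \le Ce^{-cM}$. The only mildly delicate point is the bookkeeping in the first case: the overlapping-window decomposition is what lets a single large increment of $\op$ over an arbitrary sub-window of length $\le 2M$ be reduced to the maximal deviation of the walk over one of $O(M)$ fixed intervals, after which everything is a direct application of \eqref{E:IC-assumption} and Lemma \ref{L:rw-estimate}.
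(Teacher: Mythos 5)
Your proposal is correct and follows essentially the same strategy as the paper: union bound over particles (controlled by \eqref{E:IC-assumption}), a chaining/overlapping-window reduction for $\{P'\ne P''\}$, a shell decomposition for $\{P'''\not\sset P'\}$, and the maximal random-walk estimate from Lemma \ref{L:rw-estimate} in both cases. The only cosmetic differences are that you use dyadic shells where the paper uses linear shells of width $M$, and length-$2M$ overlapping windows where the paper uses length-$4M$ windows at integer multiples of $M$; these are immaterial.
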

	
	\begin{proof}
		Let $P_j = \{p \in P : |\op(0)| \in [jM, j(M+1)]\}$. We then have the union bounds
		\begin{align*}
		\P(P'''  \not\subset P') &\le \sum_{j=M}^\infty \sum_{p \in P_j} \P(\max_{0 \le r \le 4M^2}  |\op(0) - \op(r)| \ge jM/2), \\
		\P( P' \ne P'') &\le \sum_{p \in P'} \sum_{r \in [0, 4M^2] \cap M \Z} \P(\max_{s \in [0, 4M]}  |\op(s + r) - \op(r)| \ge M).
		\end{align*}
		By the right hand sides above can be bounded by $C e^{- c M}$ by Lemma \ref{L:rw-estimate} and \eqref{E:IC-assumption}.
	\end{proof}

	We will want to take a union bound over starting and ending times of active chains so it will be useful to discretize time on a fine mesh of size $e^{-M\nu/100}$.  Let $A$ be the set of times at which a particle in $P'$ takes a step.  Let
	\[
	\cJ=\{\forall 0\leq j\leq 5M^2 e^{M\nu/100}: |A\cap [je^{-M\nu/100},(j+1)e^{-M\nu/100}]| \leq 1\}
	\]
    which says that each interval of time of length $e^{-M\nu/100}$ up to time $5M^2$ has at most one particle step in $P'$.
	
	\begin{lemma}
		\label{l:time.discr}
		We have that,
		\[
		\P[\cJ] \geq 1- C e^{- c M\nu}.
		\]
	\end{lemma}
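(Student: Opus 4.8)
The plan is a straightforward union bound over the (exponentially many) mesh intervals, exploiting that $A$ is a Poisson point process of polynomially bounded intensity.

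First I would bound the number of particles feeding into $A$. Since $P' = \{p \in P : |\op(0)| \le M^2\}$ and the $L^1$-ball of radius $M^2$ is contained in $[-M^2, M^2]^2 = [-jM, jM]^2$ with $j = M$, assumption \eqref{E:IC-assumption} gives $|P'| \le \de M^4$. Each particle in $P$ performs an independent continuous-time simple random walk, whose jump times form a Poisson process of a fixed rate $c_0$; superposing over $p \in P'$ shows that, conditionally on the initial positions $\{\op(0) : p \in P\}$, the set $A$ is a Poisson point process on $[0,\infty)$ of intensity $\la := c_0 |P'| \le c_0 \de M^4$, and all the estimates below are uniform in those positions, so the conditioning is harmless.

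Next I would fix $0 \le j \le 5 M^2 e^{M\nu/100}$ and write $L = e^{-M\nu/100}$ and $I_j = [jL, (j+1)L]$. Then $|A \cap I_j|$ is a Poisson random variable with mean $m := \la L \le c_0 \de M^4 e^{-M\nu/100}$, and the elementary inequality $\P(N \ge 2) = 1 - (1+m) e^{-m} \le m^2/2$ for $N \sim \mathrm{Poisson}(m)$ gives $\P(|A \cap I_j| \ge 2) \le \tfrac12 c_0^2 \de^2 M^8 e^{-M\nu/50}$. A union bound over the at most $6 M^2 e^{M\nu/100}$ such intervals then yields
$$
\P[\cJ^c] \ \le\ 6 M^2 e^{M\nu/100} \cdot \tfrac12 c_0^2 \de^2 M^8 e^{-M\nu/50} \ =\ 3 c_0^2 \de^2\, M^{10}\, e^{-M\nu/100} \ \le\ C M^{10} e^{-M\nu/100},
$$
using $\de < 1$. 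Absorbing the polynomial prefactor into the exponent --- harmless in the regime in which the lemma is applied, recalling that $M$ is taken sufficiently large, cf. Lemma \ref{L:poisson-domination} --- gives $\P[\cJ^c] \le C e^{-cM\nu}$, as claimed.

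I do not expect a genuine obstacle: the mechanism is simply that the $e^{M\nu/100}$ intervals in the union bound are beaten by the per-interval failure probability $e^{-M\nu/50}$, leaving a full factor $e^{-M\nu/100}$ that also swallows the $M^{10}$ coming from $|P'|^2$ times the number of intervals. The only points needing a little care are extracting $|P'| \le \de M^4$ from \eqref{E:IC-assumption} with the correct choice $j = M$, and recalling that, conditioned on the initial positions, the step times of the $P'$-particles genuinely form a Poisson process, so that the clean ``two points in one interval'' estimate applies.
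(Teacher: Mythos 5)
Your proof is correct and takes essentially the same route as the paper: bound $|P'|$ via \eqref{E:IC-assumption}, observe that conditionally on initial positions $A$ is a Poisson process of rate $\lesssim |P'|$, apply the Poisson two-point bound on each mesh interval, and union bound over the $O(M^2 e^{M\nu/100})$ intervals. One small note: the paper's proof asserts $|P'|\le M^2$ (apparently a slip), whereas your $|P'|\le \de M^4$ is what \eqref{E:IC-assumption} with $j=M$ actually gives; both versions leave a polynomial prefactor that is absorbed into the exponential in the same implicit way.
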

	\begin{proof}
	    By~\eqref{E:IC-assumption} there are at most $M^2$ particles in $P'$ and so $A$ is stochastically dominated by a rate $M^2$ Poisson process.  Then
	    \[
	    \P[|A\cap [je^{-M\nu/100},(j+1)e^{-M\nu/100}]| > 1] \leq C (M^2 e^{-M\nu/100})^2
	    \]
	    and the result follows by a union bound over $j$.
	\end{proof}
	
	The next lemma will help relate the event in Proposition \ref{P:death-with-specifics} to active chains.
	
	\begin{lemma}
		\label{L:reduce-to-singles}
		Let $T \le M, t \in [M^2/8, 4M^2 - T]$ and let $	\cE_{t, T}$ be the event where 
		$$
	 \text{there exists an infected particle in } [-M, M]^2 \text{ at some time in } [t + T/2, t+ T].
		$$
		Then $\cE_{t, T} \sset \cI_{t, T} \cup\cH \cup \cD_Q^c$, where $\cH$ is as in Lemma \ref{L:side-bry} and $\cI_{t, T}$ is the event where either
		\begin{enumerate}[label=(\roman*)]
			\item there is an active chain $Z$ on $[t, s]$ for some $s \in [t + T/2, t+T]$ with $Z(r) \in [-2M, 2M]^2$ for all $r \in [t, s]$ and final location $Z(s) \in [-M, M]^2$, which only uses particles with labels in $P''$, or
			\item there is an active chain on $[s', s]$ for some $s' \in [t, t + T], s \in [t + T/2, t + T]$ with $Z(r) \in [-2M, 2M]^2$ for all $r \in [t, s]$, starting location $z \in \del [-2M, 2M]^2$, and final location in $[-M, M]^2$ which only uses particles with labels in $P''$.
		\end{enumerate} 
	\end{lemma}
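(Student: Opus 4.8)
The plan is to trace the infection backwards in time from the offending particle and then truncate the resulting chain so that it both stays in $[-2M, 2M]^2$ and uses only well-behaved particles. Suppose we are on $\cE_{t, T}$, so that some particle $b$ is infected at a time $s \in [t + T/2, t + T]$ with $\bar b(s) \in [-M, M]^2$. Just as in the derivation of $|\mathbf I_t| \le |\sC_{0, 0}(t)|$, the realized infection history of $b$, traced all the way back to a particle infected at time $0$, produces an active chain $Z = (\Pi, Q)$ on an interval $[0, s]$ whose final location is $Z(s) = \bar b(s) \in [-M, M]^2$. From here on we argue purely about active chains.

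Next I would record the elementary fact that restricting an active chain on $[0, s]$ to a subinterval $[u, s]$ --- keeping only the labels active on $[u, s]$ and re-indexing them --- again yields an active chain: the meeting conditions and the non-healing conditions are inherited verbatim (the first non-healing interval only shrinks), the labels stay distinct, and since the discarded labels form an initial segment of the original label sequence, every surviving label of new index $\ge 2$ also had original index $\ge 2$ and is therefore susceptible at time $0$. Now, if $Z(r) \in [-2M, 2M]^2$ for all $r \in [t, s]$, then $Z|_{[t, s]}$ is precisely the chain demanded by alternative (i) of $\cI_{t, T}$. Otherwise $Z$ is outside $[-2M, 2M]^2$ at some time in $[t, s]$; here I would use that $r \mapsto Z(r)$ is a nearest-neighbour path --- it is continuous at the chain times $\pi_i$, where one label jumps onto the other, and between consecutive such times it follows a single random walk --- to let $s'$ be the last time at which $Z$ jumps from outside $[-2M, 2M]^2$ into it. Then $t \le s' \le s$, the landing site $Z(s')$ lies on $\del[-2M, 2M]^2$, and $Z$ stays in $[-2M, 2M]^2$ throughout $[s', s]$, so $Z|_{[s', s]}$ is the chain demanded by alternative (ii).

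For the particle bookkeeping: on $\cD_Q$ no $Q$-particle ever occupies a site of $[-2M, 2M]^2$ during $[0, 4M^2]$, so every label of the truncated chain is a $P$-particle; moreover each such label sits in $[-2M, 2M]^2$ at some time in $[t, s] \sset [M^2/8, 4M^2]$, hence comes within distance $4M$ of the origin during $[0, 4M^2]$, so it lies in $P'''$. On $\cH^c$ we have $P''' \sset P' = P''$, so all these labels lie in $P''$. Thus on $\cE_{t, T} \cap \cD_Q \cap \cH^c$ the event $\cI_{t, T}$ occurs, which is the asserted inclusion $\cE_{t, T} \sset \cI_{t, T} \cup \cH \cup \cD_Q^c$.

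I expect the only genuinely delicate point to be making the truncation rigorous --- checking that the last entry time $s'$ is well defined with $s' \ge t$ and $Z(s') \in \del[-2M, 2M]^2$ for the cadlag chain path (this is where the nearest-neighbour observation is essential, since alternative (ii) insists that the chain start on the boundary), and that restriction genuinely preserves all the active-chain conditions. Everything else, in particular matching the truncated chain's labels to $P''$, is bookkeeping once $\cD_Q$ and $\cH^c$ are in force.
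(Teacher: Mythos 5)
Your proof is correct and takes essentially the same route as the paper: both trace the infection backwards from the offending particle and truncate the resulting chain when it first exits $[-2M,2M]^2$ (going backward) or reaches time $t$, and both use $\cD_Q$ and $\cH^c$ to force the remaining labels into $P''$. The only difference is organisational: the paper performs the truncation on the fly, stopping the backward trace at the first of the three candidate moments for each particle and explicitly invoking finiteness of $P'$ (via Lemma \ref{L:poisson-domination}) to guarantee termination, whereas you first build the full chain back to time $0$ (implicitly leaning on the same backward-tracing fact that underlies $|\mathbf{I}_t| \le |\sC_{0,0}(t)|$) and then apply a clean ``restriction of an active chain is an active chain'' observation, noting the nearest-neighbour structure to justify that the truncation point lands on $\del[-2M,2M]^2$. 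Both handle the same delicate points; your restriction lemma is a slightly more modular way to package the same argument.
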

	
	\begin{proof}
		Suppose that there is an infected particle $p$ with  $\op(s) \in [-M, M]^2$ for some $s \in [t+T/2, t+T]$, and that we are working on the event $\cH^c \cap \cD_Q$. On this event, the only particles in $P \cup Q$ that enter $[-2M, 2M]^2$ in the interval $[t, t + T]$ are particles with labels in $P'$, which equals $P''$, since we are working on $\cH^c$.
		
		We can trace the particle $p$ backwards in time until the moment $s_0$ given by the maximum of 
		\begin{itemize}[nosep]
			\item $t$,
			\item the time when $p$ most recently entered the set $[-2M, 2M]^2$,
			\item the time when $p$ became infected by a particle $q$.
		\end{itemize}
		In the former two cases, the particle $p$ and its trajectory form an active chain on $[s_0, s]$ either with starting location in $\del [-2M, 2M]^2$, implying (ii) above, or with starting location in $[-2M, 2M]^2$ and starting time $t$, implying (i). In the latter case, the infection location was contained in $[-2M, 2M]^2$, so $q \in P'$ as well. Moreover
		$$
		\op(s_0) = \oq(s_0), \qquad \op(s_0^-) \ne \oq(s_0^-).
		$$
		We can then continue tracing the particle $q$ back until the time and location when it either became infected or left the box $[-2M, 2M]^2$, and proceed in this way until we reach time $t$, or a location outside of $[-2M, 2M]^2$. This process terminates a.s. since $P'$ is a.s. finite by Lemma \ref{L:poisson-domination}. Either way, we are left with an active chain that implies the event $\cI_{t, T}$.
	\end{proof}

	\begin{lemma}
		\label{L:F-bound}
		There exist constants $c, C > 0$ such that as long as $C \de < \nu/8 < 1$, for any $T \le M, t \in [M^2/8, 4M - T]$ we have
		$$
		\P [\cI_{t, T} \cap \cJ] \le C e^{- c\nu T},
		$$
		where $\cI_{t, T}$ is as in Lemma \ref{L:reduce-to-singles}.
	\end{lemma}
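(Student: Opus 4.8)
The plan is to bound $\P[\cI_{t,T} \cap \cJ]$ by estimating the expected number of active chains of the two types described in Lemma~\ref{L:reduce-to-singles} and applying Markov's inequality. The key point is that all chains in question stay inside $[-2M,2M]^2$, use only labels in $P''$, and — crucially — $P''$ particles move at most $M$ over any sub-window of length $2M$, so on $\cJ$ the structure of such a chain is controlled. I would first discretize: on $\cJ$, every jump time of a $P'$-particle (hence every ``switch time'' $\pi_i$ of a chain using $P''$ labels) lies in its own mesh interval of length $e^{-M\nu/100}$, and up to time $5M^2$ there are at most $N := 5M^2 e^{M\nu/100}$ such intervals. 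So I can encode an active chain $Z$ on $[s', s]$ by: the starting mesh-interval index for $s'$, the ending index for $s$, the ordered subset of mesh-interval indices at which the switches $\pi_1 < \dots < \pi_{k-1}$ occur, and the label $a_i$ used on each stretch $[\pi_{i-1},\pi_i]$.

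The core estimate is a per-chain probability bound. Fix a candidate combinatorial skeleton with switches at times (discretized to) $\pi_1 < \dots < \pi_{k-1}$ in $[s',s]$. For the skeleton to be realized by an actual active chain: (a) each labeled particle $a_i$ must have no point of its healing process $\oa_i^h$ in $[\pi_{i-1},\pi_i]$, which has probability $e^{-\nu(\pi_i - \pi_{i-1})}$, and these are independent across $i$ since the healing processes are independent; multiplying gives $e^{-\nu(s - s')} \le e^{-\nu T/2}$ (using $s - s' \ge T/2$ for type (ii), and $s - t \ge T/2$ for type (i)). (b) For each $i < k$, particle $a_i$ and $a_{i+1}$ must collide at $\oa_i(\pi_i)$ having been apart just before; and $a_1$ (for type (i)) must start the chain near the relevant location, or (for type (ii)) $a_1$ must be at a specified boundary site at time $s'$. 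These collision/location constraints, together with the factor counting \emph{which} label $a_{i+1}$ joins at step $i$, are exactly the kind of thing handled in the proof of Lemma~\ref{L:Poisson-model-1}: at a collision the conditional number of available $P''$-particles at the target site is stochastically dominated by a Poisson of mean $O(\de)$ (this is Lemma~\ref{L:poisson-domination} applied inside the box, valid since $t \ge M^2/8$ and the box is inside $[-3M,3M]^2$), so the expected ``branching factor'' at each switch is $O(\de)$. Combining, the expected number of chains with a given number $k-1$ of switches is at most (number of ways to place $k-1$ switch-indices among $N$) $\times$ (starting/ending index choices, $\le N^2$) $\times (C\de)^{k-1} \times e^{-\nu T/2}$, i.e. roughly $N^2 \binom{N}{k-1}(C\de)^{k-1} e^{-\nu T/2}$.

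Summing over $k \ge 1$ gives $N^2 e^{-\nu T/2}(1 + C\de)^N \le N^2 e^{-\nu T/2} e^{C \de N}$. Now plug in $N = 5M^2 e^{M\nu/100}$: the dangerous factor is $e^{C\de N} = \exp(5 C \de M^2 e^{M\nu/100})$, which is enormous unless $\de$ is extremely small relative to $M$. This is where I expect the real obstacle to be, and it means the crude ``one factor of $\de$ per switch'' bound is too lossy — a chain can have astronomically many switches. The fix is to not sum freely over all $k$: instead, observe that between consecutive switches of a chain the label is fixed, and on $\cJ$ a fixed $P''$-label particle occupies each mesh-box with probability $O(M^{-2})$ per unit time window... more robustly, one should bound the \emph{number of distinct particles} a chain can use: since all are in $P''$ and confined to $[-2M,2M]^2$ with $|P''| = O(\de M^2)$ with high probability, and — the essential gain — re-using the supermartingale computation of Lemma~\ref{L:Poisson-model-1} directly rather than a static union bound. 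Concretely, I would instead run the argument of Lemma~\ref{L:Poisson-model-1} for the restricted process confined to $[-2M,2M]^2$ (chains killed on exiting the box), giving $\E I_{u,\Z^2}^{\text{box}}(s') \le (\text{initial}) \cdot e^{-\nu(s-s')/2}$ with initial value $O(1)$ for type (i) (one particle at the start) or summed over $O(M^2)$ boundary sites and $O(M^2 e^{M\nu/100})$ starting mesh-times for type (ii), giving a prefactor polynomial in $M$ and $e^{M\nu/100}$, all dominated by the gain $e^{-\nu T/2}$ times... no: here $T \le M$, so $e^{-\nu T/2}$ only beats $e^{M\nu/100}$-type prefactors, not $e^{M^2}$-type ones — so the box restriction plus the martingale bound (which has \emph{no} exponential-in-$M^2$ loss, only the polynomial union bound over $O(M^2 e^{M\nu/100})$ starting times and $O(M^2)$ boundary sites for type (ii)) is exactly what makes $\P[\cI_{t,T}\cap\cJ] \le C e^{-c\nu T}$ come out, after absorbing the $\mathrm{poly}(M) e^{M\nu/100}$ prefactor into the exponent using $\nu T \le \nu M$ — wait, that does not work either since $e^{M\nu/100}$ is not dominated by $e^{-c\nu T}$ for $T$ as small as $3$. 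The resolution must be that the relevant bound is only needed for $T$ in a suitable range, or the mesh parameter $e^{-M\nu/100}$ is chosen so that the prefactor $N = 5M^2 e^{M\nu/100}$ enters only through $\log N = O(M\nu)$, and the claimed bound should really read $C e^{-c\nu T}$ with the understanding (from the proof of Prop.~\ref{P:death-with-specifics}) that it is applied with $T \ge c M$; I would therefore carry out the martingale-style argument in the box, track the prefactor as $\mathrm{poly}(M)\cdot e^{M\nu/100}$, and conclude $\P[\cI_{t,T}\cap \cJ] \le \mathrm{poly}(M) e^{M\nu/100} e^{-\nu T/2}$, which is $\le C e^{-c\nu T}$ whenever $T \ge M/40$, say. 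The main obstacle, then, is organizing the discretization so that every combinatorial choice contributing to a chain is paid for by at most a $\mathrm{poly}(M)e^{O(M\nu)}$ factor while the healing-process independence supplies the decisive $e^{-\nu T/2}$, i.e.\ importing the supermartingale bookkeeping of Lemma~\ref{L:Poisson-model-1} into the spatially-restricted, time-discretized setting.
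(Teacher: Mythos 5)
Your eventual strategy is the paper's: on $\cJ$, discretize the chain endpoints to the $e^{-M\nu/100}$-mesh, take a union bound over starting time, ending time and starting location, restrict to the localized particle set (those in $[-3M,3M]^2$ at the starting time of the chain, which by Lemma~\ref{L:poisson-domination} is Poisson-dominated with intensity $C\de$), and then invoke the supermartingale inequality of Lemma~\ref{L:Poisson-model-1} rather than a raw combinatorial union bound over switch skeletons. You also correctly diagnose why the naive skeleton-counting fails (the $e^{C\de N}$ blow-up), and your worry that the polynomial$\times e^{M\nu/100}$ prefactor only beats $e^{-c\nu T}$ when $T\gtrsim M$ is a fair observation about how tight the estimate is; the paper applies the lemma with $T=M/4$.

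However, there is a genuine gap in your treatment of the type (ii) chains, and it undermines the decay you claim. You write that the healing factors multiply to ``$e^{-\nu(s-s')}\le e^{-\nu T/2}$, using $s-s'\ge T/2$ for type (ii).'' This is false. For type (ii), Lemma~\ref{L:reduce-to-singles} only requires $s'\in[t,t+T]$ and $s\in[t+T/2,t+T]$ with $s'\le s$; both can be near $t+T$, so $s-s'$ can be arbitrarily small. Consequently $\E I_{u,\Z^2}(s)\le \E I_{u,\Z^2}(s')\,e^{-\nu(s-s')/2}$ gives essentially no $T$-dependent decay for type (ii), and the boundary starting points contribute a $\mathrm{poly}(M)\,e^{O(M\nu)}$ prefactor with nothing to beat it down. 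This is exactly why the paper does \emph{not} take $A=\Z^2$ as you do. It uses the spatially weighted count $I_{u,A}$ with $A=[-(M+1),M+1]^2$: the initial value then carries the factor $\exp(-\nu\,d(z,A)/8)\approx e^{-\nu M/8}$ for $z$ within distance $1$ of $\del[-2M,2M]^2$, and since a chain ending in $[-(M+1),M+1]^2$ has weight $1$, this spatial factor survives to the final bound. Because $T\le M$, $e^{-\nu M/8}\le e^{-\nu T/8}$, and this supplies the $T$-decay for type (ii) that the time gap $s-s'$ cannot. So the mechanism you need for type (ii) is the Lyapunov-type spatial weighting in $I_{u,A}$ (already set up in Lemma~\ref{L:Poisson-model-1} for exactly this purpose), not the time-decay, and your proposal as written omits it. (A secondary slip: $\del[-2M,2M]^2$ has $O(M)$ sites, not $O(M^2)$.)
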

	
	\begin{proof}
		First, we may assume that $M$ is large enough so that Lemma \ref{L:poisson-domination} holds, since otherwise we can guarantee the bound by taking $C$ large enough and $c$ small enough.  On the event $\cJ$ the active chain given in the event $\cI_{t, T}$ takes at most one step in an interval of length $e^{-M\nu/100}$ and so by rounding the starting and ending points we we have the union bound
		\begin{equation}
		\label{E:big-union}
		\P [\cI_{t, T}] \le \sum_{\substack{s' \in e^{-M\nu/100}\Z \cap [t, t + T+1], \\s \in e^{-M\nu/100}\Z \cap [t + T/2, t + T + 1], \\
				z: d(z, \del [-2M, 2M]^2)\leq 1}} \P [\cG_{z, s', s}] + \sum_{\substack{s \in e^{-M\nu/100}\Z \cap [t + T/2, t+ T + 1], \\
				z \in [-2M, 2M]^2}} \P [\cG_{z, t, s}],
		\end{equation}
		where $\cG_{z, r, s}$ is the event where there is an active chain using only particles in $P'$ starting at $(z, r)$  and ending at a time $s$ at a location in $[-(M+1), M+1]^2$. Note that there are $O(M^4 e^{M\nu/50})$ terms in \eqref{E:big-union} so to complete the proof we bound $\P [\cG_{z, r, s}]$. %By Lemma \ref{L:discrete-reduction}, we have $\P \cI_{z, r, s} \le C e^{2\nu} (\P \cG_{z, r, s} + e^{-cM})$, so it suffices to prove the same bound on $\P \cG_{z, r, s}$. 
		In fact, we will show that whenever $z \in [-2M, 2M]^2$ and $s - r \le \frac32 M$ we have
		\begin{equation}
		\label{E:cFzrs}
		\P [\cG_{z, r, s}] \le C \exp\lf( - \nu\lf(\frac{d(z, [-(M+1), M+1]^2)}{8} + \frac{s-r}{2}\rg)\rg).
		\end{equation}
		To prove \eqref{E:cFzrs}, first observe that if $|r -s| \le \frac32 M$ then any active chain $Z$ on $[r, s]$ with $Z(r') \in [-2M, 2M]^2$ for all $r'$ that only uses particles in $P''$ only uses particles in the set 
		$$
		P^* = \{p \in P' : \op(r) \in [-3M, 3M]^2\}.
		$$
		Therefore $\cG_{z, r, s} \sset \cG^*_{z, r, s}$, where $\cG^*_{z, r, s}$ is the event where there exists an active chain $Z$ from $(z, r)$ to $(s, y')$ for some $y' \in [-(M+1), (M+1)]^2$ with labels contained in $P^*$.
		
		Now, by Lemma \ref{L:poisson-domination}, the process $P^*$ is stochastically dominated by a Poisson process of intensity $C \de$ and the trajectories from $P^*$-particles on the interval $[r, \infty)$ are independent random walks. 
		After a time shift by $r$, we are then in the setting of Lemma \ref{L:Poisson-model-1}.
		By Markov's inequality and Lemma \ref{L:Poisson-model-1}, we have
		\begin{align}
		\nonumber
		\P [\cG_{z, r, s}] \le \P [\cG^*_{z, r, s}] &\le \E I_{(z, r), [-(M+1), M+1]^2}(s) \le \E I_{(z, r), [-(M+1), M+1]^2}(r) e^{-\nu (s-r)/2}.
		\end{align}
		The inequality \eqref{E:cFzrs} then follows since 
		$$
		I_{(z, r), [-(M+1), M+1]^2}(r)=
		N\exp\lf(d(z, [-(M+1), M+1]^2)/8\rg),
		$$
		where $N = |\{p \in P' : \op(r) = z\}|$ is stochastically dominated by a Poisson random variable of mean $C \de$ and the result follows by the union bound in~\eqref{E:big-union}.
	\end{proof}
	
	Proposition \ref{P:death-with-specifics} now follows by a quick union bound. 
	
	\begin{proof}[Proof of Proposition \ref{P:death-with-specifics}]
		We let 
		$$
		\cB_X^c = \cH \cup \bigcup_{t \in \Z \cap [M^2/8, 4M^2]} \cI_{t, M/4}.
		$$
		On the event $\cD_Q \cap \cB_X$, Lemma \ref{L:reduce-to-singles} implies that there are no infected particles in the time-space box $[M^2/4, 4M^2] \times [-M, M]^2$. The estimate on $\P \cB_X^c$ then follows from a union bound, Lemma \ref{L:side-bry}, and Lemma \ref{L:F-bound}.
	\end{proof}
	
\section{Sidoravicius--Stauffer percolation}
\label{S:SSP}
	
	Proving the existence of a survival regime for the SIR process is much more involved than showing the existence of a recovery regime. To do this, we will couple the SIR model to a competing growth process that we call Sidoravicius--Stauffer percolation (SSP). The original variant of SSP was first studied in~\cite{sidoravicius2019multi}. We adapted the framework from~\cite{sidoravicius2019multi} in~\cite{dauvergne2021spread} in order to understand infection spread in random walks. Many of the results of~\cite{dauvergne2021spread} will be required here, though we do not need the same level of generality used in that paper. In this section, we introduce SSP and gather the necessary results.
	
	Informally, in SSP on $\Z^2$, a set of red vertices grows outwards from the origin according to a set of edge weights, similarly to first passage percolation. Within $\Z^2$, there is a collection of blue seeds $\mathfrak{B}_*$ which cannot be invaded by the red growth process. Whenever the red process attempts to invade a blue seed, a competing blue growth process is activated. Once activated, the blue process will grow outward from a blue seed at a slow, constant speed. The red and blue processes can only invade squares that are not already part of one of the two coloured processes. 
	
	Within this framework, we also allow for squares adjacent to the blue process to become activated at arbitrary times. When these squares are activated, they will turn red and join the red process unless they belong to the set of blue seeds, in which case they will turn blue.
	
	\subsection{Constructing the process}
	
	We will define a pair of coupled growth processes $\fR, \fB:[0, \infty) \to \{S: S \sset \Z^2\}$. The growth of these processes will be governed according to weights on the set of directed edges
	$$
	E := \{(u, v) \in \Z^2 : |u - v|
	= 1 \}.
	$$ 
	We will need the following data to define $\fR, \fB$.
	\begin{itemize}
		\item A constant $\ka > 0$, referred to as the \textbf{parameter} of the process. In \cite{dauvergne2021spread}, we let the parameter $\ka$ be a function from $E \to (0, \infty)$. In the current paper, we are only concerned with the case where $\ka$ is constant.
		\item A collection of blue seeds $\fB_* \sset \Z^2$.
		\item Edge weight functions $X_\fR:E \to [0, 1]$ and $X_\fB:E \to [0, \ka]$. We will think of $X_\fR, X_\fB$ as clocks governing the growth of the red and blue processes. Edge weights $X_\fB(e)= \ka$ will encourage the spread of the blue process, whereas other edge weights will encourage the spread of the red process.
		\item We have allowed for the caveat that clocks may have a value of $0$. This corresponds to instantaneous invasion. To ensure that the process is still well-defined even when instantaneous invasion is allowed, we require that there are no directed cycles $(u_0, u_1, \dots u_n = u_0)$ with
		$$
		X_\fR(u_{i-1}, u_{i}) \wedge X_\fB(u_{i-1}, u_{i}) = 0
		$$
		for all $i = 1, \dots, n$. 
	\end{itemize}
	We define $\fR$ and $\fB$ by recording a time $T(u)$ when each vertex $u$ gets added to the process, and a colour $C(u)$ for that vertex. We initialize the process by setting $T(0) = 0$, and setting $C(0) = \fR$ if $0 \notin \fB_*$ and $C(0) = \fB$ if $0 \in \fB_*$. The rules for assigning the other times and colours are as follows.

	For every edge $(u, v) \in E$, at time $T(u) + X_{C(u)}(u, v)$, the edge $(u, v)$ will ring, and the process will update accordingly. If $T(v) < T(u) + X_{C(u)}(u, v)$, then nothing happens. This corresponds to the case when $v$ has already been added to one of the processes by this time. Otherwise, we set $T(v) = T(u) + X_{C(u)}(u, v)$, and colour $v$ according the following four rules.
	\begin{enumerate}
		\item If $v \in \fB_*$, set $C(v) = \fB$.
		\item If $C(u) = \fR$ and $v \notin \fB_*$, then set $C(v) = \fR$.
		\item If $C(u) = \fB, v \notin \fB_*$, and $X_{\fB}(u, v) < \ka$, then set $C(v) = \fR$.
		\item If $C(u) = \fB, v \notin \fB_*$ and $X_{\fB}(u, v) = \ka$, then set $C(v) = \fB$.
	\end{enumerate}
	We must specify what happens if $T(u) + X_{C(u)}(u, v) = T(u') + X_{C(u')}(u', v)$ for two different vertices $u, u'$. In this case, if the colours assigned to $v$ via the two edges $(u, v)$ and $(u, v')$ are different, we set $C(v) = \fB$. For $t \in [0, \infty)$, we set
	\begin{equation}
	\label{E:BR-def}
	\begin{split}
	\fR(t) &= \{u \in \Z^2 : T(u) \le t, C(u) = \fR\} \quad \text{and} \\
	\quad \fB(t) &= \{u \in \Z^2 : T(u) \le t, C(u) = \fB\}. 
	\end{split}
	\end{equation}
	When working with SSP, we will always assume that for any $t$, only finitely many squares $u$ satisfy $T(u)  < t$.
	Under this \textbf{finite speed} assumption, the colouring process is well-defined. From now on, all processes we consider have finite speed.
	
	In~\cite{dauvergne2021spread}, we showed that as long as $\ka$ is sufficiently large, we can find a well-controlled set $\mathfrak{C}$ depending only on the initial configuration of blue seeds $\fB_*$, such that under certain conditions on $\fB_*$, we have $\fB(\infty) \sset \mathfrak{C}$. 
	
The set $\mathfrak{C}$ is built via a multi-scale construction. 
Let $r_0 = 1 \le r_1 < r_2 < \dots$ be a sequence of integer scales satisfying
\begin{equation}
\label{E:gammabound}
\ga := \prod_{i=0}^\infty \left(1 + \frac{10^{12} r_i}{r_{i+1}} \right) < 2,
\end{equation}
and such that $r_{i+1} = (i+1)^2 r_i$ for all large enough $i$. For closed disks and annuli, we write
$$
D(x, r) = \{y \in \Z^2: |y - x| \le r \}, \qquad A(x, r, R) = \{y \in \Z^2 : r \le |y - x| \le R\}.
$$
Here and throughout the paper, we use the union-of-disks notation
$$
D(A, r) = \{y \in \Z^2: d(y, A) \le r \}= \bigcup_{x \in A} D(x, r).
$$
Now, for a set of blue seeds $\fB_* \sset \Z^2$, define
$$
A_1(\fB_*) := \{x \in \fB_*: \fB_* \cap D(x, r_1/3) = \{x\} \},
$$
We then recursively define $\fB_{*, k} = \fB_* \smin \cup_{i=1}^{k-1} A_i(\fB_*)$, and let 
$$
A_k(\fB_*) := \{x \in \fB_{*, k}: \fB_{*, k} \cap A(x, r_{k-1},  r_k/3) = \emptyset \}.
$$
Finally, for a set $A \sset \Z^2$, we let $[A]$ be the union of $A$ and all bounded components of $A^c$.
\begin{theorem}
	\label{T:BssetI}
	Consider an SSP $(\fR, \fB)$ on $\Z^2$ initiated from a set of blue seeds $\fB_*$ with parameter $\ka > 4000$. Suppose that
	\begin{enumerate}
		\item $\fB_* = \bigcup_{k=1}^\infty A_k(\fB_*)$, and
		\item $0 \notin [\fD]$, where $\fD := \bigcup_{k=1}^\infty D(A_k(\fB_*), r_k/100)$.
	\end{enumerate} 
	Then $\fB(\infty) \sset [\fC]$, where $\fC :=\bigcup_{k=1}^\infty D(A_k(\fB_*), 100 r_{k-1})$.
\end{theorem}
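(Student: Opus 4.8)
The plan is to deduce Theorem~\ref{T:BssetI} from the multi-scale containment machinery for SSP developed in~\cite{dauvergne2021spread}: the statement is essentially the constant-parameter specialization of the main containment theorem there, so in the paper I would quote that theorem and merely check that a constant $\ka > 4000$ together with hypotheses~(1)--(2) fall under its assumptions. Below I describe the argument it imports, as I would reconstruct it from scratch. The governing intuition is a \emph{race}: a blue invasion across an edge costs time $\ka$, so the blue process advances at speed at most $1/\ka \le 1/4000$, whereas the red weights lie in $[0,1]$, so in obstacle-free space the red process advances at speed at least $1$. Hence, whenever red reaches an isolated cluster of blue seeds and triggers blue growth there, red has time to wrap a closed contour around the cluster before the slow blue growth reaches the contour; since blue can never recolour a red square, the blue descendants of that cluster are then trapped forever in a disk comparable to the cluster's size. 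The scales $r_k$ and the bound $\ga < 2$ in~\eqref{E:gammabound} are calibrated precisely so that the \emph{cumulative} delay the red process suffers from detouring around all the smaller, already-trapped blue clusters slows it by a factor at most $\ga < 2$, so the race is still won at every scale.

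Step 1 (cluster geometry). First I would record the combinatorics of the sets $A_k(\fB_*)$. Since $\ga < 2$ forces $r_{i+1} > 10^{12} r_i$ for every $i$, one checks that for $x \in A_k(\fB_*)$ the connected component of $x$ in the graph on $\fB_{*,k}$ joining points at distance $\le r_{k-1}$ is contained in $D(x, r_{k-1})$, while $\fB_{*,k} \cap A(x, r_{k-1}, r_k/3) = \emptyset$; hence every piece of scale-$\ge k$ seed material other than $x$'s own cluster lies at distance $> r_k/3$ from $x$. In particular the annulus $A(x, 3 r_{k-1}, 10 r_{k-1})$, which is well inside radius $r_k/3$, contains no seeds of scale $\ge k$, and the scale-$<k$ clusters it does contain are, at each scale $j < k$, mutually $> r_j/3$-separated. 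Hypothesis~(1) guarantees that every seed is assigned to some finite scale, and hypothesis~(2), $0 \notin [\fD]$, will guarantee that the red process is not already trapped at the origin.

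Step 2 (multi-scale induction and encirclement). For $k \ge 0$ set $\fC_k := \bigcup_{j \le k} D(A_j(\fB_*), 100 r_{j-1})$ and $G_k := \Z^2 \smin [\fC_k]$. I would prove by induction on $k$: (a) $\fB(\infty) \sset \Z^2 \smin G_k$, and (b) viewed as a first-passage process on $G_k$ with the original red weights and the deleted squares as obstacles, the red process reaches each $z \in G_k$ within a factor $\ga$ of the obstacle-free passage time, in particular before blue can interfere there. The base case is the obstacle-free red process from $0 \notin [\fD]$. For the step, given~(a)--(b) at level $k-1$: take a scale-$k$ cluster with representative $x$; by~(b) red reaches $D(x, r_{k-1})$ at a time $t_0$ controlled by $|x|$, with all scale-$<k$ blue already confined by~(a), and red's touching a seed triggers blue there. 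Since blue moves at speed $\le 1/\ka$, the blue descendants of the scale-$k$ cluster stay inside $D(x, r_{k-1} + s/\ka) \sset D(x, 2 r_{k-1})$ for all $s \le C r_{k-1}$. Meanwhile red spreads through $A(x, 3 r_{k-1}, 10 r_{k-1})$, which by Step~1 is free of scale-$\ge k$ seeds; a contour-counting estimate (a loop of length $\Theta(r_{k-1})$ meets $O(r_{k-1}/r_j)$ scale-$j$ obstacles, each forcing an $O(r_{j-1})$ detour) shows red closes a loop around $D(x, 2 r_{k-1})$ inside the annulus within time $\le \ga C r_{k-1}$. As $\ka > 4000$, once $C$ is fixed we have $\ga C r_{k-1}/\ka \ll r_{k-1}$, so blue never reaches the loop and is sealed inside $D(x, 10 r_{k-1}) \sset D(x, 100 r_{k-1})$ forever; this gives~(a) at level $k$. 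Deleting the new disks $D(A_k(\fB_*), 100 r_{k-1})$ and rerunning the detour estimate (the scale-$k$ obstacles now contributing the factor $1 + 10^{12} r_{k-1}/r_k$) upgrades~(b) to level $k$. Letting $k \to \infty$ and using hypothesis~(1), every blue square is sealed at some level, so $\fB(\infty) \sset \bigcup_k D(A_k(\fB_*), 100 r_{k-1})$, and since blue squares lying in bounded holes of this union are also captured by the sealing contours, $\fB(\infty) \sset [\fC]$.

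Step 3 (the main obstacle). The delicate point is statement~(b): the blue obstacles are not present at time $0$ but are \emph{created dynamically} as red arrives, so one cannot simply invoke a static first-passage-with-obstacles lemma. The resolution is that~(a) at lower levels confines each smaller cluster's blue to a fixed disk \emph{irrespective of when it is triggered}, so red may be routed around those disks from the start; one then needs a quantitative first-passage estimate showing that deleting disks of radius $O(r_{j-1})$ that are $\Omega(r_j)$-separated multiplies passage times by at most $1 + 10^{12} r_{j-1}/r_j$, and that these factors compose over all scales into the single bound $\ga < 2$ of~\eqref{E:gammabound}. Carrying this out — and reconciling the nominally arbitrary constants ($4000$, $100$, $10^{12}$, the $C$'s) so that each scale nests inside the next — is the technical heart of the matter, and is precisely what is imported from~\cite{dauvergne2021spread}; accordingly, in the paper the cleanest route is to reduce to that paper's containment theorem and verify only that a constant $\ka > 4000$ and hypotheses~(1)--(2) constitute a special case of its assumptions.
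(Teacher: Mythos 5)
Your proposal takes the same route as the paper: the paper's entire proof is the one-line observation that Theorem~\ref{T:BssetI} is the constant-parameter, $V=\Z^2$ special case of Theorem~2.1 in~\cite{dauvergne2021spread}, and you correctly identify that reduction as the cleanest path. The multi-scale encirclement sketch you supply is a fair reconstruction of the argument that lives in that reference, but it is not reproduced in the present paper.
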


Theorem \ref{T:BssetI} is a special case of Theorem 2.1 from~\cite{dauvergne2021spread} when $V = \Z^2$ (see Remark 2.2 from that paper). 

In our applications of Theorem \ref{T:BssetI}, the blue seed set $\fB_*$ will always be stochastically dominated by a low-density Bernoulli process. The next series of lemmas from~\cite{dauvergne2021spread} bound the behaviour of $\fC, \fD$ in this setting. We start with a simple monotonicity lemma.

\begin{lemma}[Lemma 2.11, \cite{dauvergne2021spread}, special case when $V = \Z^2$]
	\label{L:dom-seeds}
	Let $\fB_*' \sset \fB_* \sset \Z^2$ be two collections of blue seeds. Then for every $k \ge 1$, we have $\fB_{*, k}' \sset \fB_{*, k}$. Moreover, if the set $\fB_*$ satisfies the assumptions
	of Theorem \ref{T:BssetI}, then $\fB_*'$ also satisfies these assumptions, and with $\fC, \fD, \fC', \fD'$ as in that theorem grown from the sets $\fB_*, \fB_*'$, we have
	\begin{equation}
	\label{E:IbIB}
	\fC' \sset \fC \qquad \text{ and } \qquad \fD' \sset \fD.
	\end{equation}
\end{lemma}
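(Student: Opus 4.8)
The plan is to derive everything from two combinatorial facts about how the classification $\fB_* \mapsto \{A_k(\fB_*)\}$ behaves when the seed set shrinks, together with the standard monotonicity of the filling operation $[\,\cdot\,]$. Note that the statement involves only the sets $\fB_{*,k}, A_k, \fC, \fD$, not the growth processes $\fR,\fB$ themselves, so the whole argument is set-theoretic.

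First I would prove $\fB_{*,k}' \subset \fB_{*,k}$ by induction on $k$, with trivial base case $\fB_{*,1}' = \fB_*' \subset \fB_* = \fB_{*,1}$. The engine is the observation that, assuming $\fB_{*,k}' \subset \fB_{*,k}$, if $x \in \fB_{*,k}'$ and $x \in A_k(\fB_*)$ then $x \in A_k(\fB_*')$: indeed, $x \in A_k(\fB_*)$ forces $\fB_{*,k} \cap A(x, r_{k-1}, r_k/3) = \emptyset$, hence $\fB_{*,k}' \cap A(x, r_{k-1}, r_k/3) = \emptyset$ by the inductive hypothesis, and $x \in \fB_{*,k}'$ by assumption. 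Now take $x \in \fB_{*,k+1}' = \fB_{*,k}' \smin A_k(\fB_*')$. Then $x \in \fB_{*,k}' \subset \fB_{*,k}$, and $x \notin A_k(\fB_*)$, since otherwise the observation would give $x \in A_k(\fB_*')$, contradicting $x \notin A_k(\fB_*')$. Hence $x \in \fB_{*,k} \smin A_k(\fB_*) = \fB_{*,k+1}$, completing the induction.

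The second fact is that reclassification can only move a seed to a \emph{larger} scale: every $x \in A_k(\fB_*')$ lies in $A_j(\fB_*)$ for some $j \ge k$. Indeed $x \in \fB_{*,k}' \subset \fB_{*,k} \subset \fB_*$, so by assumption (1) of Theorem \ref{T:BssetI} applied to $\fB_*$ there is some $j$ with $x \in A_j(\fB_*)$; and $x \in \fB_{*,k}$ excludes $j < k$. Since $(r_i)$ is non-decreasing, $j \ge k$ gives the disk inclusions $D(x, r_k/100) \subset D(x, r_j/100) \subset D(A_j(\fB_*), r_j/100) \subset \fD$ and $D(x, 100 r_{k-1}) \subset D(x, 100 r_{j-1}) \subset D(A_j(\fB_*), 100 r_{j-1}) \subset \fC$. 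Taking the union over $x \in A_k(\fB_*')$ and over $k$ yields $\fD' \subset \fD$ and $\fC' \subset \fC$, which is \eqref{E:IbIB}. For the claim that $\fB_*'$ satisfies the hypotheses of Theorem \ref{T:BssetI}: hypothesis (1) holds because each $x \in \fB_*'$ is removed at some stage — otherwise $x \in \fB_{*,k}'$ for all $k$, but picking $j$ with $x \in A_j(\fB_*)$ (which exists since $x \in \fB_*$), the facts $x \in \fB_{*,j}'$ and $x \notin A_j(\fB_*')$ would force $\fB_{*,j}' \cap A(x, r_{j-1}, r_j/3) \ne \emptyset$, contradicting $\fB_{*,j} \cap A(x, r_{j-1}, r_j/3) = \emptyset$ together with $\fB_{*,j}' \subset \fB_{*,j}$. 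Hypothesis (2) then follows from $\fD' \subset \fD$ once one notes that $A \subset B$ implies $[A] \subset [B]$: a point of a bounded component $U$ of $A^c$ lying in $B^c$ has its $B^c$-component contained in $U$ (a connected subset of $A^c$ meeting $U$), hence bounded, so it lies in $[B]$; thus $[\fD'] \subset [\fD]$ and $0 \notin [\fD]$ gives $0 \notin [\fD']$.

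The main obstacle to anticipate is precisely the second fact: one cannot argue $\fC' \subset \fC$ and $\fD' \subset \fD$ scale-by-scale, because deleting seeds can strictly enlarge the radius of empty space around a retained seed and thereby promote it to a larger scale, so $A_k(\fB_*') \not\subset A_k(\fB_*)$ in general. Everything hinges on first establishing the monotonicity $\fB_{*,k}' \subset \fB_{*,k}$, which ensures such promotions only increase the index and hence increase the radii $r_k/100$ and $100 r_{k-1}$ appearing in $\fD$ and $\fC$ in the direction that keeps the inclusions valid.
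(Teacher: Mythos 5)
Your proof is correct, and the argument is essentially forced: establish $\fB_{*,k}'\subset\fB_{*,k}$ by induction, deduce that each $x\in A_k(\fB_*')$ satisfies $x\in A_j(\fB_*)$ for some $j\ge k$, and use the monotonicity of the radii $r_k$ and of the filling operation $[\,\cdot\,]$. The paper itself does not reprove this lemma (it imports it verbatim from \cite{dauvergne2021spread}), so there is no in-paper argument to compare against, but your reasoning handles every step correctly, including the two points most likely to be fumbled: that the disk condition defining $A_1$ is a special case of the annulus condition (so the induction is uniform in $k$), and that set-monotonicity of $A\mapsto[A]$ needs the component-containment argument you give rather than being immediate.

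One small phrasing quibble in your closing remark: deleting seeds enlarges the empty annulus around a surviving seed and therefore lets it qualify \emph{earlier} in the recursion, i.e.\ it gets \emph{demoted} to a smaller index $k\le j$, not promoted. This is exactly what your second fact records ($j\ge k$, so the radius $r_j$ used in $\fD$ dominates $r_k$ used in $\fD'$), so the substance of the argument is right; only the word "promote" points in the wrong direction.
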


	Note that the monotonicity in Lemma \ref{L:dom-seeds} does not necessarily hold at the level of the blue processes $\fB, \fB'$ generated from $\fB_*, \fB'_*$.
	
	\begin{lemma} [Lemma 2.12, \cite{dauvergne2021spread}]
		\label{L:Ak-estimate} 
		Consider any collection of blue seeds $\fB_{*}$. 
		Then for any set $X \sset \Z^2$, the set $X \cap \fB_{*, k}$ is a function of $\fB \cap D(X, r_{k-1}/2)$. 
		
		Moreover, suppose that $\fB_{*}$ is stochastically dominated by an i.i.d.\ Bernoulli process with mean $p> 0$.  Then for any $x \in \Z^2$,
		$$
		\mathbb P(x \in \fB_{*, k}) \le (C p)^{2^{k-1}}.
		$$
	\end{lemma}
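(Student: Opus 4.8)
The plan is to prove the two assertions in turn, since they are of a somewhat different character. For the measurability claim, I would unwind the recursive definition of $\fB_{*,k}$ and $A_k(\fB_*)$. Recall $A_1(\fB_*) = \{x \in \fB_* : \fB_* \cap D(x, r_1/3) = \{x\}\}$, which is manifestly a function of $\fB_* \cap D(X, r_1/3)$ restricted to $X$ — more precisely, $X \cap A_1(\fB_*)$ depends only on $\fB_* \cap D(X, r_1/3)$. Then $\fB_{*,k} = \fB_* \setminus \bigcup_{i=1}^{k-1} A_i(\fB_*)$ and $A_k(\fB_*) = \{x \in \fB_{*,k} : \fB_{*,k} \cap A(x, r_{k-1}, r_k/3) = \emptyset\}$, so membership of a point $x$ in $\fB_{*,k}$ is determined by looking in a ball of radius $r_{k-1}/3 + \dots$ around $x$. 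The key point is that the radii $r_{k-1}$ dominate the sum $\sum_{i<k} r_i$ by \eqref{E:gammabound} (indeed $\sum_{i<k} r_i \le r_{k-1}\sum_i 10^{12}r_i/r_{i+1} \cdot (\text{something}) $ is small relative to $r_{k-1}$), so an induction on $k$ shows that $X \cap \fB_{*,k}$ is a function of $\fB_* \cap D(X, r_{k-1}/2)$, with $r_{k-1}/2$ comfortably absorbing the nested neighbourhoods. I would set this up as a clean induction: assume the claim for all $j < k$ with the stated radius, observe that deciding $x \in \fB_{*,k}$ requires knowing $\fB_*$ and $A_1(\fB_*), \dots, A_{k-1}(\fB_*)$ inside $A(x, r_{k-1}, r_k/3) \cup \{x\}$, hence inside $D(x, r_k/3)$; applying the inductive hypothesis to the singleton sets in this annulus and using $r_{j-1}/2 + r_k/3 \le $ (something under $r_{k-1}/2$ after the scale nesting)... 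Actually the cleanest route is to note $X \cap \fB_{*,k}$ depends on $\fB_* \cap D(X, \sum_{i=1}^{k-1} r_i/3 + r_{k-1}/3)$ and then bound this radius by $r_{k-1}/2$ using \eqref{E:gammabound}, which gives $r_i/r_{i-1} \to \infty$ fast enough.

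For the probability bound, the strategy is a recursive/percolation-type estimate exploiting the sparsity forced by the definition of $A_k$. The event $\{x \in \fB_{*,k}\}$ requires $x \in \fB_{*,k} \subset \fB_*$ and $\fB_{*,k} \cap A(x, r_{k-1}, r_k/3) = \emptyset$. The real content is a lower bound on how many nearby blue seeds must be present: I claim $\{x \in \fB_{*,k}\}$ forces the existence of at least $2^{k-1}$ points of $\fB_*$ in $D(x, r_k)$, which combined with the i.i.d.\ Bernoulli($p$) domination and a union bound over the $O(r_k^2)$ choices of locations gives $\P(x \in \fB_{*,k}) \le (Cr_k^2)^{2^{k-1}} p^{2^{k-1}} \le (C'p)^{2^{k-1}}$ after absorbing the polynomial factor into the constant (using that $r_k$ grows only polynomially, $r_k = r_{k-1}(k)^2$ eventually, so $r_k^2$ is at most $\exp(\text{poly}(k))$ while $2^{k-1}$ dominates — one has to check the constants work, but this is the kind of thing that goes through). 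To see the $2^{k-1}$ lower bound: if $x \in \fB_{*,k}$ then $x \notin \bigcup_{i<k} A_i(\fB_*)$, so in particular $x \notin A_{k-1}(\fB_*)$; but $x \in \fB_{*,k} \subset \fB_{*,k-1}$, so the only way $x$ can fail to be in $A_{k-1}(\fB_*)$ is that $\fB_{*,k-1} \cap A(x, r_{k-2}, r_{k-1}/3) \ne \emptyset$, producing a point $x' \in \fB_{*,k-1}$ with $|x - x'| \le r_{k-1}/3$. Now recurse on both $x$ and $x'$: each lies in $\fB_{*,k-1}$ and fails to be in $A_{k-2}$ (we need to be slightly careful — $x$ is not in $A_{k-2}$ since $x \in \fB_{*,k} \subset \fB_{*,k-1}$ means $x \notin A_{k-2}$; same for $x'$), so each spawns a further nearby point of $\fB_{*,k-2}$, and these can be taken distinct since the annuli have inner radius $r_{k-2} \gg$ the displacement at the previous step. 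Iterating down to scale $1$ yields a binary tree of depth $k-1$ whose leaves are distinct points of $\fB_*$, hence $2^{k-1}$ of them, all within distance $\sum_{i} r_i/3 \le r_k$ of $x$ by \eqref{E:gammabound}.

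I expect the main obstacle to be bookkeeping the distinctness of the $2^{k-1}$ points and verifying that the accumulated displacements stay within the prescribed radii — i.e.\ making precise that when we split a point at scale $j$ into two points at scale $j-1$, the new point lands in the annulus $A(\cdot, r_{j-1}, r_j/3)$ which has inner radius strictly larger than all displacements incurred at lower scales, so the two subtrees rooted at a given node occupy disjoint regions. This is exactly where the rapid growth $r_{j}/r_{j-1} \to \infty$ enforced by \eqref{E:gammabound} is used, and it is the same mechanism as in Lemma~2.12 of~\cite{dauvergne2021spread}; since the statement here is quoted verbatim from that reference, I would in fact just cite it, but the sketch above is how one reconstructs the argument. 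The union bound at the end is routine: $\binom{O(r_k^2)}{2^{k-1}} p^{2^{k-1}} \le (O(r_k^2) p)^{2^{k-1}}$ and $r_k^2 = e^{O(k^2)}$ while there is a factor $2^{k-1}$ in the exponent, so for the bound $(Cp)^{2^{k-1}}$ to hold for \emph{all} $k$ one absorbs $r_k^2$ by enlarging $C$ — here one uses that $2^{k-1}$ grows, wait, this needs $r_k^{2/2^{k-1}} = e^{O(k^2)/2^{k-1}}$ bounded, which holds. So after choosing $C$ large enough the estimate is uniform in $k$.
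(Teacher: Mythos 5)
The statement you're proving is cited verbatim from \cite{dauvergne2021spread} and the paper gives no proof of its own, so the comparison is really to the presumed argument in the reference. You correctly identify the two core mechanisms: for measurability, an induction on $k$ using the rapid growth of the scales $r_j$ to absorb the nested annulus radii into $r_{k-1}/2$; and for the probability bound, a binary-tree construction showing that $x\in\fB_{*,k}$ forces $2^{k-1}$ distinct blue seeds nearby. The distinctness argument via the annulus inner radii is the right idea.

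However, the final union bound has a genuine gap. You estimate
$\P(x\in\fB_{*,k}) \le \binom{O(r_k^2)}{2^{k-1}}p^{2^{k-1}}$
by summing over all $2^{k-1}$-element subsets of $D(x,r_k)$, and then claim this is $(Cp)^{2^{k-1}}$ because ``$r_k^{2/2^{k-1}}$ is bounded.'' That is not the relevant quantity: $\binom{n}{m}p^m \le (en/m)^m p^m$, so you need $r_k^2/2^{k-1}$ bounded, and with $r_{j+1}=(j+1)^2 r_j$ eventually one has $r_k\sim (k!)^2$, making $r_k^2/2^{k-1}\to\infty$ rapidly. The flat union bound over all of $D(x,r_k)$ is therefore too lossy. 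The fix is to exploit the tree structure in the union bound itself: a child of a node at scale $j$ is constrained to an annulus of area $O(r_j^2)$ about its parent (not all of $D(x,r_k)$), and scale $r_j$ contributes $2^{k-1-j}$ new children, giving
$\P(x\in\fB_{*,k}) \le p\prod_{j=1}^{k-1}\bigl(Cr_j^2\bigr)^{2^{k-1-j}} p^{2^{k-1}-1}$.
Taking logarithms, $\sum_{j=1}^{k-1}2^{k-1-j}\log(Cr_j^2) = 2^{k-1}\sum_j 2^{-j}\log(Cr_j^2)$, and since $\log r_j = O(j\log j)$ the series $\sum_j 2^{-j}\log r_j$ converges, yielding a bound of the form $(C'p)^{2^{k-1}}$ with $C'$ independent of $k$. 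This hierarchical bookkeeping is the essential point your sketch misses; everything else in the plan is sound, and in practice citing \cite{dauvergne2021spread} as the paper does is appropriate.
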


Lemma \ref{L:Ak-estimate} naturally yields bounds on the engulfing sets $\fC, \fD$, given by the next lemma. To state this lemma, for $k \in \N$ we also define
\begin{equation*}
\fD_k := \bigcup_{i=1}^k D(A_i(\fB_*), r_i/100) \qquad \text{ and } \qquad \fC_k := \bigcup_{i=1}^k D(A_i(\fB_*), 100 r_{i-1}).
\end{equation*}
	\begin{lemma}[Lemma 2.13, \cite{dauvergne2021spread}]
		\label{L:largest-scale} Assume that $\fB_{*}$ is stochastically dominated by an i.i.d.\ Bernoulli process with parameter $p> 0$.
		Fix a ball $D(x, r)$ for some $r \in \N$, and let $M(x, r)$ be the diameter of the largest component of $[\fD]$ that intersects $D(x, r)$. Then
		\begin{equation}
		\label{E:Mrrk}
		\P\lf( M(x, r) > r_k \rg) \le r^2 (Cp)^{2^{k-1}} \qquad \text{ and } \qquad \P\lf( M(x, r) > 0 \rg) \le C r^2 p.
		\end{equation}
		In particular, for every $n_0 \in \N$ with $n_0 \ge 3$ we have
		\begin{equation}
		\label{E:Mn-BC}
		\P\lf( M(x, n) \ge n/2 \text{ for some } n \ge n_0\rg) \le  \exp\lf(\log(Cp) \exp (\log n/(C\log \log n))\rg).
		\end{equation}
		Moreover, for all $k \ge 1$,
		\begin{equation}
		\label{E:fDk-estimate}
		\P([\fD] \cap D(x, r) \ne [\fD_k] \cap D(x, r)) \le  r^2 (Cp)^{2^{k-1}},
		\end{equation}
		and for all sufficiently small $p$, we have
		\begin{equation}
		\label{E:PDxr}
		\P\lf(|[\fD] \cap D(x, r)| \ge \frac{1}{3}|D(x, r)| \rg) \le \exp\lf(\log(Cp) \exp (\log r/(C\log \log r))\rg).
		\end{equation}
		The same bounds hold with $\fC, \fC_k$ in place of $\fD, \fD_k$.
	\end{lemma}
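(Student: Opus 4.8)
The plan is to deduce all of \eqref{E:Mrrk}, \eqref{E:Mn-BC}, \eqref{E:fDk-estimate} and \eqref{E:PDxr} from two inputs. The first is the doubly-exponential seed bound $\P(x \in \fB_{*,k}) \le (Cp)^{2^{k-1}}$ from Lemma \ref{L:Ak-estimate}, which applies under the stated stochastic domination since $A_k(\fB_*) \subseteq \fB_{*,k}$. The second is the geometric fact, established in the course of proving Theorem \ref{T:BssetI} in \cite{dauvergne2021spread} — this is where the constant $\ga < 2$ from \eqref{E:gammabound} enters — that each connected component of the truncated set $[\fD_k]$ is contained in a ball of radius $C r_k$, and hence has filled-in area $O(r_k^2)$; the same holds for $[\fC_k]$. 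The key consequence of the second input is a deterministic implication: if a component of $[\fD]$ meeting $D(x,r)$ has diameter larger than $C r_j$ then its maximal scale must exceed $j$ (otherwise it would coincide with a component of $[\fD_j]$), so it contains a seed $y \in A_{j'}(\fB_*)$ with $j' > j$; being a component of $[\fD_{j'}]$ it lies in $D(y, C r_{j'})$, and since it meets $D(x,r)$ we get $|y-x| \le r + C r_{j'}$.

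Granting this, \eqref{E:Mrrk} and \eqref{E:fDk-estimate} are union bounds. Applying the implication above with $j = k-1$, the event $\{M(x,r) > r_k\}$ forces a seed $y \in A_{j}(\fB_*)$ for some $j \ge k$ with $|y-x| \le C(r + r_j)$; summing $C(r+r_j)^2 (Cp)^{2^{j-1}}$ over $j \ge k$ gives a rapidly converging series — $r_j$ grows only factorially in $j$ while $(Cp)^{2^{j-1}}$ decays doubly exponentially — dominated by its first term, which yields \eqref{E:Mrrk}; the bound $\P(M(x,r)>0) \le C r^2 p$ is the case in which only scale-$1$ disks matter. For \eqref{E:fDk-estimate} one runs the analogous sum over scales $j > k$, after checking that a point of $D(x,r)$ lying in $[\fD] \smin [\fD_k]$ — whether it sits in a scale-$j$ disk directly or in a bounded complementary component newly ``capped off'' by scale-$(>k)$ disks — forces, again by the component-diameter bound, a scale-$(>k)$ seed within distance $C(r+r_j)$ of $x$.

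The almost-sure estimate \eqref{E:Mn-BC} then follows by summing \eqref{E:Mrrk} over scales. For each $n$ pick $k_n$ with $r_{k_n}$ of order $n$; since $r_k$ is of order $(k!)^2$ one has $k_n$ of order $\log n/\log\log n$, so $2^{k_n}$ is of order $\exp(c\log n/\log\log n)$. The event $\{M(x,n) \ge n/2\}$ is contained in $\{M(x,n) > r_{k_n - C'}\}$ for an absolute constant $C'$, which by \eqref{E:Mrrk} has probability at most $n^2 (Cp)^{2^{k_n - C'}} = n^2 \exp(\log(Cp)\exp(c\log n/\log\log n))$; summing over $n \ge n_0$ the polynomial factor is absorbed and one is left with the stated bound.

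The estimate \eqref{E:PDxr} is the most delicate. I would first invoke \eqref{E:fDk-estimate} with $k$ chosen so that $r_k$ is of order $r$, after which it suffices to bound $\P(|[\fD_k]\cap D(x, 2r)| \ge \tfrac{1}{3}|D(x,r)|)$. By the component-area bound, every component of $[\fD_k]$ meeting $D(x,2r)$ with maximal scale $i$ contributes at most $C r_i^2$ to this volume, and the number of such components is bounded by the number of scale-$i$ seeds within distance $C(r+r_i)$ of $x$, a quantity stochastically dominated by a Poisson variable of mean $C(r+r_i)^2(Cp)^{2^{i-1}}$. The resulting expected covered volume is only $O(p\, r^2)$, far below $\tfrac{1}{3}|D(x,r)|$ for small $p$, so the event in \eqref{E:PDxr} forces either a scale-$i$ seed with $r_i$ of order at least $r$ near $D(x,r)$ (rare, by the union bound of the previous paragraph with $i$ of order $\log r/\log\log r$) or an atypically large number of seeds at some scale $i$ with $r_i$ much smaller than $r$ (rare, by a Poisson large-deviation estimate for the seed counts); optimizing the trade-off over $i$ produces the claimed double-exponential bound, and the case of $\fC, \fC_k$ is identical. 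I expect the main obstacle to be the second input — making the ``every component of $[\fD_k]$ lies in a ball of radius $C r_k$'' statement precise, which is a genuine multiscale geometric argument and is where $\ga$ is used — together with setting up the large-deviation step for \eqref{E:PDxr}; once those are in hand the remaining steps are union bounds and summations of rapidly converging series.
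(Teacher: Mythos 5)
This lemma is quoted in the paper directly from \cite{dauvergne2021spread}; the paper provides no proof of its own, so there is nothing to compare against within this document, and your proposal must be assessed on its own terms.

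Your overall strategy — reduce everything to (a) the doubly-exponential bound $\P(x\in\fB_{*,k})\le (Cp)^{2^{k-1}}$ from Lemma~\ref{L:Ak-estimate}, and (b) a deterministic geometric containment for components of $[\fD_k]$, and then run union bounds over scales — is the natural one and is almost certainly how the reference proceeds. The deterministic implication (a component meeting $D(x,r)$ with diameter $> Cr_j$ contains a seed of scale $>j$, hence lies in a disk $D(y, Cr_{j'})$ around that seed) is correct, and the summability over $j$ of $(r+r_j)^2(Cp)^{2^{j-1}}$ is indeed dominated by its leading term because $r_j$ grows only like $(j!)^2$ while $(Cp)^{2^{j-1}}$ decays doubly exponentially. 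The deduction of \eqref{E:Mn-BC} from \eqref{E:Mrrk} and of \eqref{E:fDk-estimate} via the same scale decomposition are both sound.

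There are, however, two genuine gaps. First, you invoke as a black box the claim that every component of $[\fD_k]$ is contained in a ball of radius $Cr_k$; you correctly identify this as the crux and the place where $\ga<2$ from \eqref{E:gammabound} enters, but you do not prove it, and the statement is \emph{not} an obvious consequence of the moat condition $\fB_{*,k}\cap A(x,r_{k-1},r_k/3)=\emptyset$ alone, because scale-$k$ moats can contain arbitrarily many seeds of scales $<k$ and one must rule out chains of low-scale clusters bridging the moat — this is a nontrivial inductive argument. Second, in the treatment of \eqref{E:PDxr} you assert that the number of scale-$i$ seeds in a disk is \emph{stochastically dominated by a Poisson variable} of mean $C(r+r_i)^2(Cp)^{2^{i-1}}$. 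This is unjustified and I don't believe it is true: the indicators $\mathbf{1}(y\in A_i(\fB_*))$ are correlated (they are $r_i$-dependent, since the moat condition looks out to radius $r_i/3$), and Lemma~\ref{L:Ak-estimate} controls only the one-point marginals, not the higher-order product structure needed for a Poisson domination. What you actually need — and what works — is a concentration bound for sums of $R$-dependent indicators with $R\sim r_i$, of the type recorded in Lemma~\ref{l:dependentPerc}; combined with a Markov bound at the scales where $\E N_i\ll 1$ (roughly $r_i\gtrsim r^{1/3}$, where the concentration inequality degenerates), one recovers the stated double-exponential bound. So the plan is salvageable, but the Poisson-domination step as written would not survive scrutiny and needs to be replaced.
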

	
The next theorem is a version of Theorem \ref{T:BssetI} for random SSPs.

\begin{theorem}[Theorem 2.14, \cite{dauvergne2021spread}]
	\label{T:random-red-blue}
	Consider a random SSP $(\fR, \fB)$ on $\Z^2$ driven by potentially random clocks $X_\fR, X_\fB$, a collection of blue seeds $\fB_*$ and parameter $\ka > 4000$.
	Suppose additionally that $\fB_*$ is stochastically dominated by an i.i.d.\ Bernoulli process with parameter $p > 0$ sufficiently small.		
	Let $\fC$ be defined from $\fB_*$ as in Theorem \ref{T:BssetI}. Then the event where
	$$
	\fB_* = \bigcup_{i=1}^\infty A_i(\fB_*) \quad \text{ and } \quad [\fC] \text{ has only bounded components}
	$$
	is almost sure. Moreover, the probability of the event $\cE$ given by
	\begin{equation}
	\label{E:CD}
	\begin{split}
	\Big\{\fB_* = \bigcup_{i=1}^\infty A_i(\fB_*), 0 \in [\fD]^c, [\fC]^c &\sset \fR(\infty), [\fC] \text{ has only bounded components}\Big\}, 
	\end{split}
	\end{equation}
	is at least $1 - Cp$. Finally, on the event $\cE$, we have $D(0, t/(2\ka)) \sset [\fR(t)]$ for all large enough $t$. More precisely, for any $t_0 > 0$ we have
	\begin{equation}
	\label{E:D0kk}
	\begin{split}
	\P(\{D(0, t/(2\ka)) &\sset [\fR(t)] \text{ for all } t \ge t_0\} \mid \cE) \\
	&\ge 1 - \exp\lf(\log(Cp) \exp (\log (t_0/\ka)/(C\log \log (t_0 /\ka))\rg).
	\end{split}
	\end{equation}
\end{theorem}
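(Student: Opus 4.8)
The plan is to derive Theorem~\ref{T:random-red-blue} from the deterministic engulfing statement Theorem~\ref{T:BssetI} together with the multi-scale estimates in Lemmas~\ref{L:dom-seeds},~\ref{L:Ak-estimate} and~\ref{L:largest-scale}. First I would dispose of the two almost-sure claims. Since $\bigcup_i A_i(\fB_*) \sset \fB_*$ always, and $x \in \fB_* \smin \bigcup_i A_i(\fB_*)$ forces $x \in \fB_{*,k}$ for every $k$, Lemma~\ref{L:Ak-estimate} gives $\P(x \in \fB_* \smin \bigcup_i A_i(\fB_*)) \le \lim_k (Cp)^{2^{k-1}} = 0$, so a union bound over $x \in \Z^2$ yields $\fB_* = \bigcup_i A_i(\fB_*)$ a.s. For the second claim, if the component of $[\fC]$ through a fixed $x$ were unbounded, then the quantity $M(x, n)$ of Lemma~\ref{L:largest-scale} (defined now using $[\fC]$ rather than $[\fD]$) would equal $\infty \ge n/2$ for every $n$; by~\eqref{E:Mn-BC} applied to $\fC$ and letting $n_0 \to \infty$ this has probability $0$, and a union bound over $x$ shows $[\fC]$ has only bounded components a.s.

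Next I would prove $\P(\cE) \ge 1 - Cp$. Working on the a.s.\ event just established, $0 \in [\fD]$ can occur only if either $0 \in \fD$ --- i.e.\ some $x \in A_k(\fB_*)$ lies within $r_k/100$ of $0$, which by Lemma~\ref{L:Ak-estimate} has probability at most $\sum_k r_k^2(Cp)^{2^{k-1}} \le Cp$ for $p$ small --- or $0$ lies in a bounded hole of $\fD$, which forces a positive-diameter component of $[\fD]$ within bounded distance of $0$ and hence has probability at most $Cp$ by the second bound in~\eqref{E:Mrrk}. So $\P(0 \notin [\fD]) \ge 1 - Cp$. On the intersection of $\{0 \notin [\fD]\}$ with the a.s.\ event, all hypotheses of Theorem~\ref{T:BssetI} hold (here we use $\ka > 4000$), giving $\fB(\infty) \sset [\fC]$. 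Since the SSP clocks are bounded ($X_\fR \le 1 \le \ka$ and $X_\fB \le \ka$), an induction on graph distance gives $T(u) \le \ka\, d(0,u)$ for every $u$; in particular every vertex is eventually coloured, so $\Z^2 = \fR(\infty) \sqcup \fB(\infty)$ and therefore $[\fC]^c \sset \fR(\infty)$. Thus all four events defining $\cE$ in~\eqref{E:CD} hold off an event of probability at most $Cp$.

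Finally, for~\eqref{E:D0kk} I would argue on $\cE$ as follows. From $T(u) \le \ka\, d(0,u)$ we get $D(0, m) \sset \fR(t) \cup \fB(t)$ with $m := \lfloor t/\ka\rfloor$, and on $\cE$ the blue part of $D(0,m)$ lies in $[\fC]$, so $\fR(t)^c \cap D(0, m) \sset [\fC]$. Suppose $v \in D(0, t/(2\ka))$ but $v \notin [\fR(t)]$; then $v$ lies in an unbounded component of $\fR(t)^c$, hence there is a path $\ga$ in $\fR(t)^c$ from $v$ that eventually leaves $D(0,m)$, and its segment up to the last vertex before distance exceeds $m$ stays in $\fR(t)^c \cap D(0, m) \sset [\fC]$ and is connected, so it lies in a single component $K$ of $[\fC]$. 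Since $K$ contains $v$ (with $d(0,v) \le t/(2\ka)$) and a vertex at distance $m$, we get $M(0, m) \ge \mathrm{diam}(K) \ge m - t/(2\ka)$, which is at least $m/3$ for $t$ large. Therefore, on the event that $M(0, n) < n/3$ for all $n \ge m_0$, no such $v$ exists for any $t \ge t_0$, i.e.\ $D(0, t/(2\ka)) \sset [\fR(t)]$ for all $t \ge t_0$. Estimate~\eqref{E:D0kk} then follows from~\eqref{E:Mn-BC} applied to $\fC$ (with the constant adjusted to pass from the factor $1/2$ to $1/3$) with $n_0 = m_0 = \lfloor t_0/\ka \rfloor$, after dividing by $\P(\cE) \ge 1/2$ and absorbing the constant into $C$; letting $t_0 \to \infty$ also yields the ``for all large enough $t$'' assertion.

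Everything but the last paragraph is bookkeeping with the cited lemmas. The main obstacle is that last step: one must package the red/blue competition into the clean geometric statement that a site near the origin missing from $[\fR(t)]$ forces a macroscopic crossing component of the engulfing set $[\fC]$. This leans on the multi-scale control of $[\fC]$ from Lemma~\ref{L:largest-scale} and requires some care with the ``fill in bounded components'' operation $[\,\cdot\,]$ and with the rounding that produces the constant $2\ka$; I would also expect the harmless-looking identity $[\fC]^c \sset \fR(\infty)$ to need the bounded-clock shortcut (or, absent it, a genuine argument that red percolates around the bounded blue islands).
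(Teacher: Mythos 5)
This theorem is \emph{cited} from the companion paper \cite{dauvergne2021spread} (Theorem~2.14 there); the present paper does not reprove it, so there is no proof in this document to compare against. What follows is therefore an assessment of your reconstruction on its own terms rather than a comparison.

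Your argument is, as far as I can tell, correct, and it reproduces what must be the intended deduction from the deterministic engulfing theorem (Theorem~\ref{T:BssetI}) and the multi-scale tail bounds (Lemmas~\ref{L:dom-seeds}--\ref{L:largest-scale}). The two almost-sure claims follow exactly as you say. The probability bound for $\cE$ is assembled correctly: $\P(0\in[\fD])\le Cp$ via the $k=1$ term of Lemma~\ref{L:Ak-estimate} together with the second bound in~\eqref{E:Mrrk} to rule out $0$ sitting in a hole, and the inclusion $[\fC]^c\sset\fR(\infty)$ does indeed need the bounded-clock induction $T(u)\le\ka\,d(0,u)$ (otherwise one cannot conclude $\fR(\infty)\cup\fB(\infty)=\Z^2$ and hence $\fR(\infty)^c=\fB(\infty)\sset[\fC]$). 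Your last paragraph correctly identifies the real geometric content: on $\cE$, if $v\in D(0,t/(2\ka))$ is missing from $[\fR(t)]$ then the infinite $\fR(t)^c$-path from $v$, truncated when it first leaves $D(0,m)$ with $m=\floor{t/\ka}$, lies inside $\fR(t)^c\cap D(0,m)\sset\fB(t)\sset[\fC]$, forcing a single component of $[\fC]$ of diameter $\gtrsim m-t/(2\ka)\ge m/3$ meeting $D(0,m)$. A small cosmetic point: rather than invoking a ``$1/3$-version'' of~\eqref{E:Mn-BC}, you can keep the constant $1/2$ by measuring from the smaller ball $D(0,n)$ with $n=\lceil t/(2\ka)\rceil$, since the crossing component already contains $v\in D(0,n)$ and has diameter $\ge (m-1)/2\ge n/2$ for $t$ large. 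Otherwise the rounding, the division by $\P(\cE)\ge 1/2$, and the passage from~\eqref{E:D0kk} to the almost-sure ``for all large enough $t$'' statement are all handled appropriately.
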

\section{SIR survival}

In this section we give a coupling of the SIR process $X_t$ with initial Poisson density $\mu$
 with an SSP satisfying the conditions of Theorem \ref{T:random-red-blue}. In this coupling the red sites in the SSP will correspond to spatial blocks in $\Z^2$ where the SIR infection spreads efficiently. The initial configuration of blue seeds will be dominated by a Bernoulli process with parameter $p = p(\nu)$ where $p \to 0$ as $\nu \to 0$, allowing us to conclude Theorem \ref{T:main-1} by applying Theorem \ref{T:random-red-blue}. Throughout this section and in the remainder of the paper the density $\mu$ is fixed and all constants $c, C, \dots$ are allowed to depend on $\mu$. 

\label{S:linear}
\subsection{A block construction for the SIR process}
\label{S:SI-colouring}

Fix a side length $L = 2^k$ for some large $k \in \N$.  We choose $L$ so that 
\begin{equation}
\label{E:Lnu-relationship}
\nu^{-1/6} \leq L \leq 2 \nu^{-1/6}
\end{equation}
and subdivide $\Z^2$ into a collection of  dyadic blocks
\begin{equation}
\label{E:BL-construct}
\sB=\sB_L:=\Big\{zL +\big\{-L/2, \ldots ,L/2-1\big\}^2:z\in \Z^2\Big\}.
\end{equation}
This induces a natural map $f = f_L:\Z^2 \to \sB_L$. For $B, B' \in \sB$, we also define the \emph{block distance}
$$
d_\sB(B, B') = |f^{-1}(B) - f^{-1}(B')|
$$
and for $B \ne B'$ note the inequalities
\begin{equation}
\label{E:block-distance}
d(B, B') < L d_\sB(B, B'), \qquad d_\sB(B, B') < 4 \ceil{d(B, B')/L}.
\end{equation}
Both $d$ and $d_\sB$ will be useful measurements in the paper for analyzing distance. 
For the arguments in the remainder of the paper, $L$ is the more natural parameter than $\nu$ and so we work exclusively with a sufficiently large $L$ rather than sufficiently small $\nu$, invoking the relationship \eqref{E:Lnu-relationship} when necessary.

Given a side length $L$, for some small $\alpha\in(0,1)$, let $\xi = \xi_L = \alpha L^2$. This will be the minimum speed at which the colouring of blocks spreads.  We will now define a \textbf{colouring process} for blocks $B \in \sB_L$ from the SIR process.

\begin{definition}
	\label{D:SI-colouring}
	We will let $\tau_B$ denote the (stopping) time when a block $B$ is first coloured. For a vertex $u\in B$ we will use $\tau_u$ as shorthand for $\tau_B$. A block $B$ becomes coloured at time $t$ if one of the following events happens:
	\begin{enumerate}[label=(\alph*)]
		\item A neighbouring block $B'$ (i.e. $d_\sB(B, B') = 1$) was coloured at time $\tau_{B'} = t - \xi$.
		\item An infected particle enters $B$ for the first time at time $t$.
		\item A block $B'$ becomes coloured according to rule (b) at time~$t$ by an infected particle entering at location $x \in B'$, and $\inf \{\|x - u \|_\infty : u \in B \} \le \al L$.
	\end{enumerate}
\end{definition}
In case (c) we call this a multi-colouring, where up to three blocks may be coloured simultaneously. 
In case (b) we call the infected particle that entered $B$ the \textbf{ignition particle} of $B$.  In case (c) the ignition particle is the ignition particle for block $B'$. In cases (b) and (c) we say that the block $B$ is \textbf{ignited} at the location $x$ where an infected particle first entered $B$ (for case (b)) or first entered the relevant neighbour of $B$ (for case (c)). All ignition locations are within $L^\infty$-distance $\al L$ of $B$.

We say that a particle $a$ is \textbf{coloured} at the first time it enters a coloured block, write $B(a)$ for the block in which $a$ was coloured, and write $T(a)$ for the time when $a$ was coloured. This can happen either because the block became coloured or because the particle entered a coloured block. We let 
$
X_t^* 
$ denote the process of all particles up to time $t$ that satisfy $T(a) \le t$, and let $\sF_t^*$ denote the $\sigma$-algebra generated by $X_t^*$. We let $\iota_a$ be the stopping time when particle $a$ becomes infected.

A key element of our analysis is to regard the collection of random walks as given by a Poisson process.  We let $\fW$ be the space of cadlag sample paths $w(t):\R\to\Z^2$ such that 
\begin{equation}\label{eq:sublinearGrowth}
\frac1{t}|w(t)|\to 0
\end{equation}
as $|t| \to \infty$.  Elements of $\fW$ will represent the trajectories of particles.  To represent healing times of particles we let we let $\fQ$  be the set of simple point measures on $\R$ and let $\fW^+=\fW \times \fQ$. An element of $\fW^+$ will represent the trajectory of a particle together with the set of times at which it receives a healing event.
Let $\sW^+_u$ denote the measure on $\fW^+$ given by the pair of 
\begin{enumerate}[nosep, label=(\alph*)]
	\item a continuous time random walk on $\Z^2$ over all time $t\in \R$ that is at $u\in \Z^2$ at time $0$ and
	\item an independent rate $\nu$ Poisson process on $\R$.
\end{enumerate}
Note that a simple random walk satisfies~\eqref{eq:sublinearGrowth} almost surely.  Furthermore, let $\sW^+ = \sum_{u\in \Z^2} \sW^+_u$.  We let $\sP$ denote a Poisson process on $\fW^+$ with intensity measure $\mu\sW^+$.  If we remove the initial infected particle from the origin, we can interpret all remaining particles and their trajectories as being given by a sample from $\sP$.  

In order to obtain spatial independence of various events we will give an alternative construction of the SIR process with the same law based on a collection of Poisson processes $\sP_B, B \in \sB$ which are IID and equal in distribution to $\sP$. We call a particle $a$ in $\sP_B$ \textbf{simple}, let $a(t)$ denote its trajectory, and let $a^h \subset \R$ its set of healing times.

Let $\sM_B$ denote the $\sig$-algebra generated by
\begin{enumerate}[label=(\roman*)]
	\item the independent Poisson process $\sP_B$,
	\item a pair $(W_{B,\operatorname{ig}}, W_{B,\operatorname{ig}}^h)$ sampled independently from $\sW^+_0$, which will encode the trajectory and healing process of an ignition particle. 
\end{enumerate} 
Note that the $(\sP_B,W_{B,\operatorname{ig}}, W_{B,\operatorname{ig}}^h), B \in \sB$ are IID.

We will build the SIR process $X_t$ according to the $\sM_B$ in such a way that particles coloured in a block $B$ correspond to particles in $\sP_B$ but with a time shift of length $\tau_B$. We will abuse notation somewhat by conflating a particle $a$ in some $\sP_B$ with a particle in $X_t$ matched according to our construction. Define
\begin{equation}
\label{E:atti}
\oa(t):= a(t-\tau_B), \qquad \oa^h = a^h - \tau_B.
\end{equation}
These will be, respectively, the trajectory and healing process of $a$ in $X_t$. Particle locations in $\sP_B$ at time $0$ tell us the particle locations in $X_t$ at time $\tau_B$.

It will be  enough to construct the process of previously coloured particles $X_t^*$ at every time $t$, since every particle is eventually coloured. This holds since the colouring process spreads at least linearly by Definition \ref{D:SI-colouring}(a), whereas particles spread diffusively.
Let $B_0$ be the block containing the origin so that $\tau_{B_0}=0$.  At time $0$ we add all simple $\sP_{B_0}$-particles that are in $B_0$ at time $0$ to $X_t^*$ plus an infected ignition particle at the origin. The simple particles evolve according to their paths $a(t)$ while the initially infected particle evolves according to $W_{B_0, \operatorname{ig}}(t)$. Particles become infected if they enter the same vertex as another infected particle.  New particles can enter $X_t^*$ in two ways, either when a new block becomes coloured or when an uncoloured particle enters a coloured block for the first time.

{\bf Case 1:  A newly coloured block:} When a block $B$ is coloured for the first time according to Definition \ref{D:SI-colouring} we add  particles to $X_t^*$ as follows.  If a particle $a$ from $\sP_B$ is in $B$ at time $0$, then we add it to $X_t^*$ at time $t=\tau_B$ if for all $0\leq t < \tau_B$  we have that $\tau_{a(t-\tau_B)} > t$.  This condition is equivalent to saying that a particle with trajectory $a(t-\tau_B)$ first hits a coloured block at time $\tau_B$. The trajectory and healing process of this particle is given by $\oa(t), \oa^h$.

If $B$ is ignited at a vertex $x \in \del B$, the ignition particle follows special rules.  Instead of continuing to follow the trajectory given by the block it was initially coloured by, when it becomes the ignition particle of $B$ at time $\tau_B$ we alter its future trajectory to $x+ W_{B,\operatorname{ig}}(t-\tau_B)$ and its future healing process to $\tau_B + W^h_{B, ig}$. Note that some particles may ignite multiple boxes at distinct times, in which case their future trajectories and healing processes will change more than once.

{\bf Case 2:  Particle first entering a coloured block:} For times $t\in(\tau_B,\tau_B+\xi]$ new particles are revealed in $B$ in the process $X_t^*$ according to the following rules.  If $a$ is a particle in $\sP_B$ that enters $B$ at time $t-\tau_B$, then we add it to $X_t^*$ at time $t$ if for all $0\leq s < t$  we have that $\tau_{a(s-\tau_B)} > t$.  This condition is equivalent to saying that a particle with trajectory $a(s-\tau_B)$ first hits a coloured block at time $t$.  Again, the trajectory and healing process of this particle are given by $\oa(s), \oa^h$.  Particles can only join $X^*_t$ in this way during the time interval $(\tau_B,\tau_B+\xi]$ since at time $\tau_B+\xi$ the neighbouring blocks of $B$ are already coloured by construction.

With this construction, a particle $a$ from $\sP_B$ is added with trajectory $\oa$ if and only if $B(a) = B$. It follows from independence properties of Poisson processes that the construction above indeed describes an SIR process. This is shown in Section 5 of \cite{dauvergne2021spread} in the setting when particles cannot heal. The proof goes through verbatim with the minor addition of healing processes. 

We would like to partition the set of all particles in the SIR process $X_t$ into sets $H_B, B \in \sB$ where particle trajectories in each $H_B$ are entirely determined by $\sM_B$. However, this is not quite possible since some particles that are coloured in $B$ may become ignition particles for other blocks and hence their trajectories will not be entirely $\sM_B$-measurable. To deal with this, it will be convenient to think of a particle $a \in H_B$ as healing when it becomes the ignition particle for a new block $B'$, and a new infected particle $W_{B', \operatorname{ig}} \in H_{B'}$ appearing instantaneously in its place.

Let $H_B$ be the set of all particles that are first coloured in $B$. Definition~\ref{D:SI-colouring}(a) guarantees that our colouring process is moving at a linear speed depending only on $L$, which helps to ensure that there are typically many particles in $H_B$. With this in mind, define $H^-_B$ to be all particles $a\in \sP_B$ that satisfy the following additional constraint:
\begin{equation}
\label{eq:principal}
\sup_{t\leq 0} d(a(t),B) - L \lfloor |t\xi^{-1}|/4 \rfloor = 0.
\end{equation}
We call the particles in $H_B^-$ \textbf{principal particles} and claim that $H_B^- \sset H_B$. Indeed, let $a \in H_B^-$. Equation \eqref{eq:principal} implies that $a(0) \in B$. Now suppose $B'$ gets coloured before $B$. To show that $H_B^- \sset H_B$, we must check that $a(t) \notin B'$ for all $t \ge \tau_{B'} - \tau_{B}$.
By Definition \ref{D:SI-colouring}(a) and the bound \eqref{E:block-distance}, we have the bound
\begin{equation}
\label{E:first-lipschitz}
|\tau_B-\tau_{B'}| \leq d_\sB(B, B')\xi < 4\ceil{d(B, B')/L} \xi.
\end{equation}
Condition~\eqref{eq:principal} then implies that $d(a(t), B) < d(B, B')$ for all $t \ge \tau_{B'} - \tau_{B}$, so $a(t) \notin B'$.

Also let $H_B^+$ be the set of all particles $a \in \sP_B$ such that $a(t) \in B$ for some $t \in [0, \xi]$. We then have $H_B \sset H_B^{++} := H_B^+ \cup \{W_{B, \operatorname{ig}}\}$. The $\sM_B$-measurable set of trajectories in $H_B^{++}$ describes all randomness contributed to the SIR process by $\sM_B$.

\subsection{Blue Seeds}
\label{S:blue-seeds}

In this section we give a set of conditions on $\sM_B$ which will ensure the efficient spread of the infection to the neighbouring blocks if the block $B$ is ignited, i.e. coloured according to rule (b) or (c).  The complement of this event will correspond to marking the block as a blue seed in our SSP coupling. For the remainder of Section \ref{S:linear}, we fix an arbitrary $\ka > 4000$.

Setting some notation, let 
$$
U_x = \{B \in \sB_L : \inf_{y \in B} \|x - y\|_\infty \le \al L \}.
$$
When the meaning is clear from context, we will also let $U_x$ denote the set of vertices contained in the union of the blocks in $U_x$. By construction, $U_x$ is a rectangle in the plane made of either 1, 2, or 4 blocks. It includes the block containing $x$. 

Furthermore, if $x$ is the ignition site for the block $B$, all blocks in $U_x$ are coloured at or before time $\tau_B$. Set 
\[
\partial B^\#= \bigcup_{B'\in \sB_L}\{x\in \partial B': B\in U_x\},
\]
which is the set of possible ignition locations for the ignition particle of $B$ (the shape of the set $\partial B^\#$ resembles a hashtag $\#$).

The ignition particle for $B$ is infected, starts at $x\in\partial B^\#$ and follows the path $x+ W_{B_x,\operatorname{ig}}((t-\tau_B))$ for $t \ge \tau_B$, where $B_x$ is the block containing $x$. Define the event
\[
\cA^{(1)}_{B}=\bigcap_{B':d(B,B')\leq 2}\Big\{\sup_{0\leq s\leq \xi/ \log \log L} |W_{B',\operatorname{ig}}(s)| \leq \frac12 \alpha L\Big\}.
\]
Since $d(B_x, B) \le 2$ for all $x \in \del B^\#$, this event ensures that the ignition particle for $B$ remains in $U_x$ until time $\tau_B + \xi/ \log \log L$, and hence cannot become the ignition particle of another block prior to this time.

For $x \in \del B^\#$ and each block $B' \not\in U_x$ with $d(B, B') = 1$, define the event
\begin{align*}
\cA^{(2)}_{B,B',x}=\bigg\{\exists a \in H_B^-, \; &\exists \; 0\leq s \leq s' < \tfrac{\xi}{\log\log L} \text{ s.t.} \\  a(s) = x+ W_{B_x,\operatorname{ig}}(s);\quad 
&a(s')\in B'; \quad \forall\;  0\leq s''<s', a(s'')\in U_x\bigg\}.
\end{align*}
This event asks for at least one principal particle in $B$ to have a trajectory that intersects the ignition particle's trajectory at some time $s$, and to stay within $U_x$ until some time $s'$, at which point it enters $B'$. 
Finally let $\cA^{(3)}_{B}$ be the event that the ignition particles of all blocks $B'$ with $d(B, B') \le 2$ have no healing events in the interval $[0, L^3]$ and let $\cA^{(4)}_{B}$ be the event that no particle in $H_B^-$ has a healing event in the interval $[0, L^3]$.  Finally we want that many principal particles which do not become ignition particles become infected. Thus we define
\begin{align*}
Q_{B,x}=\Big\{a\in H_B^- &: d(a(0),B^c) \geq \tfrac1{10} L, \exists s\in[0,\tfrac{\xi}{\log\log L}], a(s) = x+ W_{B_x,\operatorname{ig}}(s) \Big\},
\end{align*}
and
\begin{align*}
\cA^{(5)}_{B,x}=\Big\{|Q_{B,x}| \geq L^{2-\log^{-1/2} L} \Big\}.
\end{align*}
We combine these events and define
\begin{equation}
\label{E:cA123}
\cA_B=\cA_B^{(1)}\cap\cA^{(3)}_{B}\cap \cA^{(4)}_{B} \cap  \bigcap_{x\in\partial B^\#} \bigg(\cA^{(5)}_{B,x} \cap \bigcap_{\substack{B'\in \sB \smin U_x \\
		d(B,B')\le 1}}\cA^{(2)}_{B,B',x} \bigg)
\end{equation}
We call the block $B$ a \textbf{blue seed} if $(\cA_B)^c$ holds. It follows from the definition that $\cA_B$ is measurable with respect to the $\sigma$-algebra generated by $\{\sM_{B'}, d(B',B) \leq 2\}$. 

We have asked for many things from our event $\cA_B$ that together will ensure the efficient spread of the infection. The simplest consequence of the definition is that if $\cA_B$ holds and $B$ is ignited, then all its neighbouring blocks in $\sB_L$ will be coloured before time $\tau_B + \xi/\kappa$.

\begin{lemma}
	\label{L:lower-scale-ignition}
	Suppose that a block $B \in \sB_L$ is ignited at time $\tau_B$ at location $x$  and that $\cA_B$ holds. Then any neighbouring block $B'$ is coloured by time $\tau_B + \xi/\kappa$.
\end{lemma}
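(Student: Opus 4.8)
The plan is to unpack the definition of $\cA_B$ and trace the infection from the ignition particle, through a principal particle, into each neighbouring block, checking that all of this happens well within the time budget $\xi/\ka$. Fix a neighbouring block $B'$ with $d_\sB(B, B') = 1$. There are two cases depending on whether $B' \in U_x$.

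If $B' \in U_x$, then by the remark immediately before the lemma (``all blocks in $U_x$ are coloured at or before time $\tau_B$''), the block $B'$ is already coloured at time $\tau_B$, and since $\xi/\ka > 0$ there is nothing to prove. So I would assume $B' \notin U_x$. In this case I invoke the event $\cA^{(2)}_{B, B', x} \subset \cA_B$: there is a principal particle $a \in H_B^-$ and times $0 \le s \le s' < \xi/\log\log L$ with $a(s) = x + W_{B_x, \operatorname{ig}}(s)$, with $a(s'') \in U_x$ for all $s'' < s'$, and with $a(s') \in B'$. The point $x + W_{B_x, \operatorname{ig}}(s)$ is the location of the ignition particle of $B$ at time $\tau_B + s$; here I should note that on $\cA^{(1)}_B$ the ignition particle stays within $U_x$ up to time $\tau_B + \xi/\log\log L$, so it has not yet ignited any other block and is genuinely the active infected particle carrying the infection for $B$. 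Hence at time $\tau_B + s$ the ignition particle and $a$ occupy the same vertex, so $a$ becomes infected at time $\iota_a \le \tau_B + s$ (it is principal, hence in $H_B^-\sset H_B$, so it has been coloured — indeed it was coloured in $B$ at time $\tau_B$ since $a(0)\in B$ by \eqref{eq:principal} — and thus is a live susceptible particle of the process by time $\tau_B$). Moreover on $\cA^{(4)}_B$ the particle $a$ has no healing event in $[0, L^3]$, and since $s' < \xi/\log\log L = \al L^2/\log\log L \le L^3$, the particle $a$ remains infected throughout $[\iota_a, \tau_B + s']$. Therefore $a$ is an infected particle that enters $B'$ at time $\tau_B + s'$, which by Definition~\ref{D:SI-colouring}(b) colours $B'$ (if it was not already coloured earlier). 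Since $s' < \xi/\log\log L$, the colouring time of $B'$ is at most $\tau_B + \xi/\log\log L \le \tau_B + \xi/\ka$, using $\ka \le \log\log L$ for $L$ large (equivalently $\nu$ small), which holds by \eqref{E:Lnu-relationship} since $\ka$ is a fixed constant while $L \to \infty$.

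The one subtlety I would be careful about — and which I expect is the main (minor) obstacle — is making sure that the ``same vertex'' encounter at time $\tau_B + s$ genuinely transmits the infection, i.e. that at that moment the ignition particle is infected and has not been removed, and that $a$ has not already been infected or removed. The former follows from $\cA^{(3)}_B$ (no healing events for the relevant ignition particles in $[0, L^3]$, and $s < L^3$) together with $\cA^{(1)}_B$ (so that this ignition particle of $B$ is still the one active in $U_x$, not having been diverted to ignite another block). The latter: $a$ being susceptible before $\iota_a$ is automatic, and if $a$ were infected strictly earlier by some other infected particle, that only helps — it is still infected and susceptible particles entering $B'$ colour it; so in fact we only need that $a$ is not \emph{removed} before time $\tau_B + s'$, which is $\cA^{(4)}_B$. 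One should also double check the edge case where $B'$ is coloured by $a$ but at a time that \emph{precedes} $\tau_B + s'$ via some other route — but that only makes the colouring time smaller, so the bound still holds. Assembling these observations over all neighbours $B'$ of $B$ gives the claim.
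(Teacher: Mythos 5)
Your proof is correct and follows essentially the same route as the paper's: dispose of the case $B'\in U_x$ as already coloured, then invoke $\cA^{(2)}_{B,B',x}$ to produce a principal particle $a$ that meets the ignition particle at time $\tau_B+s$, use $\cA^{(3)}_B$ (and $\cA^{(4)}_B$) to ensure infection is transmitted and persists, and conclude $a$ enters and colours $B'$ at time $\tau_B+s'$. Your explicit remark that $\cA^{(1)}_B$ is what guarantees the ignition particle of $B$ genuinely follows the path $x+W_{B_x,\operatorname{ig}}(\cdot)$ throughout $[0,s]$ (i.e., is not diverted to become an ignition particle of another block) is a point the paper's proof of the lemma leaves implicit, though it is discussed in the surrounding text; it is good that you surfaced it. Your explicit note that the final bound requires $\ka \le \log\log L$ (valid for $L$ large since $\ka$ is a fixed constant) is also a correct reading of the convention the paper uses without comment. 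Incidentally, the paper's proof has a small typo in which $s$ and $s'$ are swapped (it writes that $a$ meets the ignition particle at $\tau_B+s'$ and reaches $B'$ at $\tau_B+s$); your version has them the right way around.
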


\begin{proof}
	First, we may assume that $B' \notin U_x$, since else $B'$ is coloured by time $\tau_B$.
	Select some $a\in H_B^-$ and  $s,s'$ that make the event $\cA^{(2)}_{B,B',x}$ hold. The particle $a$ must be susceptible at time $\tau_B$ and at time $\tau_B + s'$ will meet the ignition particle which by $\cA^{(3)}_{B}$ is still infected at that time. Therefore $a$ will become infected at or prior to to time $\tau_B + s'$. Moreover, it will reach $B'$ at time $\tau_B+s \leq \tau_B + \xi/\kappa$ and by $\cA^{(4)}_{B}$ it will not heal in the interval $[\tau_B, \tau_B + \xi/\kappa]$. Therefore this particle will ignite $B'$ at time $\tau_B + s$ if $B'$ has not already been previously coloured.
\end{proof}

With these colouring rules, we can associate an SSP to the given SIR process and then use the machinery of Section \ref{S:SSP}. To do this, it is more natural to use the time scaling of the current section, which will allow the red and blue clocks to take values in $[0, \xi/\ka]$ and $[0, \xi]$, rather than in $[0, 1]$ and $[0, \ka]$, respectively. We will also define the SSP directly on the block set $\sB$, rather than on $\Z^2$. Clearly this will map to an SSP on $\Z^2$ via the correspondence from \eqref{E:BL-construct}.

We let $B<_{\operatorname{SIR}} B'$ for two blocks $B, B' \in \sB$ if both they are ignited by the same ignition particle at location $x$, and $d(x, B) < d(x, B')$. Note that our ignition rules guarantee that either $d(x, B) < d(x, B')$ or $d(x, B) < d(x, B')$.
The random directed graph on $\sB$ with edges given by pairs $(B, B') \in \sB^2$ with $B <_{\operatorname{SIR}} B'$ is acyclic.
For each directed edge $(B, B')$ between adjacent vertices in $\sB$, define
\begin{equation*}
\begin{split}
X_\fR(B, B') = \begin{cases}
\lf(\tau_{B'} - \tau_B \rg) \wedge \frac{\xi}{\ka}, \qquad &\tau_{B'} - \tau_B > 0,
\\
0, \qquad &\tau_{B'} = \tau_B \text{ and } B <_{\operatorname{SIR}} {B'}, \\
\frac{\xi}{\ka} \qquad &\text{ else.}
\end{cases}
\end{split}
\end{equation*}
Similarly define the blue clock by
\begin{equation*}
\begin{split}
X_\fB(B, B') = \begin{cases}
\tau_{B'} - \tau_B, \qquad &\tau_{B'} - \tau_B > 0,
\\
0, \qquad &\tau_{B} = \tau_{{B'}} \text{ and } B <_{\operatorname{SIR}} B', \\
\xi \qquad &\text{ else.}
\end{cases}
\end{split}
\end{equation*}
\begin{prop}
	\label{P:SI-to-BR}
	The clocks above along with the set of blue seeds $\fB_* = \{B \in \sB: \cA_B^c \text{ holds}\}$ a.s.\ define a finite speed SSP on $\sB$. Moreover, a.s.\ for every $B \in \sB$, the colouring time $T(B)$ equals $\tau_B$. That is,
	\begin{equation}
	\label{E:BR-tau-eqn}
	\mathfrak{B}(t) \cup \mathfrak{R} (t) = \{B \in \sB : \tau_B \le t \}.
	\end{equation}
	Finally, let $\operatorname{IGN} : =\{B \in \sB: B \text{ is ignited or } B = B_0\}$. Then a.s.\ 
$
	\fR(\infty) \sset \operatorname{IGN} \smin \fB_*.$
\end{prop}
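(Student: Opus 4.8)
The plan is to verify that the stated clocks and blue-seed set meet the hypotheses of the SSP construction of Section~\ref{S:SSP}, and then to prove $T(B)=\tau_B$ together with $\fR(\infty)\sset\operatorname{IGN}\smin\fB_*$; since the colour of a block is entangled with its colouring time, I would run both through one induction. The axioms are quick: \eqref{E:block-distance} and Definition~\ref{D:SI-colouring}(a) give $|\tau_B-\tau_{B'}|\le\xi$ for adjacent blocks, so $X_\fR$ takes values in $[0,\xi/\ka]$ and $X_\fB$ in $[0,\xi]$, playing (after rescaling time by $\xi/\ka$) the roles of $[0,1]$ and $[0,\ka]$; a clock vanishes only in the middle case of its definition, that is, along a $<_{\operatorname{SIR}}$-edge, so the zero-weight edges form a subgraph of the acyclic $<_{\operatorname{SIR}}$-graph and there are no zero-weight directed cycles; and the SIR colouring process is a.s.\ finite-speed ($\{B:\tau_B\le t\}$ is finite for every $t$), which is part of the coupled construction recalled above and in~\cite{dauvergne2021spread}, so the resulting SSP is well-defined and finite-speed. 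Since blue seeds are always coloured blue (SSP rule~(1) together with the blue-favouring tie-break), $\fR(\infty)\cap\fB_*=\emptyset$ is immediate, so the remaining content is the identity $T=\tau$ and the statement that every red block is ignited or equals $B_0$.

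First I would prove $T(B)\le\tau_B$ for every $B$ by induction along a linear extension $\prec$ of ``$\tau_B<\tau_{B'}$, or $\tau_B=\tau_{B'}$ and $B<_{\operatorname{SIR}}B'$'' (a.s.\ a well-order, as the only colouring-time coincidences come from multi-colourings, on which $<_{\operatorname{SIR}}$ is total): in each colouring case one exhibits a $\prec$-earlier neighbour $B'$ of $B$ --- for (a), $\tau_{B'}=\tau_B-\xi$; for (b), the block holding the infecting particle just before $\tau_B$; for (c), a block of $U_x$ adjacent to $B$ and $<_{\operatorname{SIR}}$-below it --- whose clock rings into $B$ by time $\tau_B$, from the explicit clock formulas. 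Then I would prove $T(B)=\tau_B$ and the colour statement by induction on the SSP colouring order, so that the predecessor used in each step is already handled. If $T(B)<\tau_B$, the edge $(u,B)$ that colours $B$ has, after the zero-clock case is excluded (it would give $T(B)=T(u)=\tau_u=\tau_B$), a neighbour $u$ with $T(u)<T(B)$, hence $T(u)=\tau_u<\tau_B$ by the inductive hypothesis; if $C(u)=\fB$ then $X_\fB(u,B)=\tau_B-\tau_u$ forces $T(B)=\tau_B$, and if $C(u)=\fR$ then the inductive colour statement makes $u$ ignited with $\cA_u$ holding, so Lemma~\ref{L:lower-scale-ignition} gives $\tau_B\le\tau_u+\xi/\ka$ and hence $X_\fR(u,B)=\tau_B-\tau_u$, again forcing $T(B)=\tau_B$ --- a contradiction either way. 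For the colour statement, if $C(B)=\fR$ then $B\notin\fB_*$, and $B$ cannot be SIR-coloured by rule (a): from a neighbour $B'$ with $\tau_{B'}=\tau_B-\xi$, the case $C(B')=\fR$ is ruled out by the same use of Lemma~\ref{L:lower-scale-ignition}, whereas $C(B')=\fB$ forces $X_\fB(B',B)=\xi$, so SSP rule~(4) (and the blue-favouring tie-break) would colour $B$ blue; hence $B$ is ignited. This gives \eqref{E:BR-tau-eqn} and $\fR(\infty)\sset\operatorname{IGN}\smin\fB_*$.

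The main obstacle is precisely that $T=\tau$ and the colour statement cannot be separated: the colour of a block's SSP-predecessor is what selects the red or blue clock governing that block's colouring time, while the ``ignited and not a blue seed'' structure needed to control that colour is available only through Lemma~\ref{L:lower-scale-ignition} fed by the inductive colour hypothesis. Closing the induction also takes some care --- splitting off the easy bound $T\le\tau$ (run along the spatial order $\prec$) from the equality (run along the SSP colouring order, so predecessors are processed first), handling multi-colourings where the rule-(c) block need not be adjacent to the rule-(b) block so one must route through $U_x$ via zero clocks, and confirming that a.s.\ colouring-time ties occur only within multi-colourings. The clock-range checks, the acyclicity of the zero-weight graph, and finite speed are routine given the earlier material.
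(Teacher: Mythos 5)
The paper does not actually prove this proposition: it states that the SI-model analogue is Proposition~3.9 of~\cite{dauvergne2021spread} and that the proof ``goes through verbatim'' for SIR, so there is no in-text argument to compare against. With that caveat, your reconstruction has the structure one would expect and closes the main loop correctly. You isolate the genuine difficulty --- that the colour of the SSP-predecessor determines which clock governs a block's SSP time, while the ``ignited and not a blue seed'' information needed to control that colour comes back through Lemma~\ref{L:lower-scale-ignition} fed by the inductive colour hypothesis --- and your two-stage induction (first $T\le\tau$ along a spatial order, then $T=\tau$ and the colour statement along the SSP colouring order) handles it, including the zero-clock routing needed when a rule-(c) block is diagonally placed in $U_x$ and not adjacent to the rule-(b) block. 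The axiom checks (clock ranges via \eqref{E:block-distance}, acyclicity of the zero-weight graph via $<_{\operatorname{SIR}}$, finite speed via $T\le\tau$ plus finite-speed SIR colouring) are as they should be.

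The one place where the argument has a genuine hole as written is the root block $B_0$. In both your red-clock contradiction and your case-(a) exclusion you invoke Lemma~\ref{L:lower-scale-ignition} for a red predecessor $u$, and the inductive colour statement tells you only that $u$ is \emph{ignited or equals $B_0$}. Lemma~\ref{L:lower-scale-ignition} is stated for ignited blocks, whose ignition site $x$ lies in $\del B^{\#}$, and the events $\cA^{(2)}_{B,B',x}$ and $\cA^{(5)}_{B,x}$ that make $\cA_B$ useful are indexed only over $x\in\del B^{\#}$. But $B_0$'s initial infected particle sits at the origin $(0,0)$, which is in the interior of $B_0$ and hence \emph{not} in $\del B_0^{\#}$. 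So $\cA_{B_0}$ as written does not obviously yield $\tau_{B'}\le\tau_{B_0}+\xi/\ka$ for neighbours $B'$ of $B_0$, which is exactly what is needed in the base step when $C(B_0)=\fR$: otherwise the red clock out of $B_0$ could ring strictly before $\tau_{B'}$ and \eqref{E:BR-tau-eqn} would fail. The paper's own exposition also does not address this, deferring to~\cite{dauvergne2021spread}; in a self-contained proof you would either need an analogue of $\cA^{(2)}_{B_0,\cdot,0}$ for the central ignition trajectory, or a separate small argument for $B_0$. The rest of the induction is unaffected.
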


A version of Proposition \ref{P:SI-to-BR} for the SI model was shown in \cite{dauvergne2021spread}, Proposition 3.9. As the proof goes through verbatim for the SIR model, we omit it.

Next, to apply the framework of Section \ref{S:SSP} to understand the SIR process, we need to show that the blue seed probability is small.

\begin{proposition}\label{p:blueSeed}
	There exists $\al > 0$ such that for any $L$ satisfying $L^5 \le \nu^{-1} \le L^6$, we have
	\[
	\P[(\cA_B)^c] \leq \epsilon_\nu,
	\]
	where $\eps_\nu \to 0$ with $\nu$.
\end{proposition}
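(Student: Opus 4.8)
The plan is to bound $\P[(\cA_B)^c]$ by a union bound over the five events $\cA_B^{(1)}, \dots, \cA_B^{(5)}$ (and their sub-events indexed by nearby blocks $B'$ and ignition locations $x \in \del B^\#$), showing that each fails with probability tending to $0$ as $L \to \infty$ (equivalently $\nu \to 0$). Since the number of choices of $B'$ with $d(B,B') \le 2$ and the number of ignition locations $x \in \del B^\#$ are both $O(L)$, it will suffice to show each individual sub-event fails with probability $o(L^{-1})$, or more precisely that the total contribution is $\eps_\nu \to 0$. The events $\cA_B^{(1)}, \cA_B^{(3)}, \cA_B^{(4)}$ are the easy ones: $\cA_B^{(1)}$ fails only if one of $O(1)$ ignition random walks $W_{B',\operatorname{ig}}$ travels distance $\ge \tfrac12\al L$ in time $\xi/\log\log L = \al L^2/\log\log L$, which by Lemma \ref{L:rw-estimate} has probability at most $C\exp(-c\al L\log\log L)$ after absorbing constants; $\cA_B^{(3)}$ and $\cA_B^{(4)}$ fail only if one of the $O(1)$ ignition particles, respectively one of the (random, but with high probability $O(L^2)$ many) principal particles in $H_B^-$, has a healing event in $[0, L^3]$, which since $\nu \le L^{-5}$ has probability $O(L^2 \cdot \nu L^3) = O(\nu L^5) = O(L^{-1}) \to$ actually we need $o(1)$: here $\nu L^5 \le 1$ is not enough, so one takes $\nu^{-1} \ge L^6$, giving $O(L^2 \cdot L^{-6} \cdot L^3) = O(L^{-1}) \to 0$. (One first needs a crude high-probability bound $|H_B^-| \le C L^2$, which follows from \eqref{eq:principal} and a union bound over starting locations using Lemma \ref{L:rw-estimate}.)

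The heart of the argument is controlling $\cA_B^{(2)}$ and $\cA_B^{(5)}$, which both require that \emph{many} principal particles in $H_B^-$ intersect the ignition particle's trajectory $x + W_{B_x,\operatorname{ig}}(\cdot)$ within time $\xi/\log\log L$ while staying inside the rectangle $U_x$, and that at least one of these continues into each neighbouring block $B' \notin U_x$. The strategy here is: first, condition on the ignition trajectory $W_{B_x,\operatorname{ig}}$ (which by $\cA_B^{(1)}$ stays within $\tfrac12\al L$ of its start, hence inside a ball well inside $U_x$); second, observe that conditionally on $\sM_B$ restricted away from the relevant particles, the principal particles $a \in H_B^-$ with $d(a(0), B^c) \ge \tfrac1{10}L$ form (a subset of) a Poisson process of intensity $\mu$ on the "core" sub-block of $B$, with independent random-walk trajectories — this is exactly the kind of spatial-independence structure set up in Section \ref{S:SI-colouring}. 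For each such particle, the probability that its trajectory meets the ignition trajectory before time $\xi/\log\log L = \al L^2/\log\log L$ while staying in $U_x$ is bounded below by a constant $c_0 = c_0(\mu, \al) > 0$ independent of $L$: indeed two independent continuous-time random walks started within $O(L)$ of each other on $\Z^2$ meet before time $\Theta(L^2)$ with probability $\Theta(1/\log L)$ by the standard recurrence/collision estimate for planar walks, and one must check the $1/\log\log L$ time-truncation and the confinement to $U_x$ only cost constant factors (the confinement is fine because both walks are already localized to a ball of radius $O(\al L)$ with probability $\Theta(1)$). So $|Q_{B,x}|$ stochastically dominates a $\mathrm{Binomial}(N, c_0/\log L)$ where $N$ is Poisson of mean $\Theta(\mu L^2)$; its mean is $\Theta(L^2/\log L) \gg L^{2 - \log^{-1/2}L}$, and a Chernoff bound gives $\P[\cA_{B,x}^{(5)c}] \le \exp(-c L^2/\log L)$, which beats the $O(L)$ union bound over $x$. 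For $\cA_{B,B',x}^{(2)}$: given that $\gtrsim L^2/\log L$ principal particles have met the ignition trajectory inside $U_x$, each independently has probability bounded below by a constant (depending on $\al, \mu$) of subsequently exiting into the specified neighbour block $B'$ while remaining in $U_x$ up to that exit time — this is a one-step-scale random walk event, a walk localized in a ball of radius $O(\al L)$ exits a given face of a block of side $L$ within further time $O(L^2)$ with $\Theta(1)$ probability — so the probability that \emph{none} of them do is exponentially small in $L^2/\log L$, again beating the union bound.

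The main obstacle, I expect, is making the collision estimate for the principal particles with the ignition trajectory \emph{simultaneously} rigorous and uniform: one needs the lower bound on the meeting probability to hold for \emph{every} admissible ignition trajectory $W_{B_x,\operatorname{ig}}$ (or at least on the high-probability event $\cA_B^{(1)}$ where it is confined), with the meeting constrained to happen before the short time $\xi/\log\log L$ and with both particles confined to the rectangle $U_x$; and one must carefully set up the conditioning so that the principal particles genuinely behave like an independent Poisson process of random walks even though membership in $H_B^-$ (the condition \eqref{eq:principal}) is itself a constraint on the whole past trajectory. The cleanest route is to first show $H_B^-$ contains, with probability $\ge 1 - e^{-cL^2}$, at least $c\mu L^2$ particles started in the core region $\{u \in B : d(u, B^c) \ge \tfrac1{10}L\}$ with $a(0) = u$ and "controlled past" (so that \eqref{eq:principal} is automatically satisfied by confining the past trajectory to $D(B, \text{small})$, an event of probability $\Theta(1)$ per particle), and then run the forward collision argument on those. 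Getting the two-walk collision lower bound $\Theta(1/\log L)$ with all the side constraints, while standard in spirit, is the step that needs the most care; everything else is a union bound plus Lemma \ref{L:rw-estimate} and Chernoff.
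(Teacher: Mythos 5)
Your proposal follows essentially the same route as the paper: union bound over the five sub-events; easy random-walk and healing-rate estimates for $\cA_B^{(1)},\cA_B^{(3)},\cA_B^{(4)}$; and for the hard events $\cA_B^{(2)}$ and $\cA_B^{(5)}$, condition on the ignition trajectory, use the Poisson structure of the principal particles, run a collision estimate, and apply concentration. You also correctly identify the genuine technical crux --- making the collision estimate uniform over all admissible ignition paths while respecting the time truncation $\xi/\log\log L$ and the confinement to $U_x$, and setting up the conditioning so that the principal particles are a Poisson process of controlled walks. The paper does exactly this by introducing the family $Q''_{B,x}$ of particles conditioned to stay in $U_x^-$ and applying Lemma~\ref{l:hittingNumberSimple}, which converts a lower bound on the expected aggregate intersection time $\E R$ into a lower bound on the expected number of colliding particles, uniformly over the (deterministic, once conditioned) ignition path.

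Two points where your sketch is imprecise, though neither breaks the argument. First, the claim that the time truncation to $\xi/\log\log L$ and the confinement ``only cost constant factors'' is not right: since a particle in $B^-$ starts at distance $\Theta(L)$ from the ignition site and $\xi/\log\log L = \alpha L^2/\log\log L$, the ratio of (initial distance)${}^2$ to the available time is $\Theta(\log\log L)$, so reaching the ignition trajectory at all already costs a factor $\exp(-c\log\log L) = (\log L)^{-c}$. The paper's net estimate is $\E|Q'_{B,x}| \ge L^2 \exp(-C\log\log L) = L^2 (\log L)^{-C}$, not $\Theta(L^2/\log L)$. Because the threshold in $\cA^{(5)}_{B,x}$ is $L^{2-\log^{-1/2}L} = L^2 \exp(-\sqrt{\log L})$, which decays far faster than any fixed power of $\log L$, your conclusion still holds, but the ``constant factors'' claim needs correction. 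Second, ``the standard recurrence/collision estimate'' for two walks does not directly apply once you have conditioned on the ignition trajectory --- the relevant event is a single conditioned Markov chain hitting a fixed deterministic path, not two independent walks meeting --- and you need a bound valid for \emph{every} admissible target path. Lemma~\ref{l:hittingNumberSimple} (the aggregate-time versus hitting-number inequality) is precisely the paper's device for this; you would need to state and prove such a lemma, or give an argument of comparable uniformity.
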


When analyzing rare events in the final section of the paper, we will also require the following conditional estimate which will fall out immediately from the proof of Proposition \ref{p:blueSeed}.

\begin{lemma}\label{L:blueSeedconditional}
	Let $\al, L$ be as in Proposition \ref{p:blueSeed}. Fix a block $B$, a subset $\sC \sset \{B' : d(B, B') \le 2\}$, and let $\cW$ be the event
	$$
	\bigcap_{B' \in \sC} \{ W_{B', \operatorname{ig}}(t) = W_{B', \operatorname{ig}}(0) \; \forall t \in [0, L^4], \; W^h_{B', \operatorname{ig}} \cap [0, L^3] = \emptyset, \;W^h_{B', \operatorname{ig}} \cap [L^3, L^4] \ne \emptyset\}. 
	$$
	Then 
	$$
	\P(\cA_B^c \;|\; \cW) \le \ep_\nu.
	$$
\end{lemma}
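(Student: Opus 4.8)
\textbf{Proof plan for Lemma \ref{L:blueSeedconditional}.}

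The plan is to observe that the proof of Proposition \ref{p:blueSeed} bounds $\P[\cA_B^c]$ by a union bound over the complements of the constituent events $\cA_B^{(1)}, \cA_B^{(3)}, \cA_B^{(4)}, \cA_B^{(5)}$ and $\cA_B^{(2)}$, and that conditioning on $\cW$ affects only a subset of these estimates, which remain small under the conditioning. Concretely, $\cW$ is an event measurable with respect to $\{(W_{B', \operatorname{ig}}, W^h_{B', \operatorname{ig}}) : B' \in \sC\}$, i.e.\ it only touches the ignition-particle trajectories and healing processes for blocks $B'$ at block-distance at most $2$ from $B$; it says nothing about the Poisson processes $\sP_{B'}$. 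So the first step is to revisit which constituent events are (partly) determined by these ignition data and re-derive the required bound for each under the conditioning.

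The second step is the event-by-event check. The event $\cA_B^{(5)}$ and the events $\cA_{B,B',x}^{(2)}$ are, up to the choice of the ignition trajectory $W_{B_x, \operatorname{ig}}$, determined by the principal particles in $H_B^-$, which come from $\sP_B$ and are independent of $\cW$; the statement of $\cW$ fixes $W_{B', \operatorname{ig}}$ to be constant on $[0, L^4]$, hence in particular constant on the relevant time window $[0, \xi/\log\log L] \subset [0, L^4]$ (here using $\xi = \al L^2 \ll L^4$), so the conditional law of the family $\{a(\cdot) - W_{B_x,\operatorname{ig}}(\cdot)\}_{a \in H_B^-}$ under $\cW$ is just that of random walks started from a Poisson configuration shifted by a deterministic vector — exactly the unconditioned setting used to lower-bound $|Q_{B,x}|$ and to produce the crossing particle in $\cA^{(2)}$. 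Thus the bounds for $\cA_B^{(5)c}$ and $(\cA_{B,B',x}^{(2)})^c$ from the proof of Proposition \ref{p:blueSeed} go through verbatim after this deterministic reduction. The event $\cA_B^{(1)}$ asks $\sup_{s \le \xi/\log\log L}|W_{B',\operatorname{ig}}(s)| \le \tfrac12 \al L$ for all $B'$ with $d(B,B')\le 2$: for $B' \in \sC$ this is \emph{implied} by $\cW$ (since $W_{B',\operatorname{ig}}$ is constant $= W_{B',\operatorname{ig}}(0) = 0$ there, as the ignition trajectory starts at $0$); for $B' \notin \sC$ the ignition trajectory is independent of $\cW$, so its bound is unchanged. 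Likewise $\cA_B^{(3)}$ (no healing events of the ignition particles of blocks $B'$ with $d(B,B')\le2$ in $[0,L^3]$) is \emph{implied} by $\cW$ for $B' \in \sC$ — indeed $\cW$ specifies $W^h_{B',\operatorname{ig}} \cap [0,L^3] = \emptyset$ — and for $B' \notin \sC$ is independent of $\cW$; and $\cA_B^{(4)}$ concerns only healing events of principal particles in $\sP_B$, which are independent of $\cW$. Assembling these: every constituent event is either implied by $\cW$, independent of $\cW$, or has its bad-event probability unchanged after a deterministic shift, so the same union bound yields $\P(\cA_B^c \mid \cW) \le \ep_\nu$ with the same $\ep_\nu \to 0$.

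A minor technical point to be handled carefully is the cross-dependence inside $\cA^{(2)}_{B,B',x}$ and $\cA^{(5)}_{B,x}$: these events reference \emph{both} the principal particles (from $\sP_B$, independent of $\cW$) \emph{and} the ignition trajectory $W_{B_x,\operatorname{ig}}$ (which, if $B_x \in \sC$, is pinned by $\cW$). The clean way to say this is to first condition further on the value of $W_{B_x,\operatorname{ig}}$ on $[0,\xi/\log\log L]$ — under $\cW$ this is forced to be the constant path at the starting vertex $x$, i.e.\ $x + W_{B_x,\operatorname{ig}}(s) = x$ for all relevant $s$ — and then note the event becomes a statement purely about the $\sP_B$-particles, whose law is unaffected. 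If $B_x \notin \sC$ then $W_{B_x,\operatorname{ig}}$ is outright independent of $\cW$ and we condition on its value, reducing again to the $\sP_B$-only computation. The only real obstacle is bookkeeping: making sure that in the proof of Proposition \ref{p:blueSeed} none of the estimates for $(\cA^{(2)})^c$ or $(\cA^{(5)})^c$ secretly used randomness of $W_{B_x,\operatorname{ig}}$ beyond its marginal law on a short time window — but since that proof must already handle an \emph{arbitrary} ignition location $x \in \del B^\#$ and the ignition particle's short-time behaviour is what matters, pinning $W_{B_x,\operatorname{ig}}$ to a constant path only makes the crossing condition $a(s) = x + W_{B_x,\operatorname{ig}}(s) = x$ \emph{easier} to satisfy (a random walk must hit the fixed point $x$, rather than a moving target), so all estimates are at least as good. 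Hence the lemma follows immediately from the proof of Proposition \ref{p:blueSeed}.
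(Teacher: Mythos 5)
Your proof is correct and follows essentially the same strategy the paper uses: classify each constituent event $\cA_B^{(i)}$ by whether it is implied by $\cW$, independent of $\cW$, or only depends on $\cW$ through the pinned ignition trajectory, then repeat the same union bound. One minor caveat: your heuristic that pinning $W_{B_x,\operatorname{ig}}$ to a constant makes the crossing event $a(s)=x+W_{B_x,\operatorname{ig}}(s)$ \emph{easier} to satisfy is neither needed nor what the paper invokes; the estimates for $\cA^{(2)}_{B,B',x}$ and $\cA^{(5)}_{B,x}$ in the proof of Proposition~\ref{p:blueSeed} already hold uniformly over arbitrary cadlag paths in $U_x^*$ (via Lemma~\ref{l:hittingNumberSimple}), so conditioning on a specific ignition trajectory in that set leaves them unchanged — which is all that is required.
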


Before proving Proposition \ref{p:blueSeed} and Lemma \ref{L:blueSeedconditional}, we will see how they imply Theorem \ref{T:main-1}. First, we have the following corollary of Proposition \ref{p:blueSeed}. 

\begin{corollary}
	\label{C:blue-domination}
	The process $\fB$ of blue seeds on $\sB_L$ is stochastically dominated by a Bernoulli process of mean $\de_\nu$, where $\de_\nu \to 0$ with $\nu$.
\end{corollary}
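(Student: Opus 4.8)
The plan is to upgrade the single-block estimate of Proposition \ref{p:blueSeed} to a statement about the whole blue seed process by exploiting its finite range of dependence and then quoting a standard domination-by-product-measures theorem.

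First I would recall that the blue seed process on $\sB_L$ is the $\{0,1\}$-valued field $\{\mathbf{1}(\cA_B^c) : B \in \sB_L\}$, and that—as noted immediately after the definition \eqref{E:cA123}—each $\cA_B$ is measurable with respect to $\sigma(\sM_{B'} : d(B, B') \le 2)$. Since the data $(\sP_B, W_{B,\operatorname{ig}}, W_{B,\operatorname{ig}}^h)$, and hence the $\sigma$-algebras $\sM_B$, are i.i.d.\ across blocks $B \in \sB_L$, the field $\{\mathbf{1}(\cA_B^c)\}$ is a finite-range function of an i.i.d.\ field on the lattice $\sB_L \cong \Z^2$: there is an absolute constant $k$ such that if $S, S' \subset \sB_L$ have $d_\sB(S, S') > k$, then $\{\mathbf{1}(\cA_B^c) : B \in S\}$ and $\{\mathbf{1}(\cA_B^c) : B \in S'\}$ are independent. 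In other words, the blue seed process is a $k$-dependent Bernoulli field with marginals bounded by $\eps_\nu$ by Proposition \ref{p:blueSeed}.

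Next I would invoke the Liggett--Schonmann--Stacey theorem on domination of $k$-dependent fields by product measures: there is a threshold $p_0 = p_0(k) > 0$ and a function $q(\cdot, k)$ with $q(p, k) \to 0$ as $p \to 0$, such that any $k$-dependent $\{0,1\}$-field on $\Z^2$ with all marginals at most $p \le p_0$ is stochastically dominated by an i.i.d.\ Bernoulli$(q(p,k))$ field. Since $\eps_\nu \to 0$ as $\nu \to 0$, for all sufficiently small $\nu$ we have $\eps_\nu \le p_0$, and then setting $\de_\nu := q(\eps_\nu, k)$ gives the claim, with $\de_\nu \to 0$ as $\nu \to 0$. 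Since the bound only asserts behaviour in the limit $\nu \to 0$, the remaining values of $\nu$ require no attention; alternatively, one may note that the analogous block construction and domination step for the SI model was carried out in \cite{dauvergne2021spread} and the same argument applies here essentially verbatim.

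There is no real obstacle here—the corollary is a routine consequence of Proposition \ref{p:blueSeed}. The only points needing a little care are correctly reading off the (absolute) range of dependence $k$ from the measurability property of $\cA_B$, and observing that it suffices to work in the regime where $\nu$ is small enough that the single-block probability $\eps_\nu$ lies below the threshold of the domination theorem.
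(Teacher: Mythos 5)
Your proof is correct and follows essentially the same route as the paper: both observe that $\cA_B^c$ is measurable with respect to $\sigma(\sM_{B'} : d_\sB(B,B') \le 2)$, that the $\sM_B$ are i.i.d.\ so the blue seed field has finite range of dependence, and then invoke a Liggett--Schonmann--Stacey domination theorem together with the marginal bound of Proposition~\ref{p:blueSeed}. The paper cites the conditional-probability formulation (Theorem~0.0(i) of \cite{liggett1997domination}) rather than the $k$-dependent-field formulation you use, but these are two standard statements of the same result and lead to identical conclusions.
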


\begin{proof}
	We appeal to Theorem 0.0(i)\footnote{It really is Theorem 0.0, this is not a typo} in \cite{liggett1997domination}, which states the following. Let $d \ge 1$, and suppose that $X:\Z^d \to \{0, 1\}$ is a random process such that for any vertex $v \in \Z^d$, the conditional probability that $X(v) = 1$ given all the values of $X$ on vertices at $\ell^\infty$-distance at least $k$ away from $v$ is at most $\ep$. Then $X$ is stochastically by an i.i.d.\ Bernoulli process $Y$ on $\Z^d$ such that $\E Y(0) = f(\ep)$, where $f(\ep) \to 0$ with $\ep$. Here the function $f$ depends on $k$ and $d$. 
	In our setting, we let $X = \{ B \in \sB_L : \cA_B^c \text{ holds}\}$, and use that the event $\cA_B^c$ is measurable given the $\sig$-algebras $\sM_{B'}, d(B, B') \le 2$ and hence is independent of the collection of events $\cA_{B'}^c, d(B, B') \ge 3$. Proposition \ref{p:blueSeed} then implies the result.
\end{proof}

\begin{proof}[Proof of Theorem \ref{T:main-1}]
	Fix $\mu > 0$. By Proposition \ref{P:high-recovery-rate}, we can take $\nu^+ = 8 \mu$. Now, the SIR process survives forever if there exists $L$ for which the coupled SSP $(\fR, \fB)$ in Proposition \ref{P:SI-to-BR} satisfies $|\fR(\infty)| = \infty$. Choosing $L$ as in \eqref{E:Lnu-relationship},  by Corollary \ref{C:blue-domination} and Theorem \ref{T:random-red-blue} we have
	$$
	\P(d, \mu, \nu) \ge \P(|\fR(\infty)| = \infty) \ge 1 - C \de_\nu,
	$$
	which tends to $1$ as $\nu \to 0$.
\end{proof}

\subsection{Tools for analyzing random walks}

In this part we gather a couple of preliminary estimates used in the proof of Proposition \ref{p:blueSeed}.  With $W_t$ a continuous time random walk, for $x\in B$ define
\[
p_x = \P\Big[\forall t\geq 0: d(x+W_t,B^c) \leq L \lfloor |t\xi^{-1}|/4 \rfloor \Big]
\]
At time $\tau_B$ the principal particles $H_B^-$ are Poisson distributed with intensity $p_x \mu$.
For any $x$ such that $d(x,B^c) \geq \frac1{10} L$ we have that
\begin{align}\label{eq:principalLB}
p_x &\geq 1 - \P[\sup_{0\leq t \leq 3\alpha L^2}  |W_t| > \frac1{10} L^2 ] - \sum_{j\geq 1} \P[\sup_{0\leq t \leq 3(j+1)\alpha L^2}  |W_t| > Lj ]   \geq \frac34,
\end{align}
for all large enough $L$ and $\alpha \leq c$ by Lemma \ref{L:rw-estimate}. The next lemma then follows from a standard concentration estimate on Poisson random variables.
\begin{lemma}\label{l:HBsize}
	Let $L$ be sufficiently large. The size of $|H_B^+|, |Q_B|$ and $|H_B^{-}|$ are all Poisson distributed with the following means:
	\begin{align*}
	\E |H_B^{-}| &= \sum_{x\in B} p_x \mu \geq  \frac13 \mu L^2\\
	\E |Q_B| &= \sum_{x\in B,d(u,B^c)\geq L/10}  p_x \mu \geq \frac1{3} \mu  L^2\\
	\E |H_B^+| &= \sum_{x\in \Z^2} \mu  \P[\exists t\in[0,\xi] : W_t+x \in B ] \leq  \frac32 \mu L^2.
	\end{align*}
	In particular, for some $c>0$ we have that
	\begin{equation}\label{eq:particleSizeBounds}
	\P[|H_B^+|\leq 2\mu L^2, |H_B^{-}|\geq |Q_B|\geq \frac14 \mu L^2] \geq 1 - \exp(-c \mu L^2).
	\end{equation}
%	Moreover, $|H_B^+ \smin H_B^-|$ is also Poisson distributed with mean $\E |H_B^+| - \E |H_B^-| > 0$, and $H_B^+ \smin H_B^-$ is independent of $H_B^-$.
\end{lemma}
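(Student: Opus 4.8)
The plan is to recognize each of $H_B^-$, $Q_B$, $H_B^+$ as an independent thinning of the block Poisson process $\sP_B$, to read off the three intensities, and then to conclude by standard Poisson tail bounds. Recall that $\sP_B$ has intensity $\mu\sW^+ = \mu\sum_{u\in\Z^2}\sW^+_u$, so the particles of $\sP_B$ whose trajectory sits at a site $u$ at time $0$ form a $\mathrm{Poisson}(\mu)$ random variable, independently over $u$. Crucially, whether a particle $a$ lies in any of the three sets in question is a function of $a$ alone: of its trajectory $a(\cdot)$ and, for $H_B^-$ and $Q_B$, nothing more. (It matters that the version of $Q_B$ appearing in this lemma does not involve the ignition path $W_{B,\operatorname{ig}}$, unlike $Q_{B,x}$.) Hence the thinning theorem immediately gives that $|H_B^-|$, $|Q_B|$, $|H_B^+|$ are each Poisson, and it remains to identify the means.

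For this I would argue site by site. A particle with $a(0)=x$ lies in $H_B^-$ exactly when $x\in B$ and its two-sided trajectory satisfies~\eqref{eq:principal}; reflecting time about $0$ turns a two-sided continuous-time simple random walk into a forward one started at $a(0)$, so this event has probability $p_x$, and summing over $x\in B$ gives $\E|H_B^-| = \sum_{x\in B}\mu p_x$. Since $Q_B$ is the further thinning of $H_B^-$ retaining only particles with $d(a(0),B^c)\ge L/10$, its mean is the same sum restricted to such $x$. And $a$ lies in $H_B^+$ iff its forward trajectory from $a(0)=x$ enters $B$ before time $\xi$, which has probability $\P[\exists t\le\xi : x+W_t\in B]$, giving the stated mean. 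The lower bounds then follow at once: by~\eqref{eq:principalLB}, for $\alpha$ small and $L$ large we have $p_x\ge 3/4$ whenever $d(x,B^c)\ge L/10$, and there are at least $(4L/5)^2 > \tfrac49 L^2$ such sites in $B$, so $\E|Q_B|\ge \tfrac34\cdot\tfrac49\mu L^2 = \tfrac13\mu L^2$ and hence $\E|H_B^-|\ge\E|Q_B|\ge\tfrac13\mu L^2$; while $\E|H_B^+|\ge\mu|B|=\mu L^2$ trivially, since a particle starting in $B$ sits in $B$ at time $0$.

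The one genuinely non-routine step is the upper bound $\E|H_B^+|\le\tfrac32\mu L^2$, and I expect it to be the main obstacle. Here I would rewrite $\sum_x\P[\exists t\le\xi : x+W_t\in B]$ as $\E|B+W_{[0,\xi]}|$, the expected size of $B$ dilated by the range of $W$ on $[0,\xi]$: a site at $L^1$-distance $r$ from $B$ lies in this dilated box only if $\max_{t\le\xi}|W_t|\ge r$, which by Lemma~\ref{L:rw-estimate} has probability at most $C\exp(-cr^2/(r+\xi))$. Summing this over the $O(L+r)$ sites at distance $r$ and over $r\ge 1$, with $\xi=\alpha L^2$, the dominant contributions come from $r\lesssim\sqrt{\xi}=\sqrt{\alpha}\,L$ and yield $\E|B+W_{[0,\xi]}|\le L^2+C(\sqrt{\alpha}+\alpha)L^2$, which is at most $\tfrac32 L^2$ once $\alpha$ is small and $L$ is large. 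This is exactly where $\alpha$ must be taken small, consistently with~\eqref{eq:principalLB} and the choice of $\alpha$ in Proposition~\ref{p:blueSeed}.

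Finally, to deduce~\eqref{eq:particleSizeBounds}: since $Q_B\sset H_B^-$ the bound $|H_B^-|\ge|Q_B|$ holds deterministically, so it suffices to control $\P[|H_B^+|>2\mu L^2]$ and $\P[|Q_B|<\tfrac14\mu L^2]$. Because $\E|H_B^+|\le\tfrac32\mu L^2$, the threshold $2\mu L^2$ exceeds $\tfrac43\E|H_B^+|$, and because $\E|Q_B|\ge\tfrac13\mu L^2$, the threshold $\tfrac14\mu L^2$ lies below $\tfrac34\E|Q_B|$; the usual Chernoff bounds for the upper and lower tails of a Poisson variable then make both probabilities at most $e^{-c\mu L^2}$, and a union bound finishes the proof.
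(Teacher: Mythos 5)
Your proposal is correct and follows essentially the same route the paper intends, which it leaves largely implicit (the text before the lemma only computes the mean and lower bound for $H_B^-$ and then declares that the lemma ``follows from a standard concentration estimate''). You correctly identify all three sets as independent thinnings of the block Poisson process $\sP_B$ (noting, rightly, that the $Q_B$ in the lemma, unlike $Q_{B,x}$, does not involve the ignition path and so remains a genuine thinning), derive the three means, and apply Poisson Chernoff bounds. The one step the paper glosses over entirely --- the upper bound $\E|H_B^+|\le\tfrac32\mu L^2$ --- you supply cleanly via $\E|B+W_{[0,\xi]}|$ and Lemma~\ref{L:rw-estimate}, and you correctly flag that this is where $\alpha$ must be taken small; that requirement is consistent with the paper's choice of $\alpha$ in \eqref{eq:principalLB} and Proposition~\ref{p:blueSeed}, so there is no tension. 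No gaps.
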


Next, let $H$ be a set of particles and let $x:[s,s+T]\to\Z^2$ be an arbitrary cadlag path.
The following lemma will be useful for counting how many particles in $H$ hit $x$. With this in mind, let
\[
N(H,s,T,x):= \sum_{a\in H} \mathbf{1}(\{t\in [s,s+T]: a(t) = x(t)\}\neq \emptyset)
\]
be the number of $H$-particles that intersect the path $x$ and let
\[
R(H,s,T,x):= \sum_{a\in H} \int_s^{s+T} \mathbf{1}(a(t) = x(t))dt.
\]
be the aggregate intersection time of the $H$-particles with $x(t)$.
\begin{lemma}\label{l:hittingNumberSimple}
	There exists a constant $C>0$ such that for any set of particles $H$ performing continuous-time simple random walks and any path $x:[s, s + T] \to \Z^2$, for $T \ge 2$ we have
	\[
	\E[N(H,s,T,x) \mid \sF_s] \geq \frac{1}{C\log T} \E[R(H,s,T,x) \mid \sF_s].
	\]
	Here $\sF_s$ is the $\sig$-algebra generated by the walks in $H$ up to time $s$.
	The same results holds if the walks in $H$ instead perform a different Markov chain with transition matrix $Q_{x,y}^{t,t'}$ satisfying the following bound for all $t_1 > 0$, $T\geq 2,$ and $x \in \Z^2$:
	\begin{equation}\label{eq:transitionProbReturnCondition}
	    \int_{t_1}^{t_1 +T} \sup_{y\in \Z^2} Q_{x,y}^{t_1,t} dt \leq C\log T.
	\end{equation}
\end{lemma}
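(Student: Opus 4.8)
The plan is to bound $N$ from below by comparing, for each particle $a \in H$, the quantity $\mathbf{1}(a \text{ hits } x)$ with the conditional expectation of the total intersection time $\int_s^{s+T}\mathbf{1}(a(t)=x(t))\,dt$ given that $a$ does hit $x$. Concretely, for a fixed particle $a$, let $\sigma$ be the first time in $[s,s+T]$ at which $a(\sigma)=x(\sigma)$ (with $\sigma = \infty$ if there is no such time). Then
$$
\E\!\left[\int_s^{s+T}\mathbf{1}(a(t)=x(t))\,dt \;\Big|\; \sF_s\right] = \E\!\left[\mathbf{1}(\sigma < \infty)\,\E\!\left[\int_\sigma^{s+T}\mathbf{1}(a(t)=x(t))\,dt \;\Big|\; \sF_\sigma\right] \Big|\; \sF_s\right],
$$
using that the integrand vanishes before $\sigma$. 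So it suffices to show that on the event $\{\sigma < \infty\}$,
$$
\E\!\left[\int_\sigma^{s+T}\mathbf{1}(a(t)=x(t))\,dt \;\Big|\; \sF_\sigma\right] \le C\log T,
$$
since then summing over $a \in H$ and using $\sum_a \mathbf{1}(\sigma_a<\infty) = N(H,s,T,x)$ gives the result.

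The key step is the displayed bound on the expected residual intersection time after the first meeting. Conditionally on $\sF_\sigma$, we have $a(\sigma) = x(\sigma) =: y_0$, and for $t \ge \sigma$,
$$
\P(a(t) = x(t) \mid \sF_\sigma) = \sum_{z} \P(a(t) = z \mid a(\sigma) = y_0)\,\mathbf{1}(x(t) = z) \le \sup_{z} Q^{\sigma,t}_{y_0,z},
$$
where in the simple random walk case $Q^{\sigma,t}_{y_0,z} = \P_{y_0}(W_{t-\sigma} = z)$. By the first estimate in Lemma \ref{L:rw-estimate}, $\sup_z \P_{y_0}(W_{r}=z) \le C/r$ for $r \ge 1$ (and it is trivially $\le 1$ for $r < 1$), so
$$
\E\!\left[\int_\sigma^{s+T}\mathbf{1}(a(t)=x(t))\,dt \;\Big|\; \sF_\sigma\right] \le \int_0^{T} \Big(1 \wedge \frac{C}{r}\Big)\,dr \le C(1 + \log T) \le C'\log T
$$
for $T \ge 2$. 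For the general Markov chain, this last integral is exactly the hypothesis \eqref{eq:transitionProbReturnCondition} (applied with $t_1 = \sigma$), so the same conclusion holds. Note the path $x$ plays no role beyond pinning $a(t)$ to a single site at each time, which is why the bound is uniform over all cadlag $x$.

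The main obstacle — really the only subtlety — is making the conditioning on the stopping time $\sigma$ rigorous: one must check that $\sigma$ is an $(\sF_t)$-stopping time (it is, being a hitting time of the predictable moving target $x(t)$, using right-continuity of paths), that the strong Markov property applies to the continuous-time walk at $\sigma$, and that the tower property manipulation with the inner conditional expectation is valid. For the inhomogeneous Markov chain case one additionally needs the chain to be strong Markov, which is standard, and the transition kernel estimate must be applied with the correct initial time $t_1 = \sigma$; since \eqref{eq:transitionProbReturnCondition} is assumed uniformly in $t_1 > 0$, this causes no difficulty. Everything else is a routine application of Lemma \ref{L:rw-estimate} and Fubini's theorem.
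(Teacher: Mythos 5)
Your proof is correct and follows essentially the same route as the paper's: bound each particle's expected residence time on the path by conditioning at the first hitting time $\sigma$ (the paper's $\varsigma_a$), apply the strong Markov property so the residual residence time is controlled by $\int Q^{\sigma,t}_{x(\sigma),\cdot}\,dt$, and invoke the $C/r$ decay (or hypothesis \eqref{eq:transitionProbReturnCondition}) to get the $C\log T$ factor. The only difference is cosmetic: the paper phrases the argument per-particle via $n_a$, $r_a$ rather than via $\sF_\sigma$-conditional expectations, but the underlying computation is identical.
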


\begin{proof}
First note that the transition probability for a simple random walk satisfies \eqref{eq:transitionProbReturnCondition}.
Now, for each particle $a\in H$ let
	\begin{align*}
	n_a &= \P[\{t\in [s,s+T]:  a(t) = x(t)\}\neq \emptyset\mid \sF_s],\\
	r_a &= \E\Big[\int_s^{s+T} \mathbf{1}(a(t) = x(t)) dt \Bigm| \sF_s\Big].
	\end{align*}  
	Setting $\varsigma_a = \inf\{t\geq s: a(t) = x(t)\}$ we have
	\begin{align*}
	r_a &= \E\Big[\mathbf{1}(\varsigma_a \leq s+T) \int_{\varsigma_a}^{s+T} Q^{\varsigma_a,t}_{x(\varsigma_a),x(t)} dt \Bigm| \sF_s\Big]\\
	&\leq C\log (T) \E[\mathbf{1}(\varsigma_a \leq s+T) \mid \sF_s] \leq  C\log (T)n_a.
	\end{align*}
	Summing over $a \in H$ yields the result.
\end{proof}

We will also need a slight variant of Lemma~\ref{l:hittingNumberSimple}. Define
\begin{align*}
N'(H,s,T,x)&:= \sum_{a\in H} \mathbf{1}(\{t\in [s,s+T]: a(t') = x(t) \text{ for all } t'\in[t,t+1]\} \neq \emptyset) \\
R'(H,s,T,x)&:= \sum_{a\in H} \int_s^{s+T} \mathbf{1}(a(t') = x(t) \text{ for all } t'\in[t,t+1])dt
\end{align*}

\begin{lemma}\label{l:hittingNumber}
	There exist constants $C_1,C_2>0$ such that for any set of particles $H$ performing continuous time simple random walks and any path $x:[s, s + T] \to \Z^2$, for $T \ge 2$ we have
	\[
	\E[N'(H,s,T,x) \mid \sF_s] \geq \frac{1}{C\log T} \E[R'(H,s,T,x) \mid \sF_s].
	\]
\end{lemma}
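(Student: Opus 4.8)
The plan is to reduce Lemma~\ref{l:hittingNumber} to Lemma~\ref{l:hittingNumberSimple} by interpreting the modified quantities $N',R'$ as the analogous quantities $N,R$ for a \emph{different} Markov chain, namely the ``lagged'' chain that records the position of a simple random walk one unit of time in the past. Concretely, for a particle $a$ performing a continuous-time simple random walk, define $\tilde a(t) = a(t+1)$, or more precisely work with the chain whose state at time $t$ is the pair consisting of the walk's trajectory on $[t, t+1]$; the event $\{a(t') = x(t)\text{ for all }t'\in[t,t+1]\}$ is the event that this lagged process ``sits still at $x(t)$'' over a unit interval. The key point is that the condition $a(t')=x(t)$ for all $t'\in[t,t+1]$ is equivalent to requiring that $a$ be at $x(t)$ at time $t$ and then take no step during $[t,t+1]$, an event of probability $e^{-1}$ (for the unit-rate continuous-time SRW) times the probability of being at $x(t)$ at time $t$, independently of the future.

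First I would make this precise. Write $p_t^a(y) = \P(a(t) = y \mid \sF_s)$ for the transition kernel. Then
\[
\E[R'(H,s,T,x) \mid \sF_s] = \sum_{a \in H} \int_s^{s+T} \P\big(a(t) = x(t),\ a\text{ takes no step in }[t,t+1] \,\big|\, \sF_s\big)\, dt = e^{-1} \sum_{a \in H} \int_s^{s+T} p_t^a(x(t))\, dt,
\]
which is exactly $e^{-1} \E[R(H,s,T,x)\mid\sF_s]$. Similarly, for $N'$, the event that some $t \in [s,s+T]$ has $a$ sitting at $x(t)$ for a full unit interval is sandwiched between the event $N(H,s,T+1,x_{\text{ext}}) \ge 1$ for an appropriate extension of the path, and it is bounded below by a constant multiple of the probability of hitting combined with then pausing. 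The cleanest route is: on the one hand $N' \le N$ trivially (if $a$ matches $x$ on a whole interval it certainly hits $x$); on the other hand, by the Markov property applied at the first hitting time $\varsigma_a = \inf\{t \ge s : a(t) = x(t)\}$, conditionally on $\{\varsigma_a \le s+T\}$ and $\sF_{\varsigma_a}$ there is a probability bounded below by some absolute constant (roughly $e^{-1}$ times the chance the path $x$ stays put for a unit of time, but one must handle the case $x$ itself moves) that the pause event occurs starting near $\varsigma_a$ --- and here one can afford to be generous because we only need a \emph{lower} bound on $N'$ in terms of $R'$, so constant losses are harmless.

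Actually the slickest argument avoids re-deriving anything: apply Lemma~\ref{l:hittingNumberSimple} directly to the lagged chain. Define, for each $a \in H$, a new process $\hat a$ on the state space $\Z^2$ by $\hat a(t) := a(t+1)$; this is again a (time-homogeneous, in fact stationary-increment) Markov chain, and its transition kernel $\hat Q^{t,t'}_{x,y} = \P(a(t'+1) = y \mid a(t+1) = x)$ is just the SRW kernel, which satisfies \eqref{eq:transitionProbReturnCondition}. The quantities $N'(H,s,T,x)$ and $R'(H,s,T,x)$, however, are \emph{not} quite $N(\{\hat a\}, s, T, x)$ and $R(\{\hat a\}, s, T, x)$, because the defining condition involves matching on a whole interval rather than at a point. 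So one genuinely does want the elementary identity above relating $R'$ to $R$ via the ``no step in a unit interval'' decomposition, and then a comparable bound $N' \ge c\, N(\text{pause-augmented})$ for $N'$. The main obstacle --- though it is a mild one --- is the lower bound on $\E N'$: one must show that if the \emph{expected} number of unit-interval matches $\E R'$ is large, then with the stated logarithmic loss the \emph{expected} number of particles achieving at least one match $\E N'$ is large. Following the proof of Lemma~\ref{l:hittingNumberSimple} verbatim, set $\varsigma'_a = \inf\{t \ge s : a(t') = x(t) \ \forall t' \in [t,t+1]\}$; then by the Markov property at $\varsigma'_a$,
\[
r'_a := \E[R'_a \mid \sF_s] = \E\Big[\mathbf 1(\varsigma'_a \le s+T) \int_{\varsigma'_a}^{s+T} \P\big(a(t') = x(t)\ \forall t'\in[t,t+1] \,\big|\, \sF_{\varsigma'_a}\big)\, dt \,\Big|\, \sF_s\Big],
\]
and the inner integrand is bounded by $\P(a(t) = x(t) \mid \sF_{\varsigma'_a}) = Q^{\varsigma'_a + 1,\, t}_{x(\varsigma'_a), x(t)}$ up to the $e^{-1}$ pausing factor, so the same computation as in Lemma~\ref{l:hittingNumberSimple} with \eqref{eq:transitionProbReturnCondition} gives $r'_a \le C \log(T)\, n'_a$ where $n'_a = \P(\varsigma'_a \le s+T \mid \sF_s)$. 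Summing over $a \in H$ completes the proof; I would spell out only this displayed chain and remark that the rest is identical to the previous lemma.
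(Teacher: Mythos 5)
Your final displayed computation is essentially the paper's proof, with one organizational difference: the paper decomposes at the first \emph{hitting} time $\varsigma_a=\inf\{t\ge s: a(t)=x(t)\}$ (which is a genuine stopping time), pulls out the $e^{-1}$ pausing factor \emph{before} applying the Markov property, bounds the resulting hitting integral by $C\log T$ via \eqref{eq:transitionProbReturnCondition}, and then closes the loop via the observation $e^{-1}\P(\varsigma_a\le s+T\mid\sF_s)\le n'_a$ (after hitting $x(\varsigma_a)$, the walk pauses for a unit interval with probability $e^{-1}$, producing a match at $t=\varsigma_a$). You instead decompose directly at the first unit-interval-match time $\varsigma'_a$, which collapses the paper's last step but introduces two small wrinkles you should flag: (a) $\varsigma'_a$ is \emph{not} a stopping time for the natural filtration, since $\{\varsigma'_a\le u\}$ depends on the walk up to time $u+1$ --- you should apply the strong Markov property at the stopping time $\varsigma'_a+1$ instead (and indeed your own formula $Q^{\varsigma'_a+1,t}_{x(\varsigma'_a),x(t)}$ already implicitly restarts from time $\varsigma'_a+1$, where $a(\varsigma'_a+1)=x(\varsigma'_a)$, so the kernel bound $\int_{\varsigma'_a+1}^{s+T}\sup_y Q^{\varsigma'_a+1,t}_{x(\varsigma'_a),y}\,dt\le C\log T$ applies cleanly); (b) the slice $t\in[\varsigma'_a,\varsigma'_a+1)$ is not covered by that Markov decomposition, but it contributes at most $1$ to the inner integral, which is absorbed by the constant since $T\ge 2$. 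With those two fixes the argument is correct, and the "lagged chain'' and "sandwich'' detours earlier in your writeup can be dropped --- they are not used in the final step and the lagged chain does not in fact reduce $N',R'$ to $N,R$, as you yourself observe.
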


\begin{proof}
For each particle $a\in H$ let
	\begin{align*}
	n_a &= \P[\{t\in [s,s+T]: a(t') = x(t) \text{ for all } t'\in[t,t+1]\} \neq \emptyset \mid \sF_s],\\
	r_a &= \E\Big[\int_s^{s+T} \mathbf{1}(a(t') = x(t) \text{ for all } t'\in[t,t+1])dt \Bigm| \sF_s \Big].
	\end{align*}  
	Setting $\varsigma_a = \inf\{t\geq s: a(t) = x(t)\}$ we have
	\begin{align*}
	r_a &= e^{-1}\E\Big[\mathbf{1}(\varsigma_a \leq s+T) \int_{\varsigma_a}^{s+T} Q^{\varsigma_a,s+T}_{x(\varsigma_a),x(t)} dt \Bigm| \sF_s\Big]\\
	&\leq Ce^{-1}\log (T) \E[\mathbf{1}(\varsigma_a \leq s+T) \mid \sF_s] \leq  C\log (T)n_a.
	\end{align*}
which yields the result after summing over $a$.
\end{proof}

\begin{remark}\label{rem:hittingNumberConcentration}
	The counting function $N = N(H,s,T,x)$ is a sum of independent indicator random variables so by Bernstein's inequality,
	\begin{align}
	\nonumber
	\P\Big[N \leq \frac{1}{2} \E[N \mid \sF_s] \;\Big|\; \sF_s\Big] \leq \exp(- c \E[N \mid \sF_s]).
	\end{align}
	The same bound holds with $N' = N'(H,s,T,x)$ by identical reasoning.
\end{remark}

\subsection{Proof of Proposition~\ref{p:blueSeed} and Lemma \ref{L:blueSeedconditional}}

The proofs of these two statements are essentially the same. We focus on proving Proposition~\ref{p:blueSeed} and will indicate when a minor difference is required for the proof of Lemma \ref{L:blueSeedconditional}. 

By standard random walk estimates (Lemma \ref{L:rw-estimate}),
\begin{equation}\label{eq:a1Bound}
\P[\cA^{(1)}_{B}] \to 1    
\end{equation}
as $L\to \infty$. Note that in the context of Lemma \ref{L:blueSeedconditional}, $\P[\cA^{(1)}_{B} \mid \cW] \ge \P[\cA^{(1)}_{B}]$ since the events for $\cA_B^{(1)}$ involving particles that are affected by $\cW$ are necessarily satisfied on the event $\cW$.

 Moreover, the relationship \eqref{E:Lnu-relationship} implies that any particle has a healing event in the interval $[0, L^3]$ with probability at most $64 L^{-3}$.  By this bound, Lemma~\ref{l:HBsize}, and a union bound we have that
\begin{equation}\label{eq:a34Bound}
\P[\cA^{(3)}_{B} \cup \cA^{(4)}_{B}] \geq 1 - CL^{-1}.
\end{equation}
Again, we have $\P[\cA^{(3)}_{B} \cup \cA^{(4)}_{B} \mid \cW] \ge \P[\cA^{(3)}_{B} \cup \cA^{(4)}_{B}]$ since the event $\cW$ guarantees that some ignition particles receive no healing events in the interval $[0, L^3]$.

It remains to bound the probabilities of $\cA^{(2)}_{B,B',x}$ and $\cA^{(5)}_{B,x}$.  We define the following sets:
\begin{align*}
U_x^- &= \{y\in B:d(y,U_x^c)\geq \frac14 \alpha L\},\quad U_x^* = \{y\in B:d(y,U_x^c)\geq \frac12 \alpha L\},\\
B^- &= \{y\in B: d(y, B^c) \geq \tfrac1{10} L\},
\end{align*}
and 
\begin{align*}
Q''_{B,x}=\Big\{a\in H_B^- &: a(0)\in B^-, \forall 0 \le s'\leq \tfrac{\xi}{2\log\log L} \quad a(s')\in U_x^- \Big\}.
\end{align*}
At time $0$, $Q''_{B,x}$ is a Poisson process of particles on the set $B^-$ with density $\mu p_y q_{x,y}$ where $p_y$ was defined in \eqref{eq:principalLB} and
\[
q_{x,y}=\P[W_s \in U_x^- \text{ for all } s \in [0, \tfrac{\xi}{2\log\log L}] ],
\]
where $W_s$ is a simple random walk started at $y$. When $x \in \del B^\#, y \in  B^-$, then by Lemma \ref{L:rw-estimate} we have that $q_{x,y} \ge 1 - o(1)$ where the $o(1)$ term tends to $0$ as $L \to \infty$ uniformly over $x \in \del B^\#, y \in  B^-$.
By equation~\eqref{eq:principalLB}, $p_y\geq \frac34$  and so for sufficiently large $L$,
\[
\E[|Q''_{B,x}|]\geq \frac12 \mu |B^-| \geq \frac14\mu L^2
\]
for all $x \in \del B^\#$.
The future trajectories of the particles in $Q''_{B,x}$ are random walks conditioned to stay inside $U_x^-$ until time $\tfrac{\xi}{2\log\log L}$. These conditioned walks are instances of a Markov chain.  Moreover, the set $U_x^-$ is of the form $[a, b] \times [c, d]$ where $b - a \ge L/2, d-c \ge L/2$. Therefore conditioning a random walk to stay inside $U_x^-$ up in the interval $[0, \tfrac{\al L^2}{2\log\log L}]$ will not greatly increase the transition probabilities between points and the transition matrix of this Markov chain will satisfy
\[
P^{t,t'}_{y,y'} \leq 1\wedge \frac{C}{t'-t}
\]
for all $0\leq t'-t$. In particular, this Markov chain satisfies equation~\eqref{eq:transitionProbReturnCondition}.  Furthermore, for all $y\in B^-$, all $y'\in U_x^*$ and all $s\in[\tfrac{\xi}{4\log\log L},\tfrac{\xi}{2\log\log L}]$ we have that
\[
P^{0,s}_{y,y'} \geq \frac1{L^2} \exp(-C(\log \log L)),
\]
since $y,y'$ are both distance $\al L/4$ away from the boundary of $U_x^-$ and are distance at most $4L$ from each other.
If we let $x(t)$ be a path in $U_x^*$ and let
\[
R(x)=\sum_{a\in Q''_{B,x}}\int_{\tfrac{\xi}{4\log\log L}}^{\tfrac{\xi}{2\log\log L}} \mathbf{1}(a(t)=x(t)) dt
\]
then
\begin{align}
\E R(x)&=\sum_{a\in Q''_{B,x}}\int_{\tfrac{\xi}{4\log\log L}}^{\tfrac{\xi}{2\log\log L}} P^{0,t}_{a(0),x(t)} dt\nonumber\\
&\geq \E[Q''_{B,x}] \inf_{y \in B^-} \int_{\tfrac{\xi}{4\log\log L}}^{\tfrac{\xi}{2\log\log L}} P^{0,t}_{y,x(t)} dt\nonumber\\
& \geq \tfrac{cL^2}{4\log\log L} \exp(-C_1(\log \log L))\nonumber\\
&\geq L^2 \exp(-C_2(\log \log L)).
\end{align}
Now let
\begin{align*}
Q'_{B,x}=\Big\{a\in Q^{''}_{B, x} &: \exists s\in[0,\tfrac{\xi}{2\log\log L}] \text{ such that } a(s) = x+ W_{B_x,\operatorname{ig}}(s)\Big\}.
\end{align*}
Since $x+ W_{B_x,\operatorname{ig}}(s) \in U_x^*$ for $s\in[0,\tfrac{\xi}{2\log\log L}]$ on the event $\cA^{(1)}_B$, by Lemma~\ref{l:hittingNumberSimple} we have that
\begin{equation}
\E[|Q'_{B,x}|\mid \cA^{(1)}_B] \geq \frac{C}{\log L} \E[R(x+ W_{B_x,\operatorname{ig}})\mid \cA_B^{(1)}] \geq L^2 \exp(-C_3(\log \log L)).
\end{equation}
Now, conditional on $\cA^{(1)}_B$ and all particle trajectories from $\sP_B$ up to time $\xi/(2 \log \log L)$, each particle in $Q'_{B,x}$ performs a simple random walk after time $\xi/(2 \log \log L)$. Therefore under this conditioning, each particle in $Q'_{B,x}$ has probability at least $\exp(-C_4 (\log\log L))$ of exiting $U_x$ through $B'$ for any neighbouring block $B' \not\subset U_x$ by time $\tfrac{\xi}{2\log\log L}$. Calling the set of particles that does this $Q_{B, B', x}'$, we therefore have
\begin{align*}
\E[|Q'_{B, B', x}|  \mid \cA^{(1)}_B] &\geq \exp(-C_4 (\log\log L))\E[|Q'_{B,x}|  \mid \cA^{(1)}_B] \\
&\geq L^2 \exp(-(C_3 + C_4)(\log \log L)).
\end{align*}
Conditional on the trajectory of $W_{B_x,\operatorname{ig}}$, we also have that $Q'_{B, B', x}$ is a Poisson random variable and so
\begin{align*}
\P\Big[|Q'_{B, B', x}| \Bigm| \cA^{(1)}_B\Big] &\geq \P\Big[\hbox{Poisson}\big(L^2 \exp(-(C_3 + C_4)(\log \log L))\big) \geq L^{2-\log^{-1/2} L}]\nonumber\\
&\geq 1-L^{-100}
\end{align*}
for $L$ sufficiently large. Now, $Q'_{B, B', x} \sset Q_{B, x}$ for any $B'$, so
\begin{align}
\label{eq:a5Bound}
\P\Big[\cA^{(5)}_{B,x} \Bigm| \cA^{(1)}_B\Big]\geq 1-L^{-100}.
\end{align}
Also, $\cA^{(2)}_{B,B', x}$ is contained in the event where $|Q'_{B, B', x}| \ne \emptyset$ for all $B' \notin U_x$ with $d(B, B') = 1$, so similarly
\begin{align}
\label{eq:a2Bound}
\P\Big[\cA^{(2)}_{B,B', x} \Bigm| \cA^{(1)}_B\Big]\geq 1-4L^{-100}.
\end{align}
Note that the argument for \eqref{eq:a5Bound} and \eqref{eq:a2Bound} goes through verbatim if we condition on $\cA^{(1)}_B \cap \cW$ rather than just on $\cA^{(1)}_B$, since this stricter conditioning just determines some of the ignition trajectories. 
Combining~\eqref{eq:a1Bound}, \eqref{eq:a34Bound}, \eqref{eq:a5Bound} and~\eqref{eq:a2Bound} together with a union bound we have that
\[
\P[(\cA_B)^c]\to 0
\]
as $\nu\to 0$ (or equivalently as $L \to \infty$), completing the proof of Proposition~\ref{p:blueSeed}. Similarly, $\P[(\cA_B)^c \mid \cW]\to 0$, yielding Lemma \ref{L:blueSeedconditional}.

\section{Local and global events}
\label{S:global}
The remainder of the paper is devoted to the proof of the more refined Theorem~\ref{T:main-2}. One of the difficulties in proving this theorem is to separate local and global effects on the process. In this section we define events which will help us to deal with this. Here and throughout the remainder of the paper, for a set of blocks $\sD \sset \sB$, we write $\sM_{\sD}$ for the $\sig$-algebra generated by the $\sM_B, B \in \sD$. 

Suppose that we wish to understand the behaviour of the SIR process $X_t$ in a spatial window near a block $B$ and in a window of time centered at the colouring time $\tau_B$. In this window, we would like to say that $X_t$ is essentially governed only by the $\sig$-algebras $\sM_{B'}$ for $B'$ close to $B$. However, there are three global effects that can disrupt this:
\begin{enumerate}
	\item The behaviour of the coupled SSP from Proposition \ref{P:SI-to-BR} in blocks near $B$.
	\item The presence of particles coloured in far away blocks coming close to $B$. 
	\item The differences $\tau_{B'} - \tau_B$, for all blocks $B'$ close to $B$.
\end{enumerate}
We handle the first of these effects by looking not at the exact behaviour of the SSP, but rather only the encapsulating sets $[\fC]$ and $[\fD]$. First, throughout the proof we assume that $L$ is sufficiently large so that Theorem \ref{T:random-red-blue} holds in the coupled SSP. Next, we have no control over the SSP or the SIR process unless we are working on the event
\begin{equation}
\label{E-event}
\cC = \Big\{\fB_* = \bigcup_{i=1}^\infty A_i(\fB_*), 0 \in [\fD]^c, [\fD]^c \sset \fR(\infty), [\fD] \text{ has only bounded components}\Big\}.
\end{equation}
This is essentially the event $\cE$ in \eqref{E:CD}, with $[\fC]$ replaced by $[\fD]$ in two places. 
 It satisfies the same probability bound as $\cE$ since $\fC \sset \fD$ and $[\fD]$ only has bounded components almost surely for $p$ small enough by Lemma \ref{L:largest-scale}. We choose to only work with $[\fD]$ to simplify things moving forward.

On the event $\cC$, the only information we need about the SSP will be contained in the set $[\fD]$. We can use inequality \eqref{E:fDk-estimate} in Lemma \ref{L:largest-scale} to estimate the probability that $[\fD]$ is not determined locally. For a block $B$ and $r \in (0, \infty]$, let 
$$
D_\sB(B, r) = \{B' \in \sB: d_\sB(B, B') \le r\}.
$$
Define the encapsulating set $[\fD(B, r)]$ to be the version of $[\fD]$ generated using only blue seeds from blocks in $D_\sB(B, r)$. Note that these sets are nondecreasing in $r$ by Lemma \ref{L:dom-seeds}. The set $[\fD(B, r)]$ is a local approximation of $[\fD]$. The two sets are typically equal away from the boundary of $D_\sB(B, r)$; in particular they typically coincide on the block $B$. Indeed, we have the following estimate.
\begin{lemma}
\label{L:global-local-ests}
For $r < r' \in (0, \infty]$, define the event
\begin{equation}
\cD_B(r, r') = \{ \{B\} \cap [\fD(B, r)] = \{B\} \cap [\fD(B, r')]\}.
\end{equation}
Then as long as $\nu$ is sufficiently small given $\mu$, we have $\P [\cD_B(r, r')] \ge 1 - \exp(-c r^{c/\log \log r})$ for some $c>0$.
\end{lemma}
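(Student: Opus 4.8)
The plan is to derive this single-scale estimate from the multi-scale machinery of Lemma \ref{L:largest-scale}, after localizing the encapsulating set $[\fD]$ to a block-window of radius comparable to $r$. First I would use monotonicity: since $D_\sB(B,r) \subseteq D_\sB(B,r')$, Lemma \ref{L:dom-seeds} gives $[\fD(B,r)] \subseteq [\fD(B,r')] \subseteq [\fD]$ (the last inclusion on letting $r' \to \infty$), so $\cD_B(r,r')^c \subseteq \{B \in [\fD]\} \smin \{B \in [\fD(B,r)]\}$ and it suffices to bound $\P(B \in [\fD],\, B \notin [\fD(B,r)])$. By Corollary \ref{C:blue-domination}, for $\nu$ small the seed set $\fB_*$ — and likewise its restriction to $D_\sB(B,r)$ — is stochastically dominated by an i.i.d.\ Bernoulli process of arbitrarily small mean $p = \de_\nu$, so Lemmas \ref{L:Ak-estimate} and \ref{L:largest-scale} apply to both the full and the restricted SSP.

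\textbf{Choosing the scale.} Next I would fix $k = k(r)$ to be the largest integer with $100 r_k \le r$. Since $r_0 = 1$ and $r_{i+1} = (i+1)^2 r_i$ eventually, $\log r_k \asymp k \log k$, so $k \ge c \log r / \log\log r$ and hence $2^{k-1} \ge r^{c/\log\log r}$ for large $r$; thus any bad event of probability at most $r^2 (Cp)^{2^{k-1}}$ also has probability at most $\exp(-c r^{c/\log\log r})$ once $p$ is small, and the small-$r$ case is trivial after adjusting constants. Writing $\fD_k$ and $\fD_k(B,r)$ for the scale-$\le k$ truncations of $\fD$ built from all blue seeds, resp.\ from those in $D_\sB(B,r)$, I would introduce three bad events: $\cE_1 = \{[\fD] \cap D_\sB(B, r/4) \ne [\fD_k] \cap D_\sB(B, r/4)\}$, $\cE_2 = \{[\fD(B,r)] \cap D_\sB(B, r/4) \ne [\fD_k(B,r)] \cap D_\sB(B, r/4)\}$, and $\cE_3 = \{M(B,1) > r_k\}$, where $M(B,1)$ is the diameter of the largest component of $[\fD]$ meeting $D_\sB(B,1)$. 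By \eqref{E:fDk-estimate} (applied to the full system for $\cE_1$, to the restricted one for $\cE_2$) and by the first bound of \eqref{E:Mrrk}, each of these has probability at most $r^2 (Cp)^{2^{k-1}}$.

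\textbf{The main step.} On $(\cE_1 \cup \cE_2 \cup \cE_3)^c$ the argument would run as follows. By Lemma \ref{L:Ak-estimate}, for each $i \le k$ the trace $A_i(\fB_*) \cap D_\sB(B, r/4 + r_i/100)$ depends only on $\fB_* \cap D_\sB(B, r/4 + r_i/100 + r_{i-1}/2) \subseteq D_\sB(B, r)$, so deleting the seeds outside $D_\sB(B,r)$ leaves it unchanged; hence $\fD_k$ and $\fD_k(B,r)$ agree on $D_\sB(B, r/4)$. If moreover $B \in [\fD]$, then on $\cE_3^c$ its $[\fD]$-component has diameter at most $r_k$ and so lies inside $D_\sB(B, r_k) \subseteq D_\sB(B, r/4)$; on $\cE_1^c$ it is also a full component of $[\fD_k]$, and since $\fD_k$ and $\fD_k(B,r)$ coincide on the surrounding window $D_\sB(B, r/4)$ — in particular on all the $\fD_k$-cells enclosing this component — the block $B$ lies in the corresponding component of $[\fD_k(B,r)]$; finally $\cE_2^c$ upgrades this to $B \in [\fD(B,r)]$. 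Summing the three probabilities yields $\P(\cD_B(r,r')^c) \le 3 r^2 (Cp)^{2^{k-1}} \le \exp(-c r^{c/\log\log r})$.

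\textbf{Expected obstacle.} The step I expect to be the main obstacle is the heart of the previous paragraph: showing that the ``fill in bounded complementary components'' operation $A \mapsto [A]$ is undisturbed by deleting far-away blue seeds, once the relevant component of $[\fD]$ around $B$ is known to be small. This requires checking that the cycles of $\fD_k$-blocks that trap $B$ inside a bounded region all sit within $D_\sB(B, r/4)$, so that they are present for $\fD_k(B,r)$ as well — elementary planar topology, but it must be married carefully with the locality statement of Lemma \ref{L:Ak-estimate} and with the geometry of the scales $(r_i)$. It is of the same flavour as arguments already carried out in \cite{dauvergne2021spread}; everything else is bookkeeping with the scale sequence.
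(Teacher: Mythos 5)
Your proof is correct, but it takes a genuinely different route from the paper's. The paper argues directly from the assumption $B \in [\fD(B,r')] \smin [\fD(B,r)]$: supposing for contradiction that the component of $B$ in $[\fD(B,r')]$ has radius $< r/2$, all contributing blue seeds lie in $D_\sB(B, r/2)$; some such seed $B'$ must have a strictly higher level $k_2(B')$ in the $D_\sB(B,r')$-restricted system than its level $k_1(B')$ in the $D_\sB(B,r)$-restricted system (else the two encapsulating sets would agree at $B$); the locality in Lemma \ref{L:Ak-estimate} then forces $r_{k_1(B')} > r$, hence $r_{k_2(B')} > 100r$ by \eqref{E:gammabound}, so the disk $D(B', r_{k_2(B')}/100)$ already gives the component radius $> r$, a contradiction. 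The whole event is then bounded by \eqref{E:Mn-BC} applied to the full set $[\fD]$. Your proof instead truncates at scale $k(r)$, uses \eqref{E:fDk-estimate} and \eqref{E:Mrrk} to discard the bad events $\cE_1, \cE_2, \cE_3$, and then runs a deterministic locality-plus-planar-topology argument (the enclosing $\fD_k$-cells of $B$'s component lie in $D_\sB(B,r/4)$, so they survive in $\fD_k(B,r)$). The paper's approach is shorter and avoids the ``ring'' topology altogether; yours is more hands-on but requires no insight about how seed levels respond to deletion. Both ultimately yield the same $\exp(-r^{c/\log\log r})$ rate from the same source (the $2^{k-1}$ exponent at the natural scale $k(r)$).

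Two small remarks. First, your locality radius is slightly off: to determine $A_i(\fB_*) \cap X$ one needs $\fB_{*,i} \cap D(X, r_i/3)$ (because of the annulus $A(x, r_{i-1}, r_i/3)$ in the definition of $A_i$), and then $\fB_* \cap D(X, r_i/3 + r_{i-1}/2)$, not merely $D(X, r_{i-1}/2)$ as you wrote. Since $r_i/3 + r_{i-1}/2 < r_i \le r_k \le r/100$, your containment in $D_\sB(B,r)$ still holds, so the conclusion survives. Second, the event $\cE_2$ is superfluous: once you know $B \in [\fD_k(B,r)]$, monotonicity $\fD_k(B,r) \subset \fD(B,r)$ gives $[\fD_k(B,r)] \subset [\fD(B,r)]$ directly, so $B \in [\fD(B,r)]$ without invoking the restricted-system version of \eqref{E:fDk-estimate}.
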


Moving forward, we will write $\cD_B(r) = \cD_B(r, \infty)$. It will also be useful to note the transitivity relationship
\begin{equation}
\label{E:transitivity}
\cD_B(r, r') \cap \cD_B(r', r'') = \cD_B(r, r'')
\end{equation}
for $r < r' < r''$, which follows from the monotonicity of the sets $[\fD(B, r)]$ in $r$. We will typically use this with $r'' = \infty$. In this case, \eqref{E:transitivity} splits the global event $\cD_B(r)$ into a local part $\cD_B(r, r')$ which is measurable given the $\sig$-algebra $\sM_{D_\sB(B, r' + 2)}$, and a global part $\cD_B(r')$ whose probability decays quickly with $r'$.
\begin{proof}
If $\{B\} \cap [\fD(B, r')] \ne \{B\} \cap [\fD(B, r)]$, then by monotonicity $B \notin [\fD(B, r)]$ but $B \in [\fD(B, r')]$.
We claim that then $B$ is in a component of $[\fD(B, r')]$ of radius at least $r/2$, and hence is also in a component of $[\fD]$ of radius at least $r/2$. This will imply the lemma via \eqref{E:Mn-BC} in Lemma \ref{L:largest-scale}.

Setting some notation, let 
$$
\fB_*^1 = \fB_* \cap D(B, r), \qquad \fB_*^2 = \fB_* \cap D(B, r'),
$$
and for each blue seed $B'$ and $i = 1, 2$, let $k_i(B')\in \N$ be the unique level with $B' \in A_k(\fB_*^i)$. Suppose that the component of $B$ in $[\fD(B, r')]$ has radius less than $r/2$. Then this component only contains blue seeds in the smaller disk $D_\sB(B, r/2)$. One of these blue seeds  $B'$ must satisfy $k_1(B') < k_2(B')$ and so $B' \in \fB^2_{*, k_1(B') + 1} \smin \fB^1_{*, k_1(B') + 1}$. Therefore by applying the first part of Lemma \ref{L:Ak-estimate}, with $X = D_\sB(B, r/2)$, we must have that 
$$
D_\sB(D_\sB(B, r/2) , r_{k_1(B')}/2) \not \sset D_\sB(B, r), 
$$
and hence $r_{k_1(B')} > r$. Therefore $r_{k_2(B')} > 10^{12} r_{k_1(B')}> 100r$ by \eqref{E:gammabound} and so by the definition of $\fD$ the component of $B'$ in $[\fD(B, r')]$ has radius at least $r$. This is a contradiction.
\end{proof}

We can handle the second effect with straightforward bounds on the deviations of random walks. 
For this next lemma, recall that $H_B^{++}$ consists of all trajectories that may be contributed to the SIR process from $\sP_B$ plus the ignition trajectory for $B$.
\begin{lemma}
\label{L:far-particles}
For $t \in \R, s, m > 0$, and a block $B$, define the event
\begin{equation}
\label{E:deviation}
\cE_B(t, s, m) = \bigcap_{a \in H_B^{++}} \{\sup_{r \in [-s, s]} |a(t) - a(t + r)| \le m\}.
\end{equation}
Then $\P [\cE_B(t, s, m)] \ge 1 - C L^2 e^{- \frac{cm^2}{m+s}}$, where $C, c > 0$ are constants depending on $\mu$.
\end{lemma}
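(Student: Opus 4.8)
The plan is to bound $\P[\cE_B(t,s,m)^c]$ by a first-moment (union) bound over the particles in $H_B^{++} = H_B^+ \cup \{W_{B,\operatorname{ig}}\}$, treating the ignition particle separately from the particles of the Poisson process $\sP_B$. Writing $\cB_a = \{\sup_{r\in[-s,s]}|a(t)-a(t+r)| > m\}$, we have $\cE_B(t,s,m)^c \sset \cB_{W_{B,\operatorname{ig}}} \cup \bigcup_{a\in H_B^+}\cB_a$, so it suffices to bound the probability of each of the two pieces by $CL^2 e^{-cm^2/(m+s)}$. For the ignition particle this is immediate: $W_{B,\operatorname{ig}}$ is a continuous-time simple random walk run over all of $\R$, so conditioning on its value at time $t$ and applying the maximal inequality of Lemma \ref{L:rw-estimate} to its forward increments and (by reversibility) its time-reversed backward increments gives $\P[\cB_{W_{B,\operatorname{ig}}}] \le 2\P(\max_{0\le r\le s}|X(r)| > m) \le C e^{-cm^2/(m+s)}$.

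For the $\sP_B$-particles I would first pass to expectations, $\P\big(\bigcup_{a\in H_B^+}\cB_a\big) \le \E\,\#\{a\in H_B^+ : \cB_a\}$. A particle $a\in\sP_B$ lies in $H_B^+$ exactly when its (unshifted) trajectory visits $B$ at some time in $[0,\xi] = [0,\al L^2]$, and since $\sP_B$ is a Poisson process with intensity $\mu\sum_{u}\sW^+_u$, this count has expectation $\mu\sum_{u\in\Z^2}\P_u(E_1(u)\cap E_2)$, where $\P_u$ denotes the law of a continuous-time simple random walk $W$ started at $u$, $E_1(u) = \{W \text{ visits } B \text{ during } [0,\xi]\}$, and $E_2 = \{\sup_{r\in[-s,s]}|W(t)-W(t+r)| > m\}$. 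As for the ignition particle, $\P_u(E_2) \le 2\P(\max_{0\le r\le s}|X(r)| > m) \le Ce^{-cm^2/(m+s)}$ uniformly in $u$.

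The main obstacle is the dependence between $E_1(u)$ and $E_2$: the time windows $[0,\xi]$ and $[t-s,t+s]$ can overlap, and conditioning a walk to visit $B$ biases its trajectory near that window, so the two probabilities do not simply factor. To sidestep this I would apply Cauchy--Schwarz, $\P_u(E_1(u)\cap E_2) \le \P_u(E_1(u))^{1/2}\,\P_u(E_2)^{1/2} \le \sqrt{C}\,e^{-cm^2/(m+s)}\,\P_u(E_1(u))^{1/2}$ (after relabelling $c$), which decouples the events at the cost of square roots. It then remains to check that $\sum_{u\in\Z^2}\P_u(E_1(u))^{1/2} \le CL^2$: the $u\in B$ contribute $|B| = L^2$, while for $u\notin B$ with $d(u,B) = d\ge 1$ the walk must move by at least $d$ to reach $B$, so Lemma \ref{L:rw-estimate} gives $\P_u(E_1(u)) \le Ce^{-cd^2/(d+\al L^2)}$; since there are $O(L+d)$ such $u$, a routine Gaussian tail sum (splitting at $d=\al L^2$) bounds their total contribution by $CL^2$ for all large $L$.

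Combining the three steps yields $\E\,\#\{a\in H_B^+ : \cB_a\} \le CL^2 e^{-cm^2/(m+s)}$, and adding the ignition-particle estimate gives the claim, with $C,c$ depending only on $\mu$ (the dependence entering through the intensity $\mu$ and the fixed constant $\al$). As an alternative to Cauchy--Schwarz one could observe that when $[t-s,t+s]$ is disjoint from $[0,\xi]$ the events $E_1(u)$ and $E_2$ are genuinely independent, and there combine the high-probability bound $|H_B^+|\le 2\mu L^2$ from Lemma \ref{l:HBsize} with a direct union bound over particles; but the Cauchy--Schwarz argument has the advantage of handling every $t\in\R$ uniformly.
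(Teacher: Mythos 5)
Your proof is correct and is, at bottom, the same first-moment/union-bound strategy that the paper's one-line proof invokes (Lemma~\ref{L:rw-estimate}, Lemma~\ref{l:HBsize}, union bound), but you have filled in a genuine gap that the paper elides. Membership of a $\sP_B$-particle in $H_B^+$ conditions its trajectory to visit $B$ during $[0,\xi]$, so when $[t-s,t+s]$ overlaps $[0,\xi]$ the deviation event $\cB_a$ and the membership event are dependent, and one cannot literally apply Lemma~\ref{L:rw-estimate} to each of the $O(L^2)$ particles guaranteed by Lemma~\ref{l:HBsize}. Your Cauchy--Schwarz step decouples the two events uniformly in $t$, costing only a factor of two in the exponent, and your Gaussian-tail verification that $\sum_u \P_u(E_1(u))^{1/2} = O(L^2)$ (splitting at $d\approx\alpha L^2$) is exactly the estimate needed to close the argument; treating the ignition particle separately via time-reversal is also correct. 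The alternative you flag at the end---exploiting genuine independence when $[t-s,t+s]\cap[0,\xi]=\emptyset$ and the Poisson count bound of Lemma~\ref{l:HBsize}---is presumably the reading the authors had in mind, but, as you observe, it does not cover the overlapping-window case on its own, which is why the Cauchy--Schwarz version is the more robust way to write the proof.
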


\begin{proof}
This bound is immediate from Lemma \ref{L:rw-estimate}, Lemma \ref{l:HBsize}, and a union bound.
\end{proof}

The third effect is more difficult to understand, as we have not established any control over the correlations between different $\tau_B$'s. We will therefore deal with the issue of the random differences $\tau_B - \tau_{B'}$ with a mixture of different methods. Our main tool will be the bound from \eqref{E:first-lipschitz} which crudely guarantees the Lipschitz estimate
\begin{equation}
\label{E:Lipschitz}
|\tau_B - \tau_B'| \le L^2 d(B, B'). 
\end{equation}
As an immediate application of \eqref{E:Lipschitz} we use Lemma \ref{L:far-particles} to control particles that come close to $B$ from far away. 
\begin{corollary}
	\label{C:no-far-particles}
For blocks $B, B'$ and $s > 0$, define the event 
\begin{equation*}
\cL_B(B', s) := \cE_{B'}(0, s + L^2 d(B, B'), d(B, B')/4).
\end{equation*}
Then on $\cL_B(B', s)$, we have that
\begin{equation}
\label{E:LB-event}
\inf \{ d(\bar a (t), B) : t \in [\tau_B - s, \tau_B+ s], a \in H_{B'}^{++} \} \ge d(B, B')/2.
\end{equation}
Moreover,
\begin{align*}
\P [\cL_B(B', s)] &\ge 1 - C L^2 \exp \lf( -\frac{c d(B, B')^2}{s + L^2 d(B, B')}\rg).
\end{align*}
\end{corollary}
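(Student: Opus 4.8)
The plan is to read both claims off from Lemma~\ref{L:far-particles} together with the Lipschitz estimate~\eqref{E:Lipschitz}. For the probability bound there is nothing to do beyond bookkeeping: by definition $\cL_B(B', s) = \cE_{B'}\bigl(0,\, s + L^2 d(B,B'),\, d(B,B')/4\bigr)$, so Lemma~\ref{L:far-particles} gives
$$
\P[\cL_B(B', s)] \;\ge\; 1 - C L^2 \exp\!\left(-\,\frac{c\,(d(B,B')/4)^2}{\,d(B,B')/4 + s + L^2 d(B,B')\,}\right),
$$
and since $B \ne B'$ forces $d(B,B') \ge 1$, the denominator is at most $2\bigl(s + L^2 d(B,B')\bigr)$; absorbing numerical factors into $c$ gives the stated estimate.

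For the deterministic inclusion~\eqref{E:LB-event}, fix $a \in H_{B'}^{++}$ and $t \in [\tau_B - s, \tau_B + s]$, and work on $\cL_B(B', s)$. First control the time shift: by~\eqref{E:Lipschitz}, $|\tau_B - \tau_{B'}| \le L^2 d(B,B')$, so writing $s' := s + L^2 d(B,B')$ we have $|t - \tau_{B'}| \le |t - \tau_B| + |\tau_B - \tau_{B'}| \le s'$, and also $\xi = \al L^2 \le L^2 \le s'$. The point of the window $[-s',s']$ is precisely that $\cL_B(B',s) = \cE_{B'}(0, s', d(B,B')/4)$ bounds the displacement of the underlying $\sP_{B'}$-path over exactly this window, which contains both the shift $t - \tau_{B'}$ and any anchor offset in $[0,\xi]$.

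The remaining step is to anchor $\bar a$ near $B'$ and then slide to time $t$, which splits into the two types of trajectory in $H_{B'}^{++}$. If $a$ is one of the simple $\sP_{B'}$-particles, then $\bar a(r) = a(r - \tau_{B'})$, and since $a \in H_{B'}^{+}$ there is $t_0 \in [0,\xi]$ with $a(t_0) \in B'$; on $\cL_B(B',s)$ we get $|a(t - \tau_{B'}) - a(0)| \le d(B,B')/4$ and $|a(t_0) - a(0)| \le d(B,B')/4$, whence $d(\bar a(t), B') \le |a(t-\tau_{B'}) - a(t_0)| \le d(B,B')/2$. If instead $a = W_{B',\operatorname{ig}}$ and $B'$ is ignited at a site $x$, then $\bar a(r) = x + W_{B',\operatorname{ig}}(r - \tau_{B'})$ for $r \ge \tau_{B'}$ with $d(x, B') \le 2\al L$ (ignition locations lie within $L^\infty$-distance $\al L$ of $B'$), and $|W_{B',\operatorname{ig}}(t - \tau_{B'})| \le d(B,B')/4$ on $\cL_B(B',s)$, so $d(\bar a(t), B') \le 2\al L + d(B,B')/4 \le d(B,B')/2$, using $2\al L \le d(B,B')/4$ (valid for $\al$ small and $d(B,B')$ in the range of order $L$ at which the corollary is applied). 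In both cases, choosing $y \in B'$ realizing $d(\bar a(t), B')$ gives $d(\bar a(t), B) \ge d(y, B) - d(\bar a(t), B') \ge d(B,B') - d(B,B')/2 = d(B,B')/2$, which is~\eqref{E:LB-event}.

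The only step requiring genuine care is the bookkeeping above: arranging that the single deviation window $[-s',s']$ with $s' = s + L^2 d(B,B')$ simultaneously absorbs the anchor offset and the shift $t - \tau_{B'}$ — this is exactly where~\eqref{E:Lipschitz} is used — and handling the ignition trajectory of $B'$ separately, since its starting point lies up to $2\al L$ from $B'$ rather than inside it. Everything else is triangle inequality.
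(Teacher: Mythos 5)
Your proof is correct and takes the same route as the paper: use the Lipschitz bound~\eqref{E:Lipschitz} to convert the window $[\tau_B-s,\tau_B+s]$ into a window of length $s'=s+L^2 d(B,B')$ around $\tau_{B'}$, anchor each trajectory in $H_{B'}^{++}$ at a point near $B'$ in $[\tau_{B'},\tau_{B'}+\xi]$, and combine with a triangle inequality and the displacement bound from $\cE_{B'}(0,s',d(B,B')/4)$. The one place where you are slightly more careful than the paper's own proof is the ignition particle: the paper asserts that every $a\in H_{B'}^{++}$ visits $B'$ during $[\tau_{B'},\tau_{B'}+\xi]$, but the ignition trajectory $W_{B',\operatorname{ig}}$ is anchored at the ignition site $x\in\partial B'^\#$, which may lie up to $L^\infty$-distance $\al L$ (so $L^1$-distance $2\al L$) outside $B'$; you explicitly account for this offset and note that the resulting requirement $2\al L\le d(B,B')/4$ is harmless because for smaller $d(B,B')$ the stated probability bound is already vacuous. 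This is a reasonable way to phrase it, though the cleanest version would simply note that the corollary has no content when $d(B,B')=O(L)$ so one may freely assume $d(B,B')\ge 8\al L$. Everything else (the two uses of the displacement bound, the check that $\xi\le s'$ and $|t-\tau_{B'}|\le s'$, and the final probability estimate from Lemma~\ref{L:far-particles}) matches the paper.
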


\begin{proof}
By \eqref{E:Lipschitz}, we have
$$
[\tau_B - s, \tau_B + s] \sset [\tau_B' - s - L^2 d(B, B'), \tau_B' + s + L^2 d(B, B')].
$$
Any particle $a \in H_{B'}^{++}$ is contained in the block $B'$ at some point in the interval $[\tau_{B'}, \tau_{B'} + \xi]$.
Therefore for \eqref{E:LB-event} to fail, there must be some particle $a \in H_{B'}^{++}$ whose path in the time interval $[-s - L^2 d(B, B'), s + L^2 d(B, B')]$ has diameter at least $d(B, B')/2$. This particle would have to travel distance $d(B, B')/4$ from its location at time $0$ in the time interval $[-s - L^2 d(B, B'), s + L^2 d(B, B')]$, implying $\cL_B(B', s)^c$.
Lemma \ref{L:far-particles} gives the quantitative bound.
\end{proof}

While Lemma \ref{L:global-local-ests} and Corollary \ref{C:no-far-particles} do not allow us to rule out all global effects on the local behaviour of the SIR process $X_t$, they do allow us to simplify our study to only consider blocks within a $(d_\sB(B, 0)^{1/6} + m)$-radius of $B$ for some constant $m$ (here $0$ is the block containing $(0,0)$). The following corollary is immediate from these results, and sets up the global event $\cG_\nu$ in Theorem \ref{T:main-2}.

\begin{corollary}\label{L:global-is-rare}
For any density $\mu > 0$ and $\nu < \nu^-$, as long as $m =m_\nu \in \N$ is large enough the following event has positive probability.
	$$
	\cG_\nu := \cC \cap \bigcap_{B \in \sB} \lf(\cD_B(d_\sB(0, B)^{1/6} + m) \cap \bigcap_{B' : d_\sB(B, B') \ge m + d_\sB(0, B)^{1/6}}\cL_B(B', [d(B, B')]^{3/2}) \rg).
	$$
	Moreover, by letting $m_\nu \to \infty$ as $\nu \to 0$ we can guarantee that $\P[ \cG_\nu] \to 1$ as $\nu \to 0$.
\end{corollary}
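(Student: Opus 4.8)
The approach is a union bound over the three mechanisms by which $\cG_\nu$ can fail:
$$
\cG_\nu^c = \cC^c \;\cup\; \bigcup_{B \in \sB} \cD_B\big(d_\sB(0,B)^{1/6} + m\big)^c \;\cup\; \bigcup_{B \in \sB}\ \bigcup_{\substack{B' \in \sB \\ d_\sB(B,B') \ge m + d_\sB(0,B)^{1/6}}} \cL_B\big(B',[d(B,B')]^{3/2}\big)^c .
$$
The first term is handled directly: $\cC$ is the event $\cE$ of \eqref{E:CD} with $[\fC]$ replaced by $[\fD]$, so as noted after \eqref{E-event}, Theorem \ref{T:random-red-blue} (together with Lemma \ref{L:largest-scale}, which gives that $[\fD]$ a.s.\ has only bounded components for small $\nu$) yields $\P[\cC^c] \le C p(\nu)$ with $p(\nu) \to 0$ as $\nu \to 0$; this contributes nothing depending on $m$. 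The plan is then to show the remaining two contributions can be made as small as we like by taking $m$ large, so that the whole bound is $\le C p(\nu) + \Phi(m,\nu)$ with $\Phi(m,\nu)\to 0$ as $m\to\infty$ for each fixed $\nu$.

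For the second term, I would apply Lemma \ref{L:global-local-ests} with $r = r_B := d_\sB(0,B)^{1/6}+m$ and $r' = \infty$ to get $\P[\cD_B(r_B)^c]\le \exp(-c\,r_B^{\,c/\log\log r_B})$. Since there are $O(n)$ blocks at block distance $n$ from $0$, and $\exp(-c\,r^{c/\log\log r})$ decays faster than any fixed power of $1/r$ (because $c\log r\gg(\log\log r)^2$, so $r^{c/\log\log r}/\log r\to\infty$), the series $\sum_{B}\P[\cD_B(r_B)^c]$ converges; splitting it at $d_\sB(0,B)=m^6$ bounds it by a fixed power of $m$ times $\exp(-c\,m^{c/\log\log m})$ plus a convergent tail, both $\to 0$ as $m\to\infty$. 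For the third term, I would apply Corollary \ref{C:no-far-particles} with $s=[d(B,B')]^{3/2}$: writing $D=d(B,B')$ this gives $\P[\cL_B(B',s)^c]\le CL^2\exp\big(-cD/(D^{1/2}+L^2)\big)$, and by the comparison \eqref{E:block-distance} the exponent is $\ge c' g(\ell)$ with $g(\ell)=\min(\ell/L,\sqrt{L\ell})$ increasing and $g(\ell)/\log\ell\to\infty$, where $\ell=d_\sB(B,B')$. There being $O(\ell)$ choices of $B'$ at block distance $\ell$ and $O(n)$ choices of $B$ at block distance $n$ from $0$, the constraint $\ell\ge m+n^{1/6}$ gives
$$
\sum_B\sum_{B'}\P[\cL_B(B',s)^c]\ \le\ \sum_{n\ge 0}O(n)\sum_{\ell\ge m+n^{1/6}}O(\ell)\,CL^2\,e^{-c'g(\ell)},
$$
which converges for each fixed $\nu$ and tends to $0$ as $m\to\infty$ (the $n^{1/6}$ lower cutoff making the outer sum summable).

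Combining the three estimates gives $\P[\cG_\nu^c]\le Cp(\nu)+\Phi(m,\nu)$ with $\Phi(m,\nu)\to 0$ as $m\to\infty$ for every fixed $\nu<\nu^-$. Hence for each such $\nu$ one may choose $m=m_\nu$ large enough that $\P[\cG_\nu^c]<1$, i.e.\ $\P[\cG_\nu]>0$; and letting $m_\nu\to\infty$ as $\nu\to 0$ while using $p(\nu)\to 0$ forces $\P[\cG_\nu^c]\to 0$, i.e.\ $\P[\cG_\nu]\to 1$. The computations are routine; the only delicate point is organizing the nested union bound in the third term so that the $\ell$-sum and the $n$-sum converge and are simultaneously small — which is exactly why the radius threshold in $\cG_\nu$ is split into a $B$-dependent piece $d_\sB(0,B)^{1/6}$ (ensuring summability over $B$) and a tunable constant $m$ (ensuring smallness), and why Corollary \ref{C:no-far-particles} is invoked at the generous scale $[d(B,B')]^{3/2}$, which still leaves enough tail decay.
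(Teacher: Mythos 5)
Your proof is correct and fills in exactly the union-bound argument the paper leaves implicit when it declares the corollary ``immediate from these results.'' The three-way decomposition into $\cC^c$, the $\cD_B$ failures, and the $\cL_B$ failures, together with the probability bound on $\cC$ from Theorem~\ref{T:random-red-blue} and Lemma~\ref{L:largest-scale}, the stretched-exponential bound of Lemma~\ref{L:global-local-ests}, and the random-walk estimate of Corollary~\ref{C:no-far-particles}, is precisely the intended route, and your bookkeeping of the summability over blocks and the role of the split radius $d_\sB(0,B)^{1/6}+m$ is sound.
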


Moving forward, we set $m_B = m + d_\sB(0, B)^{1/6}$ to be the locality radius for $B$ and assume $\nu$ is small enough so that $\P[\cG_\nu] \ge 1/2$. 

\section{Upper bound on the density of remaining susceptible particles}
\label{S:upper-bd-density}

In this section, we show that the density of infected and susceptible particles in a region decreasing dramatically shortly after the colouring process passes through. When combined with the estimate in Proposition \ref{P:death-with-specifics}, this will later allow us to conclude that the infection dies out locally after the colouring process passes through (see Section \ref{S:herd}).

\subsection{Local infection spread}

Fix any block $B$.
We first establish that the following holds for some small fixed $\de > 0$: on the event $\cG_\nu$,
\begin{equation}
\label{E:goal-delta}
\text{all particles in $H_B$ are infected by time $L^{2+\delta} + \tau_B$ and heal by time $L^7$}
\end{equation}
with probability superpolynomial in $L$.

We will construct a collection of events whose intersection with $\cG_\nu$ implies \eqref{E:goal-delta}. The first of these events, $\cJ_B$, will be a long-range event controlling the coupled SSP, and the remaining events will have short range, only depending on the $\sig$-algebras $\sM_{B'}$ for $B'$ in a small radius of $B$. Throughout this section, all bounds hold for sufficiently large $L$ given a fixed $\de > 0$.  

We let the block $B$ be our frame of reference. Setting some notation for the section, let $\sC = \sC_B := D_\sB(B, 4 L^\de), \sC^- = \sC^-_B := D_\sB(B, 2 L^\de)$.
%
%$B^{k,0}$ be a box of side length $kL$ centered at $B$ and let $B^{k,z}=\{x+z:x\in B^{k,0}\}$ be a $z$-translation of it for $z\in\Z^2$.  Let $\sC=\sC_B$ (respectively $\sC^-$) denote the set of blocks contained in $B^{4L^{\delta}+1,0}$ (respectively $B^{2L^{\delta}+1,0}$).

Let $\cJ_B$ be the event
\[
\cJ_B = \Big\{\forall B'\in \sC^-: \{B'\} \cap [\fD(B', L^{\delta/20})] = \{B'\} \cap [\fD(B', m_{B'})]\Big\}.
\]
Using the notation of Lemma \ref{L:global-local-ests}, we have $\cJ_B = \bigcap_{B'\in \sC^-} \cD_{B'}(L^{\delta/20}, m_{B'})$. By equation~\eqref{E:transitivity}, on the event $\cG_\nu \cap \cJ_B$ the components of $\sC^- \cap [\fD]$ agree with a local approximation of radius $L^{\delta/20}$.  
By Lemma~\ref{L:global-local-ests} we have that
\begin{equation}
\label{E:cJB-bound}
\P[\cJ_B] \geq 1 - \exp(-L^{c\delta/\log\log L}).
\end{equation}
We move on to defining the short-range events.
Our next event implies that every local neighbourhood of radius $L^{\delta/15}$ blocks has density at least $\frac23$ of red blocks. We set
\[
\cL_B = \Big\{\forall B'\in \sC^-: |\{B''\subset D_\sB(B', L^{\delta/15}) : B''\in [\fD(B'', L^{\delta/20})]\}| \leq \frac13 L^{2\delta/15}\Big\}
\]
and have the following strong estimate.
\begin{lemma}
\label{E:cLB-bound}
We have $
\P[\cL_B] \geq 1 - \exp(-cL^{\delta/30})$.
\end{lemma}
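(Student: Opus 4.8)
The plan is to prove the complementary bound $\P[\cL_B^c]\le \exp(-cL^{\delta/30})$ by a union bound over $B'\in\sC^-$, followed, for each fixed $B'$, by a local independence argument on the neighbourhood $W:=D_\sB(B',L^{\delta/15})$. Since $|\sC^-|=|D_\sB(B,2L^\delta)|\le CL^{2\delta}$ is only polynomial in $L$, it is enough to show that for each fixed $B'$,
\[
\P\Big[\,\big|\{B''\sset W: B''\in[\fD(B'',L^{\delta/20})]\}\big|>\tfrac13 L^{2\delta/15}\Big]\le \exp(-cL^{\delta/30}),
\]
with $c$ large enough that the polynomial factor $CL^{2\delta}$ is absorbed when we sum over $B'$.

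Fix $B'$ and write $\chi_{B''}=\mathbf 1(B''\in[\fD(B'',L^{\delta/20})])$ for $B''\in W$. The two inputs I would use are a \emph{small marginal} and a \emph{short range}. For the marginal: the encapsulating sets $[\fD(B'',\cdot)]$ are nondecreasing in their radius (as noted after their definition, via Lemma \ref{L:dom-seeds}), so $[\fD(B'',L^{\delta/20})]\sset[\fD]$; if $B''\in[\fD]$ then $B''$ lies in a component of $[\fD]$, and since $\fD$ is a union of lattice disks of radius at least $r_1/100\ge1$ (forced large by \eqref{E:gammabound}) with bounded holes filled, that component has positive diameter, i.e.\ $M(B'',1)>0$ in the notation of Lemma \ref{L:largest-scale}. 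Hence by the second bound in \eqref{E:Mrrk} together with Corollary \ref{C:blue-domination}, $\P[\chi_{B''}=1]\le\P[B''\in[\fD]]\le C\de_\nu$, and taking $\nu$ small enough (equivalently $L$ large enough) we may assume $\P[\chi_{B''}=1]\le\frac1{100}$. For the range: by definition $[\fD(B'',L^{\delta/20})]$ is a deterministic function of $\fB_*\cap D_\sB(B'',L^{\delta/20})$, and each $\{B'''\in\fB_*\}=\cA_{B'''}^c$ is measurable with respect to the $\sM_{B''''}$ with $B''''$ within a bounded block-distance of $B'''$; since the $\sM_B$ are i.i.d., $\chi_{B''_1}$ and $\chi_{B''_2}$ are independent whenever $d_\sB(B''_1,B''_2)>2L^{\delta/20}+O(1)$.

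Now partition $W$ into the $O(L^{\delta/10})$ classes obtained by fixing the coordinates of $B''$ modulo $s:=\lceil 2L^{\delta/20}\rceil+O(1)$; within each class the $\chi$'s are mutually independent, and each class has $\Theta(L^{2\delta/15}/L^{\delta/10})=\Theta(L^{\delta/30})$ elements. If $\sum_{B''\in W}\chi_{B''}>\frac13 L^{2\delta/15}$, then some class has more than a fixed positive fraction of its $\Theta(L^{\delta/30})$ members equal to $1$; since within a class the $\chi$'s are independent Bernoulli with mean $\le\frac1{100}$, well below that fraction, a Chernoff bound gives probability at most $\exp(-c'L^{\delta/30})$ for this. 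A union bound over the $O(L^{\delta/10})$ classes, and then over the $O(L^{2\delta})$ blocks $B'$, costs only a factor $L^{O(1)}$, absorbed into the exponent for $L$ large, yielding $\P[\cL_B^c]\le\exp(-cL^{\delta/30})$.

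The step I expect to be most delicate is the marginal bound $\P[B''\in[\fD]]=O(\de_\nu)$: one must invoke \eqref{E:Mrrk} rather than \eqref{E:PDxr} (the latter decays too slowly to feed a $\Theta(L^{\delta/30})$-exponent), and check that the hole-filling operation $[\,\cdot\,]$ does not inflate a rare local seed configuration into a likely event — which is exactly why one phrases everything through Lemma \ref{L:largest-scale}, where $[\,\cdot\,]$ is already accounted for. A secondary point requiring care is the arithmetic linking the three exponents: it is precisely the \emph{cap} at scale $L^{\delta/20}$ in the definition of $\cL_B$ that makes each $\chi_{B''}$ short-range and so allows the partition of $W$ into $\Theta(L^{\delta/30})$-sized independent classes; without the cap one would be thrown back on \eqref{E:PDxr} and lose the stretched-exponential rate.
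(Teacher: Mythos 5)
Your proof is correct and follows essentially the same route as the paper: bound the marginal $\P(B''\in[\fD(B'',L^{\delta/20})])$ by a small constant via Corollary~\ref{C:blue-domination} and the second bound in \eqref{E:Mrrk}, note that the indicators are independent beyond block-distance $\approx 2L^{\delta/20}$, and conclude by a concentration estimate for finite-range dependent Bernoulli fields together with union bounds over $B'\in\sC^-$. The partition-into-residue-classes-plus-Chernoff argument you spell out is precisely the content of the paper's Lemma~\ref{l:dependentPerc}, which the paper's proof invokes directly rather than re-deriving.
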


\begin{proof}
	For each block $B''$ we have that $\P(B''\in [\fD(B'', L^{\delta/20})]) \leq \frac14$ for large enough $L$ by Corollary \ref{C:blue-domination} and \eqref{E:Mrrk} in Lemma \ref{L:largest-scale}. Moreover, any collection of events 
$$
\{B_i \in [\fD(B_i, L^{\delta/20})]\}, \quad i = 1, \dots, k
$$
are independent if $d_\sB(B_i, B_j) > 2L^{\delta/20}$ for all $i \ne j$.
Applying a standard concentration estimate, see Lemma~\ref{l:dependentPerc} immediately below, yields the bound.
\end{proof}

\begin{lemma}\label{l:dependentPerc}
	Let $p > 0$, and let $\{Y_x\}_{x\in \Z^2}$ be a field of Bernoulli$(p_x)$ random variables where $p_x \le p$ for all $x$. Suppose that $\{Y_x\}_{x\in A}$ are independent for any finite set $A\subset \Z^2$ whose pairwise distances are at least $R$.  Then for any set $\Lambda$ and any $y>2p|\Lambda|$,
	\[
	\P[\sum_{x\in \Lambda} Y_x \geq y] \leq R^2 \exp(-c y^2/(R^2|\Lambda|)).
	\]
	Similarly, if $p_x \ge p$ for all $x$ then for any set $\Lambda$ and any $y< p|\Lambda|/2$ we have
	\[
	\P[\sum_{x\in \Lambda} Y_x \leq y] \leq R^2 \exp(-c y^2/(R^2|\Lambda|)).
	\]
\end{lemma}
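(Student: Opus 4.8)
The plan is to split the field $\{Y_x\}$ into a bounded number of independent sub-fields and apply a Chernoff bound to each, combining the pieces through the generalized Hölder inequality so that the dependence between sub-fields costs only a constant factor in the rate. We may assume $\Lambda$ is finite and that $R \ge 1$: if $R < 1$, any two distinct sites of $\Z^2$ are at distance $\ge 1 \ge R$, so $\{Y_x\}$ is already a globally independent family and the classical Chernoff inequality applies directly (this is the case $N=1$ of what follows). First I would partition $\Z^2$ into $N := \lceil R \rceil^2 \le 4R^2$ colour classes according to the residues of the two coordinates modulo $\lceil R \rceil$. Any two distinct sites in the same class differ by a nonzero multiple of $\lceil R \rceil$ in some coordinate, hence are at $\ell^1$-distance at least $\lceil R \rceil \ge R$, so the restriction of $\{Y_x\}$ to any one class is an independent family. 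Writing $\Lambda = \bigsqcup_{j=1}^N \Lambda_j$ for the induced partition, $X := \sum_{x \in \Lambda} Y_x$, and $X_j := \sum_{x \in \Lambda_j} Y_x$, each $X_j$ is a sum of independent $\mathrm{Bernoulli}(p_x)$ variables, though the $X_j$ need not be independent of one another.

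For the upper tail I would bound the exponential moment of $X = \sum_j X_j$ by the generalized Hölder inequality with all exponents equal to $N$, and then use $1+t \le e^t$ inside each (independent) class:
\[
\E\, e^{\lambda X} \;\le\; \prod_{j=1}^N \big(\E\, e^{\lambda N X_j}\big)^{1/N} \;\le\; \exp\Big(\tfrac{m}{N}\,(e^{\lambda N}-1)\Big), \qquad \lambda > 0,
\]
where $m := p|\Lambda| \ge \sum_{x \in \Lambda} p_x$ (using $p_x \le p$). Markov's inequality and optimizing over $\lambda$ (the optimum is $\lambda = N^{-1}\log(y/m)$, admissible since $y > 2m > m$) then yield the Poisson-type tail
\[
\P\Big[\,\sum_{x \in \Lambda} Y_x \ge y\,\Big] \;\le\; \exp\Big(-\tfrac1N\big(y\log(y/m) - y + m\big)\Big).
\]
Since $y > 2m$, an elementary one-variable estimate (put $r = y/m \ge 2$) gives $y\log(y/m) - y + m \ge c\,y$; and since $\sum_{x\in\Lambda} Y_x \le |\Lambda|$ we may assume $y \le |\Lambda|$, so $y \ge y^2/|\Lambda|$. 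With $N \le 4R^2$ this gives $\P[\sum_{x\in\Lambda} Y_x \ge y] \le \exp(-c\,y^2/(R^2|\Lambda|)) \le R^2\exp(-c\,y^2/(R^2|\Lambda|))$, the claimed bound.

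The lower tail is entirely parallel: the same Hölder computation with $\lambda < 0$, now using the hypothesis $p_x \ge p$ so that $m := p|\Lambda| \le \sum_{x\in\Lambda} p_x$, gives $\E\, e^{-\lambda X} \le \exp(-\tfrac{m}{N}(1-e^{-\lambda N}))$ for $\lambda>0$, and Markov plus optimization ($\lambda = N^{-1}\log(m/y)$, admissible since $y < m/2 < m$) gives
\[
\P\Big[\,\sum_{x\in\Lambda} Y_x \le y\,\Big] \;\le\; \exp\Big(-\tfrac1N\big(m - y - y\log(m/y)\big)\Big).
\]
For $y < m/2$ one checks $m - y - y\log(m/y) \ge c\,m \ge 2c\,y$, and the same endgame (again $y \le |\Lambda|$ and $N \le 4R^2$) finishes the proof.

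I expect the only point needing care to be ensuring that the exponent scales like $y^2/(R^2|\Lambda|)$ — with the $R^2$ in the right power and with no dependence on $p$ in the constant. This is precisely why I would combine the independent classes through the generalized Hölder inequality rather than through a crude union bound over classes: a union bound forces one to distribute the deviation $y$ (or the centred excess $y - \sum_x p_x$) across classes of possibly very unequal sizes, and extracting a clean bound then needs a somewhat fussy two-regime case analysis in Bernstein's inequality, at the risk of picking up an extra factor of $N$ or a factor $p^{-1}$ in the exponent. The two elementary inequalities $y\log(y/m) - y + m \ge c\,y$ (for $y \ge 2m$) and $m - y - y\log(m/y) \ge c\,m$ (for $y \le m/2$) are routine calculus and I would not dwell on them.
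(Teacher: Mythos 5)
Your proof is correct, but the method is genuinely different from the paper's. Both approaches start the same way: partition $\Z^2$ into $O(R^2)$ residue classes modulo (roughly) $R$, so that the restriction of $\{Y_x\}$ to each class is an independent family. From there the paper proceeds by a pigeonhole-then-union-bound scheme: if $\sum_{x\in\Lambda} Y_x \ge y$ then some class $\Lambda_{j_1,j_2}$ must carry at least $y\left(\tfrac{1}{4R^2}+\tfrac{3|\Lambda_{j_1,j_2}|}{4|\Lambda|}\right)$ of the sum, one applies Hoeffding's inequality within that (independent) class, minimizes the resulting exponent over the unknown class size $|\Lambda_{j_1,j_2}|$ by AM--GM, and a union bound over the $R^2$ classes produces the $R^2$ prefactor. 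You instead combine the classes multiplicatively: the generalized H\"older inequality $\E e^{\lambda X}\le\prod_j(\E e^{\lambda N X_j})^{1/N}$ converts the dependence between classes into an $N$ in the tilting parameter, and standard Poisson-flavored Chernoff estimates within each class then give $\exp\left(-\tfrac1N(y\log(y/m)-y+m)\right)$ for the whole sum at once; the elementary reductions for $y\ge 2m$ (resp.\ $y\le m/2$) and $y\le|\Lambda|$ finish. Your route bypasses the pigeonhole step and the union bound entirely --- indeed, your bound has no $R^2$ prefactor and you tack it on trivially --- while the paper's pigeonhole threshold is itself a clean trick that makes the Hoeffding-plus-union-bound route painless, so your cautionary remark about that path being ``fussy'' slightly overstates the difficulty. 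Both are fine arguments of comparable length. One small imprecision worth noting: your reduction ``we may assume $R\ge 1$, else apply plain Chernoff'' is not literally a valid reduction, since for $R<1$ the stated conclusion (exponent $\sim y^2/(R^2|\Lambda|)$, prefactor $R^2$) is \emph{strictly stronger} than what independence gives; but in $\Z^2$ the hypothesis only has content for $R\ge 1$ and the paper's own proof implicitly treats $R$ as a positive integer, so this does no harm.
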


\begin{proof}
		We only prove the first inequality as the second is similar. For $(j_1,j_2)\in \{0,\ldots,R-1\}^2$ let 
	\[
	\Lambda_{j_1,j_2} = \{(x_1,x_2)\in\Lambda: x_1 \equiv j_1 \ \hbox{mod} \ R, x_2 \equiv j_2 \  \hbox{mod} \ R\}
	\]
	If $\sum_{x\in \Lambda} Y_x \geq y$ then there must exist some $(j_1,j_2)$ such that
	\[
	\sum_{x\in \Lambda_{j_1,j_2}} Y_x \geq y\lf(\frac{1}{4R^2} + \frac{3|\Lambda_{j_1,j_2}|}{4 |\Lambda|}\rg).
	\]
	By Hoeffding's inequality and using that $\E \sum_{x\in \Lambda_{j_1,j_2}} Y_x \le |\Lambda_{j_1,j_2}| p \le \tfrac{|\Lambda_{j_1,j_2}|y}{2 |\Lambda|} $ we have
	\begin{align*}
	\P[\sum_{x\in \Lambda_{j_1,j_2}} Y_x &\geq |\Lambda_{j_1,j_2}|y\lf(\frac{1}{4R^2 |\Lambda_{j_1,j_2}|} + \frac{3}{4 |\Lambda|} \rg)] \\
	& \le \exp\lf(-2 |\Lambda_{j_1,j_2}| y^2\lf(\frac{1}{4R^2 |\Lambda_{j_1,j_2}|} + \frac{1}{4 |\Lambda|} \rg)^2 \rg) \le \exp\lf(- c \frac{y^2}{R^2 |\Lambda|}\rg).
	\end{align*}
	The final inequality follows by minimizing the exponent in $|\Lambda_{j_1,j_2}|$. The lemma follows by a union bound.
\end{proof}
On the event $\cG_\nu \cap\cJ_B\cap\cL_B$, for every $B' \in \sC^-$, each ball $D_\sB(B', L^{\de/15})$ is coloured at most one-third blue. All these balls are completely contained in the larger set $\sC$. Our next event controls the movement of particles from this larger set.
Define
\[
\cN_B:= \bigcap_{ B'\in \sC} \bigcap_{ a\in H_{B'}^{++}} \Big\{  |a(t)-a(0)| \leq L^{1 +\frac{\delta}{10}}\lceil t/L^2 \rceil^{\frac12+ \frac{\delta}{5}} \quad \forall t \in [0, L^{3}]\Big\}.
\]
With notation as in Lemma \ref{L:far-particles}, we have
\begin{equation*}
\cN_B \supset \bigcap_{ B'\in \sC} \bigcap_{i=1}^L \cE_B(0,i L^2, (iL^2)^{1/2 + \de/10}),
\end{equation*}
so by that lemma and a union bound we have
\begin{equation}
\label{E:NB-bound}
\P[\cN_B] \geq 1 - \frac12 \exp(-cL^{\delta/3})
\end{equation}
for large enough $L$.
If $B' \in \sC$ and $B'' \notin D_\sB(B', L^{3\de/5})$ then $d(B, B'') \ge L^{1 + 3\delta/5}/5$ by \eqref{E:block-distance}. The Lipschitz bound \eqref{E:Lipschitz} then implies that on the event $\cN_B$ a particle in $H_B^{++}$ will not reach $B''$ before time $\tau_B'' \wedge (\tau_{B'} + L^3)$ and thus cannot become the ignition particle for $B''$ prior to time $\tau_{B'} + L^3$.  It follows that at most $L^{3\de}$ particles in $\bigcup \{H_{B'}^{++} : B' \in \sC\}$ become ignition particles prior to time $\tau_{B'} + L^3$.

A significant difficulty in checking when particles near $B$ become infected given only the information in in $\sM_{\sC}$ is that this does not tell us the relative times of the $\tau_{B'}$.  To accommodate this we will discretize the relative times $\tau_B-\tau_{B'}$ and take a union bound over this discretization.  Similarly, we don't know exactly which blocks are blue so we will take a union bound over the possibilities.

For a block $B$  let $\sU=\sU_B=\{0,1\}^{\sC}\times \Z^{\sC}\times (\Z^2)^{\sC}$. 
 Let
\begin{align*}
\sU^* = \sU^*_B =\bigg\{(r,s,x)\in \sU&: \max_{B'\in \sC}  |s(B')| \leq 9L^{\delta}\xi, \qquad x(B') \in (B')^{\#} \; \forall B' \in \sC, \\
&\sum_{B'' \in D_\sB(B', L^{\delta/15})} r(B'') \geq \frac{1}{3} L^{2\delta/15} \qquad \forall B'\in \sC^-
\bigg\}.
\end{align*}
We will be interested in the random variable $U_B\in \sU$ defined by $U_B=(r_B,s_B,x_B)$ where $r_B(B'')$ is the indicator that block $B'$ is red, $x_B(B')$ is the ignition location for $B'$ and $s_B(B') = \lfloor \tau_{B} - \tau_{B'} \rfloor$.  

Since the colouring process spreads at least at rate $\xi$ (see \eqref{E:first-lipschitz}) we have that $\max_{B'\in \sC^-}  |s_{B}(B')| \leq 9L^{\delta}\xi$ and $x_B(B') \in (B')^\#$ for all $B'$ by construction. Moreover, the condition on $r$ required for membership in $\sU^*$ holds for $r_B$ on the event $\cG_\nu \cap \cJ_B\cap\cL_B$. Therefore on this event, $U_B\in \sU^*$. Note that
\begin{equation}
\label{E:cLde}
|\sU^*| \leq L^{c L^{\delta}}.
\end{equation}  
For $u=(r,s,x)\in \sU_B^*$ we say a particle $a\in H_{B}^+$ is \textbf{covered} by a particle $a'\in Q_{B',x(B')}$ with respect to $u$ if $r(B')=1$ and there exists $t\in[\frac12 L^{2+\delta}, L^{2+\delta}-1]$ and $y\in\Z^2$ such that $a(t)=y$ and $a'(t' + s(B'))=y$ for all $t'\in [t,t+1]$.  By construction, if $(r,s,x)=(r_B,s_B,x_B)$ and $a$ is covered by $a'$ for $t$ and $x$ then $\oa(t+\tau_B)=\oa'(t+\tau_B)=y$ since $a'(t')=y$ for all 
\[
t'\in \Big[t  + \lfloor \tau_{B} - \tau_{B'} \rfloor + \tau_{B'},t  + \lfloor \tau_{B} - \tau_{B'} \rfloor + \tau_{B'} +1\Big],
\]
which includes $t+\tau_B$.  We will write $\cW_{B,u,a}$ for the event that $a\in H_B^+$ is covered by at least $L$ particles in $\bigcup_{B' \in \sC} Q_{B', x(B')}$  with respect to $u\in \sU_B^*$.  We ask that $L$ particles cover $a$ rather than just 1 since some of the particles may become ignition particles and then not follow their original path.  However, recall on the event $\cN_B$ at most $L^{3 \delta}$ particles will become ignition particles prior to time 
$$
\inf_{B' \in \sC} \tau_{B'} + L^3 \ge \tau_B + L^3/2.
$$
Therefore some of the $L$ particles that cover $a$ will not change their original path prior to time $\tau_B + L^3/2$.
\begin{lemma}
\label{L:WBua-bd}
Defining
$$
\cW_B = \bigcap_{u=(r,s,x)\in \sU^*} \Big(\bigcap_{a\in H_B^+} \cW_{B,u,a} \Big) \cup \cT_{B, u}^c,
$$
where $\cT_{B, u} := \{\forall B'\in \sC : r(B') \leq \mathbf{1}(\cA_{B'})\}$,
we have
$
\P[\cW_B^c, \cN_B] \leq \exp(-c L^2)$.
\end{lemma}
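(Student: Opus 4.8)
The plan is a union bound over the configurations $u\in\sU^*$ together with a conditional first–moment lower bound on the number of covering particles. Since $|\sU^*|\le L^{cL^\delta}$ by \eqref{E:cLde} and $\P[|H_B^+|>2\mu L^2]\le\exp(-cL^2)$ by Lemma~\ref{l:HBsize}, it suffices to show: for each fixed $u=(r,s,x)\in\sU^*$ and each $a\in H_B^+$, the probability that $\cT_{B,u}\cap\cN_B$ holds but $a$ is covered (w.r.t.\ $u$) by fewer than $L$ particles of $\bigcup_{B'\in\sC}Q_{B',x(B')}$ is at most $\exp(-cL^{2+\delta/15})$; then $L^{cL^\delta}\cdot L^2\cdot\exp(-cL^{2+\delta/15})\le\exp(-cL^2)$ for $L$ large. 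Two easy reductions come first. Because $\cA_{B'}\subseteq\cA^{(1)}_{B'}\cap\cA^{(5)}_{B',x(B')}$, on $\cT_{B,u}$ every red block $B'$ (i.e.\ $r(B')=1$) satisfies $\cA^{(1)}_{B'}$ and $|Q_{B',x(B')}|\ge L^{2-\log^{-1/2}L}$; call this weaker, more local event $\cR_u$, so $\cT_{B,u}\subseteq\cR_u$. And it is enough to produce $L$ covering particles from red blocks $B'\ne B$ alone, of which there is always at least one within $d_\sB$–distance $L^{\delta/15}$ of any block of $\sC^-$ once $\tfrac13L^{2\delta/15}\ge 2$.

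The crucial choice is what to condition on. Fix $u$, and for a red block $B'$ and a principal particle $a'\in H_{B'}^-$ let $\varsigma_{a'}$ be the first hitting time of the shifted ignition trajectory $x(B')+W_{B_{x(B')},\operatorname{ig}}(\cdot)$. Let $\cF$ be generated by $\sM_B$, by the ignition trajectories $W_{B'',\operatorname{ig}}|_{[0,\xi/\log\log L]}$ for all $B''$ within distance $2$ of a red block, and by the trajectories $a'|_{(-\infty,0]\cup[0,\min(\varsigma_{a'},\xi/\log\log L)]}$ for principal particles $a'$ of red blocks. Given $\cF$ one knows $H_B^+$ and the trajectories $\bar a$, $a\in H_B^+$; the $H_B^{++}$–part $\cN_B^{\mathrm{loc}}$ of $\cN_B$; the sets $Q_{B',x(B')}$ (hence $\cR_u$); and, for each $a'$ in such a set, $\varsigma_{a'}\le\xi/\log\log L$ together with the point $a'(\varsigma_{a'})$. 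On $\cR_u$ the event $\cA^{(1)}_{B'}$ confines the ignition trajectory to an $L^\infty$–ball of radius $\tfrac12\alpha L$ about the origin, so $a'(\varsigma_{a'})\in D(B',C\alpha L)$; and by the strong Markov property, conditionally on $\cF$ the translates $a'(\cdot+\varsigma_{a'})$ of the particles in the sets $Q_{B',x(B')}$ are independent continuous–time simple random walks started from $a'(\varsigma_{a'})$. Conditioning at the hitting times — and reading the localization of the $Q$–particles off $\cA^{(1)}$ — is exactly what keeps everything $\cF$–measurable while leaving the walks fresh; the naive alternative of conditioning at a fixed time and asking that the $Q$–particles not have strayed from $B'$ fails, since a random walk run for time $\sim L^2$ cannot be confined below the diffusive scale $L^{1+\delta/2}$ except on an event of failure probability $\exp(-L^{c\delta})$, far too large to survive the union over $\sU^*$.

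Now fix $u$ and $a$ and work on $\cR_u\cap\cN_B^{\mathrm{loc}}$. By $\cN_B$ and \eqref{E:block-distance}, for every $t\in[\tfrac12L^{2+\delta},L^{2+\delta}]$ the point $\bar a(t)$ lies in a block of $\sC^-$, so the $\sU^*$–constraint on $r$ yields $\ge\tfrac14L^{2\delta/15}$ red blocks $B'\ne B$ within distance $2L^{1+\delta/15}$ of $\bar a(t)$, each carrying $\ge L^{2-\log^{-1/2}L}$ particles $a'\in Q_{B',x(B')}$ with $a'(\varsigma_{a'})\in D(B',C\alpha L)$. For $\alpha$ a small enough absolute constant, since $|s(B')|\le 9\alpha L^{2+\delta}$ the shifted trajectory $a'(\cdot+s(B'))$ restricted to the window is a fragment of the walk started at $a'(\varsigma_{a'})$ that has been running for a time in $[\tfrac1{40}L^{2+\delta},10L^{2+\delta}]$; as $2L^{1+\delta/15}+C\alpha L\le\sqrt{\tfrac1{40}L^{2+\delta}}$ for large $L$, the local central limit theorem gives $\P\big(a'(t'+s(B'))=\bar a(t)\ \forall\,t'\in[t,t+1]\mid\cF\big)\ge cL^{-(2+\delta)}$. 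Summing over these $\gtrsim L^{2\delta/15}\cdot L^{2-\log^{-1/2}L}$ particles and integrating $t$ over the window bounds below the aggregate intersection time $R'$ (in the sense of Lemma~\ref{l:hittingNumber}) of $\bar a$ with the shifted trajectories of $\bigcup_{B'\ \mathrm{red},\,\ne B}Q_{B',x(B')}$ by $\E[R'\mid\cF]\ge L^{2+2\delta/15-\log^{-1/2}L}$. Lemma~\ref{l:hittingNumber} then gives $\E[N'\mid\cF]\ge L^{2+2\delta/15-\log^{-1/2}L}/(C\log L)\ge L^{2+\delta/15}$ for large $L$, for the count $N'$ of \emph{distinct} covering particles, and the Bernstein bound in Remark~\ref{rem:hittingNumberConcentration} yields $\P(N'\le L\mid\cF)\le\P\big(N'\le\tfrac12\E[N'\mid\cF]\mid\cF\big)\le\exp(-cL^{2+\delta/15})$. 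Since $\cR_u,\cN_B^{\mathrm{loc}},H_B^+$ and the $\bar a$'s are $\cF$–measurable, taking expectations and using $\cT_{B,u}\subseteq\cR_u$, $\cN_B\subseteq\cN_B^{\mathrm{loc}}$ gives the per–$(u,a)$ bound, and the union bounds over $a$ and $u$ finish the proof.

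I expect the main obstacle to be the conditioning in the second paragraph: one must find a $\sigma$–algebra with respect to which the red blocks, the sets $Q_{B',x(B')}$, the event $\cR_u$, and the localization of the $Q$–particles near their home blocks are all measurable, while the shifted $Q$–trajectories over the window remain independent simple random walks. Conditioning at the per–particle hitting times $\varsigma_{a'}$ and extracting localization from $\cA^{(1)}$ rather than from a separate (and too unlikely) deviation event is the only non-routine ingredient; after that, the estimate is the standard $R'/N'$ first–moment computation of Lemma~\ref{l:hittingNumber} combined with Bernstein's inequality.
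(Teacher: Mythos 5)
Your per-$(u,a)$ union-bound strategy and the first-moment/Bernstein computation are the same as the paper's, and your estimate is correct; the genuine difference is the choice of $\sigma$-algebra, and there your critique of the ``naive alternative'' misdiagnoses the paper. The paper \emph{does} condition at the fixed time $L^2$: $\sI$ is generated by $\{\cA_{B'}\}_{B'\in\sC^-}$, by $a|_{[0,L^{2+\delta}]}$, and by $a'|_{[0,L^2]}$ for $a'\in\bigcup_{B'\in\sC^-}H_{B'}^+\cup W_{B,\operatorname{ig}}$. Localization of the $Q$-particles is read off the $\sI$-measurable event $\cN_B^*=\{\P(\cN_B\mid\sI)>0\}$ (which pins $a'(L^2)$ within $\approx L^{1+\delta/10}$ of $a'(0)\in B'$), via the indicator $Z_{B,u,a}=\mathbf{1}(\cT_{B,u}\cap\cN_B^*)$. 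The point you miss is that $\cN_B^*$ is the \emph{same} event for every $u\in\sU^*$: it never enters the union bound, and its failure probability never multiplies $|\sU^*|\approx L^{cL^\delta}$. Since the lemma bounds $\P[\cW_B^c\cap\cN_B]$ rather than $\P[\cW_B^c]$, one simply carries $\cN_B^*\supset\cN_B$ through, and the only quantity unioned over $u,a$ is the conditional tail $\P[\cW_{B,u,a}^c,\,Z_{B,u,a}=1\mid\sI]\le\exp(-cL^2)$. So the $\exp(-L^{c\delta})$ you identify is never paid inside the union; the ``naive alternative'' is exactly the paper's proof and it works.

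Your alternative — stopping each $Q$-particle at its hitting time $\varsigma_{a'}$, locating $a'(\varsigma_{a'})$ from $\cA^{(1)}$, and invoking the strong Markov property for the joint $(a',W_{B_{x(B')},\operatorname{ig}})$-process — is coherent and would yield the estimate, but it is strictly more delicate than the paper's argument: it requires simultaneous strong Markov at per-particle stopping times defined jointly with the ignition trajectory, and care that the per-$u$ $\sigma$-algebra $\cF_u$ reveals enough to determine $Q_{B',x(B')}$ without revealing the post-$\varsigma_{a'}$ increments. It also forces you to exclude $B'=B$ from the covering count, since your $\cF_u\supset\sM_B$ reveals the full trajectories of $Q_{B,x(B)}$-particles, whereas the paper reveals $a'$ only up to $L^2$ and can keep $B'=B$ in play without extra work. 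In short: you trade the paper's clean fixed-time conditioning plus the $\cN_B^*$ device for extra stopping-time machinery that buys nothing here, and your stated reason for doing so (that the fixed-time approach fails under the $\sU^*$ union) is incorrect.
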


\begin{proof}
We will establish the result by a union bound over $u$ and $a$.  Let $N$ be the number of particles $a' \in \bigcup_{B' \in \sC} Q_{B', x(B')}$ which cover a fixed $a \in H_B^+$ and define
\[
R= \sum_{\substack{B'\in \sC:\\r(B')=1}} \sum_{a'\in Q_{B',x(B')}} \int_{\frac12 L^{2+\delta}}^{L^{2+\delta}-1} \mathbf{1}\Big(a'(t' + s(B'))=a(t) \;\; \forall t'\in[t,t+1] \Big) dt.
\]
Let $\sI$ denote the $\sigma$-algebra generated by all $\{\cA_B\}_{B'\in \sC^-}$, all particles in $\cup_{B'\in \sC^-} H_{B'}^+ \cup W_{B, \operatorname{ig}}$ up to time $L^2$ and the particle $a(t)$ up to time $L^{2+\delta}$:
\[
\sI=\sigma\Big\{\{\cA_B\}_{B'\in \sC^-}, \{a(t)\}_{t\in [0,L^{2+\delta}]}, \{a'(t)\}_{a'
	\in \cup_{B'\in \sC^-} H_{B'}^+ \cup W_{B, \operatorname{ig}}, t\in [0,L^{2}]} \Big\}.
\]
The future trajectories of the $a'(t)$ after time $L^2$ are simple random walks independent of $\sI$, so conditional on $\sI$, we are in the setting of Lemma~\ref{l:hittingNumber} and Remark \ref{rem:hittingNumberConcentration}. This gives that
\begin{equation}
\label{E:RcI}
\P[N \leq \frac{c}{\log L} \E[R \mid \sI]\mid \sI] \leq \exp(-\frac{c'}{\log L} \E[R \mid \sI])
\end{equation}
for constants $c, c' > 0$. Next, we estimate $\E[R \mid \sI]$.

For this, let
\[
Z_{B,u,a} = \mathbf{1}(\cT_{B, u} \cap \cN_B^*).
\]
where $\cN_B^* := \{\P[\cN_B \mid \cI] > 0\}$ is the part of the probability space where information from $\cI$ does not preclude the set $\cN_B$. The indicator $Z_{B,u,a}$ is $\sI$-measurable.

Now fix $t\in [\frac12 L^{2+\delta}, L^{2+\delta}-1]$.
Let $B_t$ denote the $\sI$-measurable block containing the location $a(t)$. On the event $\cN_B^*$, the movement of $a$ is restricted and so we necessarily have $B_t \in \sC^-$.
Therefore on $\cN_B^*$, we have that $D_\sB(B_t, L^{\de/15}) \sset \sC$, and so for any $B' \in D_\sB(B_t, L^{\de/15})$ any $a' \in Q_{B',x'(B')}$, on $\cN_B^*$ the value of $a'(L^2)$ must be within distance $2L^{1 + \de/5}$ of $B_t$. Hence
\[
d(a'(L^2), a(t)) \le 3L^{1 + \de/5}.
\]
Therefore for any such $a'$ we have 
\begin{align*}
\P[a'(t' + s(B'))=a(t) \quad \forall t'\in[t,t+1] \mid \sI] &= e^{-1} \P[a'(t + s(B'))=a(t) \mid \sI] \\
&\ge c L^{-2-\delta}Z_{B,u,a}.
\end{align*}
Since $u \in \sU^*$ and $B_t \in \sC^-$, at least one-third of the blocks $B' \in D_\sB(B_t, L^{\de/15})$ have $r(B')=1$.  If we also have that $\cT_{B, u}$ holds then $\mathbf{1}(\cA_{B'})=1$ for all these blocks.
Since $|Q_{B',x(B')}|\geq L^{2-(\log L)^{-1/2}}$ for such blocks, we have
\begin{align*}
\E\Big[\sum_{\substack{B'\in \sC:\\r(B')=1}} \sum_{a'\in Q_{B',x}}  &\mathbf{1}(a'(t' + s(B'))=a(t) \quad \forall t'\in[t,t+1] ) \mid \sI\Big] \\
&\geq c' L^{-\de + 2\de/15 -(\log L)^{-1/2}} Z_{B,u, a}
\end{align*}
and so upon integrating we get that
\[
\E[R\mid \sI] \geq c L^{2+2\delta/15 -(\log L)^{-1/2} }Z_{B,u, a} \geq L^{2} \log(L) \ Z_{B,u, a}.
\]
Therefore by \eqref{E:RcI}, we have
\[
\P[\cW_{B,u,a}^c,Z_{B,u, a}=1 \mid \sI]\leq \exp(-c L^2)
\]
and so unconditionally
\[
\P[\cW_{B,u,a}^c \cap \cT_{B, u} \cap \cN_B^*]\leq \exp(-c L^2).
\]
Taking a union bound over $u, a$ and using that $\cN_B \sset \cN_B^*$ we have that
$
\P[\cW_B^c, \cN_B] \leq \exp(-c L^2)
$
which completes the proof.
\end{proof}

On the event $\cW_B$ we must have $\cW_{B, U_B, a}$ for all ${a\in H_B^+}$ and hence all such $a$ are covered. 

Next, we write $\cK_{B}$ for the event that every particle in $H_B^+ \cup W_{B, \operatorname{ig}}$ gets a healing event in the interval $[\frac12 L^7, L^7]$. This is the final event we need to ensure all particles in $H_B$ are removed. The following lemma then summarizes the results of this section.

\begin{lemma}
\label{L:final-event}
On the event $\cG_\nu \cap\cJ_B\cap\cL_B\cap\cN_B \cap \cW_B \cap \cK_B$ every particle in $H_B \cup W_{B, \operatorname{ig}}$ is removed by time $\tau_B+L^7$. Moreover, we have the following probability bounds, in addition to the bound \eqref{E:cJB-bound} on $\P [\cJ_B]$.
\begin{align}
\label{eq:cLNWbound}
\P[\cL_B\cap\cN_B \cap \cW_B] &\geq  1 - \exp(-cL^{\delta/30}) \\
\label{eq:cKbound}
\P[\cK_B] &\geq 1-\exp(-\frac13 L).
\end{align}
\end{lemma}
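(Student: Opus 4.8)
The plan is to derive the deterministic statement from the events in the intersection, and then to obtain \eqref{eq:cLNWbound} and \eqref{eq:cKbound} by assembling estimates already proved in this section. For the deterministic claim I would work throughout on $\cG_\nu \cap \cJ_B \cap \cL_B \cap \cN_B \cap \cW_B \cap \cK_B$. As already observed, on $\cG_\nu \cap \cJ_B \cap \cL_B$ the random variable $U_B = (r_B, s_B, x_B)$ lies in $\sU^*$; moreover $\cT_{B, U_B}$ holds, because on $\cG_\nu$ the event $\cC$ holds, so Proposition \ref{P:SI-to-BR} gives $\fR(\infty) \sset \operatorname{IGN} \smin \fB_* = \operatorname{IGN} \smin \{B'' : \cA_{B''}^c \text{ holds}\}$, whence every red block $B'$ is ignited at $x_B(B')$ and satisfies $\cA_{B'}$. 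Since $U_B \in \sU^*$ and $\cT_{B, U_B}$ holds, the definition of $\cW_B$ then forces $\cW_{B, U_B, a}$ for every $a \in H_B^+$: each such $a$ is covered with respect to $U_B$ by at least $L$ particles drawn from $\bigcup_{B' \in \sC} Q_{B', x_B(B')}$, and every host block $B'$ occurring here is red, hence satisfies $\cA_{B'}$.

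The next step is to show that every particle of $H_B \sset H_B^{++} = H_B^+ \cup \{W_{B, \operatorname{ig}}\}$ is infected by time $\tau_B + L^{2+\de}$ — the block-$B$ instance of \eqref{E:goal-delta}. The ignition particle $W_{B, \operatorname{ig}}$ of $B$, if $B$ is ignited, is infected already at $\tau_B$. For $a \in H_B^+$, note that on $\cN_B$ at most $L^{3\de} < L$ particles of $\bigcup_{B' \in \sC} H_{B'}^{++}$ ever become ignition particles before time $\inf_{B' \in \sC} \tau_{B'} + L^3 \ge \tau_B + L^3/2$, so at least one of the $\ge L$ particles covering $a$, say $a' \in Q_{B', x_B(B')}$, follows its original simple trajectory up to that time. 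Since $\cA_{B'}$ holds and $B'$ is ignited at $x_B(B')$, this $a'$ meets the ignition particle of $B'$ — which by $\cA^{(3)}_{B'}$ is infected and unhealed throughout $[\tau_{B'}, \tau_{B'} + L^3]$ — within time $\xi/\log\log L$ of $\tau_{B'}$, and then by $\cA^{(4)}_{B'}$ stays infected and unhealed on $[\tau_{B'}, \tau_{B'} + L^3]$. The Lipschitz bound \eqref{E:first-lipschitz} gives $|\tau_B - \tau_{B'}| \le 4\al L^{2+\de}$ for $B' \in \sC$, so for $\al$ small the covering time $\tau_B + t$ with $t \in [L^{2+\de}/2, L^{2+\de}]$ falls inside the interval on which $a'$ is infected and healthy; by the identity defining the covering relation, $a'$ and $a$ occupy the same site at time $\tau_B + t$, so $a$ is infected by $\tau_B + L^{2+\de} < \tau_B + L^7/2$. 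Finally, on $\cK_B$ every particle of $H_B^{++}$ has a healing event in $[\tau_B + L^7/2, \tau_B + L^7]$; being infected well before $\tau_B + L^7/2$, it is removed by $\tau_B + L^7$ (and if it first turns into an ignition particle for another block it is ``removed'' even earlier by our bookkeeping convention). This gives the deterministic statement.

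For \eqref{eq:cLNWbound} I would union bound, $\P[(\cL_B \cap \cN_B \cap \cW_B)^c] \le \P[\cL_B^c] + \P[\cN_B^c] + \P[\cW_B^c \cap \cN_B]$, and feed in Lemma \ref{E:cLB-bound}, the estimate \eqref{E:NB-bound}, and Lemma \ref{L:WBua-bd}; the $\cL_B$ term dominates, leaving $1 - \exp(-cL^{\de/30})$ after shrinking $c$. For \eqref{eq:cKbound}, note $H_B^+$ is a function of trajectories alone, so conditionally on $H_B^+$ the healing processes of the finitely many particles in $H_B^{++}$ are independent rate-$\nu$ Poisson processes; each misses the length-$L^7/2$ interval $[L^7/2, L^7]$ with probability $\exp(-\nu L^7/2) \le \exp(-L/2)$ using $\nu \ge L^{-6}$ from \eqref{E:Lnu-relationship}, and since $\E(|H_B^+| + 1) = O(L^2)$ a union bound over these particles gives $\P[\cK_B^c] \le \exp(-L/3)$ for $L$ large.

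The main obstacle is the bookkeeping in the infection-propagation step: one must keep straight both the time shift converting $\sP_{B'}$-time to $X_t$-time and the integer discretization $s_B(B') = \lfloor \tau_B - \tau_{B'} \rfloor$ built into the covering relation, verify that $\tau_B + t$ genuinely lands in the time window on which some covering particle is simultaneously infected, unhealed, and still on its original trajectory, and check that at least one of the $\ge L$ covering particles avoids every failure mode — having deviated into an ignition particle, having healed early, or not yet being infected. This is precisely where $\cN_B$, $\cA^{(3)}_{B'}$, $\cA^{(4)}_{B'}$, and \eqref{E:first-lipschitz} are used together. The two probability bounds, by contrast, are immediate given the lemmas already established in this section.
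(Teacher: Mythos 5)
Your proof is correct and takes essentially the same route as the paper: the deterministic statement is read off from the covering mechanism set up in the discussion before Lemma \ref{L:WBua-bd} (with $\cN_B$ controlling the number of particles that become ignition particles, $\cA^{(3)}$ and $\cA^{(4)}$ supplying the non-healing windows, and the Lipschitz bound placing the covering time inside those windows), and the two probability bounds are assembled by union bound from Lemma \ref{E:cLB-bound}, \eqref{E:NB-bound}, Lemma \ref{L:WBua-bd}, the Poisson estimate from Lemma \ref{l:HBsize}, and the healing-rate bound via \eqref{E:Lnu-relationship}. You simply fill in the book-keeping (time shifts, discretization $\lfloor\tau_B-\tau_{B'}\rfloor$, and the ``at least one of $L$ covering particles survives all failure modes'' argument) that the paper leaves to the reader with a pointer to the earlier discussion.
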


\begin{proof}
On the event $\cG_\nu\cap \cJ_B\cap\cL_B\cap\cN_B \cap \cW_B$ every particle in $B$ is infected by time $\tau_B + L^{2+\delta}$ since each particle in $H_B^+$ will collide with a particle in some $Q_{B',x_B(B')}$ from a red block $B'$ while it is still infected by the discussion prior to Lemma \ref{L:WBua-bd}, so if we also intersect with $\cK_B$ then every particle in $H_B \cup W_{B, \operatorname{ig}}$ will be removed by time $\tau_B+L^7$. The bound \eqref{eq:cLNWbound} follows from Lemma \ref{E:cLB-bound}, \eqref{E:NB-bound}, and Lemma \ref{L:WBua-bd}. Finally, $|H_B^+|$ is stochastically dominated by a Poisson with mean $\frac32 \mu L^2$ by Lemma~\ref{l:HBsize} and each particle receives a healing event in $[\frac12 L^7, L^7]$ with probability at least $1-\exp(-\frac12 L)$ by the relationship \eqref{E:Lnu-relationship}. This yields \eqref{eq:cKbound}.
\end{proof}

\subsection{Events to bound the number of susceptible particles}
\label{S:upper-bound-s}
Having established that particles in most blocks $B$ become infected soon after $\tau_B$ we next want to establish that the density of susceptible or infected particles in a given region near $B$ remains low. Concretely, we will prove the following estimate.

\begin{prop}
	\label{P:big-estimate}
	Fix a block $B$ and let $s \ge L^{10}$. Let $z_B$ be the square closest to the center of $B$ and for $z \in \Z^2$ define $\Lambda_z := D(z + z_B, s^{49/100})$. Then there is an event $\cX = \cX_{B, s}$ such that:
	\begin{itemize}
		\item  $\P[\cX] \ge 1 - \exp(-s^{c/\log \log s})$ and $\cX$ is $\sM_{D_{\sB}(B, m_B + 3s^{3/4})}$-measurable. 
		\item On the event $\cX \cap \cG_\nu$, we have
		\begin{equation}
		\label{E:bara-sum}
		\frac{1}{|\Lambda_z|} \sum_{B' \in D_\sB(B, s^{3/4})} \sum_{a \in H_{B'}^{++}} \mathbf{1}(\bar a(\tau_B + s') \in \Lambda_z, \bar a \notin \mathbf{R}_{\tau_B + s'}) \le L^{-20}
		\end{equation}
		for all $s' \in [s-1, s]$ and $z \in \Z^2$.
	\end{itemize}
\end{prop}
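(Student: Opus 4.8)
The proof runs through the local recovery estimate of Lemma~\ref{L:final-event}, random walk displacement bounds (Lemmas~\ref{L:rw-estimate} and~\ref{L:far-particles}), and the SSP component estimates (Lemma~\ref{L:global-local-ests} and the dependent percolation concentration of Lemma~\ref{l:dependentPerc}). Call a block $B'$ \emph{bad} if the good event $\cG_\nu \cap \cJ_{B'} \cap \cL_{B'} \cap \cN_{B'} \cap \cW_{B'} \cap \cK_{B'}$ of Lemma~\ref{L:final-event} fails, except that I replace the SSP-locality event $\cJ_{B'}$ by a version $\cJ_{B'}^{(s)}$ built from the local approximations $\cD_{B''}(r_s, m_{B''})$ at the enlarged scale $r_s := \min(s^{1/100}, m_{B''}/2)$. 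By Lemma~\ref{L:global-local-ests}, the transitivity relation~\eqref{E:transitivity}, and the fact that $\cG_\nu$ forces $\cD_{B''}(m_{B''})$ (Corollary~\ref{L:global-is-rare}), on $\cG_\nu$ the event $\cJ^{(s)}_{B'}$ still yields the conclusion of Lemma~\ref{L:final-event}; moreover ``$B'$ is bad'' becomes an event measurable with respect to $\sM$ in a radius $\le m_{B'} + O(s^{1/8})$ of $B'$, with probability at most $\exp(-c\,r_s^{c/\log\log r_s})$. Now observe that \emph{every} block $B' \in D_\sB(B, s^{3/4})$ is ``old'': by~\eqref{E:block-distance} we have $d(B,B') < Ls^{3/4} \le s^{17/20}$, whereas the colouring-speed bound~\eqref{E:first-lipschitz} forces any block coloured later than $\tau_B + s' - s/4$ to satisfy $d(B,B') \gtrsim s^{9/10}$, a contradiction for large $s$; hence $\tau_{B'} \le \tau_B + s' - s/4$ and, if $B'$ is not bad, Lemma~\ref{L:final-event} removes all of $H_{B'} \cup \{W_{B',\operatorname{ig}}\}$ by time $\tau_{B'} + L^7 < \tau_B + s'$, after which those particles stay removed. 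Thus on $\cG_\nu$ intersected with ``all non-bad $B'\in D_\sB(B,s^{3/4})$ are good'' the non-bad blocks contribute nothing to~\eqref{E:bara-sum}, where one also notes, via the ``healing-upon-becoming-an-ignition-particle'' convention of Section~\ref{S:SI-colouring}, that the terms coming from virtual particles in $H_{B'}^{++}\smin H_{B'}$ and from ignition particles are accounted for, so the left side of~\eqref{E:bara-sum} is an honest $\sM$-convenient overcount of the true number of unremoved particles near $\Lambda_z$.

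It remains to bound $\sum_{B'\ \mathrm{bad},\ B'\in D_\sB(B,s^{3/4})}\sum_{a\in H_{B'}^{++}}\mathbf 1(\bar a(\tau_B+s')\in\Lambda_z)$ for each relevant $z$. Into $\cX$ I put: a displacement event forcing every trajectory in $\bigcup_{B'\in D_\sB(B,s^{3/4})}H_{B'}^{++}$ to move by at most $s^{51/100}$ over any intrinsic (pre-$\tau_{B'}$-shift) time window of length $O(s)$, and the size bounds $|H_{B'}^{++}|\le 2\mu L^2$ from Lemma~\ref{l:HBsize}; by Lemmas~\ref{L:rw-estimate},~\ref{L:far-particles} and a union bound these cost only probability $\exp(-s^{c})$. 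Since $\bar a(\tau_B+s')=a(\tau_B+s'-\tau_{B'})$ is evaluated at an intrinsic time in $[s/2,2s]$ (by the crude Lipschitz bounds~\eqref{E:Lipschitz}--\eqref{E:first-lipschitz}) while $a$ sits in $B'$ at intrinsic time $[0,\xi]$, any contributing $a$ has $d(B',\Lambda_z)\le s^{51/100}$, so only $\mathrm{poly}(s)$ values of $z$ are relevant (and only the finitely many jump times $s'\in[s-1,s]$). Conditioning on enough of the $\sM_{B'}$'s to determine badness — only the trajectories up to intrinsic time $O(r_s)$ — the remaining trajectories are free random walks, so by the heat-kernel bound of Lemma~\ref{L:rw-estimate} each particle of an old bad block lands in $\Lambda_z$ with probability at most $C|\Lambda_z|/s$.

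Finally, when $s$ is large compared to $m_B$, Lemma~\ref{l:dependentPerc} applied to the finite-range (range $O(r_s)$, mean $\le \exp(-c\,r_s^{c/\log\log r_s})$) badness field shows that among the $O(s^{1.02}/L^2)$ blocks able to reach $\Lambda_z$ there are at most $\tfrac12 L^{-20}s/(C\mu L^2)$ old bad ones, with probability $1-\exp(-s^{c/\log\log s})$; combined with the previous paragraph the conditional mean of the sum is then $\le \tfrac12 L^{-20}|\Lambda_z|$, and a Bernstein bound on the conditionally independent particle count upgrades this to~\eqref{E:bara-sum}. When instead $s\le (m_B)^{100}$ (so $r_s\ge\tfrac12 s^{1/100}$), one uses a plain union bound: the probability that \emph{some} old bad block lies within $s^{51/100}$ of \emph{some} relevant $\Lambda_z$, together with a heat-kernel bound excluding contributions from bad blocks at distance $\ge \sqrt s\log s$, is at most $\mathrm{poly}(s)\cdot\exp(-c\,r_s^{c/\log\log r_s}) \le \exp(-s^{c/\log\log s})$, and off that event no bad block contributes at all, so~\eqref{E:bara-sum} holds trivially (as it must, since $L^{-20}|\Lambda_z| < 1$ when $s$ is of order $L^{10}$). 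Collecting the displacement events, the size bounds, the local approximations $\cD_{B''}(r_s,m_{B''})$ and the block-count bound into $\cX$, taking a union bound over the $\mathrm{poly}(s)$ relevant $z$ and $s'$, and checking that each ingredient is $\sM$-measurable within radius $m_B+3s^{3/4}$ (the $\cD_{B''}(r_s,m_{B''})$ are the widest, needing radius $\approx m_B + O(s^{3/4})$), gives the proposition. The main obstacle is exactly this last quantitative balancing: a block being ``bad'' is only super-polynomially small \emph{in $L$}, which cannot absorb the $\mathrm{poly}(s)$ union bound over centers $z$ when $s$ is enormous — this is what forces the enlargement of the SSP-locality scale inside the definition of badness to $r_s\asymp s^{1/100}$ and the case split on the size of $s$, and makes tracking which scale governs which event, while keeping everything $\sM$-local within the allotted radius, the delicate part of the proof.
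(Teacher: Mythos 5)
Your high-level strategy matches the paper's: identify ``bad'' blocks where the good event of Lemma~\ref{L:final-event} fails (the paper's $\sD_1\cup\sD_2$), observe that for good blocks in $D_\sB(B,s^{3/4})$ all particles in $H_{B'}\cup\{W_{B',\operatorname{ig}}\}$ are removed by time $\tau_B+s-1$ and so contribute nothing to~\eqref{E:bara-sum}, split into a union-bound regime for small $s$ and a concentration regime for large $s$, and in the latter combine a heat-kernel bound with Lemma~\ref{l:dependentPerc}. However, your large-$s$ argument does not close, for two related reasons.

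First, enlarging the SSP-locality scale to $r_s\asymp s^{1/100}$ only reduces the $\cJ_{B'}$ part of the badness probability. The badness event also includes failure of $\cL_{B'}\cap\cN_{B'}\cap\cW_{B'}\cap\cK_{B'}$, whose probability is $\exp(-cL^{\delta/30})$ by~\eqref{eq:cLNWbound} and~\eqref{eq:cKbound} --- a quantity that depends only on $L$ and does \emph{not} decay in $s$. Feeding ``mean $\le\exp(-c\,r_s^{c/\log\log r_s})$'' into Lemma~\ref{l:dependentPerc} therefore under-states the badness density once $s$ is super-polynomial in $L$. In particular the hypothesis $y>2p|\Lambda|$ of that lemma, with your target $y\approx L^{-22}s$ and $|\Lambda|\approx s^{1.02}/L^2$ relevant blocks, forces $p\lesssim L^{-20}s^{-0.02}$, which $\exp(-cL^{\delta/30})$ violates for every sufficiently large $s$.

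Second, the uniform bound ``each particle of a bad block lands in $\Lambda_z$ with probability $\le C|\Lambda_z|/s$'' throws away the exponential heat-kernel decay in the block's distance to $\Lambda_z$. Among the $\sim s^{51/50}/L^2$ blocks within distance $s^{51/100}$ of $\Lambda_z$, a fraction $\approx\exp(-cL^{\delta/30})$ are bad, so with your uniform landing probability the expected contribution to~\eqref{E:bara-sum} is $\gtrsim\exp(-cL^{\delta/30})\,s^{1/50}\,|\Lambda_z|$, which is unbounded in $s$; no concentration argument can recover $L^{-20}|\Lambda_z|$ from a mean that is larger than it. The paper instead decomposes $D_\sB(B,s^{3/4})$ into dyadic annuli $V_j$ of block-radius $\sim 2^j\sqrt s/L$ around $B_z$, controls the bad-block count in each annulus at the level $L^{-1000}4^j s$ (Lemma~\ref{eq:sumCSbound}, Corollary~\ref{C:cY}), and uses the annular heat-kernel factor $\exp(-c2^j)|\Lambda_z|/s$ (Lemma~\ref{L:Q'-lemma}) so that the sum $\sum_j L^{-998}4^j\exp(-c2^j)|\Lambda_z|$ converges. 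The target density $L^{-1000}$ per annulus is attainable --- here $y>2p|\Lambda|$ becomes $L^{-998}>2\exp(-cL^{\delta/30})$, true for large $L$ --- whereas your target $L^{-22}$ is not; the exponential heat-kernel factor is then what converts this modest density control into a small contribution. Relatedly, your case split at $s\le m_B^{100}$ does not protect the union-bound case either, since $m_B$ grows with $d_\sB(0,B)$ and so admits $s\gg L^{5000}$ for far-away blocks, where $\mathrm{poly}(s)\cdot\exp(-L^{c\delta/\log\log L})\le\exp(-s^{c/\log\log s})$ fails; the paper's $L$-only threshold $s\le L^{5000}$ is what makes the union bound close.
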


To shorten notation, we write $\sD = D_\sB(B, s^{3/4})$ and define
\[
\sD_1=\{B'\in \sD: \cJ_{B'}^c\}
\]
and
\[
\sD_2 = \{B'\in \sD: (\cL_{B'}\cap\cN_{B'} \cap \cW_{B'}\cap \cK_{B'})^c\}.
\]
Lemma \ref{L:final-event} shows that for  $B'\in \sD \setminus(\sD_1 \cup \sD_2)$ all the particles in $H_{B'} \cup W_{B, \operatorname{ig}}$ are removed by time $\tau_{B'} + L^{7} \leq \tau_B + s$.  Thus in the sum in \eqref{E:bara-sum} we need only consider particles from blocks in $\sD_1 \cup \sD_2$. For small $s$, this observation already allows us to prove Proposition \ref{P:big-estimate}.

\begin{proof}[Proof of Proposition \ref{P:big-estimate} for $L^{10} \le s \le L^{5000}$]
In this case, we let $\cX = \{\sD_1 \cup \sD_2 = \emptyset\}$. On the event $\cX \cap \cG_\nu$, the left hand side of \eqref{E:bara-sum} is $0$ for all $z$.  For $B' \in D_\sB(B, s^{3/4})$ the events $\{B' \in \sD_1\}, \{B' \in \sD_2\}$ are measurable given $$
\sM_{D_\sB(B', m_{B'} + 10 L^\de)} \sset \sM_{D_\sB(B, m_{B'} + 2s^{3/4})} \sset \sM_{D_\sB(B, m_{B} + 3s^{3/4})}
$$
by the definitions of $\cJ_B, \cL_B, \cN_B, \cW_B,$ and $\cK_B$. It remains to check the probability bound on $\cX$. By \eqref{E:cJB-bound}, Lemma \ref{L:final-event}, and a union bound, we have 
\begin{equation*}
\P[|\sD_1 \cup \sD_2|  \geq 1] \leq L^{10000}\exp(-L^{c'\delta/\log \log L}) \leq  \exp(-s^{c/\log \log s})
\end{equation*}
as long as $c$ is sufficiently small given $c'$, where in the final inequality we have used that $s \le L^{5000}$.
\end{proof}

For the remainder of this section we assume $s \ge L^{5000}$. In this case, Proposition \ref{P:big-estimate} will be proven by using the estimates in \eqref{E:cJB-bound} and Lemma \ref{L:final-event} applied to different blocks $B'$ in a moderate radius around $B$. 
To have a concentration bound that improves with $s$, 
we will use the fact that all the events in Lemma~\ref{L:final-event} (aside from $\cG_\nu$ itself) are defined locally, along with the concentration estimate in Lemma \ref{l:dependentPerc}.

First, with notation as in Lemma \ref{L:far-particles}, define
\begin{equation}
\label{E:cX'}
\cX' = \bigcap_{B' \in D_\sB(B, s^{3/4})} \cE_{B'}(0, 2s, s/4).
\end{equation}
By Lemma \ref{L:far-particles} and a union bound we have
\begin{equation}
\label{E:X'-bound}
\P[\cX'] \ge 1- Cs^{3/2} L^2 \exp (- cs) \ge 1 - \exp(-s^{c/\log \log s})
\end{equation}
and by construction and the Lipschitz bound \eqref{E:first-lipschitz}, on $\cX'$ we have
$$
\sum_{B' \in D_\sB(B, s^{3/4})} \sum_{a \in H_{B'}^{++}} \mathbf{1}(d(\bar a(\tau_B + s'), B) > s) = 0
$$
for all $s' \in [s-1, s]$.
In particular, on $\cX'$, the inequality \eqref{E:bara-sum} holds for all $z \ge 2s$. Therefore by a union bound, to complete the proof it suffices to show that for every $z$ with $|z| \le 2s$, we can define a $\sM_{D_{\sB}(B, m_B + s^{3/4})}$-measurable event $\cQ_z$ satisfying $
\P[\cQ_z] \ge 1 - \exp(-s^{c/\log \log s})
$ and such that \eqref{E:bara-sum} holds on $\cQ_z \cap \cG_\nu$ for all $s' \in [s-1, s]$ for that particular $z$.

To count how many particles that originated in blocks contained in $\sD_1 \cup \sD_2$ are close to $\Lambda_z$ at time $\tau_B + s$, we will obtain concentration estimates on the density of $\sD_1 \cup \sD_2$ on different geometric scales. Define $j_{{\max}} = \frac32 \log_2 (sL^{-1})+2$ and for $i\in\{1,2\}$ and $1\leq j \leq j_{{\max}}$ define
\[
D_{i,j}= \{B' \in \sD_i: B' \in D_\sB(B_z ,  2^j s^{1/2} L^{-1}) \},
\]
where $B_z$ is the block containing the reference vertex $z + z_B$.
We will let
\[
\cR_{i,j} = \{|D_{i,j}| \leq L^{-1000} 4^j s\}, \qquad \cR =\bigcap_{j=1}^{j_{{\max}}} (\cR_{1,j}\cap \cR_{2,j})
\]
and will establish the following bound.
\begin{lemma}
\label{eq:sumCSbound}
For any $s \ge L^{5000}$ and any $z \in \Z^2$ with $|z| \le 2s$ we have 
$$
\P[\cR] \geq 1 - \exp(-s^{c'/\log \log s}).
$$
\end{lemma}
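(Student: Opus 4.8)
The plan is to bound $\P[\cR_{i,j}^c]$ for each $i \in \{1,2\}$ and $1 \le j \le j_{\max}$ by applying the concentration estimate in Lemma \ref{l:dependentPerc} to the field of indicators $Y_{B'} = \mathbf{1}(B' \in \sD_i)$ over the block set $\Lambda = D_\sB(B_z, 2^j s^{1/2} L^{-1})$, and then take a union bound over $i$ and $j$. The two ingredients needed to apply Lemma \ref{l:dependentPerc} are: (a) a uniform pointwise bound $\P[B' \in \sD_i] \le p$ for some small $p$, and (b) a finite-range independence statement for the field $\{Y_{B'}\}$. For (a), recall $\{B' \in \sD_1\} = \cJ_{B'}^c$ and $\{B' \in \sD_2\} = (\cL_{B'} \cap \cN_{B'} \cap \cW_{B'} \cap \cK_{B'})^c$; by \eqref{E:cJB-bound} and the bounds \eqref{eq:cLNWbound}, \eqref{eq:cKbound} in Lemma \ref{L:final-event}, each of these has probability at most $\exp(-c' L^{c''})$ for appropriate constants, so we may take $p = \exp(-c' L^{c''})$, which is extremely small since $L$ is large. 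For (b), I would observe that each of $\cJ_{B'}, \cL_{B'}, \cN_{B'}, \cW_{B'}, \cK_{B'}$ is measurable with respect to $\sM_{D_\sB(B', \rho)}$ for $\rho = O(L^{\delta/20} + m_{B'})$ — this is by the definitions of these events together with Lemma \ref{L:global-local-ests} applied at radius $L^{\delta/20}$ — so the indicators $Y_{B'}, Y_{B''}$ are independent whenever $d_\sB(B', B'') > 2\rho =: R$. (Here I am using $m_{B'} = m + d_\sB(0,B')^{1/6} \le m + (3s)^{1/6}$, which is at most $s^{\delta/20}$ say, for $s \ge L^{5000}$ large, so $R$ is polynomial in a small power of $s$.)

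With these in hand, Lemma \ref{l:dependentPerc} gives, for $y = L^{-1000} 4^j s$ — which exceeds $2p|\Lambda| = 2p\cdot O(4^j s L^{-2})$ comfortably since $p$ is super-polynomially small in $L$ — the bound
$$
\P[\cR_{i,j}^c] = \P\Big[\sum_{B' \in \Lambda} Y_{B'} \ge y\Big] \le R^2 \exp\big(-c y^2 / (R^2 |\Lambda|)\big).
$$
Since $|\Lambda| \le C 4^j s L^{-2}$ and $y = L^{-1000} 4^j s$, the exponent is $-c L^{-2000} 4^{2j} s^2 / (R^2 \cdot C 4^j s L^{-2}) = -c' L^{-1998} 4^j s / R^2$. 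Because $4^j \ge 1$ and $s / R^2 \ge s^{1 - c_0}$ for some small $c_0$ (as $R$ is a small polynomial power of $s$), and $L^{-1998}$ is at worst a small negative power of $s$ (since $L \le s^{1/5000}$), this exponent is at least $s^{c/\log\log s}$ in absolute value — in fact it is at least $s^{c}$ for a genuine constant $c > 0$ here; the $\log\log$ slack in the statement is only needed elsewhere. Summing over the $2 j_{\max} = O(\log s)$ pairs $(i,j)$ via a union bound absorbs the polylogarithmic factor and the $R^2$ prefactor, yielding $\P[\cR^c] \le \exp(-s^{c'/\log\log s})$ as claimed.

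The main point requiring care — and the step I would spend the most effort justifying — is the finite-range independence claim (b): one must verify that each of the five events $\cJ_{B'}, \cL_{B'}, \cN_{B'}, \cW_{B'}, \cK_{B'}$ really is measurable with respect to $\sM_{D_\sB(B', \rho)}$ for a radius $\rho$ that is only a small power of $s$, independent of the global structure of the SSP. For $\cN_{B'}, \cW_{B'}, \cK_{B'}$ this is direct from their definitions (they involve only particles in $H_{B''}^{++}$ for $B''$ within $O(L^\delta)$ blocks of $B'$, and the discretized relative-time and blue-seed possibilities summed over in $\sU^*$ are themselves local). For $\cJ_{B'}$ and $\cL_{B'}$, which are phrased in terms of the local SSP approximations $[\fD(B'', L^{\delta/20})]$, measurability with respect to $\sM_{D_\sB(B'', L^{\delta/20} + 2)}$ follows from Lemma \ref{L:Ak-estimate} (the set $X \cap \fB_{*,k}$ is a function of $\fB \cap D(X, r_{k-1}/2)$) together with the bound $r_k \le L^{\delta/20}$ forced inside the event. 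Once this locality is pinned down, the rest is the routine application of Lemma \ref{l:dependentPerc} and a union bound sketched above.
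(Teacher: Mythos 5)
Your general strategy (pointwise probability bound on $\{B'\in\sD_i\}$ plus the finite-range-dependence concentration inequality of Lemma~\ref{l:dependentPerc}, then a union bound over $i$ and $j$) matches the paper for $\sD_2$, and you correctly identify the locality question as the crux. But your treatment of $\sD_1$ has a genuine gap. You assert that $m_{B'} = m + d_\sB(0,B')^{1/6} \le m + (3s)^{1/6}$ and hence that $\cJ_{B'}$ is $\sM_{D_\sB(B',\rho)}$-measurable for $\rho$ a small power of $s$. This is false: the block $B$ in Proposition~\ref{P:big-estimate} is arbitrary, and $B'\in D_\sB(B, s^{3/4})$, so $d_\sB(0,B')$ is of order $d_\sB(0,B)$, which can be arbitrarily large compared to $s$. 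The event $\cJ_{B'} = \bigcap_{B''\in\sC^-_{B'}}\cD_{B''}(L^{\delta/20}, m_{B''})$ explicitly compares $[\fD(B'', L^{\delta/20})]$ with $[\fD(B'', m_{B''})]$, and the latter depends on blue seeds out to distance $m_{B''}$, which is not controlled by $s$. Taking $R = m_{B'}$ in Lemma~\ref{l:dependentPerc} would then make the exponent $-cL^{-1998}4^j s/R^2$ useless (and also incorrect, since $R$ would itself be a random quantity varying with $B$). (Your arithmetic aside $(3s)^{1/6}\le s^{\delta/20}$ with $\delta\in(0,1)$ is also off, but that is secondary.)

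The paper resolves this by decomposing $\cJ_{B'}^c$ via the transitivity identity~\eqref{E:transitivity}: $\cD_{B''}(L^{\delta/20}, m_{B''}) = \cD_{B''}(L^{\delta/20}, s^{1/10}) \cap \cD_{B''}(s^{1/10}, m_{B''})$, so $\{B'\in\sD_1\} = \cK^1_{B'}\cup\cK^2_{B'}$, where $\cK^2_{B'}$ (the local part at scale $s^{1/10}$) is $\sM_{D_\sB(B', O(s^{1/10}))}$-measurable and is fed into Lemma~\ref{l:dependentPerc} with $R = 3s^{1/10}$, while $\cK^1_{B'}$ (the global tail) has probability $\le\exp(-cs^{c/\log\log s})$ by Lemma~\ref{L:global-local-ests} and is disposed of by a trivial union bound showing $\sum_{B'}\mathbf{1}(\cK^1_{B'})>0$ is negligible. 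Note this also means your final remark that the $\log\log$ slack ``is only needed elsewhere'' is wrong: the $\log\log$ loss enters precisely through the $\cK^1$ term via Lemma~\ref{L:global-local-ests}. Your handling of $\sD_2$ is fine and mirrors the paper; the missing idea is the two-scale splitting of $\cJ_{B'}^c$.
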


\begin{proof}  
Events $\{B_i'\in \sD_2\}, i = 1, \dots, k$ are independent as long as $d_\sB(B_i', B_j') \ge 10 L^\de$ whenever $i \ne j$. Hence by Lemma~\ref{l:dependentPerc} we have
\begin{align}\label{eq:sumCS2bound}
\P[\cR_{2,j}^c] &\leq 100 L^{2 \de} \exp(-c L^{-2000}  4^j s L^{-2\delta}).
\end{align}
To estimate $\P[\cR_{1,j}^c]$ we can first use \eqref{E:transitivity} and the definition of $\cJ_B, \sD_1$ to write
\begin{align}
\nonumber
&\{B'\in \sD_1\} = \bigcup_{B'' \in \sC_{B'}^-} [\cD_{B''}(L^{\de/20}, m_{B''})]^c = \cK^1_{B'} \cup \cK^2_{B'} \text{ where }\\
\nonumber
\qquad &\cK_{B'}^1 = \bigcup_{B'' \in \sC_{B'}^-} [\cD_{B''}(s^{1/10}, m_{B''})]^c,  \quad \cK_{B'}^2 = \bigcup_{B'' \in \sC_{B'}^-} [\cD_{B''}(L^{\de/20}, s^{1/10})]^c.
\end{align}
From here we can use Lemma \ref{L:global-local-ests} and the fact that the events $\cK_{B'}^2$ are independent for blocks that are distance at least $3 s^{1/10}$ apart to bound $\P[\cR^c_{2, j}]$. Indeed, by combining Lemma \ref{L:global-local-ests}, Lemma~\ref{l:dependentPerc}, and a union bound we have
\begin{align*}
\P[\cR^c_{2, j}] &\le \P\Big(\sum_{B' \in D_\sB(B_z ,  2^j s^{1/2} L^{-1})} \mathbf{1}\lf(\cK_{B'}^1\rg) > 0 \Big) +  \P\Big(\sum_{B' \in D_\sB(B_z ,  2^j s^{1/2} L^{-1})} \mathbf{1}\lf(\cK_{B'}^2\rg) > L^{-2000} 4^j s \Big) \\
&\le 4^j s L^{-2} \exp(- c s^{c/\log \log s}) + 9s^{1/5} \exp(-c L^{-2000}4^j s/s^{1/5}).
\end{align*}
Combining this bound with \eqref{eq:sumCS2bound}, summing over $j$ and using that $s \ge L^{5000}$ yields the desired bound.
\end{proof}

\begin{corollary}
	\label{C:cY}
Fix $s \ge L^{5000}$ and $z \in \Z^2$ with $|z| \le 2s$ and define 
\begin{equation}
\label{E:Ydef}
\cY_{j} = \{\sum_{B'\in \sD_{1,j}\cup \sD_{2,j}} |H_{B'}^{++}| \leq L^{-800} 4^j s\}, \qquad \cY =\bigcap_{j=1}^{j_{{\max}}} \cY_{j}.
\end{equation}
Then
\begin{equation*}
\P[\cY^c] \leq \exp(-s^{c/\log \log s}).
\end{equation*} 
\end{corollary}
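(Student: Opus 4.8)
The plan is to obtain Corollary \ref{C:cY} directly from the density estimate of Lemma \ref{eq:sumCSbound} together with an elementary Poisson tail bound for the block sizes $|H_{B'}^{++}|$. The mechanism is the crude deterministic inequality
\[
\sum_{B'\in \sD_{1,j}\cup \sD_{2,j}} |H_{B'}^{++}| \;\le\; \big(|\sD_{1,j}| + |\sD_{2,j}|\big)\,\max_{B'\in \sE} |H_{B'}^{++}| ,
\]
where $\sE := D_\sB(B_z, 2^{j_{\max}} s^{1/2}L^{-1})$ is the largest block-disk appearing in the definition of the sets $\sD_{i,j}$. On the event $\cR$ of Lemma \ref{eq:sumCSbound} the definitions of $\cR_{1,j}$ and $\cR_{2,j}$ give $|\sD_{1,j}| + |\sD_{2,j}| \le 2L^{-1000}4^j s$ for every $1\le j\le j_{\max}$ at once, so the only remaining task is to control the single quantity $\max_{B'\in\sE}|H_{B'}^{++}|$.

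For that I would recall from Lemma \ref{l:HBsize} that $|H_{B'}^{++}| = |H_{B'}^+| + 1$ with $|H_{B'}^+|$ Poisson of mean at most $\tfrac32\mu L^2$, so a standard Poisson deviation bound gives $\P[|H_{B'}^{++}| > L^3] \le \exp(-cL^3)$ once $L$ is large. Since $j_{\max} = \tfrac32\log_2(sL^{-1})+2$ forces $2^{j_{\max}} = 4(sL^{-1})^{3/2}$, the disk $\sE$ has block-radius $O(s^2 L^{-5/2})$ and hence contains at most $O(s^4)$ blocks, a number polynomial in $s$. A union bound over these blocks then shows that $\cV := \bigcap_{B'\in\sE}\{|H_{B'}^{++}|\le L^3\}$ satisfies $\P[\cV^c] \le O(s^4)\exp(-cL^3)$, and using the hypothesis $s\ge L^{5000}$ to write $L^3\ge s^{3/5000}$ this is at most $\exp(-s^{c'})$ for some fixed $c'>0$ whenever $L$ is large.

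Finally, on the event $\cR\cap\cV$ the displayed inequality gives $\sum_{B'\in \sD_{1,j}\cup \sD_{2,j}} |H_{B'}^{++}| \le 2L^{-1000}4^j s\cdot L^3 = 2L^{-997}4^j s \le L^{-800}4^j s$ for $L$ large (there is a generous margin in the exponents), so $\cR\cap\cV\subset\cY$; a union bound together with Lemma \ref{eq:sumCSbound} then gives $\P[\cY^c]\le \P[\cR^c]+\P[\cV^c]\le \exp(-s^{c/\log\log s})$ for a suitable $c>0$ and all large $L$. I do not anticipate a genuine obstacle: the argument is bookkeeping, and the only step requiring a moment's care is confirming that the number of scales $j\le j_{\max}$ and the number of blocks in $\sE$ both stay polynomial in $s$ (which follows from $2^{j_{\max}}=O((s/L)^{3/2})$), so that the very crude Poisson union bound is comfortably absorbed into the claimed superpolynomial rate once $s\ge L^{5000}$.
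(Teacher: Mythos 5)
The approach—bounding the sum by (number of blocks in $\sD_{1,j}\cup\sD_{2,j}$) times (max block occupancy), then union-bounding the max over all blocks in a disk—has a genuine gap, and it is hidden in the sentence "using the hypothesis $s\ge L^{5000}$ to write $L^3\ge s^{3/5000}$." That inequality is backwards: $s\ge L^{5000}$ gives $L\le s^{1/5000}$, hence $L^3\le s^{3/5000}$. With the inequality pointed the right way, the chain $O(s^4)\exp(-cL^3)\le O(s^4)\exp(-cs^{3/5000})\le\exp(-s^{c'})$ collapses, because $\exp(-cL^3)$ is \emph{larger}, not smaller, than $\exp(-cs^{3/5000})$.

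This is not merely a typo that can be patched: the proposal's union bound for $\cV$ is over $|\sE|=\Theta(s^4L^{-5})$ blocks, while the per-block tail $\P[|H_{B'}^{++}|>L^3]$ decays only in $L$, not in $s$. Since the hypothesis $s\ge L^{5000}$ places no \emph{upper} bound on $s$, once $\log s\gtrsim L^3$ (e.g.\ $s\approx\exp(L^3)$) the bound $O(s^4)\exp(-cL^3)$ exceeds $1$ and $\cV$ is not rare. Raising the threshold cannot save it either: the final deterministic step needs $\max_{B'\in\sE}|H_{B'}^{++}|\le L^{200}/2$ to convert $2L^{-1000}4^js$ into $L^{-800}4^js$, so the threshold is forced to stay polynomial in $L$, and the union bound over $O(s^4)$ blocks remains uncontrolled for $s$ super-exponential in $L$. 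The paper sidesteps this by \emph{not} controlling the global max at all: conditioning on $\cR_{1,j}\cap\cR_{2,j}$, it union-bounds over the $\binom{4^js}{L^{-1000}4^js}$ possible \emph{selections} of bad blocks, and for each fixed selection the total occupancy is a Poisson with mean $CL^{-998}4^js$, whose tail at $L^{-800}4^js$ is $\exp(-c\,L^{-800}4^js\log L)$—exponentially small in $s$, comfortably dominating the binomial coefficient $\exp(O(L^{-1000}4^js\log L))$. Both pieces of the paper's bound scale with $N=4^js$, which is exactly the feature your max-based bound lacks.
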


\begin{proof}
We will estimate the probability of $\cY_{j}$ given $\cR_{1,j}\cap \cR_{2,j}$.  
On $\cR_{1,j}\cap \cR_{2,j}$ there are at most $L^{-1000} 4^j s$ blocks $B'$ in $D_{1,j}\cup D_{2,j}$.  There are at most ${4^j s \choose L^{-1000} 4^j s}$ choices of these blocks from the blocks in $D_\sB(B_z ,  2^j s^{1/2} L^{-1})$.  Given a deterministic selection of $L^{-1000} 4^j s$ blocks $B'$, the total number of particles in $H_{B'}^{++}$ is stochastically dominated by Poisson random variable with mean $C L^{2-1000} 4^j s$ by Lemma \ref{l:HBsize} and so
\begin{align*}
\P[\cY_{j}^c,\cR_{1,j}\cap \cR_{2,j}] &\leq {4^j s \choose L^{-1000} 4^j s} \P[\hbox{Pois}(C L^{2-1000} 4^j s) > L^{-800} 4^j s]\\
&\leq \exp(- L^{-800} 4^j s).
\end{align*}
Hence by a union bound,
\begin{equation}\label{eq:sumCYbound}
\P[\cY^c,\cR] \leq \exp(-s^{c/\log \log s}).
\end{equation}
The result then follows from \eqref{eq:sumCYbound} and Lemma \ref{eq:sumCSbound}.
\end{proof}

We will use Corollary \ref{C:cY} to control the event
\[
\cQ'_z =\bigg\{ \sup_{s' \in [s-1, s]} \sum_{B'\in \sD_1 \cup \sD_2}\sum_{a\in H_{B'}^{++}} \mathbf{1}(\oa(\tau_B+s') \in \Lambda_z) \leq L^{-20} |\Lambda_z| \bigg \}.
\]
Here we again face the issue that the functions $\oa$ depends implicitly on the non-local differences $\tau_B - \tau_{B'}$.  By \eqref{E:first-lipschitz} we know that $|\tau_B - \tau_{B'}|\leq \xi s^{3/4}$ so we let $G_{a}$ be the event that for a particle $a \in H_{B'}^{++}$ we have $a(t')\in \Lambda_z$ for some time $t'$ with $|t'-s|\leq  \xi s^{3/4} + 1$. The event $G_{a}$ must hold if a particle is in $\Lambda_z$ at time $\tau_B+s'$ for some $s' \in [s-1, s]$.  Hence we define
\begin{equation}
\label{E:Qdef}
\cQ_z =\bigg\{\sum_{B'\in \sD_1 \cup \sD_2} \sum_{a\in H_{B'}^{++}} \mathbf{1}(G_{a}) \leq L^{-20} |\Lambda_z| \bigg \}.
\end{equation}
With this definition, $\cQ'_z \subset \cQ_z$, and we have the following estimate.
\begin{lemma}
\label{L:Q'-lemma}
For $s \ge L^{5000}$ we have $\P[\cQ_z] \ge 1 - \exp(-s^{c/\log \log s}).$
\end{lemma}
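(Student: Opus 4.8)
The plan is to bound $\E[N]$ where $N := \sum_{B' \in \sD_1 \cup \sD_2}\sum_{a \in H_{B'}^{++}}\mathbf{1}(G_a)$ and then conclude with a concentration inequality. First I would work on the event $\cX' \cap \cY$, with $\cX'$ as in \eqref{E:cX'} and $\cY$ as in Corollary \ref{C:cY}; by \eqref{E:X'-bound} and Corollary \ref{C:cY} its complement has probability at most $\exp(-s^{c/\log\log s})$. Since $\sD \subset D_\sB(B_z, 2^{j_{\max}} s^{1/2} L^{-1})$ (using the $\xi$-Lipschitzness of the colouring and $|z| \le 2s$), every block contributing to $N$ lies at some scale $j \le j_{\max}$ relative to $B_z$, i.e.\ $B' \in D_\sB(B_z, 2^j s^{1/2}L^{-1}) \smin D_\sB(B_z, 2^{j-1}s^{1/2}L^{-1})$; on $\cX'$ the contributing scales are in fact $O(\log s)$, since particles from $\sD$ move at most $s/4$ over the relevant window, but this truncation is not needed below.

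Write $m_j := \sum\{|H_{B'}^{++}| : B' \in \sD_1 \cup \sD_2 \text{ at scale } j\}$, so that $\cY$ gives $m_j \le \sum_{j' \le j} m_{j'} \le L^{-800} 4^j s$. The key input is the per-particle bound $\P[G_a] \le C\, s^{-2/100}\exp(-c\,4^j)$ for $a \in H_{B'}^{++}$ with $B'$ at scale $j$. To obtain it I would condition on $a(0)$: with probability $1 - \exp(-s^{c/\log\log s})$ (union bound over the particles in $\sD$ using Lemma \ref{L:rw-estimate}, since a walk moves only $O(L)$ over $[0,\xi]$ and the ignition particle starts within $O(L)$ of its block), every such $a$ has $a(0)$ within $\sqrt s$ of $B'$, hence $|y - a(0)| \gtrsim 2^j s^{1/2}$ for all $y \in \Lambda_z$ once $j$ exceeds a fixed constant (the finitely many smaller scales are absorbed via $m_j \le CL^{-800}s$ and $\P[G_a]\le Cs^{-2/100}$). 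As $G_a$ forces $a(t') \in \Lambda_z$ for some $t'$ within $\xi s^{3/4}+1$ of $s$ and $\sqrt{\xi s^{3/4}} \ll s^{49/100}$, it is enough to bound $\P[a(t'') \in D(z+z_B, 2s^{49/100}) \mid a(0)]$ at the centre time $t'' \asymp s$, which by Lemma \ref{L:rw-estimate} is at most $|D(z+z_B,2s^{49/100})| \cdot \tfrac{C}{s}\exp(-c|y-a(0)|^2/(|y-a(0)|+s)) \le C s^{98/100}\cdot \tfrac{C}{s}\exp(-c\,4^j)$ for $2^j s^{1/2}\le s$ (and is smaller still, via the linear-in-distance tail, for larger $j$). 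The essential point is the \emph{product} $s^{-2/100}\exp(-c\,4^j)$: the weaker $\min(s^{-2/100},\exp(-c\,4^j))$ would cost a factor $\log s$ on summing over $j$, whereas $\sum_{j\ge0}4^j\exp(-c\,4^j)$ is an absolute constant.

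Putting these together, on $\cX'\cap\cY$ and conditionally on the data determining the bad blocks,
$$
\E[N \mid \cdots] \le \sum_j m_j\, C s^{-2/100}\exp(-c\,4^j) \le C L^{-800} s^{98/100}\sum_j 4^j\exp(-c\,4^j) \le C' L^{-800} s^{98/100}.
$$
Since $|\Lambda_z| = \pi s^{98/100}(1+o(1))$ and $L^{-800}\ll L^{-20}$, the right side is at most $\tfrac12 L^{-20}|\Lambda_z|$ once $L$ is large. Conditionally $N$ is (after the reduction in the last paragraph) a sum of indicators built from independent random-walk increments, so Bernstein's inequality gives $\P[N > L^{-20}|\Lambda_z|\mid\cdots] \le \exp(-c L^{-20}|\Lambda_z|) \le \exp(-c s^{97/100})$, using $L^{-20}\ge s^{-1/250}$; this is far stronger than the claimed bound. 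Removing the conditioning and adding $\P[(\cX')^c]$, $\P[\cY^c]$ and the exceptional probability from the second paragraph yields the lemma.

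The main obstacle is the independence invoked in the concentration step: $\{B' \in \sD_1\cup\sD_2\}$ and the trajectories in $H_{B'}^{++}$ are both measurable over $\sM_{B'}$ (and the $\sM$ of nearby blocks), so $N$ is not literally a sum of conditionally independent indicators. I would resolve this with the conditioning device used in the proof of Lemma \ref{L:WBua-bd}: condition on a $\sigma$-algebra recording the bad-block structure together with all trajectories up to a time $t_0$ (say $t_0 = L^7$) by which the events $\cL_{B'},\cN_{B'},\cW_{B'},\cK_{B'}$ and the local part of $\cJ_{B'}$ are already determined, and use that after $t_0$ each particle performs a fresh independent walk. One checks $t' \asymp s \gg t_0$ (as $s \ge L^{5000}$), so $G_a$ only involves the post-$t_0$ walk, and that on $\cX'$ the displacement before $t_0$ is negligible next to $2^j s^{1/2}$, so the per-particle bound still applies. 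The long-range part of $\{B'\in\sD_1\}$ — the comparison $\cD_{B''}(L^{\delta/20}, m_{B''})$ reaching beyond the window of $\sM$ revealed by $t_0$ — is handled as in the proof of Lemma \ref{eq:sumCSbound}, splitting it via \eqref{E:transitivity} into a local piece (included in the conditioning) and a globally rare piece absorbed into the $\exp(-s^{c/\log\log s})$ error.
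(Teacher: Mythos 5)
Your overall architecture matches the paper's: decompose $\sD_1\cup\sD_2$ into annular scales $j$ around $B_z$, use $\cY$ to bound the particle count at each scale by $L^{-800}4^js$, bound the per-particle probability of $G_a$ by (essentially) $s^{-2/100}e^{-c\,2^j}$ using a fresh-random-walk estimate after a conditioning time, and then apply a concentration inequality. Your aggregate Bernstein bound instead of the paper's per-scale binomial bound is a fine variant. However, there is one real gap in how you set up the fresh-walk estimate.

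You condition on the bad-block structure plus all trajectories up to $t_0=L^7$ and claim that ``on $\cX'$ the displacement before $t_0$ is negligible next to $2^j s^{1/2}$.'' This is false. The event $\cX' = \bigcap_{B'\in\sD}\cE_{B'}(0,2s,s/4)$ only bounds each particle's displacement over the interval $[-2s,2s]$ by $s/4$; it gives no better control over $[0,t_0]$. But for every scale $j$ that actually contributes to $\sD$ we have $2^j s^{1/2} \lesssim s^{3/4}L \ll s/4$ (the annuli in $\sD$ reach out to block-distance $s^{3/4}$, i.e.\ $2^j \lesssim s^{1/4}L$, and $s\ge L^{5000}$ makes $L$ a negligible power of $s$). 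So a displacement of order $s/4$ can carry a particle from its starting annulus all the way into $\Lambda_z$ before the fresh walk even begins, and the estimate $\P[G_a]\le Cs^{-2/100}e^{-c\,4^j}$ cannot be justified from the position at time $t_0$. The paper handles this with the separate event $\cO:=\bigcap_{B'\in\sD}\cE_{B'}(0, s^{4/5}, s^{5/12})$, which confines the displacement up to the conditioning time $s^{4/5}$ to $s^{5/12}\ll 2^js^{1/2}$ for every $j\ge 1$, and whose failure probability $\le Cs^{3/2}L^2e^{-cs^{1/30}}$ is still within the allowed $\exp(-s^{c/\log\log s})$. With $\cX'$ replaced by $\cO$ (or by a comparably tight bound over $[0,t_0]$ that holds uniformly over the $\approx s^{3/2}L^2$ particles in $\sD$ with probability $1-\exp(-s^{c/\log\log s})$; note a fixed displacement scale like $L^{3.6}$ does not suffice, since $s$ may be arbitrarily large relative to $L$ and the union bound fails), the rest of your argument goes through.

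Two smaller notes: the conditioning in the second paragraph should be on the trajectory up to $t_0$ (hence on $a(t_0)$), not on $a(0)$; and your worry about needing the product form $s^{-2/100}e^{-c\,4^j}$ rather than a minimum is well taken but already present in the paper's estimate $\exp(-c\,2^j)\,|\Lambda_z|\,s^{-1}$, which is the same thing.
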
  

\begin{proof}
	First, let $\cO$ be the event that no particle from $\sD$ moves more than distance $s^{5/12}$ by time $s^{4/5}$:
	\[
	\cO:=\bigcap_{B'\in \sD} \cE_{B'}(0, s^{4/5}, s^{5/12})
	\]
	which by Lemma \ref{L:far-particles} and a union bound holds with probability
	\begin{align}\label{eq:CObound}
	\P[\cO] &\geq 1 - C s^{3/2} L^2 \exp(-c s^{1/30}).
	\end{align}
By \eqref{eq:CObound} and Corollary \ref{C:cY}, it suffices to show that 
\begin{equation}\label{eq:cQ}
\P[\cQ^c,\cY,\cO] \leq \exp(-s^{c/\log \log s}).
\end{equation}
Now let $a \in H_{B'}^{++}$ for some $B' \in \sD$. It follows from our construction that $a(s^{4/5} + t)-a(s^{4/5}), t \ge 0$, the trajectory of $a$ after time $s^{4/5}$, is independent of $\sD_1, \sD_2, \cY$ and $\cO$. In particular, if we define the annulus 
$$
V_j=  D_\sB(B_z ,  2^j s^{1/2} L^{-1}) \smin D_\sB(B_z ,  2^{j-1} s^{1/2} L^{-1})
$$
then for $a\in B' \subset  V_j$, by a random walk estimate similar to Lemma \ref{L:rw-estimate} we have
\[
\P[G_{a}\mid \sD_1, \sD_2, \cY,\cO] \leq \exp(-c 2^{j})|\Lambda_z|s^{-1}
\]
since conditional on $\cO$ the path of $a$ travels at most distance $s^{5/12}$ by time $s^{4/5}$ and after that it is a random walk that needs to travel distance $c2^j s^{1/2}$ in time $s-o(s)$ to reach $\Lambda_z$.  Note that the window of time $\xi s^{3/4}+1$ in the definition of $G_{a}$ is
lower order compared to the square of the side-length of $\Lambda_z$ so this window of time will only affect the random walk estimate by a constant factor.
On the event $\cY$ there are at most $L^{-800} 4^j s$ particles in these blocks so
\begin{align*}
\P&[\sum_{B' \subset   V_j }\mathbf{1}(B'\in \sD_1 \cup \sD_2) \sum_{a\in H_{B'}^+} \mathbf{1}(G_{a}) \geq 2^{-j} L^{-20} |\Lambda_z|,\cY,\cO]\\
&\leq \P[\hbox{Bin}(L^{-1500} 4^j s,\exp(-c 2^{j})|\Lambda_z|s^{-1}) \geq 2^{-j} L^{-20} |\Lambda_z|]\\
&\leq \exp(-s^{c/\log\log s}),
\end{align*}
which yields \eqref{eq:cQ} after a union bound over $j$.
\end{proof}

\begin{proof}[Proof of Proposition \ref{P:big-estimate} when $s \ge L^{5000}$]
We set
$$
\cX = \cX' \cap \bigcap_{z \in \Z^2, |z| \le 2s} \cQ_z.
$$
This satisfies the bound in the first bullet of Proposition \ref{P:big-estimate} by Lemma \ref{L:Q'-lemma} and satisfies \eqref{E:bara-sum} for $s' \in [s-1, s]$ and $|z| \ge 2s$ by construction. The $\sM_{D_\sB(B, m_B + 3s^{3/4})}$-measurability of  $\cX$ follows by tracing back the various definitions:
\begin{itemize}[nosep]
	\item Each event $\cE_{B'}, B' \in D_\sB(B, s^{3/4})$ used in the definition of $\cX'$ only depends on $\sM_{B'}$.
	\item Given $\sD_1, \sD_2$, the event $\cQ_z$ only depends on $\sM_{B'}, B' \in D_\sB(B, s^{3/4})$.
	\item For $B' \in \sD$ the events $\{B' \in \sD_1\}, \{B' \in \sD_2\}$ are measurable given $\sM_{D_\sB(B, m_{B'} + 3s^{3/4})}$ as in the $s \le L^{5000}$ case.
\end{itemize}
Finally, on the event $\cG_\nu$, for $|z| \le 2s$ we have
\begin{align*}
	\sum_{B' \in \sD} \sum_{a \in H_{B'}^{++}} \mathbf{1}(\bar a(\tau_B + s') \in \Lambda_z, \bar a \notin \mathbf{R}_{\tau_B + s}) \le &\sum_{B'\in \sD_1 \cup \sD_2}\sum_{a\in H_{B'}^{++}} \mathbf{1}(\oa(\tau_B+s') \in \Lambda_z),
\end{align*}
which on $\cQ_z \sset \cQ'_z$ is at most $L^{-20}|\Lambda_z|$ for all $s' \in [s-1, s]$. This yields \eqref{E:bara-sum} for $|z| \le 2s$.
\end{proof}

\section{Surviving particles}
\label{S:survival}

In this section we define a \emph{survival event} which will guarantee that a certain particle in a block $B$ survives long past when its block gets coloured. This is the first step in showing that with positive probability, the infection survives forever but a positive density of particles never become infected. 

For this section and the next, in addition to examining measurability of events with respect to the block $\sig$-algebras $\sM_B$, we will also examine measurability with respect to $\sig$-algebra $\sF_t$ generated by trajectories $\bar a(s), s \in (-\infty, t]$ for all particles $a \in H_B^{++}$ for some $B \in \sB$ with $\tau_B \le t$. In other words, $\sF_t$ is the natural filtration for the process of coloured particles.

The most important component of our survival event is an extremely rare event $\cU_B$ which is $\sM_{D_\sB(B, 2)}$-measurable. We have three main goals when defining this event:
\begin{itemize}[nosep]
	\item To specify the movements of a particle $a^* \in H_B$ that will have a good chance of surviving forever.
	\item To use the other particles in $H_B$ to build a protective wall of infected particles around $a^*$ that will ensure it is difficult for  particles from other blocks to reach $a^*$ before they have recovered.
	\item To ensure that when doing this, $B$ does not become a blue seed.
\end{itemize}
Naturally, the precise definition of $\cU_B$ must be somewhat technical. For the definition, we need to distinguish a square in $B$. Rather arbitrarily, we let $v_B$ be the square closest to the center of $B$, where ties are broken using the lexicographic order.

\begin{definition}
	\label{D:survivor}
For a block $B$ let $\cU_{B}$ be the event where the following things occur:
\begin{enumerate}
	\item \emph{Particle count:} $H^+_B = H^-_B$.
	\item \emph{Healing:} For every particle $a \in H^+_B$, we have $a^h \cap [0, L^{20}-2] = \emptyset$ and $a^h \cap [L^{20}-2, L^{20}-1] \ne \emptyset$.
	\item \emph{Ignition particles:} For every block $B'$ with $d(B', B) \le 2$, the ignition trajectory $W_{B', \operatorname{ig}}$ satisfies $W_{B', \operatorname{ig}}(s) = W_{B', \operatorname{ig}}(0)$ for all $0 \le s \le L^4$ and the ignition healing clock satisfies $W^h_{B', \operatorname{ig}} \cap [0, L^3] = \emptyset$ and $W^h_{B', \operatorname{ig}} \cap [L^3, L^4] \ne \emptyset$.
	\item \emph{The survivor:}
	There is some particle $a^* \in H^+_B$ with $a^*(t) = v_B$ for all $0 \le t \le L^{20}$.
	\item  \emph{Other trajectories:} We can partition $H_B^+ \smin \{a^*\}$ into sets 
	$$
	\{H_{B, B', x}^+ : x \in \del B^\#, d(B, B') = 1, B' \not\in U_x \}
	$$
	such that:
	\begin{enumerate}
		\item $|H_{B, B', x}^+| = L^{90}$ for all $x, B'$.
		\item Every particle $a \in H_{B, B', x}^+$ satisfies:
			\begin{align}
			\label{E:66}
			d(a(0), B^c) &\ge L/10 \\
			\label{E:67}
		\{a(t) : 0 \le t \le \xi/(2\log \log L) \} &= U_x \smin \{v_B\}, \\
		\label{E:68}
		\{a(t)  : t \in [0, L^{20}]\} &\sset D(v_B, L^{20}) \smin \{v_B\}.
		\end{align}
		\item The particle $a$ first exits $U_x$ in the interval $[\xi/(2\log \log L), \xi/\log \log L)$. When it does so, it enters $B'$.
	\end{enumerate}
	\item \emph{Protective wall:} For every $b \in D(v_B, L^{20}) \smin \{v_B\}$,
	there are $L^{45}$ particles in each $H_{B, B', x}^+$ that satisfy
	$$
	a(t) = b, \qquad \xi \le t \le L^{20}.
	$$
\end{enumerate}
\end{definition}

The protective wall in the final part of Definition \ref{D:survivor} will guarantee that it is hard for a particle starting in a block far away from $B$ to infect $a^*$. However, the protective wall is not effective at preventing infection from nearby blocks. Because of this, we need to define a different specific event to handle blocks $B'$ close to $B$. To consolidate later proofs, we will build this event in a similar way to $\cU_B$; however, the exact structure here is not nearly as important.

\begin{definition}
	\label{D:survivor-2}
	For blocks $B' \ne B$, let $\cV_{B', B}$ be the event where the following things occur:
	\begin{enumerate}
	\item \emph{Particle count and ignition trajectories:} Events $1$ and $3$ in Definition \ref{D:survivor} hold for the block $B'$.
	\item \emph{Healing:} For every particle $a \in H^+_{B'}$, we have $a^h \cap [0, L^{15}] = \emptyset$ and $a^h \cap [L^{15}, L^{15} + 1] \ne \emptyset$.
	\item \emph{Trajectories:} We can partition $H_{B'}^+$ into sets 
	$$
	\{H_{B', B'', x}^+ : x \in \del B^{\prime \#}, d(B'', B') = 1, B'' \not\in U_x \}
	$$
	such that:
	\begin{enumerate}
		\item $|H_{B', B'', x}^+| = L^{90}$ for all $x, B''$.
		\item Every particle $a \in H_{B', B'', x}^+$ satisfies:
		\begin{align}
		d(a(0), B^{\prime c}) &\ge L/10 \\
		\{a(t) : 0 \le t \le \xi/(2\log \log L) \} &= U_x \smin \{v_B\}, \\
		\label{E:71}
		\{a(t)  : t \in [0, 2L^{20}]\} &\sset D(v_B, L^{20}) \smin \{v_B\}.
		\end{align}
		\item The particle $a$ first exits $U_x$ in the interval $[\xi/(2\log \log L), \xi/\log \log L)$. When it does so, it enters $B'$.
	\end{enumerate}
	Note that this is essentially the same as Definition \ref{D:survivor}.5, except we have no survivor particle $a^*$, and the block $v_B$ is not located in $B'$.
\end{enumerate}
\end{definition}

We now define
$$
\cY_{B} = \cU_B \cap \bigcap_{B' \in D_\sB(B, L^{10})\smin \{B\}} \cV_{B, B'}.
$$
We have the following easy deterministic consequences of the construction of $\cY_B$.
\begin{lemma}
	\label{L:red-boxes}
	On the event $\cY_B$, the following events hold.
	\begin{enumerate}[label=(\roman*)]
		\item Every block $B' \in D_\sB(B, L^{10})$ is not a blue seed.
		
		\item Suppose that the block $B$ is ignited. For all times $t \in [\tau_B + \xi, \tau_B + L^{20} - 2]$ and all locations $b \in D(v_B, L^{20}) \smin \{v_B\}$, there is at least one infected particle satisfying $\bar a(t) = b$.
		
		\item Suppose that the block $B$ is ignited. Then by the time $\tau_B + L^{20}-1$, all particles in the set $S_B = \bigcup \{H_{B'} : B' \in D_\sB(B, L^{10}) \}$ have been removed, other than the survivor particle $a^*$ for the block $B$. (For this point, recall that the ignition particle for $B'$ belongs to $H_{B'}$ and we think of the event where a particle becomes an ignition particle for another block as removal).
		\item Suppose that the block $B$ is ignited. Then the survivor particle $a^*$ does not come into contact with any infected particles in the set $S_B$.
	\end{enumerate}
Moreover, the event $\cY_B$ is measurable with respect to either the $\sig$-algebra $\sM_{D_\sB(B, L^{10} + 2)}$ or the $\sig$-algebra $\sF_{\tau_B + 3L^{20}/2}$. 
\end{lemma}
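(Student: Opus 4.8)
The plan is to deduce (i)–(iv) and the two measurability statements directly from the block colouring construction (Definition~\ref{D:SI-colouring}, Section~\ref{S:SI-colouring}) and the definitions of $\cU_B$ and $\cV_{B,B'}$ (Definitions~\ref{D:survivor}, \ref{D:survivor-2}), using only the crude Lipschitz bound~\eqref{E:first-lipschitz} (equivalently~\eqref{E:Lipschitz}) to control the unknown differences $\tau_{B'}-\tau_B$: for $B'\in D_\sB(B,L^{10})$ it gives $|\tau_{B'}-\tau_B|\le d_\sB(B,B')\xi\le\al L^{12}$. No appeal to $\cG_\nu$ is needed, and $L$ is taken large throughout.

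For (i): fix $B'\in D_\sB(B,L^{10})$ and check that $\cA_{B'}$ (see~\eqref{E:cA123}) holds on $\cY_B$, component by component. The event $\cV_{B,B'}$ (or $\cU_B$, when $B'=B$) imports the healing and ignition-trajectory constraints of Definition~\ref{D:survivor}: items~2 and~3 give that every principal particle and every nearby ignition particle has no healing event in $[0,L^3]$ and that ignition particles are stationary on $[0,L^4]\supset[0,\xi/\log\log L]$, which is exactly $\cA_{B'}^{(3)},\cA_{B'}^{(4)}$ and $\cA_{B'}^{(1)}$. For $\cA_{B'}^{(5)}$ and $\cA_{B'}^{(2)}$, note that $v_B$, being interior to a block, lies on no block boundary, so $v_B\notin\del(B')^\#$; hence each ignition location $x\in\del(B')^\#$ lies in the range $U_x\smin\{v_B\}$ of every particle in $H^+_{B',B'',x}$ (by~\eqref{E:67} or its analogue in Definition~\ref{D:survivor-2}), so all $L^{90}$ of those particles meet the stationary, still-infected ignition particle sitting at $x$ and then leave $U_x$ into $B''$ by time $\xi/\log\log L$ by the exit condition in item~5(c) (resp.~3(c)). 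Since $L^{90}\ge L^{2-\log^{-1/2}L}$, this is precisely what $\cA_{B'}^{(5)}$ and $\cA_{B'}^{(2)}$ demand, so no block of $D_\sB(B,L^{10})$ is a blue seed.

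For (ii)–(iv) the central device is the \emph{protective wall}. Since $B$ is ignited, it is ignited at some $x_0\in\del B^\#$; on $\cU_B$ the ignition particle of $B$ sits at $x_0$ on $[\tau_B,\tau_B+L^4]$ and stays infected at least until $\tau_B+L^3$ (item~3). Every particle of $H^+_{B,B',x_0}$ passes through $x_0$ before time $\xi/(2\log\log L)\le\xi$ (by~\eqref{E:67}), hence is infected by $\tau_B+\xi$, and by items~6 and~2 of Definition~\ref{D:survivor} at least $L^{45}$ of them occupy every $b\in D(v_B,L^{20})\smin\{v_B\}$ throughout $[\tau_B+\xi,\tau_B+L^{20}-2]$ — this is (ii). Consequently any susceptible particle at a vertex of $D(v_B,L^{20})\smin\{v_B\}$ at a time in $[\tau_B+\xi,\tau_B+L^{20}-2]$ is infected at that instant. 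For (iii): every $a\in S_B\smin\{a^*\}$ is either an ignition particle of some block, and so by item~3 (through $\cU_B$ or $\cV_{B,B'}$) is removed within $L^4$ of that block's colouring time, hence by $\tau_B+L^{20}-1$ using~\eqref{E:first-lipschitz}; or else it lies in some $H^+_{B'}$ with $B'\in D_\sB(B,L^{10})$, and then~\eqref{E:68}/\eqref{E:71} and~\eqref{E:first-lipschitz} place $a$ inside $D(v_B,L^{20})\smin\{v_B\}$ at time $\max(\tau_{B'},\tau_B+\xi)\le\tau_B+\al L^{12}$, when the wall is active, so $a$ is infected by then and then heals by $\tau_B+L^{20}-1$ (item~2 of Definition~\ref{D:survivor} if $B'=B$, and of Definition~\ref{D:survivor-2} with~\eqref{E:first-lipschitz} if $B'\ne B$). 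For (iv): $a^*\in H^-_B$ is first revealed at $\tau_B$ (by~\eqref{eq:principal}, $a^*(0)\in B$), hence is susceptible then, and occupies $v_B$ on $[\tau_B,\tau_B+L^{20}]$; every $a\in S_B\smin\{a^*\}$ in some $H^+_{B'}$ avoids $v_B$ on $[\tau_{B'},\tau_{B'}+2L^{20}]\supset[\tau_B,\tau_B+L^{20}]\cap[\tau_{B'},\infty)$ by~\eqref{E:68}/\eqref{E:71} and~\eqref{E:first-lipschitz}, and every ignition particle of $S_B$, while infected, sits at a point of some $\del(B'')^\#\not\ni v_B$ and is then removed; together with (iii) this shows $a^*$ never shares a vertex with an infected particle of $S_B$.

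Finally, measurability: $\cU_B$ is a function of $\sP_B$ and the ignition data $(W_{B',\operatorname{ig}},W^h_{B',\operatorname{ig}})$ with $d(B',B)\le2$ (hence $d_\sB(B,B')\le2$ for large $L$), so it is $\sM_{D_\sB(B,2)}$-measurable; likewise $\cV_{B,B'}$ is $\sM_{D_\sB(B',2)}$-measurable, so intersecting over $B'\in D_\sB(B,L^{10})$ shows $\cY_B$ is $\sM_{D_\sB(B,L^{10}+2)}$-measurable. For the $\sF$-version one tracks the time horizons appearing in the two definitions — all of the form $\tau_{B''}+O(L^{20})$ with $\tau_{B''}\le\tau_B+\al L^{12}$ by~\eqref{E:first-lipschitz} — and checks they all fall before $\tau_B+3L^{20}/2$. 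I expect the only step requiring genuine care to be this time-window bookkeeping underlying (iii) and (iv): one must be sure the active window $[\tau_B+\xi,\tau_B+L^{20}-2]$ of the wall and the healing windows of the various particles line up with the (unknown-offset) trajectories of \emph{every} particle of $S_B$ using only~\eqref{E:first-lipschitz}, and the hierarchy of scales $\xi\ll\al L^{12}\ll L^{15}\ll L^{20}$ built into the definitions is exactly what makes this mesh.
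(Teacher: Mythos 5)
Your overall strategy matches the paper's: verify $\cA_{B'}$ component by component for (i), use Definition~\ref{D:survivor}.3 for the ignition particle, \eqref{E:67} to infect the wall particles, and item~6 for the wall itself, then feed the wall into (iii)--(iv) via the Lipschitz bound~\eqref{E:first-lipschitz}. Parts (i), (iii), (iv), and the measurability discussion are essentially the paper's argument, sometimes with more detail (e.g.\ your observation that $v_B \notin \del (B')^\#$ is a nice point the paper leaves implicit).

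However, there is a genuine gap in your proof of (ii). You assert that ``at least $L^{45}$ of them occupy every $b \in D(v_B, L^{20}) \smin \{v_B\}$ throughout $[\tau_B + \xi, \tau_B + L^{20} - 2]$,'' but this does not follow: once a particle in $H^+_{B, B', x_0}$ becomes the \emph{ignition particle} of another block, its future trajectory is replaced by the new ignition trajectory $x + W_{B'', \operatorname{ig}}(\cdot)$ (see Case~1 of Section~\ref{S:SI-colouring}), so it no longer follows its $\sP_B$-trajectory and in particular may no longer be stationed at $b$. The paper handles this by using \eqref{E:68} (confinement to $D(v_B, L^{20})$ on $[0, L^{20}]$) to bound the number of $H_B^+$-particles that can become ignition particles before time $\tau_B + L^{20}$ by $5L^{40}$, so the wall retains at least $L^{45} - 5L^{40} \ge 1$ infected particle at each site. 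Your count of $L^{45}$ is thus not justified as stated, and since (iii) and (iv) quote the conclusion of (ii), the gap propagates. You are evidently aware particles can become ignition particles (you handle them as a separate case in (iii)), but you need to also subtract them from the wall count in (ii); the reason this works is precisely the confinement in \eqref{E:68} together with the huge slack between $L^{45}$ and the number of blocks intersecting $D(v_B, L^{20})$.
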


\begin{proof}
	Throughout the proof, we work on the event $\cY_B$.
We start with (i). We refer the reader back to \eqref{E:cA123} and surrounding discussion for the detailed definition of a blue seed.
 Let $B' \in D_\sB(B, L^{10})$. There are two cases, depending on whether or not $B' = B$ -- their proofs are identical so we only treat the case when $B' = B$. Definition \ref{D:survivor}.3 guarantees the events $\cA_B^{(1)}$ and $\cA_B^{(3)}$.  Definition \ref{D:survivor}.2 guarantees the event $\cA_B^{(4)}$. Definition \ref{D:survivor}.5 and the fact that none of the ignition trajectories in block $B'$ with $D(B', B) \le 2$ move prior to time $L^3$ (Definition \ref{D:survivor}.3) guarantee that each of the events $\cA^{(2)}_{B, B', x}$ and $\cA^{(5)}_{B, x}$ hold for $x \in \del B^\#, B' \notin U_x$. Therefore $\cA_{B}$ holds, and so $B$ is not a blue seed.
	
We now prove (ii). If $B$ is ignited at a location $x \in \del B^\#$, then by  Definition \ref{D:survivor}.$3$ that square contains an infected particle for all times in $[\tau_B, \tau_B + L^3]$. Choose $B'' \notin U_x$ with $d(B, B'') = 1$ arbitrarily. Equation \eqref{E:67} in Definition \ref{D:survivor}.$5$ guarantees that all particles in $H^+_{B, B'', x}$ visit the site $x$ prior to time $\tau_B + \xi/(2 \log \log L)$, and equation \eqref{E:68} in Definition \ref{D:survivor}.$5$ guarantees that no more than $5 L^{40}$ of these particles become ignition particles prior to time $\tau_B + L^{20}$.
Definition \ref{D:survivor}.$2$ and Definition \ref{D:survivor}.$6$ then guarantee that at least $L^{45} - 5 L^{40}$ infected particles are at every site in $D(v_B, L^{20}) \smin \{v_B\}$ at all times $t \in [\tau_B + \xi, \tau_B + L^{20} - 2]$ and all locations $b \in D(v_B, L^{20}) \smin \{v_B\}$.

For part (iii), we first deal with particles from $H_B$. The ignition particle recovers by time $\tau_B + L^4$ by Definition \ref{D:survivor}.3. The remaining particles recover by time $\tau_B + L^{20} - 1$ if they are infected prior to that time by Definition \ref{D:survivor}.2. The fact that they are infected prior to that time follows from \eqref{E:68} and part (ii). Now let $B' \in D_\sB(B, L^{10})$. The ignition particle for $H_{B'}$ recovers prior to time $\tau_{B'} + L^4$, which is bounded above by $\tau_B+ L^{12}$ by \eqref{E:first-lipschitz}. For the remaining particles, observe that by \eqref{E:first-lipschitz}, we have
$$
[\tau_{B'}, \tau_{B'} + L^{12}] \cap [\tau_B + \xi, \tau_B + L^{20}] \ne \emptyset,
$$
and so by (ii) and \eqref{E:71}, all particles in $H_{B'}$ are infected by time $\tau_{B'} + L^{12}$. Hence by Definition \ref{D:survivor-2}.2 all particles have recovered by time   $\tau_{B'} + L^{15} \le \tau_B+L^{20} - 1$. 

For part (iv), by part (iii), we just need to check that $a^*$ does not contact any infected particles in $S_B$ in the interval $[\tau_B, \tau_B + L^{20}]$. For particles in $H_B \smin \{a^*\}$, this is guaranteed by \eqref{E:68} and Definition \ref{D:survivor}.3 for the ignition particle. For particles in $S_B \smin H_B$  this is guaranteed by \eqref{E:71}, Definition \ref{D:survivor-2}.1 for the ignition particle, and the Lipschitz bound \eqref{E:Lipschitz}.

The $\sM_{D_\sB(B, L^{10} + 2)}$-measurability claim is immediate from construction and the measurability given $\sF_{\tau_B + 3 L^{20}/2}$ follows by the Lipschitz bound \eqref{E:Lipschitz}.
\end{proof}

We also need to define events that control the possibility that the particle $a^*$ comes into contact with an infected particle outside of the set $S_B$, and to control the SSP near $B$. Here we use some notation from previous sections.
First, using the notation of Lemma \ref{L:far-particles}, define
$$
\cR_{B, B'} := \cE_{B'}(0, [d(B, B')]^{3/2}, d(B, B')/4).
$$
Also let $\cT_{B'}$ be the event where for every $b\in H_{B'}^{++}$ and every $a \in [0, L^{17}]$, the healing process $b^h$ satisfies
	$$
	b^h \cap (a, a + L^{10}/2] \ne \emptyset
	$$
	and the set
	$$
	\{b(t) : t \in [a, a + L^{10}/2]\}
	$$
	has diameter at most $L^{10}/5$.
Note that $\cR_{B, B'}, \cT_{B'}$ are $\sM_{B'}$-measurable events.

\begin{lemma}
\label{L:other-particles}
Let $b \in H_{B'}^{++}$ for some $B'$ with $L^{10} < d(B, B')$.
\begin{enumerate}
	\item If additionally $L^{15} < d_\sB(B, B')$, then on the event $\cR_{B, B'}$ the particle $b$ never comes within distance $d(B, B')/2$ of the location $v_B$ before time $\tau_B + L^{20}$.
	\item If additionally $L^{10} \le d_\sB(B, B') \le L^{15}$, then on the event $\cT_{B'} \cap \cR_{B, B'} \cap \cY_B$, if the block $B$ was ignited, then the particle $b$ never comes within distance $L^{10}/4$ of the location $v_B$ before time $\tau_B + L^{10}$ and will be removed by that time.
\end{enumerate}
\end{lemma}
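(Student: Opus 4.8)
\emph{Proof proposal.} The plan is to track $b$ in the intrinsic time frame of $\sP_{B'}$ and transfer all conclusions to the SIR clock via the Lipschitz bound \eqref{E:first-lipschitz}, which gives $|\tau_B - \tau_{B'}| \le d_\sB(B,B')\,\xi \le 8\al L\, d(B,B')$ (using $\xi = \al L^2$ and $d(B,B') < L\, d_\sB(B,B')$ from \eqref{E:block-distance}). The common starting point for both parts is that, on $\cR_{B,B'} = \cE_{B'}(0, [d(B,B')]^{3/2}, d(B,B')/4)$, every $b \in H_{B'}^{++}$ begins within $\tfrac12 d(B,B')$ of $B'$ and stays there for a long time: if $b \in H_{B'}^+$ then $\bar b$ visits $B'$ at some SIR-time in $[\tau_{B'}, \tau_{B'}+\xi]$, and since $[0,\xi] \sset [-[d(B,B')]^{3/2}, [d(B,B')]^{3/2}]$ the displacement bound in $\cR_{B,B'}$ forces $d(\bar b(\tau_{B'}), B') \le d(B,B')/4$; if $b = W_{B',\operatorname{ig}}$ — relevant only when $B'$ is actually ignited, since otherwise this particle never enters the SIR process — then $\bar b$ starts at the ignition location of $B'$, which is within $L^\infty$-distance $\al L \ll d(B,B')$ of $B'$. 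Since $v_B \in B$, in either case $d(\bar b(\tau_{B'}), v_B) \ge \tfrac34 d(B,B')$, and more generally, by the triangle inequality and the displacement bound of $\cR_{B,B'}$, we get $d(\bar b(t), B') \le \tfrac12 d(B,B')$ and hence $d(\bar b(t), v_B) \ge \tfrac12 d(B,B')$ for every SIR-time $t$ with $|t - \tau_{B'}| \le [d(B,B')]^{3/2}$.

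For part (1): the hypothesis $d_\sB(B,B') > L^{15}$ forces $d(B,B') > L^{16}/8$, so the window $[\tau_{B'}, \tau_B + L^{20}]$ has length at most $|\tau_B - \tau_{B'}| + L^{20} \le 8\al L\, d(B,B') + L^{20}$, which is below $[d(B,B')]^{3/2}$ for large $L$ since $d(B,B')^{1/2} \gg \al L$ and $[d(B,B')]^{3/2} \gg L^{20}$ in this regime. Hence the displacement bound of $\cR_{B,B'}$ applies over the whole window and yields $d(\bar b(t), v_B) \ge \tfrac12 d(B,B')$ for all $t \in [\tau_{B'}, \tau_B + L^{20}]$ (and the assertion is vacuous if $\tau_{B'} > \tau_B + L^{20}$, since then $\bar b$ is not defined there).

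For the distance clause of part (2) the argument is identical with $L^{20}$ replaced by $L^{10}$: now $d_\sB(B,B') \ge L^{10}$ gives $d(B,B') > L^{11}/5$, the window $[\tau_{B'}, \tau_B + L^{10}]$ has length $\le 8\al L\, d(B,B') + L^{10} \le [d(B,B')]^{3/2}$, and $\cR_{B,B'}$ gives $d(\bar b(t), v_B) \ge \tfrac12 d(B,B') > L^{11}/10 > L^{10}/4$. For the removal clause I would combine the three events as follows. On $\cR_{B,B'}$, for all $t$ in the relevant window $\bar b(t)$ lies within $\tfrac12 d(B,B')$ of $B'$, hence inside $D(v_B, L^{20}) \smin \{v_B\}$ (using $d(B,B') < L^{16}$, so $\tfrac32 d(B,B') + 2L < L^{20}$). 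On $\cY_B$ with $B$ ignited, Lemma \ref{L:red-boxes}(ii) places an infected particle at every site of $D(v_B, L^{20}) \smin \{v_B\}$ throughout $[\tau_B + \xi, \tau_B + L^{20} - 2]$, an interval which by \eqref{E:first-lipschitz} contains $t^{\#} := (\tau_B + \xi) \vee \tau_{B'}$; since also $t^{\#} - \tau_{B'} \le 8\al L\, d(B,B') < [d(B,B')]^{3/2}$, the point $\bar b(t^{\#})$ lies in that shell, so $b$ is infected by time $t^{\#}$. Finally, on $\cT_{B'}$ the particle $b$ has a healing event within the next $L^{10}/2$ SIR-time units and moves a total of at most $L^{10}/5$ over that span (here $t^{\#} - \tau_{B'} \in [0, L^{17}]$ is used), so it is removed while still at distance $\ge \tfrac12 d(B,B') - L^{10}/5 > L^{10}/4$ from $v_B$ — i.e.\ before it could ever threaten the survivor, and in any case well before time $\tau_B + L^{20}$. (The constant $\tau_B + L^{10}$ in the statement is not meant to be sharp: when $\tau_{B'} \le \tau_B + L^{10}/2$ it is literally correct, and when $\tau_{B'}$ is larger the particle $b$ has not even been coloured by time $\tau_B + L^{10}$, so the distance clause is vacuous there while removal still happens within $L^{10}/2$ of $b$ appearing.)

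I expect the removal clause of part (2) to be the only delicate point. It requires keeping three windows simultaneously compatible — the $[d(B,B')]^{3/2}$-window on which $\cR_{B,B'}$ confines $b$ near $B'$ (hence inside the infected shell but far from $v_B$), the window $[\tau_B + \xi, \tau_B + L^{20} - 2]$ on which $\cY_B$ (via Lemma \ref{L:red-boxes}(ii)) guarantees that shell is fully occupied by infected particles, and the $[0, L^{17}]$-window on which $\cT_{B'}$ guarantees both frequent healing and slow motion — and checking that the Lipschitz control $|\tau_B - \tau_{B'}| \le 8\al L\, d(B,B')$ together with the polynomial scale separation $L^{10} \le d_\sB(B,B') \le L^{15}$ (hence $L^{11}/5 < d(B,B') < L^{16}$) makes all three fit. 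Part (1) and the distance clause of part (2) are routine random walk displacement estimates.
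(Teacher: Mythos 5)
Your proof is correct and follows essentially the same approach as the paper: it uses the Lipschitz bound \eqref{E:first-lipschitz} together with the displacement control from $\cR_{B,B'}$ to sandwich $d(\bar b(t), B)$ between $d(B,B')/2$ and $2d(B,B')$ over the relevant time window, then invokes Lemma \ref{L:red-boxes}(ii) to see that $b$ is infected inside the shell $D(v_B, L^{20}) \smin \{v_B\}$, and finally uses $\cT_{B'}$ to force rapid healing with small displacement. Your extra bookkeeping (the explicit treatment of the ignition particle, the $t^{\#}$ notation, and the parenthetical addressing $\tau_{B'}$ large relative to $\tau_B$) makes the edge cases explicit where the paper handles them tacitly, but the underlying argument is the same.
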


\begin{proof}
Part 1 of the lemma follows from Corollary \ref{C:no-far-particles}, noting that when $L^{15} < d_\sB(B,B')$, we have
$$
L^{20} + L^2 d(B, B') \le [d(B, B')]^{3/2}.
$$	
For part 2 of the lemma, by the Lipschitz bound \eqref{E:first-lipschitz} we have	
\begin{equation}
\label{E:tauB-spread}
|\tau_B + \xi - \tau_{B'}| \le \xi (d_\sB(B, B') + 1) \le [d(B,B')]^{3/2}.
\end{equation}
Therefore since we are working on the event $\cR_{B, B'}$, we have $d(B, B')/2 \le d(\bar b(t), B) \le 2d(B, B')$ for all $t \in [\tau_{B'}, \tau_B + \xi]$. 

Therefore Lemma \ref{L:red-boxes}(ii) guarantees that $b$ has been infected prior to time $\tau_B + \xi$. Therefore again using \eqref{E:tauB-spread} and the definition of $\cT_{B'}$, the particle $\bar b$ will be removed prior to time $\tau_B + \xi + L^{10}/2 < \tau_B + L^{10}$ and will never come within distance $L^{10}/4$ of $B$ in that time.
\end{proof}
	
The events $\cT_{B'}$ and $\cR_{B, B'}$ have high probability.

\begin{lemma}
	\label{L:AC-estimates}
For any block $B'$ we have
\begin{align*}
\P [\cT_{B'}] \ge 1 - \exp (-c L^{4}).
\end{align*}
and for any $B'$ with $d(B, B') \ge L^{10}$, we have
$$
\P [\cR_{B, B'}] \ge 1 - \exp (-c d(B, B')^{1/2}).
$$
Moreover, both of these bounds hold conditional on $\cY_B$ whenever $B' \notin D_\sB(B, L^{10})$.
\end{lemma}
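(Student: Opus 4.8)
The plan is to establish the two unconditional bounds from the random‑walk and Poisson inputs already in place, and then upgrade to the conditional bounds by a locality argument using Lemma \ref{L:red-boxes}. The bound on $\cR_{B,B'}$ is immediate: it is the instance of Lemma \ref{L:far-particles} with $t=0$, $s=[d(B,B')]^{3/2}$ and $m=d(B,B')/4$, which gives $\P[\cR_{B,B'}]\ge 1-CL^2\exp(-cm^2/(m+s))$. Writing $d=d(B,B')\ge L^{10}$, we have $s\ge m$, hence $m^2/(m+s)\ge c'd^{1/2}$, and $CL^2\le\exp(c'd^{1/2}/2)$ since $d^{1/2}\ge L^5$; combining gives $\P[\cR_{B,B'}]\ge 1-\exp(-c''d^{1/2})$.

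For $\cT_{B'}$, the key is to avoid an $\exp(-cL^2)$ loss from bounding the number of particles, so I would use Poisson thinning instead of a particle‑count bound. A single particle $b$ violates the condition defining $\cT_{B'}$ iff its healing process has a gap longer than $L^{10}/2$ inside $[0,L^{17}]$, or it moves more than $L^{10}/5$ over some length‑$L^{10}/2$ window. Discretizing $[0,L^{17}]$ into $O(L^7)$ intervals of length $L^{10}/4$, the relation $\nu\ge L^{-6}$ (see \eqref{E:Lnu-relationship}) forces roughly $L^4/4$ expected healing events per such interval, so each carries no healing event with probability $\le\exp(-L^4/4)$, while Lemma \ref{L:rw-estimate} bounds the probability of a displacement of $L^{10}/20$ over such an interval by $\exp(-cL^{10})$. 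A union bound gives a single‑particle failure probability $q\le\exp(-cL^4)$. Since $H_{B'}^+$ is a Poisson process of intensity $\le\frac32\mu L^2$ (Lemma \ref{l:HBsize}), the number of bad particles in it is Poisson‑dominated with mean $\le\frac32\mu L^2 q$, so $\P[\exists\text{ bad }b\in H_{B'}^+]\le\frac32\mu L^2 q$; adding the $\le q$ contribution of the single ignition particle $W_{B',\operatorname{ig}}$ gives $\P[\cT_{B'}]\ge 1-\exp(-cL^4)$.

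For the conditional statements, recall from Lemma \ref{L:red-boxes} that $\cY_B$ is $\sM_{D_\sB(B,L^{10}+2)}$-measurable, while $\cT_{B'}$ and $\cR_{B,B'}$ are $\sM_{B'}$-measurable. If $d_\sB(B,B')>L^{10}+2$ these $\sig$-algebras are independent (the $\sM_{B''}$ being i.i.d.), so the conditional bounds coincide with the unconditional ones. The only remaining case is the collar $d_\sB(B,B')\in\{L^{10}+1,L^{10}+2\}$, where $\cY_B$ can see $\sM_{B'}$ only through the pair $(W_{B',\operatorname{ig}},W_{B',\operatorname{ig}}^h)$, which enters $\cY_B$ purely via the ignition‑trajectory clause (Definition \ref{D:survivor}.3) applied to some block $B''\in D_\sB(B,L^{10})$ with $d_\sB(B'',B')\le 2$. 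In particular $\sP_{B'}$ — hence the whole family $H_{B'}^+$ with its trajectories and healing clocks — is independent of $\cY_B$, so conditioning on $\cY_B$ leaves the $H_{B'}^+$ parts of both estimates untouched, and merely forces $W_{B',\operatorname{ig}}$ to be constant on $[0,L^4]$ with $W_{B',\operatorname{ig}}^h\cap[0,L^3]=\emptyset$, $W_{B',\operatorname{ig}}^h\cap[L^3,L^4]\ne\emptyset$, and fresh afterward. These constraints only help the displacement estimates, and are harmless for the healing‑gap estimate because $L^{10}/2>L^4$ (so the guaranteed healing event in $[L^3,L^4]$ lies in every window $(a,a+L^{10}/2]$ with $a\le L^3$) and every window of length $L^{10}/2$ still contains a fresh sub‑interval of length $\ge L^{10}/4$. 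Re‑running the union bounds then gives $\P[\cT_{B'}^c\mid\cY_B]\le\exp(-cL^4)$, and combining the fresh random‑walk pieces of $W_{B',\operatorname{ig}}$ with Lemma \ref{L:rw-estimate} gives $\P[\cR_{B,B'}^c\mid\cY_B]\le\exp(-cd(B,B')^{1/2})$. I expect the main obstacle to be exactly this last bookkeeping — verifying that in the collar $\cY_B$ touches $\sM_{B'}$ only through the ignition data and that this damages neither estimate — together with the need, noted above, to route the $\cT_{B'}$ bound through Poisson thinning so that the stated $\exp(-cL^4)$ rate (and not $\exp(-cL^2)$) is obtained.
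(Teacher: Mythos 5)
Your proposal is correct and follows essentially the same route as the paper: $\cR_{B,B'}$ directly from Lemma \ref{L:far-particles}, $\cT_{B'}$ by discretizing $[0,L^{17}]$ and union‑bounding a healing‑gap estimate against a displacement estimate, using the relationship \eqref{E:Lnu-relationship} and an expected‑count bound on $H_{B'}^{+}$ to get the $\exp(-cL^4)$ rate, and the conditional claim from the $\sM_{B'}$-measurability of $\cT_{B'},\cR_{B,B'}$ together with the observation that in the collar $\cY_B$ touches $\sM_{B'}$ only through the ignition data. The only differences are cosmetic reorderings of the union bound: the paper discretizes over integer times $m\in\N\cap[0,L^{18}]$ and bounds $\P[\cZ_m^c]$ via $\E|H_{B'}^+|$ before unioning over $m$, whereas you discretize more coarsely into $O(L^7)$ windows and union over particles via Poisson thinning — both achieve the stated rate and your worry about an $\exp(-cL^2)$ loss is indeed avoided in both treatments.
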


\begin{proof}
	The estimate on $\cR_{B, B'}$ follows from Lemma \ref{L:far-particles}. For the estimate on $\cT_{B'}$, let $\cZ_{m}$ be the event where
\begin{equation}
\label{E:Xib}
h^b \cap (m, m + L^{10}/4] \ne \emptyset
\end{equation}
for all $b \in H_{B'}^{++}$. 
Then with notation as in Lemma \ref{L:far-particles} we have
$$
\cT_{B'} \supset \bigcap_{m \in \N \cap [0, L^{18}]} \cZ_m \cap \cE_{B'}(m, 2L^{10}, L^{10}/4).
$$
The probability of \eqref{E:Xib} failing for a fixed $b$ is simply the probability that a Poisson random variable of mean $\nu L^{10}/2$ equals $0$, which is $e^{-\nu L^{10}/2}$. Therefore by a union bound and Lemma \ref{l:HBsize},
$$
\P [\cZ_{m}^c] \le \E (|H_{B'}^+| + 1) e^{-\nu L^{10}/2} \le 2 \mu L^2 e^{-\nu L^{10}/2}.
$$
We can bound $\P [\cE_{B'}(m, 2L^{10}, L^{10}/4)]$ with Lemma \ref{L:far-particles}. Combining these estimates with \eqref{E:Lnu-relationship}, a union bound and simplifying yields the bound on $\cT_{B'}$. 

For the conditional claims, note that both $\cT_{B'}, \cR_{B, B'}$ are $\sM_{B'}$-measurable and so they are independent of $\cY_{B}$ unless $d(B', B'') \le 2$ for some $B'' \in D_\sB(B, L^{10})$. 
In the adjacent case, conditioning on $\cY_B$ changes the behaviour of the ignition particle in $B'$ by forcing it to stay still until time $L^3$, and affecting $W^h_{B', \operatorname{ig}}$ in the interval $[0, L^4]$. It is easy to repeat the computations above for $\P[ \cT_{B'}], \P [\cR_{B, B'}]$ under this conditioning to get the same bounds.
\end{proof}

Next, we analyze the SSP close to $B$, conditional on the event $\cY_B$. 

\begin{lemma}
\label{L:cYB-conditional}
Conditional on the event $\cY_B$, the blue seed process $\fB$ is still $1$-dependent and is stochastically dominated by a Bernoulli process of mean $\de_\nu$ as in the unconditional case (Corollary \ref{C:blue-domination}).
\end{lemma}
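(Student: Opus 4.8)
The plan is to re-run the proof of Corollary \ref{C:blue-domination} — i.e.\ the combination of Proposition \ref{p:blueSeed} with the Liggett--Schonmann--Stacey domination theorem — but now working under the conditional law $\P[\,\cdot \mid \cY_B]$. The first observation is that $1$-dependence is preserved for trivial structural reasons: the event $\cA_{B'}^c$ is $\sigma(\{\sM_{B''} : d(B'', B') \le 2\})$-measurable, and $\cY_B$ is, by Lemma \ref{L:red-boxes}, $\sM_{D_\sB(B, L^{10}+2)}$-measurable. Since $\cY_B$ is an \emph{intersection of $\sM_{B''}$-measurable events over a set of blocks} (the definitions of $\cU_B$ and $\cV_{B,B'}$ each factor through individual block $\sigma$-algebras, plus the adjacency-$2$ ignition conditions), conditioning on $\cY_B$ only reweights the individual $\sM_{B''}$ independently of one another within their factored structure; more precisely, for any block $B'$, the conditional law of $\{\sM_{B''} : d(B'', B') \le 2\}$ given $\cY_B$ still has the property that blocks at $\sB$-distance $\ge 3$ from $B'$ are conditionally independent of the block $\sigma$-algebras near $B'$. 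Hence the conditional field $\{\mathbf 1(\cA_{B'}^c)\}_{B' \in \sB}$ remains $k$-dependent with $k$ the same absolute constant as before.

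The substantive step is the uniform bound $\P[\cA_{B'}^c \mid \cY_B] \le \eps_\nu'$ with $\eps_\nu' \to 0$ as $\nu \to 0$, for every block $B'$. There are three regimes. (1) If $d_\sB(B', B)$ is large enough that $D_\sB(B', 2)$ is disjoint from $D_\sB(B, L^{10}+2)$, then $\cA_{B'}$ is independent of $\cY_B$ and Proposition \ref{p:blueSeed} applies verbatim. (2) If $B' = B$, or more generally $B'$ is one of the blocks whose block $\sigma$-algebra is directly constrained by $\cU_B$ or some $\cV_{B,B''}$, then in fact $\cA_{B'}$ \emph{holds deterministically} on $\cY_B$: this is exactly Lemma \ref{L:red-boxes}(i), which says every block in $D_\sB(B, L^{10})$ is not a blue seed on $\cY_B$. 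So $\P[\cA_{B'}^c \mid \cY_B] = 0$ there. (3) The only genuine work is the ``collar'' of blocks $B'$ with $d_\sB(B', B)$ small enough that $D_\sB(B',2)$ meets $D_\sB(B, L^{10}+2)$ but $B'$ itself is not inside $D_\sB(B, L^{10})$ — i.e.\ $B'$ lies within $\sB$-distance $2$ of the boundary shell of $D_\sB(B, L^{10})$. For such $B'$, conditioning on $\cY_B$ alters the law only of the ignition trajectories $W_{B'', \operatorname{ig}}$ and healing clocks $W^h_{B'', \operatorname{ig}}$ for the (at most a bounded number of) blocks $B''$ with $d(B'', B') \le 2$ that also lie in $D_\sB(B, L^{10}+2)$; and the constraint imposed is precisely of the form described in Lemma \ref{L:blueSeedconditional}: the ignition particle sits still on $[0, L^4]$ with no healing on $[0, L^3]$ but a healing event on $[L^3, L^4]$ (together with the analogous, even stronger, $\cU_B$/$\cV$ constraints which only make the relevant sub-events of $\cA_{B'}$ \emph{more} likely, as was already noted in the proof of Proposition \ref{p:blueSeed} for the $\cW$-conditioning). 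Therefore Lemma \ref{L:blueSeedconditional} — which was stated in exactly this generality, with $\sC \subset \{B'' : d(B'', B') \le 2\}$ arbitrary — gives $\P[\cA_{B'}^c \mid \cY_B] \le \eps_\nu$ for this collar as well. Taking $\eps_\nu' = \eps_\nu$ handles all three regimes.

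Having established the uniform conditional bound $\P[\cA_{B'}^c \mid \text{everything outside distance }k\text{ of }B', \cY_B] \le f(\eps_\nu)$ (the extra conditioning inside the probability being harmless by the $k$-dependence from the first paragraph), I would invoke Theorem 0.0(i) of \cite{liggett1997domination} exactly as in the proof of Corollary \ref{C:blue-domination}, concluding that under $\P[\,\cdot \mid \cY_B]$ the blue seed field is stochastically dominated by an i.i.d.\ Bernoulli$(\de_\nu)$ field with $\de_\nu \to 0$ as $\nu \to 0$. I expect the only delicate point is the bookkeeping in regime (3): one must check that conditioning on $\cY_B$ really does act on the near-boundary blocks only through a $\cW$-type event to which Lemma \ref{L:blueSeedconditional} applies — in particular that the $\cU_B$/$\cV_{B,B'}$ conditions on the \emph{ordinary} (non-ignition) particles of those boundary-collar blocks do not interfere, which holds because the definitions of $\cU_B$ and $\cV_{B,B'}$ only constrain ordinary particles of blocks \emph{inside} $D_\sB(B,L^{10})$, whereas the collar blocks $B'$ themselves lie outside. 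Once this is confirmed the rest is a routine repetition of the unconditional argument.
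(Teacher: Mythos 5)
Your proof takes essentially the same approach as the paper's: decompose blocks into the interior of $D_\sB(B, L^{10})$ (where $\cA_{B'}$ holds deterministically on $\cY_B$ by Lemma \ref{L:red-boxes}(i)), the collar $D_\sB(B, L^{10}+2)\smin D_\sB(B, L^{10})$ (handled via Lemma \ref{L:blueSeedconditional}, writing $\cY_B = \cW\cap\cX$ with $\cX$ independent of $\cA_{B'}$), and blocks farther out (independent of $\cY_B$, so Proposition \ref{p:blueSeed} applies verbatim), then invoke the Liggett--Schonmann--Stacey domination as in Corollary \ref{C:blue-domination}. The only difference is that you spell out the product-form factorization of $\cY_B$ underlying the preserved $1$-dependence more explicitly than the paper, which just asserts it; this is a harmless elaboration and your reasoning for it is sound.
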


\begin{proof}
First, by Lemma \ref{L:red-boxes}(i), on $\cY_B$ every block $B' \in D_\sB(B, L^{10})$ is not a blue seed. Moreover, the remaining process of blue seeds still forms a $1$-dependent process under this conditioning, so as in the proof of Corollary \ref{C:blue-domination} it is enough to show that for every $B' \notin D_\sB(B, L^{10})$, we have 
\begin{equation}
\label{E:PcA}
\P(\cA_{B'}^c \; | \; \cY_B) \le \ep_\nu.
\end{equation}
For $B' \in D_\sB(B, L^{10} + 2)$ we can write $\cY_B = \cW \cap \cX$, where $\cX$ is independent of $\cA_{B'}$ and $\cW$ is of the form of the event in Lemma \ref{L:blueSeedconditional}. That lemma implies the bound \eqref{E:PcA}. For $B' \notin D_\sB(B, L^{10} + 2)$, since $\cY_B$ is $\sM_{D_\sB(B, L^{10}+2)}$-measurable by Lemma \ref{L:red-boxes}, the events $\cY_B$ and $\cA_{B'}$ are independent, so Proposition \ref{p:blueSeed} yields \eqref{E:PcA}.
\end{proof}

Given Lemma \ref{L:AC-estimates}, we have the following.
\begin{corollary}
\label{C:blue-prob}
Recall the definition of the events $\cD_B(r, r')$ in Lemma \ref{L:global-local-ests}. We have the following:
\begin{enumerate}
	\item $\fD(B, L^{10}) = \emptyset$ on the event $\cY_B$. In particular, $B \notin [\fD]$ on the event $\cY_B \cap \cD_B(L^{10})$.
	\item As in Lemma \ref{L:global-local-ests}, for all $0 < r < r'$ and $B'$ we have $\P[\cD_{B'}(r, r') \; | \; \cY_B] \ge 1 - \exp(-c r^{c/\log \log r})$.
\end{enumerate}
\end{corollary}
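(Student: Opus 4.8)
\textbf{Proof plan for Corollary \ref{C:blue-prob}.}

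The plan is to deduce both parts directly from the structural facts about $\cY_B$ and the engulfing sets $[\fD(B,r)]$ that have already been assembled. For part (1), the key observation is that $\cY_B$ forces every block $B' \in D_\sB(B,L^{10})$ to avoid being a blue seed — this is exactly Lemma \ref{L:red-boxes}(i). By definition $\fD(B, L^{10})$ is built only from blue seeds lying in $D_\sB(B,L^{10})$, via the multiscale construction of the sets $A_k$. Since on $\cY_B$ there are no blue seeds at all in $D_\sB(B,L^{10})$, the base set of seeds used to form $\fD(B,L^{10})$ is empty, so $\fD(B,L^{10}) = \emptyset$ and hence $[\fD(B,L^{10})] = \emptyset$. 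It remains to upgrade this to $B \notin [\fD]$ on $\cY_B \cap \cD_B(L^{10})$: by the definition of $\cD_B(L^{10}) = \cD_B(L^{10}, \infty)$ from Lemma \ref{L:global-local-ests}, we have $\{B\} \cap [\fD] = \{B\} \cap [\fD(B, L^{10})] = \emptyset$, which is precisely $B \notin [\fD]$.

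For part (2), the statement of Lemma \ref{L:global-local-ests} gives the unconditional bound $\P[\cD_{B'}(r,r')] \ge 1 - \exp(-cr^{c/\log\log r})$, so the task is to show this survives conditioning on $\cY_B$. The route is the same one used in the proof of Corollary \ref{C:blue-domination} and in Lemma \ref{L:cYB-conditional}: by Lemma \ref{L:Ak-estimate}, the event $\{B'' \in \fB_{*,k}\}$ is measurable given $\fB_* \cap D_\sB(B'', r_{k-1}/2)$, so the complement of $\cD_{B'}(r,r')$ is measurable with respect to the blue seeds in a bounded neighbourhood of $B'$; hence it is a function of the blue seed process $\fB$, which by Lemma \ref{L:cYB-conditional} remains stochastically dominated by a Bernoulli process of the same mean $\de_\nu$ after conditioning on $\cY_B$. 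Tracing through the proof of Lemma \ref{L:global-local-ests} — which only used (a) the stochastic domination of $\fB_*$ by a low-density Bernoulli process through Lemma \ref{L:largest-scale}, and (b) the deterministic combinatorial argument involving the scales $r_k$ — one sees that both ingredients go through verbatim with the conditional law of $\fB$ in place of the unconditional one. Thus the same quantitative bound holds conditionally.

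The only mild subtlety, and the point I expect to need a sentence of care, is that $\cD_{B'}(r,r')$ is defined via the encapsulating sets $[\fD(B',r)]$, which are \emph{monotone} functions of the blue seed set (Lemma \ref{L:dom-seeds}) but not monotone at the level of the actual blue process $\fB$ — so one must phrase the domination argument purely in terms of $\fB_*$ (equivalently $\fB$, since blue seeds are a subset), never at the level of the grown processes. Since Lemma \ref{L:largest-scale} is already stated in terms of stochastic domination of $\fB_*$, this causes no real difficulty: one simply invokes Lemma \ref{L:cYB-conditional} to get the conditional domination of the blue seeds, then feeds it into Lemma \ref{L:largest-scale} exactly as in the unconditional proof of Lemma \ref{L:global-local-ests}.
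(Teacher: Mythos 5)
Your proof is correct and takes the approach the paper intends: part (1) is a direct consequence of Lemma~\ref{L:red-boxes}(i) together with the definitions of $\fD(B,r)$ and $\cD_B(r)$, and part (2) follows by feeding the conditional domination of the blue seed process (Lemma~\ref{L:cYB-conditional}) into the argument of Lemma~\ref{L:global-local-ests}, whose proof indeed only uses stochastic domination of $\fB_*$ plus deterministic combinatorics of the scales $r_k$. (The paper's lead-in ``Given Lemma~\ref{L:AC-estimates}\dots'' appears to be a misreference; the relevant inputs are exactly the two you cite.) Your closing caveat about keeping the domination argument at the level of $\fB_*$ rather than the grown process $\fB$ is well taken and matches how Lemma~\ref{L:dom-seeds} and Lemma~\ref{L:largest-scale} are formulated.
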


We can now put everything together to construct the survival event we will need moving forward. 

\begin{lemma}
\label{L:putting-things-together}
For a block $B$, let
$$
\cS_B = \cY_B \cap \cD_B(L^{10}, m_B) \cap \bigcap_{B' : L^{10} < d_\sB(B, B') \le L^{15}} \cT_{B'} \cap \bigcap_{B' : L^{10} < d_\sB(B, B') \le m_B} \cR_{B, B'}.
$$
If $m_B \le L^{10}$, we omit the $\cD_B(L^{10}, m_B)$ and $\cR_{B, B'}$ events above. 
Then recalling the global event $\cG_\nu$, we have the following:
\begin{enumerate}[label=(\roman*)]
	\item On $\cS_B \cap \cG_\nu$, 	at all times in $[\tau_B + L^{20} - 1, \tau_B + L^{20}]$, there is a susceptible particle $a^*$ at location $v_B$ and no unremoved particles within distance $L^{15}$ of $v_B$. All particles other than $a^*$ that were coloured in a block in $D_\sB(B, L^{15})$ are removed before time $\tau_B + L^{20} - 1$.
	\item There exists $\delta=\de(L) > 0$ such that $\P [\cY_B] = \de$ for all $B \in \sB$. Moreover, $\P(\cS_B \; | \; \cY_B) \ge 1 - \exp(- L^{c /\log \log L})$.
	\item The event $\cS_B$ is $\sM_{D_\sB(B, m_B + L^{10} + 2)}$-measurable.
\end{enumerate}
\end{lemma}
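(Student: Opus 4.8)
The plan is to dispose of the measurability claim (iii) and the probability claim (ii) first --- both follow quickly from the lemmas already in hand --- and then to extract the deterministic statement (i), which carries the real content. For (iii): tracing through the definitions, $\cY_B$ is $\sM_{D_\sB(B, L^{10}+2)}$-measurable by Lemma \ref{L:red-boxes}, the event $\cD_B(L^{10}, m_B)$ is $\sM_{D_\sB(B, m_B+2)}$-measurable by the discussion following \eqref{E:transitivity}, and each $\cT_{B'}, \cR_{B, B'}$ is $\sM_{B'}$-measurable by construction, with the blocks $B'$ appearing in $\cS_B$ satisfying $d_\sB(B, B') \le \max(L^{15}, m_B)$; collecting radii gives that $\cS_B$ is $\sM_{D_\sB(B, m_B + L^{10}+2)}$-measurable.

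For (ii): Definitions \ref{D:survivor} and \ref{D:survivor-2} are written purely in terms of the i.i.d.\ data $(\sP_{B'}, W_{B',\operatorname{ig}}, W^h_{B',\operatorname{ig}})$ for $B'$ in an $L^{10}$-block neighbourhood of $B$ --- they never mention the colouring times $\tau_{B'}$ or the coupled SSP --- and are translation covariant, so $\P[\cY_B]$ is a constant $\de = \de(L)$ independent of $B$. Positivity of $\de$ holds since $\cY_B$ is a finite intersection of events each prescribing finitely many particle counts, finitely many trajectory values on finite time windows, and finitely many healing times on finite windows; one checks these are mutually consistent (the wall and covering requirements only ask walks to make prescribed finite jump sequences, and the constraints on each ignition trajectory imposed by any overlapping $\cU_B, \cV_{B,B'}$ coincide), so the intersection has positive --- if exponentially small in $L$ --- probability under the Poisson construction, which is all that is needed. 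For the conditional bound, one has
\[
\cS_B^c \cap \cY_B \;\subseteq\; \cY_B \cap \Big(\cD_B(L^{10}, m_B)^c \cup \bigcup_{L^{10} < d_\sB(B,B') \le L^{15}} \cT_{B'}^c \cup \bigcup_{L^{10} < d_\sB(B,B') \le m_B} \cR_{B,B'}^c\Big),
\]
with the first and last terms dropped when $m_B \le L^{10}$; bounding each piece conditionally on $\cY_B$, Corollary \ref{C:blue-prob}(2) gives $\P(\cD_B(L^{10}, m_B)^c \mid \cY_B) \le \exp(-L^{c/\log\log L})$, while the conditional part of Lemma \ref{L:AC-estimates} gives $\P(\cT_{B'}^c \mid \cY_B) \le \exp(-cL^4)$ and $\P(\cR_{B,B'}^c \mid \cY_B) \le \exp(-c\,d(B,B')^{1/2})$. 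Since there are $O(L^{30})$ blocks of the first type and, using $d(B,B') \ge cL^{11}$ whenever $d_\sB(B,B') > L^{10}$, the sum of the $\cR$-terms is at most $\exp(-cL^{11/2})$, a union bound yields $\P(\cS_B^c \mid \cY_B) \le \exp(-L^{c/\log\log L})$.

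For (i), I would first show $B$ is ignited on $\cS_B \cap \cG_\nu$: since $\cS_B \subseteq \cD_B(L^{10}, m_B)$ and $\cG_\nu \subseteq \cC \cap \cD_B(m_B)$, transitivity \eqref{E:transitivity} (or monotonicity of $[\fD(B,\cdot)]$ if $m_B \le L^{10}$) puts us on $\cD_B(L^{10})$, whence $B \notin [\fD]$ by $\cY_B$ and Corollary \ref{C:blue-prob}(1), hence $B \in \fR(\infty)$ by the event $\cC$, hence $B$ is ignited by Proposition \ref{P:SI-to-BR}. Then I would split particles by the block in which they were coloured. Those coloured in $D_\sB(B, L^{10})$ form $S_B$: by Lemma \ref{L:red-boxes}(iii),(iv) every such particle except $a^*$ is removed by $\tau_B + L^{20}-1$, and $a^*$ meets no infected particle of $S_B$ and stays at $v_B$ through $\tau_B + L^{20}$ by Definition \ref{D:survivor}.4, hence is still susceptible at $v_B$ on all of $[\tau_B + L^{20}-1, \tau_B + L^{20}]$. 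Those coloured in a block $B'$ with $L^{10} < d_\sB(B,B') \le L^{15}$ fall under Lemma \ref{L:other-particles}(2): here $\cT_{B'} \subseteq \cS_B$ and $\cR_{B,B'}$ holds ($\subseteq \cS_B$ if $d_\sB(B,B') \le m_B$, and otherwise $\subseteq \cG_\nu$ since $\cL_B(B',[d(B,B')]^{3/2}) \subseteq \cR_{B,B'}$), so all such particles are removed by $\tau_B + L^{10} < \tau_B + L^{20}-1$ and never come within distance $L^{10}/4$ of $v_B$ until then. Those coloured in a block $B'$ with $d_\sB(B,B') > L^{15}$ fall under Lemma \ref{L:other-particles}(1) (again $\cR_{B,B'}$ comes from $\cS_B$ or $\cG_\nu$): they never come within $d(B,B')/2 > L^{15}$ of $v_B$ before $\tau_B + L^{20}$. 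A particle that becomes an ignition particle is re-counted as a fresh particle of the block it ignites, which lies either in $D_\sB(B, L^{10})$ (covered by Lemma \ref{L:red-boxes}(iii)) or at $d_\sB$-distance $> L^{10}$ from $B$ (covered by one of the last two cases), so nothing escapes the bookkeeping. Assembling the three cases yields both assertions of (i). The main obstacle is precisely this case analysis: one must make sure every range of $d_\sB(B,B')$ is absorbed by $\cS_B$ or by the global event $\cG_\nu$, handle the ignition-particle relabelling, and verify that the various time scales (the removal time $\tau_B + L^{10}$, the healing window $[\tau_B + L^{20}-2, \tau_B + L^{20}-1]$ of Definition \ref{D:survivor}.2, and the Lipschitz spread $|\tau_B - \tau_{B'}| \le L^2 d(B,B')$ from \eqref{E:Lipschitz}) are mutually compatible.
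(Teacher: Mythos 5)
Your proposal is correct and follows essentially the same approach as the paper: part (iii) traces the measurability radii of the constituent events, part (ii) combines translation covariance, a factorization of $\cY_B$ into positive-probability pieces whose overlapping ignition constraints are checked to be consistent, and the conditional bounds from Corollary \ref{C:blue-prob} and Lemma \ref{L:AC-estimates}, and part (i) first derives ignition of $B$ from $\cD_B(L^{10}, m_B)$, $\cG_\nu$, and Corollary \ref{C:blue-prob}, then splits particles by originating block into the three ranges $d_\sB(B,B') \le L^{10}$, $L^{10} < d_\sB(B,B') \le L^{15}$, and $d_\sB(B,B') > L^{15}$ and applies Lemma \ref{L:red-boxes}(iii)--(iv) and Lemma \ref{L:other-particles}(1)--(2) exactly as the paper does. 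Your explicit remark about re-counting ignition particles under the block that they ignite is a welcome clarification of bookkeeping that the paper leaves implicit.
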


\begin{proof}
	For part (i), we first check that the block $B$ is ignited on $\cG_\nu \cap \cS_B$. First, on $\cG_\nu$, any block not in $[\fD]$ gets coloured red and hence is ignited by Proposition \ref{P:SI-to-BR}.
	Next, the event $\cG_\nu$ implies the event $\cD_B(m_B)$. Since the event $\cD_B(L^{10}, m_B)$ occurs on $\cS_B$, by \eqref{E:transitivity} this implies the event $\cD_B(L^{10})$, so therefore $B$ gets coloured red as long as $B \notin [\fD(B, L^{10})]$. Corollary \ref{C:blue-prob} implies that $[\fD(B, L^{10})] = \emptyset$ on $\cY_B$, which contains $\cG_\nu \cap \cS_B$.
	
	Next, $\cL_B(B', [d(B, B')]^{3/2}) \sset \cR_{B, B'}$ for any $B, B'$, so the event $\cG_\nu \cap \cS_B$ implies $\cR_{B, B'}$ for all $B' \notin D_\sB(B, L^{10})$. Therefore by Lemma \ref{L:other-particles}.1, in the time interval $[\tau_B, \tau_B + L^{20}]$, no particles that originated in some $B'$ with $d_\sB(B, B') > L^{15}$ come within distance $d(B, B')/2 > L^{15}$ of $v_B$. Moreover, by Lemma \ref{L:other-particles}.2 and Lemma \ref{L:red-boxes}(iii, iv), all particles other than $a^*$ that originated in some $B'$ with $d_\sB(B, B') \le L^{15}$ are recovered by time $a^*$ and none of these come into contact with $a^*$.

Now we turn to (ii). First, $\P[\cY_B]$ does not depend on $B$ by construction. Moreover, we claim that $\cY_B$ is a finite intersection of finitely many independent positive probability events and hence $\P \cY_B > 0$. Indeed, let  $\tilde \cV_{B, B'}$ and $\tilde \cU_B$ be version of $\cV_{B', B}$ and $\cU_B$ where we omit any restrictions on ignition particles. The events $\tilde \cV_{B, B'}$ and $\tilde \cU_B$ are, respectively, $\sM_{B'}$ and $\sM_B$-measurable. Now, $\cY_B$ is then the intersection of all the $\tilde \cV_{B, B'}$ and $\tilde \cU_B$ with a collection of events restricting the behaviour of each ignition particle in $D_\sB(B, L^{10} + 2)$ which are independent of each other, all the $\tilde \cV_{B, B'}$, and $\tilde \cU_B$.

The bound on $\P(\cS_B \; | \; \cY_B)$ follows from the conditional bounds in Lemma \ref{L:AC-estimates} and Corollary \ref{C:blue-prob} along with a union bound.

Finally, the measurability follows from the `Moreover' in Lemma \ref{L:red-boxes} and the $\sM_{B'}$-measurability of the events $\cT_{B'}, \cR_{B, B'}$.
\end{proof}

Moving forward, we will also need more localized versions of $\cS_B$. This is the content of the following lemma.

\begin{lemma}
	\label{L:79}
For each block $B$ and $r > L^{15}$ define
$$
\cS_B^r := \cY_B \cap \cD_B(L^{10}, r \wedge m_B) \cap \bigcap_{B' : L^{10} < d(B, B') \le L^{15}} \cT_{B'} \cap \bigcap_{B' : L^{10} < d(B, B') < m_B \wedge r} \cR_{B, B'}.
$$
Again, if $m_B \le L^{10}$, we omit the $\cD_B(L^{10}, r \wedge m_B)$ and $\cR_{B, B'}$ events.
Then $\cS_B^r$ is measurable given either the $\sig$-algebra $\sF_{\tau_B + 2 r^{3/2}}$ or the $\sig$-algebra $\sM_{D_\sB(B, m_B + L^{10} + 2)}$ and we have the bound
$\P[\cS_B \mid \cS_B^r] \ge 1 - \exp(-  r^{c / \log \log r})$.
\end{lemma}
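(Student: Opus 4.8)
The plan is to use that $\cS_B^r$ is precisely the truncation of $\cS_B$ at radius $r$: by construction $\cS_B \sset \cS_B^r$, and every event occurring in $\cS_B$ but not in $\cS_B^r$ lives at block-distance at least $\approx r$ from $B$. So the two assertions decouple. For the $\sM_{D_\sB(B, m_B + L^{10}+2)}$-measurability one simply inspects the constituents: $\cY_B$ is $\sM_{D_\sB(B, L^{10}+2)}$-measurable by Lemma \ref{L:red-boxes}, $\cD_B(L^{10}, r \wedge m_B)$ is $\sM_{D_\sB(B, m_B + 2)}$-measurable by the remark following Lemma \ref{L:global-local-ests}, and $\cT_{B'}, \cR_{B,B'}$ are $\sM_{B'}$-measurable with all the relevant blocks $B'$ inside $D_\sB(B, m_B + L^{10})$.

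For the $\sF_{\tau_B + 2 r^{3/2}}$-measurability one checks that each constituent is determined by the coloured-particle process in a time window ending no later than $\tau_B + 2 r^{3/2}$, once $r > L^{15}$. Indeed: $\cY_B$ is $\sF_{\tau_B + 3L^{20}/2}$-measurable by Lemma \ref{L:red-boxes} and $3L^{20}/2 < 2r^{3/2}$; the event $\cT_{B'}$ with $d_\sB(B, B') \le L^{15}$ only inspects healing clocks and trajectories of $H_{B'}^{++}$ within $O(L^{17})$ of $\tau_{B'}$, and $|\tau_{B'} - \tau_B| \le L^2 d(B, B') \le L^{17}$ by \eqref{E:Lipschitz}; the event $\cR_{B, B'} = \cE_{B'}(0, [d(B,B')]^{3/2}, d(B,B')/4)$ inspects a window of radius $[d(B,B')]^{3/2} < r^{3/2}$ about $\tau_{B'}$, while $\tau_{B'} \le \tau_B + L^2 d(B,B') < \tau_B + L^2 r$, so everything is seen by time $\tau_B + L^2 r + r^{3/2} \le \tau_B + 2r^{3/2}$; and $\cD_B(L^{10}, r \wedge m_B)$ is a deterministic function of the blue-seed statuses $\{\cA_{B'}^c : d_\sB(B,B') \le (r\wedge m_B) + O(1)\}$, each of which, by \eqref{E:cA123}, only involves the principal particles of $B'$ (which stay near $B'$) and clocks running for time $O(L^3)$, hence is $\sF_{\tau_{B'} + O(L^3)}$-measurable, and $\tau_{B'} \le \tau_B + O(L^3 r)$ by \eqref{E:Lipschitz}, with $L^3 r \le 2 r^{3/2}$ for $r > L^{15}$.

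For the probability bound, assume $r < m_B$, the case $r \ge m_B$ being trivial since then $\cS_B^r = \cS_B$. By transitivity \eqref{E:transitivity}, $\cD_B(L^{10}, m_B) = \cD_B(L^{10}, r) \cap \cD_B(r, m_B)$, and comparing the ranges of the $\cR$-intersections we obtain
$$
\cS_B = \cS_B^r \cap \cD_B(r, m_B) \cap \bigcap_{B' : r \le d_\sB(B, B') \le m_B} \cR_{B, B'}.
$$
Since $\cS_B \sset \cS_B^r \sset \cY_B$ and, by Lemma \ref{L:putting-things-together}(ii), $\P[\cS_B^r] \ge \P[\cS_B] \ge (1 - \exp(-L^{c/\log\log L}))\P[\cY_B] \ge \tfrac12 \P[\cY_B]$, it follows that
$$
\P[\cS_B^c \cap \cS_B^r] \le \P[\cY_B]\Big(\P[\cD_B(r, m_B)^c \mid \cY_B] + \sum_{B' : d_\sB(B, B') \ge r} \P[\cR_{B, B'}^c \mid \cY_B]\Big).
$$
Corollary \ref{C:blue-prob}(2) bounds the first conditional probability by $\exp(-c r^{c/\log\log r})$, and Lemma \ref{L:AC-estimates} bounds $\P[\cR_{B,B'}^c \mid \cY_B]$ by $\exp(-c\, d(B, B')^{1/2})$; summing the latter over the $O(k)$ blocks at block-distance $k \ge r$ and using $d(B,B') \gtrsim L\, d_\sB(B, B')$ from \eqref{E:block-distance} gives a tail at most $\exp(-c\sqrt r)$. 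Hence $\P[\cS_B^c \cap \cS_B^r] \le \P[\cY_B]\exp(-r^{c/\log\log r}) \le 2\P[\cS_B^r]\exp(-r^{c/\log\log r})$, and dividing through gives $\P[\cS_B \mid \cS_B^r] \ge 1 - \exp(-r^{c/\log\log r})$ after relabelling $c$.

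I expect the main obstacle to be the $\sF_{\tau_B + 2 r^{3/2}}$-measurability of $\cD_B(L^{10}, r \wedge m_B)$: one must argue that the local approximation at radius $r\wedge m_B$ of the a priori globally defined set $[\fD]$, and its intersection with $\{B\}$, is already pinned down by the coloured-particle filtration after time $O(L^3 r)$, which forces one to combine the locality of the blue-seed events $\cA_{B'}$ with the crude Lipschitz control \eqref{E:Lipschitz} on the spread of colouring times, and is precisely where the hypothesis $r > L^{15}$ is used.
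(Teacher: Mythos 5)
Your proof is correct and takes essentially the same route as the paper's (which is extremely terse, merely pointing back to Lemma \ref{L:putting-things-together} and to the transitivity \eqref{E:transitivity}): trace the constituents for both measurability claims, then decompose $\cS_B = \cS_B^r \cap \cD_B(r\wedge m_B, m_B) \cap \bigcap_{B'} \cR_{B,B'}$ by transitivity and apply the conditional estimates of Lemma \ref{L:AC-estimates} and Corollary \ref{C:blue-prob} together with $\P[\cS_B^r] \ge \tfrac12 \P[\cY_B]$. The only nitpicks are cosmetic: you wrote $\tau_{B'} \le \tau_B + O(L^3 r)$ where \eqref{E:first-lipschitz}/\eqref{E:Lipschitz} actually give $O(L^2 r)$ (either works under $r > L^{15}$), and the paper itself is inconsistent between $d$ and $d_\sB$ in the ranges defining $\cS_B$ versus $\cS_B^r$, which you silently (and correctly) harmonize when asserting $\cS_B^r = \cS_B$ for $r \ge m_B$.
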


\begin{proof}
	The $\sM_{D_\sB(B, m_B + L^{10} + 2)}$-measurability follows as in Lemma \ref{L:putting-things-together}. The $\sF_{\tau_B + 2 r^{3/2}}$-measurability follows from the Lipschitz bound \eqref{E:first-lipschitz} since all of the events in the definition of $\cS_B^r$ that come from any fixed $B'$ concern times in the window $(-\infty, \tau_{B'} + r^{3/2}]$. The conditional probability bound follows in the exact same way as in Lemma \ref{L:putting-things-together}, using the transitivity $\cD_B(L^k, r) = \cD_B(L^k, r \wedge m_B) \cap \cD_B(r \wedge m_B, r)$ (Equation \eqref{E:transitivity}).  
\end{proof}
\subsection{The relationship with Section \ref{S:upper-bd-density}}

To move from the rare local survival event $\cY_B$ to a global survival event, we need to check that a version of the arguments from Section \ref{S:upper-bd-density} still goes through even after we condition on $\cY_B$. Our goal will be to show the following lemma, which is the conditional analogue of Proposition \ref{P:big-estimate}. Here all notation is as in Section \ref{S:upper-bd-density}.

\begin{lemma}
	\label{L:upper-bound-conditional}
Let $B \in \sB$, $s \ge L^{20}$. Let $z_B$ be the square closest to the center of $B$ and for $z \in \Z^2$ define $\Lambda_z := D(z + z_B, s^{49/100})$. Then there is an event $\tilde \cX = \tilde \cX_{B, s}$ such that:
\begin{itemize}
	\item  $\P[\tilde \cX \mid \cY_B] \ge 1 - \exp(-s^{c/\log \log s})$ and $\tilde \cX$ is $\sM_{D_\sB(B, m_B + 3s^{3/4})}$-measurable.
	\item  On the event $\tilde \cX \cap \cS_B  \cap \cG_\nu$, we have
	\begin{equation}
	\label{E:bara-sum*}
	\frac{1}{|\Lambda_z|} \sum_{B' \in D_\sB(B, s^{3/4})} \sum_{a \in H_{B'}^{++}} \mathbf{1}(\bar a(\tau_B + s') \in \Lambda_z, \bar a \notin \mathbf{R}_{\tau_B + s'}) \le 2L^{-20}
	\end{equation}
	for all $s' \in [s-1, s]$ and $z \in \Z^2$.
\end{itemize}
\end{lemma}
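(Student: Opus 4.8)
The plan is to re-run the proof of Proposition \ref{P:big-estimate}, but to split the block set $\sD = D_\sB(B, s^{3/4})$ into the \emph{near} part $D_\sB(B, L^{15})$ and the \emph{far} part $\sD^{\mathrm{far}} = \sD \smin D_\sB(B, L^{15})$, handling the two parts by different means. The near part is disposed of using $\cS_B$ (which contains $\cY_B$) and the deterministic consequences collected in Lemmas \ref{L:red-boxes} and \ref{L:putting-things-together}; the far part is treated by a conditional version of the argument of Section \ref{S:upper-bd-density}. The event $\tilde\cX$ will be the far-block analogue of the event $\cX$ from Proposition \ref{P:big-estimate}, so its $\sM_{D_\sB(B, m_B + 3s^{3/4})}$-measurability is inherited from there.

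For the near part: on $\cS_B \cap \cG_\nu$, Lemma \ref{L:putting-things-together}(i) shows that $B$ is ignited and that every particle coloured in a block of $D_\sB(B, L^{15})$, other than the survivor $a^*$, is removed by time $\tau_B + L^{20} - 1$, which is $\le \tau_B + s'$ for all $s' \in [s-1,s]$ since $s \ge L^{20}$. Hence the near blocks contribute nothing to the sum in \eqref{E:bara-sum*} beyond the single term coming from $a^*$; as $a^*$ is one particle, its contribution to the left-hand side is at most $|\Lambda_z|^{-1}$ for those $z$ with $\bar a^*(\tau_B+s') \in \Lambda_z$ and $0$ otherwise, and the extra $L^{-20}$ slack on the right-hand side of \eqref{E:bara-sum*} (compared with \eqref{E:bara-sum}) is exactly there to absorb this (possibly after mildly increasing the threshold on $s$, which is harmless downstream). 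As in Proposition \ref{P:big-estimate}, any ``phantom'' terms from particles $a\in H_{B'}^{++}\smin H_{B'}$ with $B'$ near $B$ are controlled because, on $\cG_\nu$, a non-removed SIR particle in $\Lambda_z$ at a time near $\tau_B$ must have been coloured in a block within block-distance $s^{3/4}$ of $B$, via the $\cL_B(B',\cdot)$ events in $\cG_\nu$ and Corollary \ref{C:no-far-particles}.

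For the far part I would copy the proof of Proposition \ref{P:big-estimate} essentially verbatim, replacing $\sD_1,\sD_2$ by their restrictions $\sD_1^{\mathrm{far}},\sD_2^{\mathrm{far}}$ to $\sD^{\mathrm{far}}$, replacing the events $\cX'$ and $\cO$ by their restrictions to far blocks, and replacing each $\cQ_z$ by the version summing only over far blocks. The point to verify is that all the probability estimates survive conditioning on $\cY_B$. The key fact is that $\cY_B$ is $\sM_{D_\sB(B, L^{10}+2)}$-measurable (Lemma \ref{L:red-boxes}) and the $\sM$-fields are independent across blocks, so conditioning on $\cY_B$ leaves the joint law of $\{\sM_{B'} : B'\notin D_\sB(B, L^{10}+2)\}$ unchanged and independent of the conditioned data. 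Consequently: (a) the events $\cL_{B'},\cN_{B'},\cW_{B'},\cK_{B'}$ and the $\cE_{B'}(\cdot)$ events used in the far-block analysis, all $\sM$-measurable within an $O(L)$-neighbourhood of $B'$, are independent of $\cY_B$ for every far block $B'$, since $d_\sB(B,B') > L^{15} \gg L^{10}$; (b) the events $\cJ_{B'}$, being intersections of events $\cD_{B''}(L^{\delta/20}, m_{B''})$, satisfy the same bound conditional on $\cY_B$ by Corollary \ref{C:blue-prob}(2) and a union bound; and (c) the independence structure exploited in Lemma \ref{l:dependentPerc} (independence of block events separated by block-distance $\gtrsim L^{\delta}$, resp.\ $\gtrsim s^{1/10}$) is untouched by conditioning on $\cY_B$. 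With these observations, Lemma \ref{eq:sumCSbound}, Corollary \ref{C:cY} and Lemma \ref{L:Q'-lemma} go through conditionally on $\cY_B$, giving $\P[\tilde\cX\mid \cY_B] \ge 1 - \exp(-s^{c/\log\log s})$, and on $\tilde\cX \cap \cS_B \cap \cG_\nu$ the far-block part of \eqref{E:bara-sum*} is at most $L^{-20}|\Lambda_z|$, as in Proposition \ref{P:big-estimate}. Adding the near- and far-block contributions gives $2L^{-20}$.

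The main obstacle is point (b): one must make sure the long-range event $\cJ_{B'}$, which in principle reaches out to block-distance $m_{B'}$ (potentially much larger than $L^{10}$), still behaves well after conditioning on the extremely rare event $\cY_B$. This is precisely what Corollary \ref{C:blue-prob}(2) — the conditional analogue of Lemma \ref{L:global-local-ests} — provides, so no genuinely new probabilistic input is required beyond care in assembling the union bounds; the remaining difference from Section \ref{S:upper-bd-density}, namely the weakening from $L^{-20}$ to $2L^{-20}$ to accommodate $a^*$, is purely cosmetic.
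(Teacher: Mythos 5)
Your proposal is correct and follows essentially the same route as the paper: split $\sD$ at radius $L^{15}$, dispose of the near blocks via $\cS_B\cap\cG_\nu$ and Lemma~\ref{L:putting-things-together}(i) (at cost $1/|\Lambda_z|\le L^{-20}$), take $\tilde\cX$ to be the far-block restriction of the construction from Proposition~\ref{P:big-estimate}, and observe that the relevant probability bounds survive conditioning on $\cY_B$ because (i) $\cY_B$ is $\sM_{D_\sB(B,L^{10}+2)}$-measurable and hence independent of the far-block $\sM$-fields, and (ii) the $\sD_1$-type estimates only need that the blue-seed process remains $1$-dependent and stochastically Bernoulli-dominated after conditioning (Lemma~\ref{L:cYB-conditional} / Corollary~\ref{C:blue-prob}(2)). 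The aside about ``phantom terms'' $a\in H_{B'}^{++}\smin H_{B'}$ is a red herring — for the upper bound one simply does not need the sum over $H_{B'}^{++}$ to \emph{capture} every nearby infected particle, only to dominate the contribution of those it does capture — but this digression does not affect the argument.
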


The event $\tilde \cX$ will simply be equal to the original event $\cX$ from Proposition \ref{P:big-estimate} but with some conditions in the local radius around $B$ removed. In particular, we have $\cX \sset \tilde \cX$.

\begin{proof}
	First, on $\cS_B \cap \cG_\nu$, all particles that originated in $D(B, L^{15})$ not equal to $a^*$ are recovered by time $\tau_B + s-1$ by Lemma \ref{L:putting-things-together}.1. Therefore the left-hand side of \eqref{E:bara-sum*} is less than or equal to
\begin{equation}
\label{E:sumsumsum}
\frac{1}{|\Lambda_z|}\Big(1 +  \sum_{B' : L^{15} < d_\sB(B, B') \le s^{3/4}} \sum_{a \in H_{B'}^{++}} \mathbf{1}(\bar a(\tau_B + s') \in \Lambda_z, \bar a \notin \mathbf{R}_{\tau_B + s'}) \Big)
\end{equation}
Here the $+ 1$ is for the one susceptible particle guaranteed by Lemma \ref{L:putting-things-together}. Therefore it suffices to find an event $\tilde \cX$ satisfying the first condition of the lemma, and such that \eqref{E:sumsumsum} $\le 2L^{-20}$ on $\tilde \cX \cap \cY_B \cap \cG_\nu$.  
 
Let $\tilde \sD = \{B' : L^{15} < d(B, B') \le s^{3/4}\}$, and define $\tilde \sD_1, \tilde \sD_2$ exactly as in the proof of Proposition \ref{P:big-estimate} but with $\tilde \sD$ in place of $\sD$. For $L^{20} \le s \le L^{5000}$, define
$$
\tilde \cX = \P\{\tilde \sD_1 \cup  \tilde \sD_2 = \emptyset\}
$$
and for $s > L^{5000}$, define $\tilde \cX = \tilde \cX' \cap \bigcap_{z \in \Z^2, |z| \le 2s} \tilde \cQ_z$, where
\begin{align*}
\tilde \cX' &= \bigcap_{B' : L^{15} < d(B, B') \le s^{3/4}} \cE(0, 2s, s/4),\\
\tilde \cQ_z &= \bigg\{\sum_{B'\in \tilde \sD_1 \cup \tilde \sD_2} \sum_{a\in H_{B'}^{++}} \mathbf{1}(G_{a}) \leq L^{-20} |\Lambda_z| \bigg \},
\end{align*}
where $G_a$ is as in the proof of Proposition \ref{P:big-estimate}. Exactly as in that proposition, the event $\tilde \cX$ implies that the double sum in \eqref{E:sumsumsum} is at most $L^{-20} |\Lambda_z|$, so the whole expression is at most $2L^{-20} |\Lambda_z|$. The measurability claim for $\tilde \cX$ also follows as in the proof of Proposition \ref{P:big-estimate}.

Now, conditional on $\cY_B$, the process of blue seeds still forms a $1$-dependent process that is stochastically dominated by a Bernoulli process of mean $\de_\nu$ by Lemma \ref{L:cYB-conditional}, so all estimates involving $\sD_1$ for the proof of Proposition \ref{P:big-estimate} go through verbatim for $\tilde \sD_1$. Moreover, the set $\tilde \sD_2$ is independent of $\cY_B$. Indeed, the event $\{B' \in \tilde \sD_2\}$ is $\sM_{D_\sB(B', 10 L^\de)}$-measurable by construction and hence $\tilde \sD_2$ is $\sM_{D_\sB(B, L^{14})^c}$-measurable whereas $\cY_B$ is $\sM_{D_\sB(B, L^{10} + 2)}$-measurable by Lemma \ref{L:red-boxes}. Therefore all estimates involving $\sD_2$ for the proof of Proposition \ref{P:big-estimate} go through verbatim for $\tilde \sD_2$. Finally, given $\sD_1, \sD_2$, all events in the definition of $\tilde \cX$ depend only on $M_{B'}, L^{15} < d(B, B') \le s^{3/4}$ and hence are independent of $\cY_B$. Therefore all remaining estimates in the proof of Proposition \ref{P:big-estimate} go through verbatim here, yielding the result.
\end{proof}

\section{The proof of Theorem \ref{T:main-2}}
\label{S:herd}

The results of Section \ref{S:upper-bd-density} guarantee that most particles in a region recover in an $O(1)$ amount of time after the infection front initially visits that region. The results of Section \ref{S:recovery-local} then guarantee that the infection must die out in this region soon afterwards, and so the infection will typically pass through a region in an $O(1)$ amount of time. The results of Section \ref{S:survival} ensure that a particle has some probability of surviving for an arbitrarily long time after the infection enters its block, which together with the results of Sections \ref{S:upper-bd-density} and \ref{S:recovery-local} indicates that some particles will survive forever. In summary, together the results of these three sections suggest the main content Theorem \ref{T:main-2}. However, putting them together is delicate. This is the goal of the present section.

\subsection{Events and scales for the proof}

For $n \in \N$, define time scales $s_n = L^{20} 2^{n-1}$. We will analyze the SIR process near a block $B$ in the increasing geometric time intervals $[\tau_B + s_n, \tau_B + s_{n+1}]$. We start by defining two events to guarantee herd immunity.

\begin{itemize}	
	\item $\cN_{n, B}$: For all $t \in [\tau_B + s_n, \tau_B + s_{n+1}]$ we have
	\begin{equation*}
	\bigcup_{B' \in D_\sB(B, m_B + s_n^{3/4})}  \{a \in H_{B'} : a \in {\bf I}_t, d(\bar a(t), B) \le s_n^{3/5} \} = \emptyset.
	\end{equation*}
	\item $\cM_{n, B}$: \quad We only care about this event when working on $\cY_B$. On $\cY_B$, there is a distinguished particle $a^* \in H_B$, see Definition \ref{D:survivor}. Let $\cM_{n, B}$ be the event where
	$$
	|a^*(t) - a^*(0)| \le s_n^{5/9}
	$$
	for all $0 \le t \le
	s_{n+1}$.
\end{itemize} 

\begin{lemma}
\label{L:herd-and-survive}
Fix any block $B$.
\begin{enumerate}[label=\arabic*.]
	\item On the event $\cG_\nu$, there are no infected particles $a$ with $d(\bar a (t), B) \le s_n^{3/5}$ for some $t \in [\tau_B + s_n,\tau_B + s_{n+1}]$ from any $H_{B'}$ with $d_\sB(B, B') > m_B + s_n^{3/4}$.
	\item On the event $\cG_\nu \cap \cN_{n, B}$, there are no infected particles $a$ with $d(\bar a (t), B) \le s_n^{3/5}$ for some $t \in [\tau_B + s_n,\tau_B + s_{n+1}]$.
In particular, on the event
	$$
	\operatorname{Herd}_{B, n} := \cG_\nu \cap \bigcap_{m \ge n} \cN_{m, B},
	$$
	there are no infected particles in the block $B$ after time $\tau_B + s_n$.
	\item On the event 
	$$
	\operatorname{Survive}_{B} := \cG_\nu \cap \cS_B \cap \bigcap_{n \ge 1} (\cN_{n, B} \cap \cM_{n, B}),
	$$
	there is a particle $a^* \in H_B$ such that $a^* \in \mathbf{S}_t$ for all $t > 0$.
\end{enumerate}
\end{lemma}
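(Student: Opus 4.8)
The plan is to prove the three parts in order, feeding Part 1 into Part 2 and Parts 1--2 into Part 3.

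\emph{Parts 1 and 2.} Part 1 should be read straight off $\cG_\nu$. If $d_\sB(B,B') > m_B + s_n^{3/4}$ then a fortiori $d_\sB(B,B') \ge m_B$, so $\cL_B(B',[d(B,B')]^{3/2})$ is one of the events comprising $\cG_\nu$ in Corollary~\ref{L:global-is-rare}; by Corollary~\ref{C:no-far-particles} every particle $a \in H_{B'}^{++} \supseteq H_{B'}$ stays at distance at least $d(B,B')/2$ from $B$ throughout $[\tau_B - [d(B,B')]^{3/2}, \tau_B + [d(B,B')]^{3/2}]$. It then remains to check the numerical facts $s_{n+1} \le [d(B,B')]^{3/2}$ (so that $[\tau_B + s_n,\tau_B + s_{n+1}]$ lies inside that window) and $d(B,B')/2 > s_n^{3/5}$; both follow from $d(B,B') \ge L s_n^{3/4}/8$, which is a consequence of \eqref{E:block-distance} and $d_\sB(B,B') > s_n^{3/4}$, together with $s_n \ge L^{20}$ and $L$ large. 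For Part 2, $\cN_{n,B}$ rules out by definition the contribution of blocks in $D_\sB(B, m_B + s_n^{3/4})$, while Part 1 rules out the rest, so no infected particle at all comes within $s_n^{3/5}$ of $B$ during $[\tau_B + s_n,\tau_B + s_{n+1}]$. For the ``in particular'' clause, given $t \ge \tau_B + s_n$ on $\operatorname{Herd}_{B,n}$ pick $m \ge n$ with $t \in [\tau_B + s_m,\tau_B + s_{m+1}]$ (possible since $s_m \nearrow \infty$) and apply the first assertion at scale $m$: no infected particle lies within $s_m^{3/5}$ of $B$ at time $t$, in particular none lies in $B$.

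\emph{Part 3.} I would show by induction on $n \ge 1$ that $a^* \in \mathbf{S}_t$ for all $t \le \tau_B + s_n$, and then let $n \to \infty$. The base case $n=1$ is exactly Lemma~\ref{L:putting-things-together}(i): on $\cS_B \cap \cG_\nu$ there is a susceptible particle $a^* \in H_B$ at $v_B$ at all times in $[\tau_B + L^{20}-1, \tau_B + L^{20}] = [\tau_B + s_1 - 1, \tau_B + s_1]$, and since the state of a particle is monotone along susceptible $\to$ infected $\to$ removed, susceptibility at time $\tau_B + s_1 - 1$ forces susceptibility at every earlier time --- which disposes of all $t \in (0,\tau_B)$ with no extra argument. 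For the inductive step, suppose $a^* \in \mathbf{S}_t$ for $t \le \tau_B + s_n$. On $\cY_B$ we have $a^*(0) = v_B$, so on $\cM_{n,B}$, for $t \in [\tau_B,\tau_B + s_{n+1}]$ we get $d(\bar a^*(t), B) \le |a^*(t-\tau_B) - a^*(0)| \le s_n^{5/9} < s_n^{3/5}$. By Part 2 (applicable since $\operatorname{Survive}_B \subseteq \cG_\nu \cap \cN_{n,B}$) no infected particle lies within $s_n^{3/5}$ of $B$ during $[\tau_B + s_n,\tau_B + s_{n+1}]$, so $\bar a^*$ is never co-located with an infected particle on that interval; as $a^*$ is susceptible at $\tau_B + s_n$, it cannot become infected before $\tau_B + s_{n+1}$, completing the induction.

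The only genuinely delicate input is the base case, which rests on the entire Section~\ref{S:survival} construction --- the protective wall of infected particles around $v_B$ and the control of both nearby and faraway particles --- already packaged into Lemma~\ref{L:putting-things-together}(i) and Lemma~\ref{L:red-boxes}. Beyond invoking those, the remaining work is the scale bookkeeping in Part 1 (checking that $s_n^{3/5}$, $s_n^{3/4}$, $s_n^{5/9}$ and $L$ interlock, using $s_n \ge L^{20}$) and noticing that the $\textrm{S}\to\textrm{I}\to\textrm{R}$ monotonicity lets susceptibility at a single instant control the whole earlier history of $a^*$, which is what makes the $t < \tau_B$ regime come for free.
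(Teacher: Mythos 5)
Your proposal is correct and takes essentially the same approach as the paper's own proof: Part~1 via Corollary~\ref{C:no-far-particles} and scale bookkeeping, Part~2 by combining Part~1 with the definition of $\cN_{n,B}$, and Part~3 by pairing Lemma~\ref{L:putting-things-together}(i) with $\cM_{n,B}$ and Part~2. The only differences are presentational: you spell out the induction and the $\mathrm{S}\to\mathrm{I}\to\mathrm{R}$ monotonicity for $t < \tau_B + s_1$, which the paper leaves implicit.
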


\begin{proof}
	For part $1$, for $B'$ with $d_\sB(B, B') > m_B + s_n^{3/4}$, the event $\cG_\nu$ is contained in the event $\cL_B(B', [d_\sB(B, B')]^{3/2}) \sset \cL_B(B', s_{n+1})$. Moreover, $d(B, B')/2 > s_n^{3/5}$, so by \eqref{E:LB-event} in Corollary \ref{C:no-far-particles}, no particles from $H_{B'}$ come within distance $s_n^{3/5}$ of $B$ in the interval $[\tau_B + s_n, \tau_B + s_{n+1}]$.
	Part $2$ follows from part $1$ and the definition of $\cN_{n, B}$.

For part $3$, the event $\cG_\nu \cap \cS_B$ guarantees that there is a susceptible particle $a^* \in H_B$ contained in the block $B$ at time $\tau_B + s_1 = \tau_B + L^{20}$. This uses Lemma \ref{L:putting-things-together}.1. On the event $\bigcap_{n \ge 1} \cM_{n, B}$, we have that $d(\bar a^*(t), B) \le 4 s_n^{5/9}$ for all $t \in [s_n, s_{n+1}]$. Since $4s_n^{5/9} < s_n^{3/5}$, part $2$ implies that $a^*$ will never encounter an infected particle after time $\tau_B + s_1$.
\end{proof}

The majority of this section is devoted to proving the following two propositions which give bounds on the behaviour of the events $\operatorname{Herd}_{B, n}$ and $\operatorname{Survive}_{B}$ on the event $\cG_\nu$.

\begin{prop}
\label{P:herd-immunity}
For all $B\in \sB$ and $n \ge 2$ we have
$$
\P( \operatorname{Herd}_{B, n} \;|\; \cG_\nu) \ge 1 - \exp(-{(2^n L)}^{c/ \log (n + \log L)}).
$$
\end{prop}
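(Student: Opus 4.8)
The plan is to fix $n$ and prove the bound $\P(\cN_{m,B}^c\mid\cG_\nu)\le\exp(-(2^mL)^{c/\log(m+\log L)})$ for each $m\ge n$; since the time scales $s_m=L^{20}2^{m-1}$ double, the union bound over $m\ge n$ is then controlled by its $m=n$ term. So the real task is to estimate, conditionally on $\cG_\nu$, the probability that some infected particle of an $H_{B'}$ with $d_\sB(B,B')\le m_B+s_n^{3/4}$ enters the disc $D(B,s_n^{3/5})$ during $[\tau_B+s_n,\tau_B+s_{n+1}]$. The two engines are Proposition~\ref{P:big-estimate} (a short while after $\tau_B$ the density of unremoved nearby particles is already $\le L^{-20}$) and Proposition~\ref{P:death-with-specifics} (once the density is that low, an SIR infection self-extinguishes locally); I would feed the former into the hypotheses of the latter, applied at the mesoscopic scale $M\asymp s_n^{1/2}$.

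Concretely, set $s:=s_n/16$, $\sigma_n:=s^{3/4}$, $r_0:=\tau_B+s-1$, and call a particle \emph{near} if it lies in some $H_{B'}$ with $d_\sB(B,B')\le\sigma_n$ and \emph{far} otherwise. The first point, via the Lipschitz bound \eqref{E:Lipschitz}/\eqref{E:first-lipschitz}, is that $\tau_{B'}\le\tau_B+L^3\sigma_n<r_0$ for every near block, so by time $r_0$ all near blocks are coloured and all near particles revealed; after $r_0$ their trajectories are independent continuous-time random walks, except for the few that become ignition particles of far blocks, which by Corollary~\ref{C:no-far-particles} then leave $D(B,2s_n^{3/5})$ permanently and may be declared removed. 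The second point is that every far block satisfies $d(B,B')\gtrsim L\sigma_n\gg s_n^{3/5}$ and $[d(B,B')]^{3/2}\gg s_n$, so on $\cL_B(B',[d(B,B')]^{3/2})$ no particle of $H_{B'}^{++}$ comes within $2s_n^{3/5}$ of $B$ during $[\tau_B,\tau_B+5s_n]$; for $d_\sB(B,B')\ge m_B$ this event is already part of $\cG_\nu$, and for $\sigma_n<d_\sB(B,B')<m_B$ I would intersect it in as an extra event, whose complement has probability $\le CL^2e^{-c\sqrt{d(B,B')}}$ by Corollary~\ref{C:no-far-particles}, summing over all such $B'$ to at most $CL^2e^{-c\sqrt{L\sigma_n}}$. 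Thus on $\cG_\nu$ together with these extra events the far particles — which will serve as the $Q$-particles of Proposition~\ref{P:death-with-specifics} — never enter $D(B,2s_n^{3/5})$ in the relevant window, so they cannot violate $\cN_{n,B}$ and the hypothesis $\cD_Q$ of that proposition holds around every point of $D(B,s_n^{3/5})$.

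It then remains to exclude infected \emph{near} particles from $D(B,s_n^{3/5})$. I would invoke Proposition~\ref{P:big-estimate} with this $s$ (note $s\ge L^{10}$ for $n\ge2$) to obtain the $\sM_{D_\sB(B,m_B+3\sigma_n)}$-measurable event $\cX$ with $\P[\cX^c]\le\exp(-s^{c/\log\log s})$, on which — together with $\cG_\nu$ — the number of unremoved near particles in any disc of radius $s^{49/100}$ at time $r_0$ is at most $L^{-20}$ times its area; covering a box $[-jM,jM]^2$ by $O(j^2M^2/s^{49/50})$ such discs turns this into the density hypothesis \eqref{E:IC-assumption} of Proposition~\ref{P:death-with-specifics} with parameter $\de\asymp L^{-20}$, and since $\nu\asymp L^{-6}$ we have $C\de\le\nu\le1$. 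Now tile $D(B,s_n^{3/5})$ by $O((s_n^{3/5}/M)^2)=O(s_n^{1/5})$ translates of $[-M,M]^2$: for the box centred at $z$, Proposition~\ref{P:death-with-specifics} applied to the SIR process restarted at $r_0$ — with the near particles as $P$ and the far particles as $Q$ — gives an event $\cB_{P,z}$ depending only on the near-particle trajectories, with $\P[\cB_{P,z}]\ge1-Ce^{-c\nu M}\ge1-Ce^{-cL^4 2^{(n-1)/2}}$, on which, given $\cD_Q$, there is no infected particle in that box during $[r_0+M^2/4,r_0+4M^2]\supseteq[\tau_B+s_n,\tau_B+s_{n+1}]$. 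On $\cG_\nu\cap\cX\cap(\text{extra }\cL_B\text{ events})\cap\bigcap_z\cB_{P,z}$ the whole disc $D(B,s_n^{3/5})$ is free of infected particles throughout the window, i.e.\ $\cN_{n,B}$ holds.

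Collecting the three error contributions and using $\P[\cG_\nu]\ge1/2$,
\[
\P(\cN_{n,B}^c\mid\cG_\nu)\ \le\ 2\P[\cX^c]\ +\ 2CL^2e^{-c\sqrt{L\sigma_n}}\ +\ 2\,O(s_n^{1/5})\,Ce^{-c\nu M},
\]
and the last two terms, decaying exponentially in $n$ and in $L$, are swallowed by the first; since $\log s_n\asymp n+\log L$ and $\log\log s_n\asymp\log(n+\log L)$ this gives $\P(\cN_{n,B}^c\mid\cG_\nu)\le\exp(-s_n^{c/\log\log s_n})\le\exp(-(2^nL)^{c'/\log(n+\log L)})$, and summing over $m\ge n$ completes the argument. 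The hard part is the scale bookkeeping: the disc $D(B,s_n^{3/5})$ that must be cleared is polynomially \emph{larger} than the mesoscopic box $[-s_n^{1/2},s_n^{1/2}]^2$ on which Proposition~\ref{P:death-with-specifics} operates, while the available time is only $\asymp s_n$, which forbids taking $M\asymp s_n^{3/5}$ (that would need a window of length $\asymp M^2\asymp s_n^{6/5}$) and forces the tiling into $\asymp s_n^{1/5}$ overlapping boxes; and one must carefully split particles into ``local'' ones (fed into Propositions~\ref{P:big-estimate} and \ref{P:death-with-specifics} as $P$) and ``global'' ones (handled through $\cG_\nu$ and the Lipschitz control of the colouring offsets $\tau_B-\tau_{B'}$), including the particles that change roles by becoming ignition particles for other blocks.
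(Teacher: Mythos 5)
Your proposal matches the paper's own route: Proposition~\ref{P:big-estimate} supplies the low-density input $\delta\asymp L^{-20}$ at a time $\asymp\tau_B+s_{n-1}$, Proposition~\ref{P:death-with-specifics} at scale $M\asymp s_n^{1/2}$ (conditioned on the coloured $\sigma$-algebra at the restart time, with near particles as $P$ and everything else as $Q$) extinguishes the infection on a single mesoscopic box, the disc $D(B,s_n^{3/5})$ is tiled by $O(s_n^{1/5})$ such boxes, the $\cD_Q$ hypothesis is supplied by $\cG_\nu$ together with extra deviation events for the intermediate annulus $s_n^{3/4}\lesssim d_\sB(B,B')\lesssim m_B$, and the estimate is summed over $m\ge n$. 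This is exactly the chain $\cP_{n,B}\Rightarrow\tilde\cN_{n,B}$, $\cO_{n,B}\cap\cG_\nu\Rightarrow\cD_Q$, $\tilde\cN_{n,B}\cap\cO_{n,B}\cap\cG_\nu\subset\cN_{n,B}$ that the paper runs through Lemma~\ref{L:NnB-conditional}, Lemma~\ref{L:cO}, and Corollary~\ref{C:avgNnB}. The one structural difference is cosmetic for this proposition but substantive in the paper: the paper packages the construction into explicitly $\sE_{n,B}$- and $\sI_{n,B}$-measurable events $\tilde\cN_{n,B},\cO_{n,B},\cP_{n,B}$ because that measurability is reused for the martingale argument behind Proposition~\ref{P:exists-delta}; for Proposition~\ref{P:herd-immunity} alone you can afford to average directly as you do. Two small points worth tightening: (i) "by Corollary~\ref{C:no-far-particles} then leave $D(B,2s_n^{3/5})$ permanently" is not quite the mechanism -- a near particle that ignites a far block $B''$ is, by the paper's bookkeeping, \emph{removed} and replaced by $W_{B'',\operatorname{ig}}\in H_{B''}^{++}$, whose trajectory is then controlled by the far-block events, so the correct justification is the reassignment convention plus the far-block control rather than Corollary~\ref{C:no-far-particles} applied to the original particle; (ii) the $\cB_{P,z}$ probability bound from Proposition~\ref{P:death-with-specifics} is conditional on the low-density input holding, so the error accounting should be $\P[\cB_{P,z}^c\cap\{\text{density OK}\}]$ rather than an unconditional $\P[\cB_{P,z}^c]$ -- the paper handles this via the indicator $\mathbf{1}(\cP_{n,B})$ in Lemma~\ref{L:NnB-conditional}. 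Neither point affects the validity of the argument.
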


\begin{prop}
	\label{P:exists-delta}
There exists a constant $\alpha = \alpha(\nu) > 0$ such that almost surely, 
$$
\liminf_{m \to \infty} \frac{1}{m^2} \sum_{B : d_\sB(B, 0) \le r} \mathbf{1}(\operatorname{Survive}_B) \ge \al \mathbf{1}(\cG_\nu).
$$
More precisely, for all large enough $r$ we have
$$
\P \Big(\frac{1}{m^2} \sum_{B : d_\sB(B, 0) \le m} \mathbf{1}(\operatorname{Survive}_B) \le \al \; \Big| \; \cG_\nu \Big) \le C_\nu \exp(- m^{c/ \log \log m}).
$$
\end{prop}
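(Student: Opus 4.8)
The plan is to show that the events $\operatorname{Survive}_B$ occur with a positive density of successes, using a two-scale argument. The key point is that although $\operatorname{Survive}_B$ has only a tiny probability $\de = \de(L) = \P[\cY_B]$ (times a factor close to $1$), distinct such events are \emph{almost independent} when the underlying blocks are far apart, and each is essentially determined by the local $\sig$-algebras $\sM_{B'}$ together with the nearby behaviour of the coupled SSP. First I would establish the basic conditional lower bound $\P(\operatorname{Survive}_B \mid \cG_\nu) \ge \de'$ for some $\de' = \de'(L) > 0$ depending only on $L$ (hence on $\nu$), uniformly in $B$. For this I combine Lemma \ref{L:putting-things-together}(ii), which gives $\P[\cY_B] = \de > 0$ and $\P(\cS_B \mid \cY_B) \ge 1 - \exp(-L^{c/\log\log L})$, with Proposition \ref{P:herd-immunity} to control $\bigcap_{n\ge 1}\cN_{n,B}$, and a straightforward random-walk estimate for $\bigcap_{n\ge 1}\cM_{n,B}$: on $\cY_B$ the survivor $a^*$ is forced to sit at $v_B$ only until time $\tau_B + L^{20} = \tau_B + s_1$, so conditionally on $\cY_B$ its future trajectory is a simple random walk, and $\P(|a^*(t)-a^*(0)| \le s_n^{5/9} \ \forall t \le s_{n+1})$ fails only with probability $\exp(-c s_n^{c})$ by Lemma \ref{L:rw-estimate}, summable in $n$. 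Intersecting with $\cG_\nu$ and using $\P[\cG_\nu]\ge 1/2$ then gives the uniform conditional lower bound. Set $\al := \de'/2$ (or any constant strictly below the density guaranteed below).

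Next I would set up the independence. Fix a large integer $K = K(L)$ comparable to $m_B + L^{10} + 2$ (recall $m_B = m + d_\sB(0,B)^{1/6}$ grows sublinearly, so for $B$ in a ball of radius $m$ one has $m_B \le m + m^{1/6}$, i.e. $K$ may be taken $\le 2m$ for the relevant range, but more importantly $K$ is polylogarithmic-plus-$m^{1/6}$, which is much smaller than $m$). The event $\operatorname{Survive}_B$ is \emph{not} exactly local — it involves the global events $\cN_{n,B}$ and, through $\cS_B$, the set $[\fD]$. I would therefore pass to a local surrogate $\operatorname{Survive}_B^{\,r}$ in which: (a) $\cS_B$ is replaced by the localized $\cS_B^r$ from Lemma \ref{L:79}, which is $\sM_{D_\sB(B, m_B + L^{10}+2)}$-measurable; (b) the herd events $\cN_{n,B}$ for $n$ up to some truncation level $n_*$ are replaced by versions restricted to a bounded spatial window using the measurability of $\tilde\cX_{B,s_n}$ from Lemma \ref{L:upper-bound-conditional} (which is $\sM_{D_\sB(B, m_B + 3 s_n^{3/4})}$-measurable) together with the local-recovery event $\cB_P$ from Proposition \ref{P:death-with-specifics} applied at scales $M \asymp s_n^{1/2}$; and (c) the tail $n > n_*$ is absorbed into a global union-bound error. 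Concretely, $\operatorname{Herd}_{B,n}$ conditioned on $\cG_\nu$ is built (in the proof of Proposition \ref{P:herd-immunity}) from the density bound of Section \ref{S:upper-bd-density}/\ref{S:survival} plus Proposition \ref{P:death-with-specifics}, and both of those inputs are measurable given block $\sig$-algebras in a radius $m_B + C s_n^{3/4}$ of $B$. Choosing $n_* = n_*(m)$ so that $s_{n_*}^{3/4} \asymp m^{1/2}$, say, the surrogate $\operatorname{Survive}_B^{\,r}$ becomes $\sM_{D_\sB(B, r)}$-measurable with $r = r(m) = m_B + C m^{1/2} \ll m/10$, and by the conditional probability bounds cited above $\P(\operatorname{Survive}_B \,\triangle\, \operatorname{Survive}_B^{\,r} \mid \cG_\nu) \le \exp(-m^{c/\log\log m})$, so in particular $\P(\operatorname{Survive}_B^{\,r} \mid \cG_\nu) \ge \de'/2$ for $m$ large. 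Here one must also use that conditioning on $\cG_\nu$ (and on $\cY_{B}$) keeps the blue-seed process $1$-dependent and dominated by a Bernoulli$(\de_\nu)$ field, which is Lemma \ref{L:cYB-conditional} and Corollary \ref{C:blue-domination}; this is what lets the Section \ref{S:upper-bd-density} estimates run under the conditioning. Since $\operatorname{Survive}_B^{\,r}$ depends only on $\sM_{B'}$ for $d_\sB(B,B') \le r(m)$, the indicators $\mathbf 1(\operatorname{Survive}_{B_i}^{\,r})$ are jointly independent for any collection of blocks $B_i$ that are pairwise at block-distance $> 2r(m)$.

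Finally I would conclude by a covering-and-concentration argument. Tile the ball $\{B : d_\sB(B,0) \le m\}$ into $O(r(m)^2)$ sublattices, each consisting of blocks pairwise more than $2r(m)$ apart; within each sublattice the surrogate indicators are i.i.d.-dominated from below by Bernoulli$(\de'/2)$, so Lemma \ref{l:dependentPerc} (equivalently Hoeffding on each sublattice plus a union bound, exactly as in the proof of that lemma) gives
\begin{equation*}
\P\Big(\sum_{B : d_\sB(B,0)\le m} \mathbf 1(\operatorname{Survive}_B^{\,r}) \le \tfrac{\de'}{4}\, m^2 \ \Big|\ \cG_\nu\Big) \le r(m)^2 \exp\big(-c\, m^2 / r(m)^2\big) \le \exp(-m^{c}),
\end{equation*}
using $r(m) = o(m)$. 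Setting $\al = \de'/4$ and adding back the error $\exp(-m^{c/\log\log m})$ from the swap $\operatorname{Survive}_B \leftrightarrow \operatorname{Survive}_B^{\,r}$ (summed over the $\le C m^2$ blocks in the ball, which only changes the constant inside the sub-exponential) yields the stated bound $\P(\tfrac1{m^2}\sum_{B} \mathbf 1(\operatorname{Survive}_B) \le \al \mid \cG_\nu) \le C_\nu \exp(-m^{c/\log\log m})$; the almost-sure $\liminf$ statement follows by Borel--Cantelli along $m \to \infty$ and monotonicity of partial sums. I expect the main obstacle to be step two: carefully defining the local surrogate $\operatorname{Survive}_B^{\,r}$ so that it is genuinely $\sM_{D_\sB(B,r)}$-measurable while differing from $\operatorname{Survive}_B$ only on an event of probability $\exp(-m^{c/\log\log m})$ conditionally on $\cG_\nu$. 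This requires threading together the localization lemmas (Lemma \ref{L:79}, Lemma \ref{L:upper-bound-conditional}, Proposition \ref{P:death-with-specifics}) and the tail sum over herd-scales $n$, and in particular verifying that all the Section \ref{S:upper-bd-density}–\ref{S:survival} estimates that were proven conditionally on $\cY_B$ (or unconditionally) remain valid after the additional conditioning on $\cG_\nu$; the $1$-dependence and Bernoulli domination from Lemma \ref{L:cYB-conditional} is the essential tool there, but bookkeeping the various radii $m_B, L^{10}, s_n^{3/4}$ so that everything nests inside a window of size $o(m)$ is where the real care is needed.
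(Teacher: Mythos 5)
Your proposal has a genuine gap, and it sits exactly where you flag the "real care" as being needed: the claim that one can construct a surrogate $\operatorname{Survive}_B^{\,r}$ that is $\sM_{D_\sB(B,r)}$-measurable with small symmetric difference from $\operatorname{Survive}_B$. That claim fails, and the failure is structural, not a matter of bookkeeping radii. The delicate component of $\operatorname{Survive}_B$ is $\bigcap_n \cN_{n,B}$. The good approximating event for $\cN_{n,B}$ constructed in the paper, namely $\tilde\cN_{n,B}$ from Lemma \ref{L:NnB-conditional}, is built from the events $\cB_{P,z}$ of Proposition \ref{P:death-with-specifics}, and the very definition of the set $P$ there (unrecovered particles near $B$ at time $\lfloor\tau_B\rfloor + s_{n-1}$) is an $\sF_{\lfloor\tau_B\rfloor + s_{n-1}}$-measurable object, \emph{not} a block-$\sig$-algebra object. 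Whether a particle has already recovered at a given time is determined by the entire infection history up to that time, not by $\sM_{B'}$ for nearby $B'$; and $\tau_B$ itself is a stopping time of the global colouring process. Accordingly, Lemma \ref{L:NnB-conditional} only delivers measurability with respect to $\sE_{n,B} = \sig\big(\sF_{\lfloor\tau_B\rfloor + s_{n-1}},\ \sM_{D_\sB(B,s_n^{3/4})}\cap\sF_{\tau_B + s_{n+1}}\big)$, which mixes a block $\sig$-algebra with the process filtration at \emph{random} times. There is no way to upgrade this to $\sM_{D_\sB(B,r)}$-measurability. Consequently, the step where you declare the indicators $\mathbf 1(\operatorname{Survive}_{B_i}^{\,r})$ jointly independent for well-separated $B_i$ and then invoke a Hoeffding/Lemma \ref{l:dependentPerc} concentration has no valid foundation; the required independence simply is not available.

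The paper's proof attacks this precise obstruction with a martingale, and this is not a cosmetic choice. It introduces an index set over (time-slot, block) pairs; the event $\cT_{(\ell,x)}$ pins the random time $\lfloor\tau_{B^x}\rfloor + s_{n-1}$ to a deterministic value $5s_{n-1}\ell+i$; and the filtration $\sH_{(\ell,x)}$ contains $\sF_{5s_{n-1}\ell + i}$, which on $\cT_{(\ell,x)}$ coincides with $\sF_{\lfloor\tau_{B^x}\rfloor + s_{n-1}}$. The ordering of $I$ and the containment \eqref{E:implication} in Lemma \ref{L:Elx-cond} are then arranged so that previously revealed terms become $\sI_{n,B^x}$-measurable, and Lemma \ref{L:NnB-conditional}'s conditional probability bound \eqref{E:sInB} applies, giving a lower bound on $\P(\cT_{(\ell,x)}\cap\tilde\cN_{n,B^x}\mid\sH_{(\ell,x)})$ whenever $\cP_{n,B^x}$ holds. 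That is the surrogate for the spatial independence you were trying to manufacture; Azuma's inequality (Corollary \ref{C:mart-conc}) then gives the concentration. The genuinely block-measurable error events $W_{n,B}$ (Lemma \ref{L:conc-P-S}) are handled by Lemma \ref{l:dependentPerc} exactly as you envisioned, but that is the easy part. Without the martingale there is no replacement for the core concentration of $\sum_B\mathbf 1(\cS_B\cap\tilde\cN_{n,B})$. Your first step (a uniform lower bound $\P(\operatorname{Survive}_B\mid\cG_\nu)\ge\de'$) is also more delicate than written — intersecting $\cS_B$ with $\cN_{n,B}$ requires the conditional-on-$\cY_B$ density bounds of Lemma \ref{L:upper-bound-conditional}, not just Proposition \ref{P:herd-immunity} — but that is fixable; the independence step is not.
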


\subsection{Probability bounds and the proof of Proposition \ref{P:herd-immunity}}
\label{SS:prob-bounds}

The key to bounding the probabilities in Proposition \ref{P:herd-immunity} and Proposition \ref{P:exists-delta} is understanding the events $\cN_{n, B}$, both conditionally on $\cG_\nu$ and conditionally on the rarer event $\cG_\nu \cap \cS_B$. We will also establish a degree of spatial independence for these events in order to prove Proposition \ref{P:exists-delta}. The first step is to find a way to apply the local recovery result in Proposition \ref{P:death-with-specifics}. This proposition requires us to start with a process that has a low density of particles locally. To ensure this, we require two more events:
\begin{itemize}
	\item $\cO_{n, B}, n \in \N$: For every $B'$ satisfying $m_B + s_n^{3/4} \ge d_\sB(B, B') > s_n^{3/4}$ and every $a \in H_{B'}^{++}$ we have
	$$
	d(a(t), B) \ge 2 s_n^{4/7} \quad \text{ for all } |t| \le s_{n+1} + L^2 d_\sB(B, B').
	$$
	Note that by the Lipschitz bound \eqref{E:first-lipschitz}, the event $\cO_{n, B}$ implies that
	$$
	d(\bar a(t), B) \ge 2 s_n^{4/7} \quad \text{ for all } t \in [\floor{\tau_B}, \tau_B + s_{n+1}].
	$$
	\item $\cP_{n, B}:$ We define this for all $n \ge 2$. Let $z_B$ be the square closest to the center of $B$ and for $z \in \Z^2$ define $\Lambda_{z, n} := D(z + z_B, s_n^{49/100})$.
	For every $z \in \Z^2$, there are at most
		$$
		L^{-20}|\Lambda_{z, n}|
		$$
		unrecovered particles in the ball $\Lambda_{z, n}$ that belong to $H_{B'}$ for some $B'$ with $d_\sB(B, B') \le s_n^{3/4}$ at time $\floor{\tau_B} + s_{n-1}$. The discretization of $\tau_B$ here is in preparation for a discrete martingale argument in Section \ref{S:proof83}.
\end{itemize}

We would like to say that the events $\cO_{n, B}, \cP_{n, B}$ are local, high-probability events. In the case of $\cO_{n, B}$, this is straightforward.
For this lemma recall that $\sF_t, t \ge 0$ is the filtration generated by the process of coloured particles up to time $t$, see Section~\ref{S:SI-colouring}.
\begin{lemma}
\label{L:cO}
Let $A_\sB(B, r_1, r_2) = \{B' \in \sB: r_1 \le d_\sB(B, B') \le r_2\}$ and $n \ge 1$.
The event $\cO_{n, B}$ is both $\sM_{A_\sB(B, s_n^{3/4}, m_B + s_n^{3/4})}$-measurable and $\sF_{\tau_B + s_{n+2} + 2\xi m_B}$-measurable, is independent of $\cY_{B}$, and satisfies
	\[
\P[\cO_{n, B}] \ge 1 - \exp(- c s_n^{1/2}) = 1 - \exp(-c' L^{10} 2^{n/2}).
	\]
\end{lemma}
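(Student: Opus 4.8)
The plan is to verify the four assertions in turn; each reduces to the block-Poisson construction of Section~\ref{S:SI-colouring} together with the random walk estimate Lemma~\ref{L:rw-estimate}.

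\emph{Measurability.} The event $\cO_{n,B}$ refers only to the trajectories $a(\cdot)$ of particles $a\in H_{B'}^{++}$ for blocks $B'$ in the annulus $A_\sB(B,s_n^{3/4},m_B+s_n^{3/4})$, and for each such $B'$ the set $H_{B'}^{++}=H_{B'}^+\cup\{W_{B',\operatorname{ig}}\}$, all the simple paths $a(\cdot)$ with $a\in\sP_{B'}$, and the ignition path $W_{B',\operatorname{ig}}$ are $\sM_{B'}$-measurable by the construction in Section~\ref{S:SI-colouring}; this gives the $\sM_{A_\sB(B,s_n^{3/4},m_B+s_n^{3/4})}$-measurability. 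For the filtration claim I would pass to the process trajectories $\bar a(t)=a(t-\tau_{B'})$: the event constrains $\bar a$ only at times in $\tau_{B'}+[-(s_{n+1}+L^2d_\sB(B,B')),\,s_{n+1}+L^2d_\sB(B,B')]$, and by \eqref{E:first-lipschitz} together with $d_\sB(B,B')\le m_B+s_n^{3/4}$ and the elementary inequality $(\xi+L^2)s_n^{3/4}+s_{n+1}\le s_{n+2}$ (valid for $L$ large), all of these times lie below $\tau_B+s_{n+2}+2\xi m_B$, so $\cO_{n,B}$ is $\sF_{\tau_B+s_{n+2}+2\xi m_B}$-measurable.

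\emph{Independence from $\cY_B$.} By Lemma~\ref{L:red-boxes} the event $\cY_B$ is $\sM_{D_\sB(B,L^{10}+2)}$-measurable, whereas $\cO_{n,B}$ is measurable with respect to the $\sM$-algebras of blocks at block-distance at least $s_n^{3/4}$ from $B$. Since $n\ge1$ forces $s_n^{3/4}=(L^{20}2^{n-1})^{3/4}\ge L^{15}>L^{10}+2$ for $L$ large, these two families of blocks are disjoint, and hence $\cO_{n,B}$ and $\cY_B$ are independent.

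\emph{Probability bound.} First I would dispose of the event that some particle in $\bigcup_{B'}H_{B'}^{++}$ is far from its home block at time $0$: since $W_{B',\operatorname{ig}}$ starts at the origin (and in the process is translated by at most $\alpha L$) and, by the Poisson description of $H_{B'}^+$ and Lemma~\ref{L:rw-estimate}, the expected number of $a\in H_{B'}^+$ with $a(0)\notin D(B',L^2)$ is at most $e^{-cL^2}$, a union bound over the $O((m_B+s_n^{3/4})^2)$ relevant blocks shows that off an event of probability $\le e^{-cL^2}$ every particle in $\bigcup_{B'}H_{B'}^{++}$ starts within distance $L^2$ of its block. On this event, a particle $a\in H_{B'}^{++}$ with $k:=d_\sB(B,B')>s_n^{3/4}$ can reach $D(B,2s_n^{4/7})$ by time $s_{n+1}+L^2k$ only if it makes a displacement of size at least $d(B,B')-O(L^2)-2s_n^{4/7}\ge Lk/30$ (using \eqref{E:block-distance} and $k\ge L^{15}$), which by Lemma~\ref{L:rw-estimate} has probability at most $C\exp\!\big(-c(Lk)^2/(Lk+s_{n+1}+L^2k)\big)$. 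Bounding $\E|H_{B'}^{++}|\le 2\mu L^2$ by Lemma~\ref{l:HBsize}, multiplying by the $O(k)$ blocks at block-distance $k$, and summing over $k>s_n^{3/4}$ gives $\P[\cO_{n,B}^c]\le\exp(-cs_n^{1/2})$: the slowest-decaying term occurs at $k\asymp s_n^{3/4}$ and already carries an exponent of order $(Lk)^2/s_{n+1}\asymp L^2s_n^{1/2}$, while in the regime $L^2k\ge s_{n+1}$ the exponent is of order $k\ge s_n^{3/4}$; the stated form then follows from $s_n^{1/2}=L^{10}2^{(n-1)/2}$.

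The step I expect to require the most care is the probability bound: one must check that the random walk estimate yields an exponent bounded below by a constant times $s_n^{1/2}$ \emph{uniformly} over the whole range $s_n^{3/4}<d_\sB(B,B')\le m_B+s_n^{3/4}$ and over all blocks $B$ (in particular uniformly in the locality radius $m_B$), which is why the comparison of the two regimes $L^2k\le s_{n+1}$ and $L^2k\ge s_{n+1}$ and the summation over $k$ have to be arranged carefully; the measurability and independence assertions are essentially bookkeeping with the definitions and \eqref{E:first-lipschitz}.
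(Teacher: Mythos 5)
Your overall structure — measurability from the block-Poisson description, independence from disjointness of the relevant $\sM_{B'}$ families, and the probability bound via random-walk displacement estimates — matches the paper's proof. The only methodological difference is that where the paper simply writes the containment
\[
\cO_{n,B}\supset\bigcap_{B':\, s_n^{3/4}<d_\sB(B,B')\le m_B+s_n^{3/4}}\cE_{B'}\bigl(0,\,s_{n+1}+L^2d_\sB(B,B'),\,d(B,B')/8\bigr)
\]
and cites Lemma~\ref{L:far-particles}, you re-derive the tail estimate directly from Lemma~\ref{L:rw-estimate}. That is fine in spirit, but the way you set it up introduces a real gap.

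The gap is in your preliminary reduction. You claim that after a union bound over the $O\bigl((m_B+s_n^{3/4})^2\bigr)$ blocks in the annulus, the event ``some $a\in H_{B'}^{++}$ has $a(0)\notin D(B',L^2)$'' has probability $\le e^{-cL^2}$. The union bound actually yields $C(m_B+s_n^{3/4})^2 e^{-cL^2}$, and $m_B=m+d_\sB(0,B)^{1/6}$ is unbounded in $B$, so this is \emph{not} $\le e^{-cs_n^{1/2}}$ uniformly over $\sB$. This is exactly the uniformity you flag at the end as the delicate point, but your preliminary step undermines it rather than addressing it. The fix is to not separate the step at all: every $a\in H_{B'}^+$ satisfies $a(t_0)\in B'$ for some $t_0\in[0,\xi]$ by definition, so one should measure the displacement $|a(t)-a(t_0)|$ rather than $|a(t)-a(0)|$; reaching $D(B,2s_n^{4/7})$ then forces a displacement of order $Lk$ in a time window of length $\le 2(s_{n+1}+L^2k)$, giving a tail that decays with $k$ for each block individually (this is precisely what $\cE_{B'}(0,\cdot,d(B,B')/8)$ does in the paper, since $a(0)$ is automatically pinned near $B'$ on that event). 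Your two-regime summation over $k$ then closes the argument, with no term whose size depends on $m_B$.

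Two minor bookkeeping remarks. First, your filtration bound actually lands at $\tau_B+(\xi+L^2)m_B+s_{n+2}$ rather than $\tau_B+2\xi m_B+s_{n+2}$ (since $\xi=\alpha L^2$ with $\alpha<1$, one has $\xi+L^2>2\xi$); this discrepancy is already present in the paper (the definition of $\cO_{n,B}$ uses $L^2 d_\sB(B,B')$ in the time window while the lemma's statement uses $\xi m_B$), and the downstream argument only needs $\sF_{\tau_B+C(s_{n+2}+L^2m_B)}$-measurability, so it is harmless. Second, for the ignition particle $W_{B',\operatorname{ig}}$ the quantity $d(a(t),B)$ in the definition of $\cO_{n,B}$ should be read as the ignition trajectory translated to its (ignition) location near $B'$, not the raw walk started at the origin; your parenthetical ``translated by at most $\alpha L$'' gestures at this, but the translation is to a point near $B'$, not near $0$.
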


\begin{proof}
Using the notation of Lemma \ref{L:far-particles}, we can write
$$
\cO_{n, B} \supset \bigcap_{B': m_B + s_n^{3/4} \ge d(B, B') > s_n^{3/4}} \cE_{B'}(0, s_{n+1} + L^2 d_\sB(B, B'), d(B, B')/8).
$$
 The $\sM_{A_\sB(B, s_n^{3/4}, m_B + s_n^{3/4})}$-measurability and independence from $\cY_{B}$ is immediate from the $\sM_{B'}$-measurability of each $\cE_{B'}$ and the independence from $\cY_{B}$ follows since $s_n^{3/4} \ge L^{15}$ and $\cY_B$ is $\sM_{B, L^{10} + 2}$-measurable (Lemma \ref{L:red-boxes}). The $\sF_{\tau_B + s_{n+2} + 2 \xi m_B}$-measurability follows from the Lipschitz bound \eqref{E:first-lipschitz}. The bound on $\P[\cO_{n, B}]$ follows from Lemma \ref{L:far-particles}. 
\end{proof}

While the event $\cP_{n, B}$ does not have a local definition, it contains a high-probability local version both conditionally on $\cG_\nu$ and conditionally on $\cG_\nu \cap \cS_B$. This next lemma follows immediately from Proposition \ref{P:big-estimate} and Lemma \ref{L:upper-bound-conditional}.

\begin{lemma}
	\label{L:local-Pbrief}
	Fix a scale $n$ and a block $B$. Then with notation as in Proposition \ref{P:big-estimate} and Lemma \ref{L:upper-bound-conditional} we have 
	$$
	\cG_\nu \cap \cX_{B, s_{n-1}} \sset \cP_{n, B} \qquad \text{ and } \qquad \cG_\nu \cap \cS_B \cap \tilde \cX_{B, s_{n-1}} \sset \cP_{n, B}.
	$$
\end{lemma}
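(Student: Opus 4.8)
The plan is to recognize $\cP_{n,B}$ as, up to scale bookkeeping, the conclusion of Proposition~\ref{P:big-estimate} (respectively Lemma~\ref{L:upper-bound-conditional}) and then chase the definitions. Concretely I would invoke Proposition~\ref{P:big-estimate}, resp.\ Lemma~\ref{L:upper-bound-conditional}, at the scale $s=s_{n-1}$: since $n\ge 2$ we have $s_{n-1}=L^{20}2^{n-2}\ge L^{20}$, so both the hypothesis $s\ge L^{10}$ of Proposition~\ref{P:big-estimate} and the hypothesis $s\ge L^{20}$ of Lemma~\ref{L:upper-bound-conditional} hold, and the events $\cX_{B,s_{n-1}}$, $\tilde\cX_{B,s_{n-1}}$ they produce are $\sM_{D_\sB(B,m_B+3s_{n-1}^{3/4})}$-measurable with the stated probability bounds; these are the events named in the lemma.

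The only step that is not purely formal is placing the snapshot time $\floor{\tau_B}+s_{n-1}$ from the definition of $\cP_{n,B}$ inside the window $\tau_B+s'$, $s'\in[s-1,s]$, controlled by those two results. Writing $\{\tau_B\}:=\tau_B-\floor{\tau_B}\in[0,1)$, one has $\floor{\tau_B}+s_{n-1}=\tau_B+(s_{n-1}-\{\tau_B\})$ with $s_{n-1}-\{\tau_B\}\in[s_{n-1}-1,s_{n-1}]$, so the choice $s=s_{n-1}$, $s'=s_{n-1}-\{\tau_B\}$ does it. With this choice, on $\cX_{B,s_{n-1}}\cap\cG_\nu$ (resp.\ on $\tilde\cX_{B,s_{n-1}}\cap\cS_B\cap\cG_\nu$) the conclusion of Proposition~\ref{P:big-estimate} (resp.\ Lemma~\ref{L:upper-bound-conditional}) bounds, for every $z\in\Z^2$,
\[
\frac{1}{|\Lambda_z|}\sum_{B'\in D_\sB(B,\,s_{n-1}^{3/4})}\ \sum_{a\in H_{B'}^{++}}\mathbf 1\!\left(\bar a(\floor{\tau_B}+s_{n-1})\in\Lambda_z,\ \bar a\notin\mathbf{R}_{\floor{\tau_B}+s_{n-1}}\right)
\]
by $L^{-20}$ (resp.\ $2L^{-20}$), where $\Lambda_z:=D(z+z_B,s_{n-1}^{49/100})$. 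Since $H_{B'}\subset H_{B'}^{++}$, the same bound holds with the inner sum restricted to $a\in H_{B'}$, which is the quantity counted in $\cP_{n,B}$, up to the radius and block-range discrepancy handled next.

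It remains to pass from the scale $s_{n-1}$ to the scale $s_n$ appearing in $\Lambda_{z,n}=D(z+z_B,s_n^{49/100})$ and in the condition $d_\sB(B,B')\le s_n^{3/4}$. Here I would use that $s_n=2s_{n-1}$, so $\Lambda_{z,n}$ is covered by boundedly many balls of radius $s_{n-1}^{49/100}$ centred at $\Z^2$-points, and combine this with the scale-\emph{independent}, local early-recovery information: by Lemma~\ref{L:final-event} and the Lipschitz bound~\eqref{E:first-lipschitz}, every particle coloured in a block $B'$ with $d_\sB(B,B')\le s_n^{3/4}$ lying outside the (scale-independent) exceptional sets $\sD_1\cup\sD_2$ is removed long before time $\floor{\tau_B}+s_{n-1}$, so only particles coloured in the exceptional blocks — precisely those tracked by the $\cX$-type events — can contribute; applying the displayed bound on each covering ball and summing gives the bound in $\cP_{n,B}$ up to an absolute constant, absorbed by the slack in the exponents $49/100<\tfrac12$, $\tfrac34<1$. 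In the conditional case one additionally uses Lemma~\ref{L:putting-things-together}(i): on $\cS_B\cap\cG_\nu$ every particle coloured within $D_\sB(B,L^{15})$ other than the survivor $a^*$ is already removed by $\floor{\tau_B}+s_{n-1}$, so the factor $2$ need only absorb the single extra particle $a^*$, which is negligible against $L^{-20}|\Lambda_{z,n}|$. The main obstacle is exactly this last reconciliation of the two geometric scales $s_{n-1}$ and $s_n$ together with the attendant radii and constants; no new probabilistic estimate is required beyond the two cited results and the already-established local recovery facts.
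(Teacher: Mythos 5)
Your first two paragraphs reproduce the paper's intended argument (the paper's own proof consists solely of the sentence ``This next lemma follows immediately from Proposition \ref{P:big-estimate} and Lemma \ref{L:upper-bound-conditional}''): apply those results at $s = s_{n-1} \ge L^{20}$, and note that $\floor{\tau_B} + s_{n-1} = \tau_B + s'$ with $s' = s_{n-1} - (\tau_B - \floor{\tau_B}) \in (s_{n-1}-1, s_{n-1}]$; the observation $H_{B'} \sset H_{B'}^{++}$ is also correct. That much is fine.

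Your third paragraph, however, does not close the gap you identify, and both of its ideas fail concretely. First, covering $\Lambda_{z,n} = D(z+z_B, s_n^{49/100})$ by balls of radius $s_{n-1}^{49/100}$ needs at least four such balls, while $|\Lambda_{z,n}|/|\Lambda_z| = 2^{49/50} < 2$; so the covering yields only about $2L^{-20}|\Lambda_{z,n}|$ rather than $L^{-20}|\Lambda_{z,n}|$, and the ``slack in the exponents $49/100<\tfrac12$, $\tfrac34<1$'' cannot repair this --- that slack was already spent inside the proof of Proposition~\ref{P:big-estimate} to produce the constant $L^{-20}$ and is not available to absorb a further multiplicative constant. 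Second, and more decisively, your early-recovery observation only removes \emph{non-exceptional} blocks in the annulus $s_{n-1}^{3/4} < d_\sB(B,B') \le s_n^{3/4}$ from the count; exceptional blocks there may still deposit unrecovered particles into $\Lambda_{z,n}$ at time $\floor{\tau_B}+s_{n-1}$, yet $\cX_{B,s_{n-1}}$ (through its constituents $\cQ_z$, whose sums run over $\sD_1 \cup \sD_2 \sset D_\sB(B,s_{n-1}^{3/4})$) controls exceptional blocks only inside $D_\sB(B,s_{n-1}^{3/4})$. The claim that ``only particles coloured in the exceptional blocks --- precisely those tracked by the $\cX$-type events --- can contribute'' is therefore false for that annulus. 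The scale mismatch you noticed appears to be a slip in the paper's definition of $\cP_{n,B}$: the lemma becomes genuinely immediate (as asserted) once $\cP_{n,B}$ is read with $s_{n-1}$ in both the ball radius and the block range, the scale consistent with the snapshot time $\floor{\tau_B}+s_{n-1}$, with the set $P$ and the inner radius of $\cO_{n,B}$ in the proof of Lemma~\ref{L:NnB-conditional} shifted in tandem. Under that reading your first two paragraphs already constitute a complete proof and the third paragraph is superfluous.
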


We will use the events $\cO_{n, B}, \cP_{n, B}$ to help understand our main event of interest, $\cN_{n, B}$. 

\begin{lemma}
\label{L:NnB-conditional}

For $n \ge 2$, define the $\sig$-algebras
\begin{align*}
\sE_{n, B} &:= \sig(\sF_{\floor{\tau_B} + s_{n-1}}, \sM_{D_\sB(B, s_n^{3/4})}\cap \sF_{\tau_B + s_{n+1}}), \\
\quad \sI_{n, B} &:= \sig(\sF_{\floor{\tau_B} + s_{n-1}}, \sM_{D_\sB(B, s_n^{3/4})^c}) 
\end{align*}
For $n \ge 2$ and $B \in \sB$ there are events $\tilde \cN_{n, B}$ which are measurable given $\sE_{n, B}$ such that
$$
\tilde \cN_{n, B} \cap \cO_{n, B} \cap \cG_\nu \sset \cN_{n, B}.
$$
Moreover, we have the following bound:
\begin{align}
\label{E:sInB}
\P(\tilde \cN_{n, B} \mid \sI_{n, B}) &\ge (1 - \exp(- c L^4 2^{n/2}))\mathbf{1}(\cP_{n, B}).
\end{align}
\end{lemma}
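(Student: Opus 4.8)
The plan is to obtain $\cN_{n,B}$ as the conclusion of Proposition~\ref{P:death-with-specifics} applied at spatial scale $M = M_n \approx s_n^{1/2}$, with the close blocks $D_\sB(B, s_n^{3/4})$ supplying the local particle set and all remaining particles the global set. One resets the clock at the integer time $t_0 := \floor{\tau_B} + s_{n-1}$ — the time at which $\cP_{n,B}$ controls the density — and chooses $M_n$ (up to immaterial $O(1)$ corrections from rounding $\tau_B$) so that the interval $[M_n^2/4, 4M_n^2]$ from Proposition~\ref{P:death-with-specifics} contains $[\tau_B + s_n, \tau_B + s_{n+1}] - t_0$ while the $P$-trajectory information it uses stays within $\sF_{\tau_B + s_{n+1}}$; since $s_n = 2 s_{n-1}$ this pins down $M_n^2 \approx s_n$. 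Cover the $s_n^{3/5}$-neighbourhood of $B$ by $O(s_n^{1/5})$ translates $[-M_n, M_n]^2 + w$ with $d(w, B) \le s_n^{3/5}$. For each, apply Proposition~\ref{P:death-with-specifics} with $P$ the not-yet-removed particles of $\bigcup\{H_{B'}^{++} : d_\sB(B, B') \le s_n^{3/4}\}$, $Q$ the remaining particles, and $S$ the $P \cup Q$ particles infected at time $t_0$, so that the SIR dynamics on $P \cup Q$ from $t_0$ onward is the true dynamics. For $n \ge 2$ and $L$ large one has $\xi s_n^{3/4} \le s_{n-1}$, so every close block satisfies $\tau_{B'} \le t_0$; hence the time-$t_0$ configuration of $P$ is $\sF_{t_0}$-measurable, and on $\cP_{n,B}$ it satisfies the density hypothesis~\eqref{E:IC-assumption} relative to any box centre with $\de = L^{-20}$ — cover $[-jM_n, jM_n]^2$ by $O(j^2 s_n^{1/50})$ balls $\Lambda_{z,n}$, each carrying $\le L^{-20}|\Lambda_{z,n}|$ $P$-particles. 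As $L^{-20} \ll \nu$, the conditions $C\de \le \nu \le 1$ hold for $L$ large, so for each box $j$ the proposition furnishes an event $\cB_P^{(j)}$, measurable given the $P$-trajectories (hence given $\sM_{D_\sB(B, s_n^{3/4})} \cap \sF_{\tau_B + s_{n+1}}$, using the convention that a particle is ``removed'' once it becomes an ignition particle), with $\P(\cB_P^{(j)} \mid \sF_{t_0}) \ge 1 - C e^{-c \nu M_n}$ on $\cP_{n,B}$, and such that on $\cB_P^{(j)} \cap \cD_Q^{(j)}$ no infected particle lies in box $j$ during the target window.

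Define $\tilde\cN_{n,B} := \bigcap_j \cB_P^{(j)}$ on $\cP_{n,B}$ and $\tilde\cN_{n,B} := \emptyset$ off $\cP_{n,B}$. Since $\cP_{n,B}$ is $\sF_{t_0}$-measurable and each $\cB_P^{(j)}$ is $\sM_{D_\sB(B, s_n^{3/4})} \cap \sF_{\tau_B + s_{n+1}}$-measurable, $\tilde\cN_{n,B}$ is $\sE_{n,B}$-measurable. For the probability bound, condition on $\sI_{n,B}$: it contains $\sF_{t_0}$ (so the time-$t_0$ positions of the $P$-particles and the event $\cP_{n,B}$ are determined) and contains $\sM_{D_\sB(B, s_n^{3/4})^c}$ but not $\sM_{D_\sB(B, s_n^{3/4})}$, so given $\sI_{n,B}$ the $P$-particles perform independent random walks after $t_0$ from their known positions — exactly the setting in which Proposition~\ref{P:death-with-specifics} was proved. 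Hence the conditional bounds on the $\cB_P^{(j)}$ persist, and a union bound over the $O(s_n^{1/5})$ boxes gives $\P(\tilde\cN_{n,B} \mid \sI_{n,B}) \ge (1 - C s_n^{1/5} e^{-c\nu M_n}) \mathbf 1(\cP_{n,B})$. Since $\nu M_n \gtrsim L^{-6} L^{10} 2^{(n-1)/2} \gtrsim L^4 2^{n/2}$, the polynomial factor is absorbed by shrinking $c$, which is the asserted bound.

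The one delicate step is the inclusion $\tilde\cN_{n,B} \cap \cO_{n,B} \cap \cG_\nu \subseteq \cN_{n,B}$. Since $P \cup Q$ is the full collection of active particles, the only thing to check is the hypothesis $\cD_Q^{(j)}$ of Proposition~\ref{P:death-with-specifics} — that the $Q$-particles avoid $[-2M_n, 2M_n]^2 + w$ throughout — for every box $j$. Far blocks ($d_\sB(B, B') > m_B + s_n^{3/4}$) are handled by $\cG_\nu$ via Corollary~\ref{C:no-far-particles}: their particles stay at distance $\ge d(B, B')/2 \gg s_n^{3/5} + 2M_n$ from $B$, hence miss every box. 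The crux is the medium blocks ($s_n^{3/4} < d_\sB(B, B') \le m_B + s_n^{3/4}$): one must verify that on $\cO_{n,B}$ their particles stay far enough from $B$ to lie outside all of the boxes throughout $[\floor{\tau_B}, \tau_B + s_{n+1}]$ — equivalently, that the exponents appearing in $\cO_{n,B}$, in the radius $s_n^{3/5}$ of $\cN_{n,B}$, in the box scale $s_n^{1/2}$, and in the close/medium cutoff $s_n^{3/4}$ are chosen compatibly. Granting this, every box is infection-free throughout $[\tau_B + s_n, \tau_B + s_{n+1}]$, the boxes cover the $s_n^{3/5}$-neighbourhood of $B$, and the particles whose behaviour $\cN_{n,B}$ constrains form a subset of $P \cup Q$, so $\cN_{n,B}$ holds. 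The residual bookkeeping — matching $M_n$ to the windows modulo the $O(1)$ error from rounding $\tau_B$, checking~\eqref{E:IC-assumption} at all scales $j$ from $\cP_{n,B}$, and the inclusions $\sF_{t_0} \subseteq \sE_{n,B} \cap \sI_{n,B}$ — is routine.
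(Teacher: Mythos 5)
Your proposal follows the paper's proof essentially step-for-step: condition on $\sF_{\floor{\tau_B}+s_{n-1}}$, split particles into $P$ (from blocks within $d_\sB$-distance $s_n^{3/4}$) and $Q$ (everything else), verify assumption~\eqref{E:IC-assumption} at scale $M\approx\sqrt{s_{n-1}}$ using $\cP_{n,B}$, apply Proposition~\ref{P:death-with-specifics} at each of a family of translates covering the $s_n^{3/5}$-neighbourhood of $B$, define $\tilde\cN_{n,B}$ as the intersection of the resulting $\cB_P$-events, and use $\cO_{n,B}\cap\cG_\nu$ to guarantee the $\cD_Q$-hypotheses. The only cosmetic deviations are that you use a genuine cover by $O(s_n^{1/5})$ boxes rather than all $z$ within distance $s_n^{3/5}$, and you set $\tilde\cN_{n,B}=\emptyset$ off $\cP_{n,B}$ to keep the measurability clean rather than folding $\mathbf 1(\cP_{n,B})$ only into the probability bound; both are harmless. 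Your flag that the exponents $4/7$ (in $\cO_{n,B}$), $3/5$ (in $\cN_{n,B}$), $1/2$ (box scale), and $3/4$ (close/medium cutoff) need to be checked for compatibility is the right instinct to have, and indeed the paper's stated $2s_n^{4/7}$ for the $\cO_{n,B}$ safety radius appears to fall slightly short of the $\approx s_n^{3/5}+4\sqrt{s_{n-1}}$ outer corner of the $\cD_Q$-boxes (since $4/7<3/5$); this is a fixable slip in the paper's choice of exponent (any value strictly between $3/5$ and $3/4$ would do) rather than a defect in your argument.
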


\begin{proof}
	The idea is to appeal to Proposition \ref{P:death-with-specifics}. The details are as follows. 
	
Condition on $\sF_{\floor{\tau_B} + s_{n-1}}$, and consider the configuration of unrecovered particles at time $\floor{\tau_B} + s_{n-1}$. Split the set of unrecovered particles at time $\floor{\tau_B} + s_{n-1}$ into two groups:
\begin{itemize}[nosep]
	\item $P$, consisting of all unrecovered particles in $\bigcup \{H_{B'} : d_\sB(B, B') \le s^{3/4} \}$.
	\item $Q$, consisting of all other unrecovered particles.
\end{itemize}
Given $\sF_{\floor{\tau_B}+ s_{n-1}}$, the future trajectories of all particles in $P$ are independent continuous time random walks.

 Now, if we are on the $\sF_{\floor{\tau_B} + s_{n-1}}$-measurable event $\cP_{n, B}$ then the $P$-particles at time $\floor{\tau_B} + s_{n-1}$ satisfy
\begin{equation}
\label{E:P-bound}
|P(\tau_B + s_{n-1}) \cap B^{j \sqrt{s_{n-1}}, z}| \le 2L^{-20} |B^{j \sqrt{s_{n-1}}, z}|
\end{equation}
for every translate $z \in \Z^2, j \in \N$. The factor of $2$ enters here since we may not be able to exactly tile $B^{j \sqrt{s_{n-1}}, z}$ with sets of the form $B^{s_n^{49/100}, y}$.

In particular, if we shift time back by $\floor{\tau_B} + s_{n-1}$ and recenter at $z$, then the corresponding process of $P$-particles satisfies assumption \eqref{E:IC-assumption} with $\delta = 2 L^{-20}$ and $M = \sqrt{s_{n-1}}$. Therefore by the relationship between $L$ and $\nu$, \eqref{E:Lnu-relationship}, we are in the setting of Proposition \ref{P:death-with-specifics}, and so for every $z \in \Z^2$ we can define an event $\cB_{P, z}$ such that we have:
\begin{itemize}[nosep]
	\item $\P[\cB_{P, z} \mid \sF_{\floor{\tau_B}+ s_{n-1}}] \ge (1 - e^{-c L^{-6} \sqrt{s_{n-1}}}) \mathbf{1}(\cP_{n, B}) \ge 1 - e^{-c L^4 2^{n/2}}\mathbf{1}(\cP_{n, B}).$
	\item $\cB_{P,z}$ is measurable given only 
	$\sF_{\floor{\tau_B} + s_{n-1}}$ and
	$\sM_{B, s_n^{3/4}} \cap \sF_{\tau_B + s_{n+1}}$ (i.e. the trajectories of $P$-particles up to time $\tau_B + s_{n+1}$). 
	\item On the event $\cD_{Q, z} \cap \cB_{P, z}$, where 
	$$
	\mathcal D_{Q, z} = \{ \text{For all } t \in [\floor{\tau_B} + s_{n-1}, \tau_B + s_{n+1}], \text{ we have } Q(t) \cap B^{2\sqrt{s_{n-1}}, z} = \emptyset \}
	$$
	there are no infected particles in $B^{\sqrt{s_{n-1}}, z}$ at any $t \in [\tau_B + s_n, \tau_B + s_{n+1}]$. 
\end{itemize}
Now, we set
$$
\tilde \cN_{n, B} = \bigcap_{z : d(z, B) \le s_n^{3/5}} \cB_{P, z}.
$$
By the first and second bullet points above, the event $\tilde \cN_{n, B}$ satisfies the measurability claims in the lemma. It also satisfies \eqref{E:sInB} with $\sF_{\floor{\tau_B} + s_{n-1}}$ in place of $\sI_{n, B}$. As conditioning on the finer $\sig$-algebra $\sI_{n, B}$ only gives us additional information about the future trajectories of $Q$-particles and these are independent of the future trajectories of the $P$-particles, $\tilde \cN_{n, B}$ satisfies \eqref{E:sInB}. 

 To get the containment claim, it is enough to observe that
\[
\cO_{n, B} \cap \cG_\nu \sset \bigcap_{z : d(z, B) \le s_n^{3/5}} \cD_{Q, z}
\]
which follows from the definition of $\cO_{n, B}$ and Lemma \ref{L:herd-and-survive}.1.
\end{proof}

\begin{corollary}
	\label{C:avgNnB}
	For every $n \ge 2$ we have
	$$
	\P[\tilde \cN_{n, B} \mid \cG_\nu] \ge 1 - \exp(-{(2^n L)}^{c/ \log (n + \log L)}).
	$$
\end{corollary}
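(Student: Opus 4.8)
The plan is to feed the conditional estimate \eqref{E:sInB} of Lemma \ref{L:NnB-conditional} together with the high-probability bound on $\cP_{n,B}$ coming from Lemma \ref{L:local-Pbrief} and Proposition \ref{P:big-estimate} through the standing assumption $\P[\cG_\nu]\ge 1/2$. The point requiring care is that $\cG_\nu$ is a global event and is not $\sI_{n,B}$-measurable, so one cannot condition on it before applying \eqref{E:sInB}; I would get around this by decomposing $\{\tilde\cN_{n,B}^c\}$ according to whether $\cP_{n,B}$ holds. To make this work I would first check that $\cP_{n,B}$ is $\sI_{n,B}$-measurable: by the Lipschitz bound \eqref{E:first-lipschitz} every block $B'$ with $d_\sB(B,B')\le s_n^{3/4}$ satisfies $\tau_{B'}\le\floor{\tau_B}+s_{n-1}$ (using $s_{n-1}=L^{20}2^{n-2}$), so the locations and recovery statuses at time $\floor{\tau_B}+s_{n-1}$ of all particles in the corresponding $H_{B'}$ are recorded by $\sF_{\floor{\tau_B}+s_{n-1}}\sset\sI_{n,B}$.

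For the part where $\cP_{n,B}$ fails, I would use that $\cG_\nu\cap\cX_{B,s_{n-1}}\sset\cP_{n,B}$ by Lemma \ref{L:local-Pbrief}, hence $\cG_\nu\cap\cP_{n,B}^c\sset\cX_{B,s_{n-1}}^c$; since $s_{n-1}=L^{20}2^{n-2}\ge L^{10}$ for $n\ge 2$, Proposition \ref{P:big-estimate} applies and gives $\P[\cX_{B,s_{n-1}}^c]\le\exp(-s_{n-1}^{c/\log\log s_{n-1}})$, so dividing by $\P[\cG_\nu]\ge 1/2$ yields $\P[\cP_{n,B}^c\mid\cG_\nu]\le 2\exp(-s_{n-1}^{c/\log\log s_{n-1}})$. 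For the part where $\cP_{n,B}$ holds, since $\cP_{n,B}\in\sI_{n,B}$ the tower property and \eqref{E:sInB} give
\[
\P[\tilde\cN_{n,B}^c\cap\cP_{n,B}]=\E\big[\mathbf{1}(\cP_{n,B})\,\P(\tilde\cN_{n,B}^c\mid\sI_{n,B})\big]\le\exp(-cL^4 2^{n/2}),
\]
and dividing by $\P[\cG_\nu]$ gives $\P[\tilde\cN_{n,B}^c\cap\cP_{n,B}\mid\cG_\nu]\le 2\exp(-cL^4 2^{n/2})$. Summing the two contributions yields $\P[\tilde\cN_{n,B}^c\mid\cG_\nu]\le 2\exp(-cL^4 2^{n/2})+2\exp(-s_{n-1}^{c/\log\log s_{n-1}})$.

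Finally I would simplify the exponents using $s_{n-1}=L^{20}2^{n-2}$: since $\log s_{n-1}=20\log L+(n-2)\log 2$ is comparable to $\log(2^n L)$ and $\log\log s_{n-1}$ is comparable to $\log(n+\log L)$, after adjusting the constant we get $s_{n-1}^{c/\log\log s_{n-1}}\ge(2^nL)^{c/\log(n+\log L)}$; and since $\log(L^4 2^{n/2})$ is a positive multiple of $\log L+n$, which for $L$ large dominates $c\log(2^nL)/\log(n+\log L)$, the term $\exp(-cL^42^{n/2})$ is likewise at most $\exp(-(2^nL)^{c/\log(n+\log L)})$. This gives the claimed bound. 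I expect the main obstacle to be purely organizational — the bookkeeping around $\cG_\nu$ and making precise the $\sI_{n,B}$-measurability of $\cP_{n,B}$ — since once the decomposition through $\cP_{n,B}$ is in place the remainder is a routine union bound and comparison of growth rates.
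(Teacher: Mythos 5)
Your proof is correct, and it is essentially the approach the paper takes: use the conditional estimate \eqref{E:sInB} of Lemma \ref{L:NnB-conditional} on $\cP_{n,B}$, use the first containment of Lemma \ref{L:local-Pbrief} with Proposition \ref{P:big-estimate} on $\cP_{n,B}^c$, and divide by $\P[\cG_\nu]\ge 1/2$. The paper's own write-up is quite terse --- it lower-bounds $\P[\tilde\cN_{n,B}\cap\cP_{n,B}\mid\cG_\nu]$ by $2\P[\tilde\cN_{n,B}\cap\cP_{n,B}]-1$ and then invokes the three ingredients --- and taken at face value this chain would require $\P[\tilde\cN_{n,B}\cap\cP_{n,B}]$ to be close to $1$ unconditionally, whereas the first containment of Lemma \ref{L:local-Pbrief} only guarantees $\cP_{n,B}$ is likely on $\cG_\nu$. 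Your explicit split of $\tilde\cN_{n,B}^c$ into the two pieces $\tilde\cN_{n,B}^c\cap\cP_{n,B}$ and $\cG_\nu\cap\cP_{n,B}^c\sset\cX_{B,s_{n-1}}^c$, each divided by $\P[\cG_\nu]\ge 1/2$, is the correct reading of the intended argument and fills in what the displayed chain glosses over. Your verification that $\cP_{n,B}$ is $\sI_{n,B}$-measurable via the Lipschitz bound is also right, and matches how the paper already uses the $\sF_{\floor{\tau_B}+s_{n-1}}$-measurability of $\cP_{n,B}$ inside the proof of Lemma \ref{L:NnB-conditional}; the exponent bookkeeping at the end is routine and checks out.
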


\begin{proof}
We have
\begin{align*}
\P[\tilde \cN_{n, B} \mid \cG_\nu] &\ge \P[\tilde \cN_{n, B} \cap \cP_{n, B}\mid \cG_\nu] 
\ge 2\P[\tilde \cN_{n, B} \cap \cP_{n, B}] - 1
\end{align*}
where in the second inequality we have  used the inequality $\P(A \mid B) \ge 1 - \P(A^c)/\P(B) = 1 + (\P(A)-1)/\P(B)$ along with the fact that $\P \cG_\nu \ge 1/2$. The bound then follows from averaging the bound in Lemma \ref{L:NnB-conditional}, applying the first containment in Lemma \ref{L:local-Pbrief}, and using the bound in Proposition \ref{P:big-estimate}.
\end{proof}

\begin{proof}[Proof of Proposition \ref{P:herd-immunity}]
This follows from the fact that $\tilde \cN_{n, B} \cap \cO_{n, B} \cap \cG_\nu \sset \cN_{n, B}$ (Lemma \ref{L:NnB-conditional}), Lemma \ref{L:cO}, Corollary \ref{C:avgNnB}, and a union bound. 
\end{proof}

We finish this subsection with two more event estimates that will be needed for the proof of the more difficult Proposition \ref{P:exists-delta}. The first lemma addresses the event $\cN_{n, 1}$ which was not addressed in Lemma \ref{L:NnB-conditional}.

\begin{lemma}
	\label{L:NnB-conditional-small}
	We have
	$$
	\cG_\nu \cap \cS_B \cap \cM_{1, B} \cap \cO_{1, B} \sset \cN_{1, B},
	$$
\end{lemma}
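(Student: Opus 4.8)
The plan is to verify directly that $\cN_{1, B}$ holds on $\cG_\nu \cap \cS_B \cap \cM_{1, B} \cap \cO_{1, B}$. As in the proof of Lemma~\ref{L:putting-things-together}(i), on $\cG_\nu \cap \cS_B$ the block $B$ is ignited, so Lemma~\ref{L:red-boxes}, Lemma~\ref{L:other-particles} and Lemma~\ref{L:putting-things-together}.1 are all available. Recalling that $s_1 = L^{20}$, $s_2 = 2L^{20}$, so $s_1^{3/4} = L^{15}$, $s_1^{3/5} = L^{12}$, $s_1^{5/9} = L^{100/9}$ and $s_1^{4/7} = L^{80/7}$, what must be shown is that during the window $[\tau_B + s_1, \tau_B + s_2]$ no particle coloured in a block $B'$ with $d_\sB(B, B') \le m_B + s_1^{3/4}$ is simultaneously infected and within $s_1^{3/5}$ of $B$, and separately that the survivor $a^*$ stays susceptible (since $a^* \in H_B$ lies in the same index set and, by $\cM_{1,B}$, within $s_1^{5/9}<s_1^{3/5}$ of $v_B\in B$, so an infected $a^*$ would by itself violate $\cN_{1,B}$).

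First I would split the blocks $B'$ with $d_\sB(B,B') \le m_B + s_1^{3/4}$ into two ranges. For the \emph{very near} blocks, $d_\sB(B, B') \le s_1^{3/4} = L^{15}$, Lemma~\ref{L:putting-things-together}.1 shows that on $\cG_\nu\cap\cS_B$ every particle coloured there other than $a^*$ is removed before $\tau_B + L^{20}-1 < \tau_B + s_1$, hence lies in $\mathbf{R}_t$ for all $t$ in the window and contributes nothing to the union defining $\cN_{1,B}$. For the \emph{intermediate} blocks, $s_1^{3/4} < d_\sB(B,B') \le m_B + s_1^{3/4}$, the bound \eqref{E:block-distance} forces $d(B,B') \gg L^{15}$, so $[d(B,B')]^{3/2} \gg s_2$ and $L^2 d(B,B') + s_2 < [d(B,B')]^{3/2}$; together with the Lipschitz estimate \eqref{E:Lipschitz}, the window $[\tau_B+s_1,\tau_B+s_2]$ lies in $[\tau_B - [d(B,B')]^{3/2}, \tau_B + [d(B,B')]^{3/2}]$. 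Using $\cL_B(B',[d(B,B')]^{3/2}) \subset \cG_\nu$ when $d_\sB(B,B') \ge m_B$ and $\cR_{B,B'} \subset \cS_B$ when $d_\sB(B,B') < m_B$ (which forces $m_B > L^{15}$), a mild variant of Corollary~\ref{C:no-far-particles} yields $d(\bar a(t), B) \ge d(B,B')/2 \gg s_1^{3/5}$ for all $a \in H_{B'}^{++}$ throughout the window, so these particles are never within $s_1^{3/5}$ of $B$ regardless of infection status. (The event $\cO_{1,B}$ gives the weaker confinement $d(\bar a(t),B) \ge 2 s_1^{4/7}$, which is what is needed below to keep these particles away from $a^*$, but for $\cN_{1,B}$ itself the stronger bound above is required.) Blocks with $d_\sB(B,B') > m_B + s_1^{3/4}$ lie outside the index set and are handled, for the union in $\cN_{1,B}$, by Lemma~\ref{L:herd-and-survive}.1 on $\cG_\nu$.

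It then remains to control $a^*$: it suffices to show $a^*$ never contacts an infected particle on $[\tau_B, \tau_B + s_2]$. On $[\tau_B, \tau_B + L^{20}]$ we have $\bar a^*(t) = v_B$ by Definition~\ref{D:survivor}.4, and infected particles coloured in $D_\sB(B, L^{15})$ never touch $v_B$ there, by Lemma~\ref{L:red-boxes}(iv) for $S_B$-particles and by Lemma~\ref{L:other-particles}.2 (valid on $\cT_{B'}\cap\cR_{B,B'}\cap\cY_B \subset \cS_B$) for the remaining near blocks, after which all such particles are removed. On $[\tau_B + L^{20}, \tau_B + s_2]$ the particle $a^*$ sits within $s_1^{5/9}$ of $v_B$ by $\cM_{1,B}$, while near-block particles are removed and intermediate particles sit at distance $\ge d(B,B')/2 \gg s_1^{5/9}$ from $B$ (previous step), so none can reach $a^*$; particles from far blocks cannot reach $a^*$ either, since on $[\tau_B + s_1, \tau_B + s_2]$ this is exactly Lemma~\ref{L:herd-and-survive}.1 (any infected far particle touching $a^*$ would be within $s_1^{5/9} < s_1^{3/5}$ of $B$), and on $[\tau_B, \tau_B + s_1]$, where $\bar a^*(t)=v_B$, it follows from $\cL_B(B',[d(B,B')]^{3/2}) \subset \cG_\nu$ and Corollary~\ref{C:no-far-particles}. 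Hence $a^*$ stays susceptible, and combining with the two ranges the union defining $\cN_{1,B}$ is empty.

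I expect the main obstacle to be exactly this last step. In Lemma~\ref{L:NnB-conditional} one restarts from a fresh low-density configuration and invokes Proposition~\ref{P:death-with-specifics}; at the base scale $n=1$ there is no such input, so the argument must rely entirely on the explicit trajectories built into $\cU_B$ and the $\cV_{B,B'}$ (that is, on $\cS_B$), and one must check that the various polynomial exponents — $s_1^{5/9} < s_1^{3/5}$, $2 s_1^{4/7} > s_1^{5/9}$, and $s_2 \ll [d(B,B')]^{3/2}$ whenever $d_\sB(B,B') > s_1^{3/4}$ — all line up so that no particle, and in particular not a possibly-infected $a^*$, is ever simultaneously infected and within $s_1^{3/5}$ of $B$ during $[\tau_B+s_1,\tau_B+s_2]$.
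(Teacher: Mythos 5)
Your proof is correct, and it takes a genuinely different route from the paper's for the intermediate range of blocks. The paper's argument handles the blocks $B'$ with $s_1^{3/4} < d_\sB(B, B') \le m_B + s_1^{3/4}$ entirely via $\cO_{1, B}$, which only supplies the confinement $d(\bar a(t), B) \ge 2 s_1^{4/7}$; as you note, $2 s_1^{4/7}$ lies strictly \emph{below} the threshold $s_1^{3/5}$ appearing in $\cN_{1, B}$, so this confinement by itself does not exclude an intermediate-block particle from being infected inside the $s_1^{3/5}$-ball, and the paper's brief proof passes over the resulting annulus. You instead control the intermediate blocks by appealing to the events $\cR_{B, B'}$ (built into $\cS_B$ for $L^{10} < d_\sB(B, B') \le m_B$, and implied for $d_\sB(B, B') \ge m_B$ by the $\cL_B(B', [d(B,B')]^{3/2})$ events inside $\cG_\nu$), together with Corollary \ref{C:no-far-particles}, which gives the much stronger bound $d(\bar a(t), B) \ge d(B, B')/2 > c L^{16} \gg s_1^{3/5}$ throughout $[\tau_B + s_1, \tau_B + s_2]$. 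This outright forbids intermediate-block particles from ever entering the $s_1^{3/5}$-ball, irrespective of infection status, and so closes the annulus issue cleanly. A side effect is that your argument does not actually use $\cO_{1, B}$, so it proves the mildly stronger inclusion $\cG_\nu \cap \cS_B \cap \cM_{1, B} \subset \cN_{1, B}$ (which is fine: it still implies the stated lemma). For $a^*$, both proofs combine $\cM_{1, B}$ with the containment bounds; the part of your argument on $[\tau_B, \tau_B + L^{20}]$ is redundant since Lemma \ref{L:putting-things-together}(i) already gives susceptibility of $a^*$ at $\tau_B + s_1$, but it is harmless.
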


\begin{proof}
	On the event $\cG_\nu \cap \cS_B $ there are no unrecovered particles from $H_{B'}, B' \in D_\sB(B, L^{15})$ after time $\floor{\tau_B} + s_1$ except for the special particle $a^*$ located at $v_B$ by Lemma \ref{L:putting-things-together}(i).
	Also, on the event $\cO_{1, B}$, no particles from a block $B'$ with $s_1^{3/4} < d_\sB(B, B') \le s_1^{3/4} + m_B$ come within distance $2 s_1^{4/7}$ of $B$ 
	in the interval $[\floor{\tau_B}, \tau_B + s_2]$. The same holds for particles from $B'$ with $m_B + s_1^{3/4} < d(B, B')$ since we work on $\cG_\nu$ by the reasoning in Lemma \ref{L:herd-and-survive}.
	
	Therefore no infected particles $a$ satisfy $d(\bar a(t), B) \le s_1^{3/5}$ in the interval $[\tau_B + s_1, \tau_B + s_2]$ unless $a^*$ becomes infected during this time. However, this cannot happen since on $\cM_{1, B}$, we have $d(\bar a^*(t), B) < L^{20}$ in this time window.
\end{proof}

\begin{lemma}
	\label{L:srw-M}
	For $n \ge 1$ we have $\P[\cM_{n, B} \mid \cY_B] \ge 1 - \exp(-c L^{20/9} 2^{n/9}).$
\end{lemma}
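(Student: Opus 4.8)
The plan is to recall that, conditional on $\cY_B$, the particle $a^*$ is pinned at the site $v_B$ for all times $0 \le t \le L^{20}$ (this is Definition \ref{D:survivor}.4, which is part of the event $\cU_B \sset \cY_B$), and that its trajectory after time $L^{20}$ is simply an independent continuous-time simple random walk started at $v_B$. This is because the block construction from Section \ref{S:SI-colouring} builds each particle's post-$\tau_B$ path from the Poisson process $\sP_B$, and conditioning on $\cY_B$ only constrains $a^*$ on the window $[0, L^{20}]$; the path increments after $L^{20}$ are unconstrained. So we need only bound, for a continuous-time simple random walk $W$ started at $0$,
$$
\P\Big(\max_{0 \le t \le s_{n+1}} |W(t)| \le s_n^{5/9} - L^{20}\Big),
$$
after noting that $|a^*(t) - a^*(0)| \le L^{20} + \max_{L^{20} \le r \le s_{n+1}}|W(r - L^{20})|$ and absorbing the additive $L^{20}$ term. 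Since $s_n = L^{20} 2^{n-1}$, for $n \ge 1$ we have $L^{20} \le s_n$, so it suffices (after adjusting constants) to bound $\P(\max_{0 \le t \le s_{n+1}} |W(t)| \le 2 s_n^{5/9})$.

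The first step is to handle the trivial regime. For $n$ bounded (say $s_n \le$ some absolute constant, equivalently $L$ bounded, which does not occur since $L$ is large, or $n$ is small but $s_n^{5/9} < 1$), the event $\cM_{n,B}$ cannot fail, so the bound holds vacuously; more to the point, for all relevant $n$ and large $L$ we have $s_n^{5/9} \ge 1$ and the estimate is a genuine small-deviation bound. The second step is the main estimate: the event $\{\max_{0 \le t \le T}|W(t)| \le R\}$ for $T \gg R^2$ has probability at most $\exp(-cT/R^2)$. This is a standard fact — one can, for instance, break $[0,T]$ into $\lfloor T/(2R^2)\rfloor$ disjoint blocks of length $2R^2$; on each block there is a constant probability $p_0 \in (0,1)$ (uniform in the starting point, by a local CLT or a crude reflection/comparison argument) that $W$ moves by more than $2R$ in sup-norm, so confinement to a ball of radius $R$ throughout forces failure of all blocks, giving probability at most $(1 - p_0)^{\lfloor T/(2R^2)\rfloor} \le \exp(-cT/R^2)$. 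Here $T = s_{n+1} = L^{20}2^n$ and $R = 2s_n^{5/9} = 2(L^{20}2^{n-1})^{5/9}$, so
$$
\frac{T}{R^2} \ge c\, \frac{L^{20} 2^n}{(L^{20} 2^{n})^{10/9}} = c\, (L^{20}2^n)^{-1/9} \cdot 2^n \cdot \text{(const)},
$$
and a short computation gives $T/R^2 \ge c L^{20 \cdot (1 - 10/9)} 2^{n(1 - 10/9)} \cdot 2^{?}$ — recomputing carefully: $T/R^2 = \tfrac14 L^{20} 2^n / (L^{200/9} 2^{10(n-1)/9}) = \tfrac14 L^{20 - 200/9}\, 2^{n - 10(n-1)/9} = c L^{-20/9}\, 2^{(-n+10)/9}$, which decays, so this naive blocking is too lossy and I must instead exploit that $s_n^{5/9}$ is genuinely sublinear in $\sqrt{T} = \sqrt{s_{n+1}}$.

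The fix — and the step I expect to be the main (minor) obstacle — is to use a sharper confinement bound: for simple random walk in $\Z^2$, $\P(\max_{0\le t \le T}|W(t)| \le R) \le \exp(-c T / R^2)$ is already the right shape, and I simply need the ratio $T/R^2$ to be a positive power of $s_n$, which it is: $T/R^2 = s_{n+1}/(4 s_n^{10/9}) = \tfrac12 s_n / (4 s_n^{10/9}) = \tfrac18 s_n^{-1/9}$ — this still goes to $0$. So confinement at radius exactly $s_n^{5/9}$ on a time window of length $\sim s_n$ is \emph{not} a rare event, and the lemma as stated cannot follow from confinement alone. Rereading: the lemma claims $\P[\cM_{n,B}^c \mid \cY_B] \le \exp(-cL^{20/9}2^{n/9}) = \exp(-c s_n^{1/9})$, i.e. it bounds the probability that $|a^*(t)-a^*(0)|$ \emph{exceeds} $s_n^{5/9}$ — this is a \emph{large}-deviation bound, not a confinement bound. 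So the correct approach is the opposite: apply the second inequality of Lemma \ref{L:rw-estimate} directly with $m = s_n^{5/9} - L^{20} \ge \tfrac12 s_n^{5/9}$ (valid for large $L$) and $t = s_{n+1} = 2 s_n$, giving
$$
\P\big(\max_{0 \le r \le s_{n+1}}|W(r)| \ge \tfrac12 s_n^{5/9}\big) \le C \exp\Big(-\frac{c (s_n^{5/9}/2)^2}{s_n^{5/9}/2 + 2s_n}\Big) \le C\exp\big(-c' s_n^{10/9}/s_n\big) = C\exp(-c' s_n^{1/9}),
$$
using $s_n^{5/9} \le s_n$ so the denominator is $\le 3 s_n$. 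Substituting $s_n = L^{20}2^{n-1}$ yields $C\exp(-c'' L^{20/9} 2^{n/9})$, and absorbing $C$ into the exponent for large $L$ gives exactly the claimed bound. The only remaining point is the conditioning: as argued in the first paragraph, conditioning on $\cY_B$ leaves $a^*(L^{20}+ \cdot) - a^*(L^{20})$ distributed as a simple random walk independent of the rest of $\cY_B$, and $a^*$ is constant on $[0, L^{20}]$ under $\cY_B$, so the unconditional random-walk estimate transfers verbatim. This completes the proof.
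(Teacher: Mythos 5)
Your approach is the same as the paper's: observe that conditional on $\cY_B$, the trajectory of $a^*$ after time $L^{20}$ is an unconditioned simple random walk, stochastically dominate $\max_{s\le t}|a^*(s)-a^*(0)|$ by the corresponding quantity for an unconditioned walk, and invoke the second bound of Lemma~\ref{L:rw-estimate} with $m\approx s_n^{5/9}$, $t=s_{n+1}=2s_n$, giving $\exp(-c s_n^{1/9})=\exp(-c' L^{20/9}2^{n/9})$. (The middle of your writeup chases the wrong ``confinement'' interpretation before self-correcting; the correct large-deviation reading that you settle on is the one intended.)

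However, there is a genuine error in the reduction. You write $|a^*(t)-a^*(0)|\le L^{20}+\max_{L^{20}\le r\le s_{n+1}}|W(r-L^{20})|$ and then take $m=s_n^{5/9}-L^{20}$, claiming $s_n^{5/9}-L^{20}\ge\tfrac12 s_n^{5/9}$ for $L$ large. That inequality is false for small $n$: $s_n^{5/9}=(L^{20}2^{n-1})^{5/9}=L^{100/9}2^{5(n-1)/9}$, which for $n=1$ is $L^{100/9}\ll L^{20}$, so $s_n^{5/9}-L^{20}<0$ regardless of how large $L$ is. The additive $L^{20}$ should simply not be there: on $\cY_B$ the particle $a^*$ is pinned at $v_B$ for all $t\in[0,L^{20}]$ (Definition~\ref{D:survivor}.4), so $a^*(L^{20})=a^*(0)=v_B$ and hence $|a^*(t)-a^*(0)|=|W(t-L^{20})|$ for $t\ge L^{20}$ and $=0$ for $t\le L^{20}$. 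Thus $\max_{0\le t\le s_{n+1}}|a^*(t)-a^*(0)|=\max_{0\le r\le s_{n+1}-L^{20}}|W(r)|\le\max_{0\le r\le s_{n+1}}|W(r)|$, and you may take $m=s_n^{5/9}$ directly in Lemma~\ref{L:rw-estimate}. With this one-line fix the argument is correct and coincides with the paper's.
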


\begin{proof}
On $\cY_B$ the particle $a^*$ performs a random walk conditioned not to move in the time interval $[0, L^{20}]$. Therefore conditional on $\cY_B$, for any $t$ the random variable $\max_{s \in [0, t]} |a^*(t) - a^*(0)|$ is stochastically dominated by the version of the random variable for an unconditioned random walk, and so the bound follows from Lemma \ref{L:rw-estimate}.
\end{proof}

\subsection{The proof of Proposition \ref{P:exists-delta}}
\label{S:proof83}

The proof of Proposition \ref{P:exists-delta} given the bounds in Section \ref{SS:prob-bounds} is more involved than the proof of Proposition \ref{P:herd-immunity} as we need will need to establish that far away events $\operatorname{Survive}_B$ behave independently. For this we set up a martingale argument.

Fix a large $r \in \N$, and let $n \in \N, n \ge 2$ be such that $s_n \le r$. For each $i \in \{0, \dots, 5s_{n-1} - 1\}$ and each $p \in \{0, 1, \dots, r-1\}^2$ we will define a martingale $J_{(\ell, x)} = J_{(\ell, x)}^{r, n, i, p}$. In the definition and various notation that follows, we will typically suppress the dependence on $r, n, i, p$ when these values do not play an important role.

To specify the index set of $(\ell, x)$, we first need a definition. For $x \in \Z^2$, define the block
$$
B^x = L (r x + p) + \{-L/2, \dots, L/2 - 1\}^2 \in \sB.
$$

Now let $K = \{x \in \Z^2 : B^x \in D_\sB(0, r^5)\}$, where here $0$ is the block containing the origin. The index set for $J$ is all $(\ell, x) \in I = I_{r, p}$, where
$$
I := \{\emptyset\} \cup I', \quad \text{and} \quad I' = I'_{r, p} = \{0, 1, \dots, r^5\} \times K.
$$
We totally order $I$ so that $\emptyset$ is the least element, and $I'$ has the lexicographic order. We note for later use that $|I| \le C r^{13}$.
For $(k, y) \in I'$, define
$$
\cT_{(k, y)} = \{\floor{\tau_{B^{y}}} + s_{n-1} = 5s_{n-1} k + i\} \cap \cS^{s_{n-1}^{2/3}/2}_{B^y}
$$
where $\cS^r_B$ is defined as in Lemma \ref{L:79}.
Finally, we define the martingale:
\begin{align*}
J_{(\ell, x)} &= \sum_{(k, y) \le (\ell, x)} \lf[\mathbf{1}(\cT_{(k, y)} \cap \tilde \cN_{n, \mathbf{y}}) - \P (\cT_{(k, y)} \cap \tilde \cN_{n, B^y} \mid  \sH_{(k, y)}) \rg]
\end{align*}
and let $J_\emptyset = 0$.
The sequence $J$ is a martingale with respect to $\sH_{(k, y)}$ if $\sH_{(k, y)}, (k, y) \in I$ is any filtration for which $J_{(\ell, x)}$ is $\sH_{(k, y)}$-measurable whenever $(\ell, x) < (k, y)$, i.e.
$$
\E[J_{(\ell, x)} \mid \sH_{(\ell, x)}] = J_{(\ell, x)^-},  
$$
where $(\ell, x)^-$ is the predecessor of $(\ell, x)$ in the total order on $I$. (Note that our indexing of the filtration here is not the standard one which would have $\sH_{(\ell, x)}$ labeled as $\sH_{(\ell, x)^-}$; we have done this to ease notation later on).

 We will work with the following filtration satisfying this. Set $\sH_\emptyset = \emptyset$ and for $(k, y) \in I_r$ define $\sH_{(k, y)}$ to be the $\sig$-algebra generated by:
\begin{itemize}
	\item For $(\ell, x) < (k, y)$, the events
	$
	\cT_{(k, y)} \cap \tilde \cN_{n, B^y}.
	$
	\item The $\sig$-algebra $\sF_{5 s_{n-1} k + i}$.
\end{itemize}
By construction, $J$ is a martingale with increments bounded by $1$. As a result, we can record the following concentration bound.

For this next corollary, we write $\bar J^{r, n, i, p}$ for the final state of the martingale.

\begin{corollary}
	\label{C:mart-conc}
	For any $r, n, i, p$ and any $\la > 0$ we have:
	$$
	\P \lf(|\bar J^{r, n, i, p}| \ge \la \rg) \le 2\exp (-c \la^2 r^{-13}).
	$$
	In particular, letting $J^{r, n} := \sum_{i \in [0, 5s_n-1], p \in [0, r-1]^2} \bar J^{r, n, i, p}$, we have
	$$
	\P \lf(|J^{r, n}| \ge \la \rg) \le 5 s_n r^2 \exp \lf( \frac{- c\la^2 r^{-13} }{ s_n^2 r^{4}}\rg) \le 5 r^3 \exp ( - c'\la^2 r^{-19} ).
	$$
\end{corollary}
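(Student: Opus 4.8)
The plan is to obtain the first inequality as a direct application of the Azuma--Hoeffding martingale concentration inequality, and then to derive the second inequality by a crude union bound over the finitely many parameters $i,p$.

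First I would record the relevant structural facts, all of which have already been established in the construction preceding the statement: for fixed $r,n,i,p$ the sequence $\{J_{(\ell,x)}\}_{(\ell,x)\in I}$ is a martingale with respect to $\{\sH_{(\ell,x)}\}$, with $J_\emptyset = 0$, and its increments
$$
J_{(k,y)} - J_{(k,y)^-} = \mathbf{1}(\cT_{(k,y)} \cap \tilde\cN_{n,B^y}) - \P(\cT_{(k,y)} \cap \tilde\cN_{n,B^y} \mid \sH_{(k,y)})
$$
lie in $[-1,1]$. Since the index set satisfies $|I| \le C r^{13}$, the number of steps is $O(r^{13})$ and each step has range at most $1$, so Azuma--Hoeffding yields
$$
\P(|\bar J^{r,n,i,p}| \ge \la) \le 2\exp\!\lf(-\frac{\la^2}{2|I|}\rg) \le 2\exp(-c\la^2 r^{-13}),
$$
which is the first claimed bound.

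For the second bound I would note that $J^{r,n}$ is a sum of exactly $5 s_n r^2$ terms, one for each $(i,p) \in \{0,\dots,5s_n-1\} \times \{0,\dots,r-1\}^2$. Hence on $\{|J^{r,n}| \ge \la\}$ we must have $|\bar J^{r,n,i,p}| \ge \la/(5 s_n r^2)$ for at least one pair $(i,p)$, and a union bound combined with the first part gives
$$
\P(|J^{r,n}| \ge \la) \le 5 s_n r^2 \cdot 2\exp\!\lf(-c\,\frac{\la^2 r^{-13}}{(5 s_n r^2)^2}\rg) \le 5 s_n r^2 \exp\!\lf(-\frac{c\la^2 r^{-13}}{s_n^2 r^4}\rg)
$$
after absorbing numerical constants into $c$. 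Finally, invoking the standing choice $s_n \le r$ to bound $5 s_n r^2 \le 5 r^3$ and $r^{13} s_n^2 r^4 \le r^{19}$ delivers $\P(|J^{r,n}| \ge \la) \le 5 r^3 \exp(-c'\la^2 r^{-19})$.

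I do not expect any substantial obstacle: the corollary is a routine packaging of martingale concentration plus a union bound. The only points that require a little care are checking that the (nonstandard) indexing of the filtration $\sH_{(\ell,x)}$ does not interfere with the martingale property or Azuma's inequality — it does not, since all that matters is that $J_{(\ell,x)^-}$ is $\sH_{(\ell,x)}$-measurable and the increment has zero conditional mean — and accounting honestly for the loss incurred in splitting the threshold $\la$ evenly across the $5 s_n r^2$ summands, which is precisely what produces the weaker exponent $r^{-19}$ in the final display.
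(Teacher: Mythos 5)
Your proposal is correct and takes exactly the approach the paper uses: the paper's proof is the one-liner that the first bound is Azuma's inequality (using $|I|\le Cr^{13}$ and increments in $[-1,1]$) and the second follows by a union bound over the $5s_n r^2$ pairs $(i,p)$, which is precisely what you spelled out. Your observations about the unconventional filtration indexing and about absorbing constants into $c$ (including using $s_n\le r$ to get $r^{17}s_n^2\le r^{19}$) are accurate and complete the argument.
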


\begin{proof}
The first inequality is simply Azuma's inequality and the second inequality follows by a union bound.	
\end{proof}

The usefulness of this martingale comes from the following observation. 
\begin{lemma}
\label{L:Elx-cond}
For any $(\ell, x) \in I'$ we have the following bound:
\begin{equation}
\label{E:TnBy}
\begin{split}
\P (\cT_{(\ell, x)}& \cap \tilde \cN_{n, B^x} \mid  \sH_{(\ell, x)}) \ge  (1 - \exp(- c L^{4} 2^{n/2}))\mathbf{1}(\cT_{(\ell, x)} \cap \cP_{n, B^{x}}).
\end{split}
\end{equation}
\end{lemma}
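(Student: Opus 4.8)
The plan is to deduce the bound from Lemma \ref{L:NnB-conditional} by showing that, on the event $\cT_{(\ell,x)}$, conditioning on $\sH_{(\ell,x)}$ gives no more information about $\tilde\cN_{n,B^x}$ than conditioning on $\sF_{5s_{n-1}\ell+i}$ alone. Writing $t_\ell := 5s_{n-1}\ell+i$, I would first record the relevant measurabilities. The timing event $\{\floor{\tau_{B^x}}+s_{n-1}=t_\ell\}$ is $\sF_{t_\ell}$-measurable since $\tau_{B^x}$ is a stopping time with $\tau_{B^x}<t_\ell$ there; and by Lemma \ref{L:79} the event $\cS^{s_{n-1}^{2/3}/2}_{B^x}$ is $\sF_{\tau_{B^x}+s_{n-1}/\sqrt2}$-measurable, hence $\sF_{t_\ell}$-measurable on the timing event, so $\cT_{(\ell,x)}\in\sF_{t_\ell}\sset\sH_{(\ell,x)}$ and $\P(\cT_{(\ell,x)}\cap\tilde\cN_{n,B^x}\mid\sH_{(\ell,x)})=\mathbf{1}(\cT_{(\ell,x)})\,\P(\tilde\cN_{n,B^x}\mid\sH_{(\ell,x)})$. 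Moreover, on $\cT_{(\ell,x)}$ every block $B'$ with $d_\sB(B^x,B')\le s_n^{3/4}$ is coloured by time $\tau_{B^x}+\xi s_n^{3/4}\le\tau_{B^x}+s_{n-1}=t_\ell$ by \eqref{E:first-lipschitz} and \eqref{E:Lnu-relationship}, so $\cP_{n,B^x}$ is $\sF_{t_\ell}$-measurable there. After these observations the lemma reduces (the cases $\cT_{(\ell,x)}^c$ and $\cT_{(\ell,x)}\cap\cP_{n,B^x}^c$ being trivial, with right side $0$) to the claim that $\P(\tilde\cN_{n,B^x}\mid\sH_{(\ell,x)})\ge 1-\exp(-cL^4 2^{n/2})$ on $\cT_{(\ell,x)}\cap\cP_{n,B^x}$.

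The heart of the argument is a conditional independence statement: given $\sF_{t_\ell}$ and on $\cT_{(\ell,x)}$, the event $\tilde\cN_{n,B^x}$ is independent of each remaining generator $\cT_{(k,y)}\cap\tilde\cN_{n,B^y}$ of $\sH_{(\ell,x)}$ with $(k,y)<(\ell,x)$. For $k<\ell$: on $\cT_{(k,y)}$ the event $\tilde\cN_{n,B^y}$ depends only on $\sF_{5s_{n-1}k+i}$ together with the $P$-particle trajectories for $B^y$ up to time $\tau_{B^y}+s_{n+1}<5s_{n-1}(k+1)+i\le t_\ell$ (here $s_{n+1}=4s_{n-1}$ and $\tau_{B^y}< 5s_{n-1}k+i-s_{n-1}+1$ on $\cT_{(k,y)}$), and $\cT_{(k,y)}$ is $\sF_{t_\ell}$-measurable by the same reasoning as above, so $\cT_{(k,y)}\cap\tilde\cN_{n,B^y}$ is already $\sF_{t_\ell}$-measurable and contributes nothing. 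For $k=\ell$ (so $y$ precedes $x$ lexicographically): $\cT_{(\ell,y)}$ is $\sF_{t_\ell}$-measurable, and $\tilde\cN_{n,B^y}$ depends only on $\sF_{t_\ell}$ and the future trajectories (after $t_\ell$) of the $P$-particles for $B^y$, i.e.\ of particles in $\bigcup\{H_{B'}:d_\sB(B^y,B')\le s_n^{3/4}\}$. Since $d_\sB(B^x,B^y)=r|x-y|\ge r>2s_n^{3/4}$ whenever $s_n\le r$ and $r$ is large, these $P$-particle sets are disjoint from those of $B^x$; given $\sF_{t_\ell}$ the future trajectories of disjoint sets of unrecovered particles are independent continuous-time random walks (as in the proof of Lemma \ref{L:NnB-conditional}), and $\tilde\cN_{n,B^x}$ is, on $\cT_{(\ell,x)}$, a function of $\sF_{t_\ell}$ and the future $P$-trajectories for $B^x$ only. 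Hence $\P(\tilde\cN_{n,B^x}\mid\sH_{(\ell,x)})=\P(\tilde\cN_{n,B^x}\mid\sF_{t_\ell})$ on $\cT_{(\ell,x)}$.

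It then remains to invoke Lemma \ref{L:NnB-conditional}. On $\cT_{(\ell,x)}$ the deterministic time $t_\ell$ equals the stopping time $\floor{\tau_{B^x}}+s_{n-1}$, so conditioning on $\sF_{t_\ell}$ there coincides with conditioning on $\sF_{\floor{\tau_{B^x}}+s_{n-1}}$; the proof of Lemma \ref{L:NnB-conditional} establishes \eqref{E:sInB} with $\sF_{\floor{\tau_{B^x}}+s_{n-1}}$ in place of $\sI_{n,B^x}$, giving $\P(\tilde\cN_{n,B^x}\mid\sF_{t_\ell})\ge(1-\exp(-cL^4 2^{n/2}))\mathbf{1}(\cP_{n,B^x})$, which is at least $1-\exp(-cL^4 2^{n/2})$ on $\cT_{(\ell,x)}\cap\cP_{n,B^x}$. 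Combined with the first paragraph this yields \eqref{E:TnBy}. The step I expect to be most delicate is the middle one: carefully tracking which trajectory information is resolved before versus after time $t_\ell$ — in particular confirming that $\tilde\cN_{n,B^y}$ for $y$ in the same time-layer $k=\ell$ depends only on $P$-particle trajectories localized within $D_\sB(B^y,s_n^{3/4})$ — so that the block separation $d_\sB(B^x,B^y)>2s_n^{3/4}$ genuinely buys conditional independence. The probabilistic content is otherwise entirely supplied by Lemma \ref{L:NnB-conditional}.
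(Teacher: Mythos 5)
Your proof follows essentially the same strategy as the paper's, and the two key verifications are identical: for $k<\ell$ you show the generator $\cT_{(k,y)}\cap\tilde\cN_{n,B^y}$ is already $\sF_{t_\ell}$-measurable (since $\tau_{B^y}+s_{n+1}<t_\ell$), and for $k=\ell$, $y\ne x$ you use the spatial separation $d_\sB(B^x,B^y)\ge r>2s_n^{3/4}$ so that the two local $P$-particle blocks are disjoint. Where you differ is in the formal bookkeeping. The paper proves an abstract auxiliary lemma (Lemma \ref{L:cond-exp-fact}: if $B\in\sF$ and $A\in\sF\Rightarrow A\cap B\in\sG$, then $\P(\P(B\cap C\mid\sG)\mid\sF)=\P(B\cap C\mid\sF)$), applies it with $B=\cT_{(\ell,x)}$, $\sF=\sH_{(\ell,x)}$, $\sG=\sI_{n,B^x}$, and then takes conditional expectations of the inequality from Lemma \ref{L:NnB-conditional} with respect to $\sH_{(\ell,x)}$. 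You instead argue directly that $\cT_{(\ell,x)}\cap\tilde\cN_{n,B^x}$ is conditionally independent of $\sH_{(\ell,x)}$ given $\sF_{t_\ell}$, which makes the conditional expectation collapse. Both work; the paper's route via $\sI_{n,B^x}$ (which, unlike $\sF_{t_\ell}$, also carries $\sM_{D_\sB(B^x,s_n^{3/4})^c}$) avoids having to state a conditional independence on a sub-event precisely because the larger $\sigma$-algebra already absorbs the far-field information, whereas your route needs the additional observation that the generator $\cT_{(\ell,y)}\cap\tilde\cN_{n,B^y}$ is not merely "nice on $\cT_{(\ell,y)}$'' but is globally $\sig(\sF_{t_\ell},\cG_y)$-measurable (because it vanishes off the $\sF_{t_\ell}$-measurable event $\cT_{(\ell,y)}$). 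That observation is exactly what makes your conditional independence claim precise rather than heuristic, and you are right to flag this as the delicate step—it is the same bookkeeping that Lemma \ref{L:cond-exp-fact} packages for the paper. With that observation made explicit, your argument is complete.
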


To prove the lemma we use the following simple fact.

\begin{lemma}
	\label{L:cond-exp-fact}
Suppose that $B$ is an event, and $\sF, \sG$ are $\sig$-algebras such that $B \in \sF$ and 
$$
A \in \sF \quad \implies \quad A \cap B \in \sG.
$$
Then for any event $C$ we have $\P(\P(B \cap C \mid \sG) \mid \sF) = \P(B \cap C \mid \sF)$.
\end{lemma}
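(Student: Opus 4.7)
The plan is to verify the defining property of conditional expectation directly. Since $\P(\P(B\cap C\mid\sG)\mid\sF)$ is tautologically $\sF$-measurable, it suffices to show that for every $A\in\sF$,
\[
\E[\mathbf{1}_A\,\P(\P(B\cap C\mid\sG)\mid\sF)] \;=\; \E[\mathbf{1}_A\,\mathbf{1}_{B\cap C}],
\]
which by the definition of the inner conditional expectation reduces to checking
\[
\E[\mathbf{1}_A\,\P(B\cap C\mid\sG)] \;=\; \E[\mathbf{1}_{A\cap B}\,\mathbf{1}_C] \qquad \text{for all } A\in\sF.
\]

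The first thing I would note is that $B\in\sG$: indeed, applying the hypothesis with $A=\Omega\in\sF$ gives $B=\Omega\cap B\in\sG$. In particular $\mathbf{1}_B$ is $\sG$-measurable, so a.s.\ we have $\P(B\cap C\mid\sG)=\mathbf{1}_B\,\P(C\mid\sG)$. Multiplying by $\mathbf{1}_A$ and taking expectations, I get
\[
\E[\mathbf{1}_A\,\P(B\cap C\mid\sG)] \;=\; \E[\mathbf{1}_{A\cap B}\,\P(C\mid\sG)].
\]

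Now the key hypothesis comes in: since $A\in\sF$, we have $A\cap B\in\sG$, so $\mathbf{1}_{A\cap B}$ is $\sG$-measurable and can be pulled inside the conditional expectation. The right-hand side equals $\E[\P(\mathbf{1}_{A\cap B}\mathbf{1}_C\mid\sG)]=\E[\mathbf{1}_{A\cap B}\mathbf{1}_C]$, which is exactly what I wanted. The only subtlety worth flagging is that the two $\sig$-algebras $\sF$ and $\sG$ need not be nested, so the usual tower property is unavailable; the hypothesis is precisely the minimal replacement that lets the argument go through, through the intermediate observation $B\in\sG$.
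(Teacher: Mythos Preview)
Your proof is correct and essentially identical to the paper's: both verify the defining property of $\P(B\cap C\mid\sF)$ by testing against $\mathbf{1}_A$ for $A\in\sF$, first deducing $B\in\sG$ from the hypothesis with $A=\Omega$, then using $B\in\sG$ to factor out $\mathbf{1}_B$ and $A\cap B\in\sG$ to reduce to $\P(A\cap B\cap C)$.
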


\begin{proof}
	First observe that by letting $A$ be the whole space in the assumption of the lemma, we get that $B \in \sG$. Now let $A \in \sF$ be arbitrary. Then
\begin{align*}
\E \P(\P(B \cap C \mid \sG) \mid \sF) \mathbf{1}(A) &= \E \P(B \cap C \mid \sG) \mathbf{1}(A) \\
&= \E \P(C \mid \sG) \mathbf{1}(A \cap B) = \P(A \cap B \cap C)
\end{align*}
where the second equality uses that $B \in \sG$ and the third equality uses that $A \cap B \in \sG$. On the other hand, 
$$
\E \P(B \cap C \mid \sF) \mathbf{1}(A) = \P(A \cap B \cap C)
$$
by definition, completing the proof.
\end{proof}

\begin{proof}[Proof of Lemma \ref{L:Elx-cond}]	The basic idea of the proof is to apply Lemma \ref{L:NnB-conditional}, conditioning on the $\sig$-algebra $\sI_{n, B^x}$. To pass from conditioning on $\sI_{n, B^x}$ to $\sH_{(\ell, x)}$ we will use Lemma \ref{L:cond-exp-fact} with $B = \cT_{(\ell, x)}$. 
	
	First we claim that $\cT_{(\ell, x)}$ is $\sF_{5 s_{n-1} \ell + i}$-measurable and hence is also $\sH_{(\ell, x)}$-measurable. Indeed, by Lemma \ref{L:79} the event $\cS^{s_{n-1}^{2/3}/2}_{B^x}$ is $\sF_{\tau_{B^x} + s_{n-1}/\sqrt{2}}$-measurable, so the whole event $\cT_{(\ell, x)}$ is $\sF_{\floor{\tau_{B^x}} + s_{n-1}}$-measurable and hence is also 
	$\sF_{5 s_{n-1} \ell + i}$-measurable since 
	\begin{equation}
	\label{E:equality}
	\floor{\tau_{B^x}} + s_{n-1} = 5 s_{n-1} \ell + i
	\end{equation}
	 on the event $\cT_{(\ell, x)}$. Next, we claim that
	\begin{equation}
	\label{E:implication}
	A \in \sH_{(\ell, x)} \quad \implies \quad A \cap \cT_{(\ell, x)} \in \sI_{n, {B^x}}.
	\end{equation}
	It is enough to check this for a collection of events $A$ that generate the $\sig$-algebra $\sH_{(\ell, x)}$. First, since \eqref{E:equality} holds on $\cT_{(\ell, x)}$ and $\sF_{\floor{\tau_{B^x}} + s_{n-1}} \sset \sI_{n, B^x}$, the implication \eqref{E:implication} holds whenever $A \in \sF_{5 s_{n-1} \ell + i}$. 
%	In particular, it holds for $A = \cT_{(k, y)}$ for any $(k, y) \le (\ell, x)$ by the reasoning prior to \eqref{E:equality}.
	 Now suppose 
	$
	A = \cT_{(k, y)} \cap \tilde \cN_{n, B^y}
	$
	 for some $(k, y) < (\ell, x)$. If $k < \ell$ then on $\cT_{(k, y)} \cap \cT_{(\ell, x)}$ we have
	$$
	\tau_{B^y} + s_{n+1} < \floor{\tau_{B^x}} + s_{n-1},
	$$
	and so 
	$$
	\cT_{(\ell, x)} \cap \cT_{(k, y)} \cap \tilde \cN_{n, B^y} \in \sF_{\floor{\tau_{B^x}} + s_{n-1}} \sset \sI_{n, B^x}
	$$
	by the $\sE_{n, B^y}$-measurability of $\tilde \cN_{n, B^y}$ in Lemma \ref{L:NnB-conditional}. Alternately, suppose $k = \ell$ but $y \ne x$. In this case $\cT_{(\ell, x)} \cap \cT_{(k, y)}$ implies that
	$$
	\floor{\tau_{B^y}} + s_{n-1} = \floor{\tau_{B^x}} + s_{n-1}
	$$ 
	and since the points $r x + p, r y + p$ are distance more than $r \ge 2s_n^{3/4}$ apart we have that 
	$$
	\sM_{D_\sB(B^y, s_n^{3/4})} \sset \sM_{D_\sB(B^x, s_n^{3/4})^c} \sset \sI_{n, B^x}.
	$$ 
	Therefore again using the measurability claim in Lemma \ref{L:NnB-conditional}, we have $\cT_{(\ell, x)} \cap \cT_{(k, y)} \cap \tilde \cN_{n, B^y} \in \sI_{n, B^x}$, completing the proof of \eqref{E:implication}.

	Now, we can compute that
	\begin{equation}
	\label{E:conditioning-pf}
	\begin{split}
\P (\cT_{(\ell, x)} &\cap \tilde \cN_{n, B^x} \mid  \sI_{n, B^x}) \\
&= \mathbf{1}(\cT_{(\ell, x)}) \P (\tilde \cN_{n, B^x} \mid  \sI_{n, B^x}) \\
&\ge (1 - \exp(- c L^{4} 2^{n/2}))\mathbf{1}(\cT_{(\ell, x)} \cap \cP_{n, B^x}).
\end{split}
	\end{equation}
The equality here uses the $\sI_{n, B^x}$-measurability of $\cT_{(\ell, x)}$, which follows from \eqref{E:implication} when $A$ is the whole space. The inequality then follows from Lemma \ref{L:NnB-conditional}. Now take conditional expectations of both sides of \eqref{E:conditioning-pf} with respect to $\sH_{(\ell, x)}$. The right side is unchanged: it is $\sF_{\floor{\tau_{B^x}} + s_{n-1}}$-measurable by \eqref{E:equality} and the definition of $\sI_{n, B^x}$ and hence is also
$\sH_{(\ell, x)}$-measurable.
The left-hand side becomes $\P (\cT_{(\ell, x)} \cap \tilde \cN_{n, B^x} \mid  \sH_{(\ell, x)})$ by Lemma \ref{L:cond-exp-fact}.
\end{proof}

For this next corollary and for the remainder of this section, to simplify notation we write $\cS_{n, B} = \cS^{s_{n-1}^{2/3}/2}_{B}$ and $\tilde \cX_{n, B} = \tilde \cX_{B, s_{n-1}}$. 
\begin{corollary}
	\label{C:mart-summation}
On the event $\cG_\nu$ for any $r \in \N$ and $n \ge 2$ with $s_n \le r$ we have
\begin{equation}
\label{E:Cor-eqn}
\begin{split}
&\sum_{B \in D_\sB(0, r^5)} \mathbf{1}(\tilde \cN_{n, B}^c \cap \cS_{B}) \le - J^{r, n} + \\
&\sum_{B \in D_\sB(0, r^5)}\mathbf{1}(\cS_{n, B} \smin \cS_B) +  \exp(- c L^{4} 2^{n/2}) \mathbf{1}(\cS_B) + \mathbf{1}(\cS_B \smin \tilde \cX_{n, B}).
\end{split}
\end{equation}
\end{corollary}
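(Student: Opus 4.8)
The plan is to rewrite $\sum_{B\in D_\sB(0,r^5)}\mathbf{1}(\tilde\cN_{n,B}^c\cap\cS_B)$ as a telescoping sum whose fluctuating part is the combined martingale $J^{r,n}$ and whose systematic part is controlled by Lemma~\ref{L:Elx-cond} and the event $\cG_\nu$. Throughout, all sums over $B$ are over $B\in D_\sB(0,r^5)$, and I abbreviate $\cS_{n,B}=\cS^{s_{n-1}^{2/3}/2}_B$ and $\eps_n=\exp(-cL^42^{n/2})$.

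The first ingredient is a bijective bookkeeping step. Every block of $\sB$ equals $B^x$ for a unique pair $(p,x)$ with $p\in\{0,\dots,r-1\}^2$ and $x\in\Z^2$, and such a block lies in $D_\sB(0,r^5)$ precisely when $x\in K$ (where $K$ depends on $p$); thus $(p,x)\mapsto B^x$ restricts to a bijection onto $D_\sB(0,r^5)$. For such a block the Lipschitz bound \eqref{E:Lipschitz} together with \eqref{E:block-distance} gives $\tau_{B^x}\le L^2 d(B^x,0)<L^3 r^5$, and since $s_{n-1}\ge L^{20}$ the integer $\lfloor\tau_{B^x}\rfloor+s_{n-1}$ admits a unique representation $5s_{n-1}\ell+i$ with $i\in\{0,\dots,5s_{n-1}-1\}$ and $\ell\in\{0,\dots,r^5\}$. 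Since $\tilde\cN_{n,B^x}$ and $\cP_{n,B^x}$ do not depend on $(i,\ell)$, summing $\mathbf{1}(\cT_{(\ell,x)})$ (with either of these events also intersected in) over all indices appearing in $J^{r,n}$ gives
\[
\sum_{i,p}\sum_{(\ell,x)\in I'_{r,p}}\mathbf{1}(\cT_{(\ell,x)}\cap\tilde\cN_{n,B^x})=\sum_B\mathbf{1}(\cS_{n,B}\cap\tilde\cN_{n,B}),
\]
and likewise with $\cP_{n,B}$ in place of $\tilde\cN_{n,B}$.

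Next comes the pointwise inequality. Comparing the defining intersections in Lemma~\ref{L:79}, the event $\cS_B^r$ only relaxes the constraints of $\cS_B$, so $\cS_B\sset\cS_{n,B}$ and hence, for every block $B$,
\[
\mathbf{1}(\tilde\cN_{n,B}^c\cap\cS_B)=\mathbf{1}(\cS_B)-\mathbf{1}(\tilde\cN_{n,B}\cap\cS_B)\le\mathbf{1}(\cS_B)-\mathbf{1}(\tilde\cN_{n,B}\cap\cS_{n,B})+\mathbf{1}(\cS_{n,B}\smin\cS_B).
\]
Summing over $B$ and using that $\bar J^{r,n,i,p}$ is the terminal value $\sum_{(\ell,x)\in I'_{r,p}}[\mathbf{1}(\cT_{(\ell,x)}\cap\tilde\cN_{n,B^x})-\P(\cT_{(\ell,x)}\cap\tilde\cN_{n,B^x}\mid\sH_{(\ell,x)})]$ of the martingale, together with the identity above, yields $\sum_B\mathbf{1}(\tilde\cN_{n,B}\cap\cS_{n,B})=J^{r,n}+\sum_{i,p}\sum_{(\ell,x)}\P(\cT_{(\ell,x)}\cap\tilde\cN_{n,B^x}\mid\sH_{(\ell,x)})$. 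I would then bound each conditional probability below by $(1-\eps_n)\mathbf{1}(\cT_{(\ell,x)}\cap\cP_{n,B^x})$ via Lemma~\ref{L:Elx-cond}, and sum, obtaining $\sum_B\mathbf{1}(\tilde\cN_{n,B}\cap\cS_{n,B})\ge J^{r,n}+(1-\eps_n)\sum_B\mathbf{1}(\cS_{n,B}\cap\cP_{n,B})$. Finally, on $\cG_\nu$, Lemma~\ref{L:local-Pbrief} gives $\cG_\nu\cap\cS_B\cap\tilde\cX_{n,B}\sset\cP_{n,B}$, which with $\cS_B\sset\cS_{n,B}$ gives $\mathbf{1}(\cS_{n,B}\cap\cP_{n,B})\ge\mathbf{1}(\cS_B)-\mathbf{1}(\cS_B\smin\tilde\cX_{n,B})$. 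Substituting the last two displays into the summed pointwise bound and using $1-\eps_n\le1$ wherever it multiplies a nonnegative indicator, the $\sum_B\mathbf{1}(\cS_B)$ contributions cancel and one is left with exactly \eqref{E:Cor-eqn}.

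The one place requiring care is the order of the pointwise manipulation in the third paragraph: $\mathbf{1}(\tilde\cN_{n,B}^c\cap\cS_B)$ must be turned directly into $\mathbf{1}(\cS_B)-\mathbf{1}(\tilde\cN_{n,B}\cap\cS_{n,B})+\mathbf{1}(\cS_{n,B}\smin\cS_B)$, so that the $\mathbf{1}(\cS_B)$ term cancels against the $(1-\eps_n)\mathbf{1}(\cS_B)$ coming from the martingale step and each of the three error events ends up with coefficient exactly one; splitting off $\cS_B\smin\tilde\cX_{n,B}$ or $\cS_{n,B}\smin\cS_B$ first would instead leave spurious factors of $2$ in front of those terms. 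The substantive content — the measurability linking $\sH_{(\ell,x)}$ to $\sI_{n,B^x}$ and the $1$-dependence that legitimizes the conditional estimate — is already isolated in Lemma~\ref{L:Elx-cond} and in the construction of the filtration $\sH$.
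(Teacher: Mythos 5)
Your proof is correct and follows essentially the same route as the paper: both arguments hinge on Lemma~\ref{L:Elx-cond}, the bijective collapse of the $(i,p,\ell,x)$-indexed sum to a single sum over blocks in $D_\sB(0,r^5)$, the pointwise decomposition via $\cS_B \sset \cS_{n,B}$, and Lemma~\ref{L:local-Pbrief}. The only cosmetic difference is that the paper bounds $J^{r,n}$ from above first and then plugs into the pointwise identity, while you write the martingale defining relation as an identity, lower-bound its conditional-probability part, and substitute into the pointwise inequality — the two chains of inequalities are equivalent and produce the same right-hand side.
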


\begin{proof}
Fix $r, n$. To simplify notation in the proof we write $\ep = \exp(- c L^{4} 2^{n/2})$. For any $r, n, i, p$ using Lemma \ref{L:Elx-cond} we have that $J^{r, n, i, p}$ is bounded above by
\begin{align*}
\sum_{(k, y) \in I_{r, p}'} \mathbf{1}(\cT_{(k, y)} \cap \tilde \cN_{n, B^y}) - (1 - \ep)\mathbf{1}(\cT_{(k, y)} \cap \cP_{n, B^{y}}),
\end{align*}
and so summing over $i, p$ we get that
\begin{align}
\label{E:triplesum}
J^{r, n} \le \sum_{B \in D_\sB(0, r^5)} \sum_{i=0}^{5 s_{n-1} - 1} \sum_{k = 0}^{r^5} \mathbf{1}(\cT_{B, k, i} \cap \tilde \cN_{n, B}) - (1 - \ep)\mathbf{1}(\cT_{B, k, i} \cap \cP_{n, B}),
\end{align}
where
$$
\cT_{B, k, i} =  \{\floor{\tau_B} + s_{n-1} = 5s_{n-1} k + i\} \cap \cS_{n, B}.
$$
Now, by \eqref{E:first-lipschitz} we necessarily have $\tau_B \le L^2 d_\sB(B, 0) \le L^2 r^5$ and so $\floor{\tau_B} + s_{n-1} \le 5 s_{n-1} r^5$. Therefore $\floor{\tau_B} + s_{n-1}$ must equal $5s_{n-1} k + i$ for some $i \in \{0, \dots, 5 s_{n-1} - 1\}, k \in \{0, \dots,r^5\}$, and so the triple sum in \eqref{E:triplesum} equals
\begin{align*}
\sum_{B : d(B, 0) \le m} \mathbf{1}(\cS_{n, B} \cap \tilde \cN_{n, B}) - (1 - \ep)\mathbf{1}(\cS_{n, B} \cap \cP_{n, B}).
\end{align*}
Next, $\cG_\nu \cap \tilde \cX_{n, B} \cap \cS_{B} \sset \cP_{n, B}$ by Lemma \ref{L:local-Pbrief} and $\cS_B \sset \cS_{n, B}$ by construction, so on $\cG_\nu$ we have
\begin{align}
\label{E:mart-bd}
J^{r, n} 
&\le \sum_{B \in D_\sB(0, r^5)} \mathbf{1}(\cS_{n, B} \cap \tilde \cN_{n, B}) - (1 - \ep)\mathbf{1}(\tilde \cX_{n, B} \cap \cS_B).
\end{align}
We now convert this into the desired bound on
$$
\sum_{B \in D_\sB(0, r^5)} \mathbf{1}(\tilde \cN_{n, B}^c \cap \cS_{B}).
$$ 
Indeed, again using that $\cS_{B} \sset \cS_{n, B}$ we can write
$$
\mathbf{1}(\tilde \cN_{n, B}^c \cap \cS_{B}) \le \mathbf{1}(\tilde \cN_{n, B}^c \cap \cS_{n, B}) =  \mathbf{1}(\cS_{n, B} \smin \cS_B) + \mathbf{1}(\cS_B) - \mathbf{1}(\cS_{n, B} \cap \tilde \cN_{n, B}),
$$
which after plugging in the bound in \eqref{E:mart-bd} gives that the left-hand side of \eqref{E:Cor-eqn} is bounded above by
\begin{align*}
- J^{r, n} + \sum_{B \in D_\sB(0, r^5)}\mathbf{1}(\cS_{n, B} \smin \cS_B) + \mathbf{1}(\cS_B) - (1-\ep)\mathbf{1}(\tilde \cX_{n, B} \cap \cS_B).
\end{align*}
Using the estimate $\mathbf{1}(\cS_B) - (1-\ep)\mathbf{1}(\tilde \cX_{n, B} \cap \cS_B) \le \ep \mathbf{1}(\cS_B) + \mathbf{1}(\cS_B \smin \tilde \cX_{n, B})$ then completes the proof.
\end{proof}

For this next lemma and the remainder of the section, we let $\de = \de_L > 0$ be as in Lemma \ref{L:putting-things-together} so that for all $B \in \sB$ we have
$$
\de = \P[\cY_B] \ge \P[\cS_B] \ge 2 \de/3.
$$
\begin{lemma}
	\label{L:conc-P-S}
	Fix $n, r \in \N$ with $r$ large and with $s_n \le r$ and let $\la > 8 \de L^{-1} \exp(- c 2^{cn /\log n})$. For $n \ge 2$ let
	$$
	W_{n, B} = \mathbf{1}(\cS_{n, B} \smin \cS_B) + \mathbf{1}(\cS_B \smin \tilde \cX_{n, B}) + \mathbf{1} (\cS_B \smin (\cM_{n, B} \cap \cO_{n, B})).
	$$
	Also define $W_{1, B} = \mathbf{1} (\cS_B \smin (\cM_{1, B} \cap \cO_{1, B}))$. Then
	$$
	\P \Big( \sum_{B \in D_\sB(0, r^5)} 
	W_{n, B} \ge \la r^{10} \Big) \le 4 r^2 \exp(- c \la^2 r^8).
	$$
	Similarly, for any $m \in \N$ and $\la < \de/4$ we have
	$$
	\P \Big( \sum_{B \in D_\sB(0, r^5)} \mathbf{1} (\cS_B) \le \la r^{10} \Big) \le 4 r^2 \exp(-c \la^2 r^8).
	$$
\end{lemma}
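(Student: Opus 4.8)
The plan is to deduce both inequalities from the dependent concentration bound Lemma \ref{l:dependentPerc}, applied over $\Lambda := D_\sB(0, r^5)$ (which has $|\Lambda| = 2r^{10} + O(r^5)$ blocks). For this we need two ingredients in each case: control of the relevant one-block mean, and the fact that the fields $\{W_{n,B}\}_B$ and $\{\mathbf 1(\cS_B)\}_B$ are $R$-dependent with $R$ polynomially small in $r$.

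The finite-range claim is bookkeeping. On $\cY_B$ (hence on $\cS_B \sset \cY_B$) each of $\cS_{n,B} = \cS^{s_{n-1}^{2/3}/2}_B$, $\cS_B$, $\tilde\cX_{n,B} = \tilde\cX_{B, s_{n-1}}$, $\cM_{n,B}$, and $\cO_{n,B}$ is measurable with respect to $\sM_{D_\sB(B, \rho)}$ for a radius $\rho$ given respectively by Lemma \ref{L:79}, Lemma \ref{L:putting-things-together}(iii), Lemma \ref{L:upper-bound-conditional}, the definition of $\cM_{n,B}$ (whose only content on $\cY_B$ concerns the frozen trajectory of the survivor $a^*\in H_B$, hence is $\sM_B$-measurable), and Lemma \ref{L:cO}. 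Since $s_n\le r$ and, for $B\in D_\sB(0,r^5)$, $m_B = m + d_\sB(0,B)^{1/6} \le m + r^{5/6}$, every such $\rho$ is at most $C(m + r^{5/6} + L^{10})$, which for $r$ large is at most $\rho_0 := C r^{5/6}$. As the $\sM_{B'}$ are independent, $W_{n,B}$ and $W_{n,B'}$ (and likewise $\mathbf 1(\cS_B)$ and $\mathbf 1(\cS_{B'})$) are independent once $d_\sB(B,B') > 2\rho_0$, so the fields are $R$-dependent with $R := 2\rho_0 + 1 \le C r^{5/6}$, in particular $R^2 \le r^2$ for $r$ large. Feeding $R$, $|\Lambda|\asymp r^{10}$ and $y = \la r^{10}$ into Lemma \ref{l:dependentPerc} yields a bound of the shape $R^2\exp(-c\la^2 r^{20}/(R^2|\Lambda|)) \le 4r^2\exp(-c'\la^2 r^{8})$ in both cases (for the first inequality one splits $W_{n,B}$ into its three indicator summands and union bounds, which only changes constants), provided the respective mean condition holds.

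For the first inequality the mean estimate is the substance. I would bound $\E W_{n,B}$ by the sum of the probabilities of its three indicators, all of which lie inside $\cY_B$: thus $\P[\cS_{n,B}\smin\cS_B] \le \P[\cY_B]\,\P[(\cS_B)^c\mid \cS_{n,B}]$ is handled by Lemma \ref{L:79} (with its parameter $r$ equal to $s_{n-1}^{2/3}/2$); $\P[\cS_B\smin\tilde\cX_{n,B}] \le \P[\cY_B]\,\P[\tilde\cX_{n,B}^c\mid\cY_B]$ by Lemma \ref{L:upper-bound-conditional}; and $\P[\cS_B\smin(\cM_{n,B}\cap\cO_{n,B})]\le \P[\cY_B]\big(\P[\cM_{n,B}^c\mid\cY_B] + \P[\cO_{n,B}^c]\big)$ by Lemma \ref{L:srw-M} and Lemma \ref{L:cO} (using that $\cO_{n,B}$ is independent of $\cY_B$). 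Each term equals $\de = \P[\cY_B]$ times an error factor, and one checks that for $L$ large and a sufficiently small universal $c$ every such factor is at most $\tfrac13 L^{-1}\exp(-c 2^{cn/\log n})$; the slowest-decaying of them is $\exp(-(s_{n-1}^{2/3}/2)^{c/\log\log s_{n-1}})$ from Lemma \ref{L:79}, the others (powers of $L$, $2^{n/2}$, $2^{n/9}$) decaying faster. Hence $\E W_{n,B} \le \de L^{-1}\exp(-c 2^{cn/\log n})$, so $y = \la r^{10} > 2\,\E W_{n,B}\,|\Lambda|$ for $\la$ above the stated threshold $8\de L^{-1}\exp(-c2^{cn/\log n})$, and Lemma \ref{l:dependentPerc} applies. (For $n=1$ the single error factor for $W_{1,B}$ is already exponentially small in a power of $L$, so the conclusion holds a fortiori, reading $2^{cn/\log n}\ge1$.) For the second inequality one uses the bound $\de \ge \P[\cS_B]\ge 2\de/3$ recorded just before the lemma and applies the lower-tail half of Lemma \ref{l:dependentPerc} with $p = 2\de/3$; the hypothesis $\la < \de/4$ guarantees $y = \la r^{10} < p|\Lambda|/2$, giving the stated bound.

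The main obstacle is the mean estimate in the first part: one must assemble the conditional bounds from Sections \ref{S:survival} and \ref{SS:prob-bounds}, keep careful track of the fact that every event appearing in $W_{n,B}$ sits inside $\cY_B$ so that its probability carries the factor $\de$ rather than being merely small, and verify that the composite error factor does not exceed the threshold. By contrast, the finite-range bookkeeping is routine once one observes that $m_B = m + d_\sB(0,B)^{1/6}$ is $o(r)$ on $D_\sB(0,r^5)$, so all of the constituent events remain local at a scale far below $r^5$.
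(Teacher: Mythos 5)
Your proof is correct and follows the same approach as the paper: establish finite-range dependence of the fields $\{W_{n,B}\}_B$ and $\{\mathbf{1}(\cS_B)\}_B$ by tracing the $\sM_{D_\sB(B,\rho)}$-measurability of each constituent event (so that the dependence radius is polynomially smaller than $r^5$), bound the one-block mean via the estimates in Lemmas \ref{L:79}, \ref{L:upper-bound-conditional}, \ref{L:cO}, \ref{L:srw-M} and the containment of everything in $\cY_B$, and then apply Lemma \ref{l:dependentPerc}. The only cosmetic difference is that the paper simply uses the crude bound $m_B + s_n < 2r$ for the dependence radius rather than your sharper $Cr^{5/6}$; both give the stated $4r^2\exp(-c\la^2 r^8)$.
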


\begin{proof}
First let $n \ge 2$. For any $B \in D_\sB(0, r^5)$, since $2 r > m_B + s_n$ for large enough $r$, the events 
$$
\tilde \cX_{n, B}, \quad \cM_{n, B}, \quad \cO_{n, B}, \quad \cS_B, \quad  \cS_{n, B},
$$
are all measurable given $\sM_{D_\sB(B, 2 r)}$. This uses Lemma \ref{L:putting-things-together} for $\cS_B$, Lemma \ref{L:79} for $\cS_{n, B} = \cS^{s_{n-1}^{2/3}/2}_{B}$, Lemma \ref{L:upper-bound-conditional} for $\tilde \cX_{n, B} = \tilde \cX_{B, s_{n-1}}$, Lemma \ref{L:cO} for $\cO_{n, B}$, and is immediate from the definition for $\cM_{n, B}$.
 Moreover 
$$
\P(W_{n, B} \ge 1) \le C \de L^{-1} \exp(- 2^{cn /\log n}).
$$
for $L$ large enough.
This uses the fact that all events in the definition of $W_{n, B}$ are contained in $\cY_B$ along with the fact that $\P[\cY_B] = \de$ (Lemma \ref{L:putting-things-together}), the bound in Lemma \ref{L:79} on $\P[\cS_B \mid \cS_{n, B}]$, the bound on $\P[\tilde \cX_{n, B} \mid \cY_B]$ from Lemma \ref{L:upper-bound-conditional}, the bound on $\P[\cO_{n, B}]$ from Lemma \ref{L:cO} along with the independence of $\cO_{n, B}$ and $\cY_B$, and finally the bound on $\P[\cM_{n, B} \mid \cY_B]$ from Lemma \ref{L:srw-M}. The first bound in the lemma for $n \ge 2$ then follows from Lemma \ref{l:dependentPerc}. The $n=1$ case is identical except we do not need to worry about the first two $W_{n, B}$ terms.
 The second bound also follows from Lemma \ref{l:dependentPerc}, 
this time using the lower bound of $\P[\cS_B] \ge \de/2$. \end{proof}

\begin{proof}[Proof of Proposition \ref{P:exists-delta}]
	We will just prove the conditional probability bound, as the liminf claim then follows from the Borel-Cantelli lemma. It is also enough to prove the conditional probability bound when $m=r^5$ for sufficiently large $r \in \N$.
	
We work on the event $\cG_\nu$ throughout the proof. First, by the containments in Lemma \ref{L:NnB-conditional-small} and Lemma \ref{L:NnB-conditional} we can write
\begin{align}
\nonumber
\sum_{B \in D_\sB(0, r^5)} &\mathbf{1}(\operatorname{Survive}_B) \\
\label{E:Bd0B}
&\ge \sum_{B \in D_\sB(0, r^5)} \mathbf{1}\Big(\cS_B \cap \bigcap_{n\ge 1} (\cM_{n, B} \cap \cO_{n, B}) \cap \bigcap_{n\ge 2} \tilde \cN_{n, B}\Big).
\end{align}
Now let $n(r) \in \N$ be the largest value of $n$ such that $s_n \le r$ so that $n(r) \sim \log_2 r$ as $r \to \infty$. 
 By Lemmas \ref{L:cO} and \ref{L:srw-M} and Corollary \ref{C:avgNnB}, the probability that $\tilde \cN_{n, B} \cap \cO_{n, B} \cap \cM_{n, B}$ fails for some $B$ with $d(B, 0) \le r^5$ and $n \ge n(r)$ is at most
 $
\exp(-{(r L)}^{c/ \log \log (r L)})
 $
 for large enough $r$.
 
 Therefore with probability at least $1- \exp(-{(r L)}^{c/ \log \log (r L)})$, the right-hand side of \eqref{E:Bd0B} is equal to 
 \begin{align*}
 &\sum_{B \in D_\sB(0, r^5)} \mathbf{1}\Big(\cS_B \cap \bigcap_{n=1}^{n(r)} (\cM_{n, B} \cap \cO_{n, B}) \cap \bigcap_{n= 2}^{n(r)} \tilde \cN_{n, B}\Big) \\
 &\ge \sum_{B \in D_\sB(0, r^5)} \Big( \mathbf{1}(\cS_B) -\sum_{n=2}^{n(r)} \mathbf{1}(\cS_B \cap \tilde \cN_{n, B}^c) -\sum_{n=1}^{n(r)} \mathbf{1} (\cS_B \smin (\cM_{n, B} \cap \cO_{n, B}) \Big).
 \end{align*}
Now by Corollary \ref{C:mart-summation}, using the $W_{n, B}$ notation from Lemma \ref{L:conc-P-S}, this is bounded below by
 \begin{align*}
 & \sum_{B : d(0, B)\le r^5}\frac{3}{4}\mathbf{1}(\cS_B)
 - \sum_{n=2}^{n(r)} |J^{r, n}| - \sum_{n=1}^{n(r)} \sum_{B : d(0, B)\le r^5} W_{n, B}
 \end{align*}
 for large enough $L$. Then as long as $L$ is sufficiently large and $r$ is sufficiently large given $L$ we have the following bounds. The first of the three terms above is bounded below by $\de r^{10}/20$ with probability at least $1 - \exp(-c r)$ by Lemma \ref{L:conc-P-S}. Each of the summands in the second term is bounded above by $\de r^{10}/\log^2(r)$ with probability at least $1-\exp(-r^{1/2})$ by Corollary \ref{C:mart-conc}. Each of the summands in the final term is bounded above by $\de r^{10}/\log^2(r)$ with probability at least $1 - \exp(-c r)$ by Lemma \ref{L:conc-P-S}. Putting everything together implies that the above expression is bounded below by $\de r^{10}/40$ with probability at least $1-\exp(-r^{1/2})$ for large enough $r$, yielding the desired probability bound in Proposition \ref{P:exists-delta} with $\al = \de/40$.
\end{proof}
 
 \subsection{Proof of Theorem \ref{T:main-2}}

 At this point, we just need to gather together everything we have done in the last few sections to prove Theorem \ref{T:main-2}.
 
First, $\P\cG_\nu \to 1$ as $\nu \to 0$ by Corollary \ref{L:global-is-rare}. Next, $\cG_\nu$ is contained in the event $\cC$ in \eqref{E-event}, which by Theorem \ref{T:random-red-blue} and Proposition \ref{P:SI-to-BR} implies that on $\cG_\nu$ infinitely many blocked become ignited, and so $\mathbf{I}_t \ne \emptyset$ for all $t \ge 0$, giving point $1$.

We next prove point $3$. For a site $x \in \Z^2$, let $B(x)$ denote the block containing $x$, let $\sig_{B(x)}$ denote the final time an infected particle is in the block $B(x)$ and let $N(x)$ denote the smallest value of $n$ such that $\operatorname{Herd}_{B(x), n}$ succeeds. Then by Lemma \ref{L:herd-and-survive}.2, we have
$$
D_x \le \sig_{B(x)} - \tau_{B(x)} \le s_{N(x)} = L^{20} 2^{N(x)-1}
$$
and by Proposition \ref{P:herd-immunity} we have 
$$
\P(\sig_{B(x)} - \tau_{B(x)} > L^{20} 2^{n-1} \mid \cG_\nu) \le \P( \operatorname{Herd}_{B, n} \;|\; \cG_\nu) \ge 1 - \exp(-{(2^n L)}^{c/ \log (n + \log L)}), 
$$
which yields \eqref{E:Dxm} after simplification. By the Borel-Cantelli lemma there exists a random $D > 0$ such that for all $x \in \Z^2$, on $\cG_\nu$ we have
\begin{equation}
\label{E:sigBx}
\sig_{B(x)} - \tau_{B(x)} \le D + [\log (\|x\|_1 + 3)]^{C \log \log \log (\|x\|_1 + 3)},
\end{equation}
which yields \eqref{E:DxDbound}, completing the proof of point $3$. 

Next, the Lipschitz bound \eqref{E:Lipschitz} and \eqref{E:sigBx} implies that $\sig_{B(x)} \le 10 L \|x\|_1$ for all large enough $x$, and so $\mathbf{I}_t \cap B(0, ct) = \emptyset$ for all large enough $t$ for some $\nu$-dependent $c > 0$. Also, there exists $C > 0$ such that almost surely, $\mathbf{I}_t \sset B(0, Ct)$ for all large enough $t$ by the corresponding result for the SI model without recovery, \cite[Theorem 1]{kesten2005spread}. Together these results give point $2$.

Finally, let $\mathbf{S}_{t, r}$ denote the set of susceptible particles at time $t$ in some $H_B$ with $d(B, 0) \le r$. Standard random walk estimates (Lemma \ref{L:rw-estimate}) imply that for any $\ep, r > 0$, for all large enough $t$ we have
$$
\mathbf{S}_{t, t(r- \ep)} \sset \mathbf{S}_t \cap D(0, rt) \sset \mathbf{S}_{t, t(r + \ep)}.
$$
Combining this with Proposition \ref{P:exists-delta} and Lemma \ref{L:herd-and-survive}.3 yields point $4$.
 
 \bibliographystyle{alpha}
 
 \bibliography{SIR}

\end{document}